\documentclass[12pt,authoryear]{elsarticle}
\journal{}

\makeatletter
\def\ps@pprintTitle{%
 \let\@oddhead\@empty
 \let\@evenhead\@empty
 \def\@oddfoot{\hfill\thepage}%
 \let\@evenfoot\@oddfoot}
\makeatother

\usepackage{amsmath,amssymb,amsfonts,amsthm,graphicx}
\usepackage[bookmarksnumbered,colorlinks=true]{hyperref}
\usepackage[labelfont=bf]{caption}
\usepackage{natbib}

\providecommand{\doi}[1]{\href{https://doi.org/#1}{doi:#1}}
\usepackage{xurl} 
\renewcommand{\doi}[1]{%
 \href{https://doi.org/#1}{\nolinkurl{doi:#1}}%
}

\usepackage{appendix} 
\usepackage{dsfont} 
\usepackage{enumitem} 
\usepackage{mathtools} 
\usepackage{float} 
\usepackage{xcolor} 
\usepackage{booktabs}
\usepackage{longtable} 
\usepackage{multirow} 
\usepackage{geometry} 
\newcounter{pstep} 
\newcommand{\step}[1]{%
\par\medskip\noindent
\refstepcounter{pstep}
\textbf{Step \thepstep: #1.}
}

\numberwithin{equation}{section}
\numberwithin{table}{section}
\numberwithin{figure}{section}

\theoremstyle{plain}
\newtheorem{theorem}{Theorem}[section]
\newtheorem{proposition}[theorem]{Proposition}
\newtheorem{lemma}[theorem]{Lemma}
\newtheorem{corollary}[theorem]{Corollary}

\theoremstyle{definition}

\newtheorem{remark}{Remark}[section]
\newtheorem*{open}{Open problem}

\newcommand{\N}{\mathbb{N}}
\newcommand{\R}{\mathbb{R}}
\newcommand{\C}{\mathbb{C}}

\newcommand{\EE}{\mathsf{E}} 
\newcommand{\Var}{\mathsf{Var}} 
\newcommand{\bb}[1]{\boldsymbol{#1}}
\newcommand{\rd}{\mathrm{d}}
\newcommand{\tr}{\mathrm{tr}}
\newcommand{\etr}{\mathrm{etr}}
\newcommand{\vecc}{\mathrm{vec}}
\newcommand{\vecp}{\mathrm{vecp}}
\newcommand{\OO}{\mathcal{O}}

\newcommand{\ii}{\mathrm{i}\hspace{0.2mm}}
\newcommand{\ind}{\mathds{1}}
\newcommand{\leqdef}{\vcentcolon=}

\allowdisplaybreaks

\begin{document}

\begin{frontmatter}

\title{Stein's method for the Wishart distribution}

\author[a1]{Gabriel Bailly}
\author[a2]{Robert E.\ Gaunt}
\author[a3]{Fr\'ed\'eric Ouimet\corref{mycorrespondingauthor}}
\author[a4]{Donald Richards}
\author[a1]{Rainer von Sachs}

\address[a1]{ISBA/LIDAM, UCLouvain, Louvain-La-Neuve, Belgium}
\address[a2]{The University of Manchester, Manchester, M13 9PL, UK}
\address[a3]{Universit\'e du Qu\'ebec \`a Trois-Rivi\`eres, Trois-Rivi\`eres, QC G8Z 4M3, Canada}
\address[a4]{Penn State University, University Park, PA 16802, USA}

\cortext[mycorrespondingauthor]{Corresponding author. Email address: frederic.ouimet2@uqtr.ca}

\begin{abstract}
In this work, we develop Stein's method for the Wishart distribution on the cone of positive definite matrices. We establish the basic ingredients of a Wishart Stein framework: we derive an extended-generator-based Stein characterization from the Wishart diffusion process, identify the corresponding transition semigroup through the noncentral Wishart law, provide an explicit semigroup representation for the solution of the Stein equation, and obtain regularity estimates for the solution. The new methodology is demonstrated in four applications: (i) an order $n^{-1}$ bound, for smooth test functions, for the Wishart approximation of uncentered group-mean scatter matrices in MANOVA; (ii) a quantitative multivariate Satterthwaite approximation; (iii) local/integrated De Bruijn identities and logarithmic Sobolev inequalities for the Wishart measure; and (iv) Stein's method of moments for the shape and scale parameters, including structured scale estimation.
\end{abstract}

\begin{keyword} 
Stein's method, Wishart approximation, Wishart distribution, Wishart process
\MSC[2020]{Primary: 60F05 Secondary: 60H10, 60J60, 62E20, 62H10, 62H12}
\end{keyword}

\end{frontmatter}


\section{Introduction}\label{sec:intro}

Stein's method is a powerful technique that has classically been applied to bound the distance between two probability distributions with respect to a probability metric. The method was introduced in Charles Stein's seminal paper \citep{s72} in the context of normal approximation. Shortly afterwards, the method was extended to Poisson approximation \citep{c75}, and the method has since been adapted to many of the most important univariate probability distributions, including the exponential \citep{cfr11,pr11}, gamma \citep{Luk1994PhD,gpr17}, variance-gamma \citep{g14} and $\alpha$-stable distributions \citep{ah19book,x19}. Stein's method has also been developed for multivariate distributions, most notably the multivariate normal \citep{MR1035659,g91} and multivariate $\alpha$-stable distributions \citep{ah19,stable24}, and a general theory of Stein's method for multivariate distributions is beginning to emerge \citep{mrrs23}. Introductions to Stein's method and some of its numerous applications throughout the mathematical sciences are given in the monographs \citep{MR2732624,np12} and surveys \citep{ross11,survey23}.

Whilst Stein's method has reached an impressive level of maturity for univariate and multivariate distributions, a conspicuous absence in the literature is a systematic development for matrix-variate distributions. The first step towards the development of Stein's method for matrix-variate distributions was taken in the recent work of \citet{GauntOuimetRichards2026}, which develops the basic framework of Stein's method for matrix normal approximation. In this paper, we make a considerably more substantial step by extending Stein's method to the Wishart distribution, arguably the most important matrix-variate distribution, with numerous applications in multivariate statistics, Bayesian analysis and random matrix theory.

The development of Stein's method for the Wishart distribution is of particular methodological interest on the following grounds. Firstly, whilst there are instances of matrix normal approximations in which the problem must be dealt with at the matrix-variate level (see Section~4.2 of \citet{GauntOuimetRichards2026} for an example), there have been instances in the literature in which a vectorization argument has been used to reduce a matrix normal approximation problem to a multivariate normal approximation, for which the powerful existing theory on Stein's method for multivariate normal approximation can then be applied; see, e.g., \citet{m22,nz22,tudor}. Such vectorization arguments are not possible for the Wishart distribution, which underlines the importance of a theory of Stein's method for Wishart approximation.

Secondly, the basic Stein's method framework for matrix normal approximation that was established by \citet{GauntOuimetRichards2026} extended the framework of Stein's method for multivariate normal approximation in a very natural manner with the proofs following similar lines to those employed in Stein's method for multivariate normal approximation. However, the relatively clean extension of Stein's method for multivariate normal approximation to matrix normal approximation is perhaps atypical. In contrast, extending Stein's method for gamma approximation to the Wishart distribution is highly non-trivial, and the techniques we employ in setting up Stein's method for the Wishart distribution may prove to be crucial in enabling researchers to extend Stein's method to other matrix-variate distributions.

\subsection{Review of Stein's method for the gamma distribution}

Before giving a summary of our contributions on Stein's method for the Wishart distribution, it will be helpful to recall the essential ingredients of Stein's method for gamma approximation. The starting point for this is the following \emph{Stein characterization} of the gamma distribution; see \citet{dz91,Luk1994PhD,gpr17}. For $r, \lambda > 0$, if $\Gamma(r, \lambda)$ denotes the gamma distribution with density $f_{r, \lambda}(x) = \lambda^{r} x^{r - 1} e^{-\lambda x}/\Gamma(r)$, $x > 0$, then a real-valued random variable $X$ satisfies
\begin{equation}\label{Steinchargamma}
X\sim \Gamma(r, \lambda) \qquad \Leftrightarrow \qquad \EE[X f''(X) + (r - \lambda X) f'(X)] = 0 ~~\forall f\in C_{\mathcal{A}^{\Gamma}}^2(0, \infty),
\end{equation}
where $C_{\mathcal{A}^{\Gamma}}^2(0, \infty)$ denotes the class of all twice differentiable functions $f:(0,\infty)\to \R$ for which the expectations $\EE[|f'(X_{\infty})|]$, $\EE[|X_{\infty} f'(X_{\infty})|]$, and $\EE[|X_{\infty} f''(X_{\infty})|]$ are finite for $X_{\infty}\sim \Gamma(r, \lambda)$. The Stein characterization \eqref{Steinchargamma} leads to the following \emph{Stein equation} for the gamma distribution:
\begin{equation}\label{gammasteineqn}
\mathcal{A}^{\Gamma} f(x) = h(x) - \EE[h(X)],
\end{equation}
where the \emph{Stein operator} $\mathcal{A}^{\Gamma}$ is defined by
\begin{align}\label{gammasteinop}
\mathcal{A}^{\Gamma} f(x) = x f''(x) + (r - \lambda x) f'(x),
\end{align}
$h: (0,\infty) \to \R$ is a real-valued test function, and $X_{\infty}\sim \Gamma(r, \lambda)$. On evaluating both sides of the Stein equation \eqref{gammasteineqn} at a random variable of interest $X$ and taking expectations, we get that
\begin{align}\label{transfer}
\EE[h(X)] - \EE[h(X_{\infty})] = \EE[Xf_h''(X) + (r - \lambda X)f_h'(X)],
\end{align}
where $f_h$ solves the Stein equation \eqref{gammasteineqn}. Thus, the problem of bounding the quantity $\EE[h(X)] - \EE[h(X_{\infty})]$ is reduced to solving the Stein equation \eqref{gammasteineqn} and then bounding the right-hand side of \eqref{transfer}, which is generally more tractable on account of the fact that this expectation only involves the random variable $X$. In order for this procedure to be effective, suitable bounds on the derivatives of the solution $f_h$ are required.

The Stein equation \eqref{gammasteineqn} was obtained by \citet{Luk1994PhD} using the generator approach to Stein's method \citep{MR1035659,g91} in which the Stein operator for the $\Gamma(r, \lambda)$ distribution is recognized as the generator of the well-known Cox--Ingersoll--Ross process, which is defined through the stochastic differential equation (SDE)
\[
\rd X_t = (r - \lambda X_t) \, \rd t + \sqrt{2X_t} \, \rd B_t, \qquad X_0 = x,
\]
where $(B_t)_{t\geq0}$ is a standard Brownian motion. By standard theory on Markov processes, it then follows that the solution to the Stein equation \eqref{gammasteineqn} is given by
\begin{align}\label{soln}
f_h(x) = -\int_0^{\infty} \big\{\mathcal{P}_t h(x) - \EE[h(X)]\big\} \, \rd t,
\end{align}
where the transition semigroup operator $\mathcal{P}_t$ is given by $\mathcal{P}_t h(x) \leqdef \EE[h(X_t) \mid X_0 = x]$, for $t\geq 0$. Provided that the test function $h$ has sufficient regularity such that an interchange in the order of differentiation and integration is permissible, we get that
\begin{align}\label{solnderiv}
f_h^{(m)}(x) = -\int_0^{\infty} (\mathcal{P}_t h)^{(m)}(x) \, \rd t, \qquad m\geq1.
\end{align}
In order to obtain bounds on the derivatives of $f_h$, \citet{Luk1994PhD} derived an explicit formula for the transition semigroup $\mathcal{P}_th(x)$ and its derivatives. It was shown in Lemma~2.4 of \citet{Luk1994PhD} that $X_t \mid \{X_0 = x\}\sim \sigma^2(t)\cdot\Gamma(r, \lambda, \theta_tx)$, where $\sigma^2(t) = 1 - e^{-\lambda t}$, $\theta_t = 2\lambda e^{-\lambda t}/\sigma^2(t)$, and $\Gamma(r, \lambda, \theta)$ denotes the noncentral gamma distribution with noncentrality parameter $\theta$. It thus follows that
\begin{equation}\label{semifor}
\mathcal{P}_th(x) = \EE[h(\sigma^2(t) Y)],
\end{equation}
where $Y\sim \Gamma(r, \lambda, \theta_tx)$. An expression for the $m$-th order derivative of the transition semigroup $\mathcal{P}_th(x)$ can then be obtained via direct calculation, with the result, found in Lemma~2.5 of \citet{Luk1994PhD}, given by the simple formula:
\begin{equation}\label{eq:Lemma.2.5.Luk}
(\mathcal{P}_t h)^{(m)}(x) = e^{-m\lambda t} \, \EE[h^{(m)}(\sigma^2(t) Y_m)], \qquad m\geq1,
\end{equation}
where $Y_m\sim \Gamma(r + m, \lambda, \theta_tx)$. If the $m$-th derivative of $h$ exists and is bounded, then we have the bound $|(\mathcal{P}_t h)^{(m)}(x)|\leq e^{-m\lambda t} \, \|h^{(m)}\|_{\infty}$, where $\|\cdot\|_{\infty}$ is the supremum norm. When this estimate is applied to \eqref{solnderiv}, it then follows from dominated convergence (see Theorem~2.6 of \citet{Luk1994PhD}) that
\begin{equation}\label{lukbound}
\|f_h^{(m)}\|_{\infty} \leq\frac{1}{m\lambda}\|h^{(m)}\|_{\infty}, \qquad m\geq1.
\end{equation}
Under the weaker assumption that the $(m - 1)$-th derivative of $h$ exists and is bounded, \citet[Theorem~2.19]{Gaunt2013PhD} obtained the following bound:
\begin{align}\label{gbound}
\|f_h^{(m)}\|_{\infty} \leq\bigg\{\frac{\sqrt{2\pi} + e^{-1}}{\sqrt{r + m - 1}} + \frac{2}{r + m - 1}\bigg\}\|h^{(m - 1)}\|_{\infty}, \qquad m\geq1,
\end{align}
where $h^{(0)}\equiv h$. One cannot hope for a bound of the form $\|f_h^{(m)}\|\leq C_{r, \lambda,m}\|h^{(m - 2)}\|_{\infty}$, $m\geq2$, for some constant $C_{r, \lambda,m} > 0$ due to a universal counterexample of \citet{ev15}.

\subsection{Summary of our contributions}

We begin our development of Stein's method for the Wishart distribution by identifying the Wishart diffusion as the matrix-valued analog of the Cox--Ingersoll--Ross process. In Proposition~\ref{prop:generator}, we give the extended generator of the Wishart process, introduced by \citet{Bru1991}, and this generator provides the natural candidate for the Wishart Stein operator. We then provide the transition law of the Wishart process in Proposition~\ref{prop:extension.Luk.1994.Lemma.2.4}. In particular, for positive definite matrices $\Sigma$ and $W$, write $\Sigma_t \leqdef (1 - e^{-2t})\Sigma$ and, for $t > 0$, let $\mathfrak{S}_t\sim \smash{\mathcal{W}_d(\alpha, I_d, e^{-2t}\Sigma_t^{-1/2} W \Sigma_t^{-1/2})}$. Then the transition semigroup admits the explicit representation
\[
\mathcal{P}_t^{\mathcal{W}} h(W) = \EE\big[h(\Sigma_t^{1/2} \mathfrak{S}_t \Sigma_t^{1/2})\big],
\]
which is the Wishart analog of the gamma semigroup representation \eqref{semifor} used by \citet{Luk1994PhD}. This transition law also identifies the invariant law of the Wishart process as $\mathfrak{W}_{\infty}\sim \mathcal{W}_d(\alpha,\Sigma)$. The generator formulation is then used to establish the full Stein characterization in Corollary~\ref{cor:Stein.Wishart.generator.step.1}. As noted in Remark~\ref{rem:Wishart.Stein.operator.d.1}, when $d = 1$ the Wishart Stein operator reduces, up to the positive multiplicative factor $4\Sigma$, to the classical gamma Stein operator \eqref{gammasteinop}.

The main technical obstacle in obtaining the semigroup solution of the Wishart Stein equation and its regularity estimates is the need to control the transition semigroup as a function of its starting point $W$. The difficulty is already visible in the scalar gamma case. If $f(u;r, \lambda, \theta_t x)$ denotes the noncentral gamma density appearing in \eqref{semifor}, then the key transfer identity underlying Lemma~2.5 of \citet{Luk1994PhD} is
\begin{equation}\label{eq:transfer}
\frac{\partial}{\partial x} f(u;r, \lambda, \theta_t x) = \frac{\theta_t}{2}\{f(u;r + 1, \lambda, \theta_t x)-f(u;r, \lambda, \theta_t x)\} = -\frac{\theta_t}{2\lambda}\frac{\partial}{\partial u} f(u;r + 1, \lambda, \theta_t x),
\end{equation}
where the two equalities follow from the standard Bessel identities obtained by specializing Eqs.~(16.3.1)~and~(16.3.4) of \citet{dlmf16} to $\gamma = r$, $n=1$ and $p=q=0$. We recover \eqref{eq:Lemma.2.5.Luk} by iterating the above transfer identity and integrating by parts: for any $m\geq 1$,
\[
(\mathcal{P}_t h)^{(m)}(x) = \left(\frac{\theta_t\sigma^2(t)}{2\lambda}\right)^m \int_0^{\infty} h^{(m)}(\sigma^2(t)u) f(u;r + m, \lambda, \theta_t x) \, \rd u = e^{-m \lambda t} \, \EE[h^{(m)}(\sigma^2(t)Y_m)],
\]
where $Y_m\sim \Gamma(r + m, \lambda, \theta_t x)$. The actual proof of Lemma~2.5 by \citet{Luk1994PhD} exploits the Poisson mixture representation of the noncentral gamma density to justify differentiating under the integral sign and integrating by parts, and to apply the argument described above on each summand of the mixture.

The Wishart case is substantially more difficult. There is no comparably simple Poisson mixture representation for the noncentral Wishart density, and the scalar Bessel derivative identities are replaced by determinantal derivative formulae for the Bessel function of matrix argument; see Eqs.~(2.3) and~(2.4) of \citet{Herz1955}. A direct matrix analog of Luk's proof is therefore not tractable. The key new observation of the present paper is that the transfer from derivatives in the noncentrality parameter to derivatives in the state variable, as in \eqref{eq:transfer}, can instead be obtained through Laplace transforms. This is carried out in two steps in Section~\ref{sec:tech.lemmas}. First, recall that for a sufficiently integrable function $g:\mathcal{S}_{++}^d\to \R$, its Laplace transform is defined by
\begin{equation}\label{eq:Laplace.transform}
\mathcal{L}[g](T) \leqdef \int_{\mathcal{S}_{++}^d} \etr(-T X) \, g(X) \, \rd X, \qquad T\in \mathcal{S}_{+}^d,
\end{equation}
where $\mathcal{S}_{+}^d$ denotes the cone of $d\times d$ real nonnegative definite matrices. Using this definition, Lemma~\ref{lem:Laplace.polynomial.calculus} proves the following integration-by-parts Laplace calculus for polynomial differential operators $P(\nabla_{\!X})$ built from the symmetric gradient $\nabla_{\!X}$:
\[
\mathcal{L}[P(\nabla_{\!X}) f_{\alpha, I_d, \Theta}^{\mathcal{W}}](T) = P(T) \, \mathcal{L}[f_{\alpha, I_d, \Theta}^{\mathcal{W}}](T), \qquad T\in \mathcal{S}_{+}^d,
\]
where $f_{\alpha, \Sigma, \Theta}^{\mathcal{W}}$ denotes the noncentral Wishart density defined in \eqref{eq:noncentral.Wishart.density}.
Second, differentiating the noncentral Wishart Laplace transform with respect to the noncentrality parameter $\Theta$ produces the factor $T(I_d + 2T)^{-1}$, and the identity
\[
T(I_d + 2T)^{-1} = \frac{T \, \mathrm{adj}(I_d + 2T)}{|I_d + 2T|} = T \, \mathrm{adj}(I_d + 2T) \mathcal{L}[f_{2, I_d, 0_{d\times d}}^{\mathcal{W}}](T)
\]
converts this factor into a polynomial in $T$ at the cost of shifting the shape parameter of the Wishart density by~$2$. Combining this with Lemma~\ref{lem:Laplace.polynomial.calculus} gives Lemma~\ref{lem:Laplace.derivative.Wishart}, namely, for any pairs of indices $(i_1, j_1), \ldots, (i_k, j_k)\in \{1, \ldots,d\}^2$,
\[
\left(\prod_{\ell=1}^k \nabla_{\Theta, i_{\ell} j_{\ell}}\right) f_{\alpha, I_d, \Theta}^{\mathcal{W}}(X) = (-1)^k \left(\prod_{\ell=1}^k \Big\{\nabla_{\!X} \, \mathrm{adj}(I_d + 2\nabla_{\!X})\Big\}_{i_{\ell} j_{\ell}}\right) f_{\alpha + 2k, I_d, \Theta}^{\mathcal{W}}(X).
\]
For $k=1$, this is the matrix transfer formula which generalizes \eqref{eq:transfer}. Notice that, unlike in the gamma case where each derivative in the noncentrality parameter is transferred to a first-order derivative in the state variable, each factor $\{\nabla_{\!X} \, \mathrm{adj}(I_d + 2\nabla_{\!X})\}_{ij}$ is a differential operator of order $d$, since $\mathrm{adj}(I_d + 2\nabla_{\!X})$ is a matrix polynomial in $\nabla_{\!X}$ of degree $d - 1$.

After an integration by parts on the cone $\mathcal{S}_{++}^d$, the operator in Lemma~\ref{lem:Laplace.derivative.Wishart} is moved from the noncentral Wishart density onto the test function $h$, and we can prove Lemma~\ref{lem:Luk.Wishart.Lemma.2.5}, namely
\[
\left(\prod_{\ell=1}^k \nabla_{\Lambda, i_{\ell} j_{\ell}}\right) (\mathcal{Q}_t h)(\Lambda) = e^{-2kt} \, \EE\left[\left(\prod_{\ell=1}^k \mathcal{D}_{i_{\ell} j_{\ell}}\right) h(\mathfrak{Y}_{k,t})\right], \qquad \mathcal{D} = \nabla_{\!X} \, \mathrm{adj}(I_d - 2\nabla_{\!X}),
\]
where $\Lambda\in \mathcal{S}_{++}^d$, $\mathfrak{Y}_{k,t}\sim \mathcal{W}_d(\alpha + 2k, I_d, e^{-2t}\Lambda)$, and $\mathcal{Q}_t$ is a scale-normalized analog of the transition semigroup operator $\mathcal{P}_t^{\mathcal{W}}$; see the definition in \eqref{eq:Wishart.Q.operator.def}. This lemma is the technical core of the paper: it shows how derivatives of $(\mathcal{Q}_t h)(\Lambda)$ with respect to the entries of the noncentrality matrix $\Lambda$ can be pushed through the noncentral Wishart transition kernel and transferred onto the test function~$h$, thereby generalizing Lemma~2.5 of \citet{Luk1994PhD}, or equivalently, Eq.~\eqref{eq:Lemma.2.5.Luk}.

This transfer mechanism first enters the proofs of the main results through Lemma~\ref{lem:contraction}. In Lemma~\ref{lem:contraction}, the transition semigroup is written as
\[
(\mathcal{P}_t^{\mathcal{W}}h)(W) = (\mathcal{Q}_t h_t)(\Sigma_t^{-1/2}W\Sigma_t^{-1/2}), \qquad h_t(X) \leqdef h(\Sigma_t^{1/2}X\Sigma_t^{1/2}),
\]
and the difference between $(\mathcal{Q}_t h_t)(\Sigma_t^{-1/2}W\Sigma_t^{-1/2})$ and the central value $(\mathcal{Q}_t h_t)(0)$ is controlled by differentiating along the line segment in the $\Lambda$ variable. The case $k = 1$ of Lemma~\ref{lem:Luk.Wishart.Lemma.2.5} gives the required derivative bound in terms of the $\mathcal{D}$-seminorm of $h_t$. This decay estimate is the key input in Theorem~\ref{thm:Stein.solutions.Wishart}, where we prove that the semigroup integral
\[
f_h(W) \leqdef -\int_0^{\infty}\Big\{\mathcal{P}^{\mathcal{W}}_t h(W)-\EE[h(\mathfrak{W}_{\infty})]\Big\} \, \rd t
\]
is well defined and solves the Wishart Stein equation. This representation is the Wishart analog of the gamma solution formula \eqref{soln}.

Finally, Theorem~\ref{thm:smoothness.estimates.wishart} completes the analytic part of the Wishart Stein framework by deriving regularity estimates for the semigroup solution. Its proof applies Lemma~\ref{lem:Luk.Wishart.Lemma.2.5} with $k = m$ to the scale-normalized transition semigroup, transfers the resulting derivatives onto the test function through products of the operators $\mathcal{D}_{t, \Sigma} \leqdef \nabla_{\!W} \, \mathrm{adj}(I_d - 2\Sigma_t\nabla_{\!W})$, and then integrates the resulting semigroup bounds in the representation of $f_h$. This yields the indexed partial derivative estimate \eqref{eq:Wishart.Stein.bound.1} and the coordinate-free directional estimate \eqref{eq:Wishart.Stein.bound.4}. As noted in Remark~\ref{rem:gamma.smoothness.Wishart.Stein}, when $d = 1$ the bound \eqref{eq:Wishart.Stein.bound.1} reduces to the classical bound \eqref{lukbound} of \citet{Luk1994PhD}, up to the $4\Sigma$ factor inherited from the relation between the one-dimensional Wishart and gamma Stein operators.

The form of the estimates in Theorem~\ref{thm:smoothness.estimates.wishart} is dictated by the transfer operator in Lemma~\ref{lem:Luk.Wishart.Lemma.2.5}. Since each entry of $\mathcal{D}_{t, \Sigma}$ is a differential operator of order at most $d$, the product of $m$ such operators naturally leads to seminorms involving derivatives of $h$ up to order $md$. Equivalently, the appearance of the $md$-th order regularity assumption is the cost of pushing $m$ derivatives of the Wishart transition semigroup through the noncentral Wishart kernel and onto the test function. This derivative count is sharp for the transfer identity and for the $\mathcal{D}$-seminorm bounds used in Theorem~\ref{thm:smoothness.estimates.wishart}; see Remark~\ref{rem:Wishart.derivative.loss.md}. It does not, however, rule out the possibility that a different argument could save derivatives at the level of the solution of the Stein equation itself. In the gamma case, such a phenomenon is reflected in the bound \eqref{gbound}, where one derivative of $h$ is saved relative to \eqref{lukbound}. It will be the subject of future work to determine whether a Wishart generalization of \eqref{gbound}, involving derivatives of $h$ up to order $m(d - 1)$, can be attained. We expect that one cannot in general obtain bounds for the $m$-th order partial derivatives of the solution of the Wishart Stein equation under substantially weaker differentiability assumptions on $h$.

This phenomenon of bounds on derivatives of the solution of a Stein equation requiring existence of derivatives whose order increases as a function of the dimension $d$ appears to be novel, although there is a precedent in the literature that stronger regularity conditions may need to be imposed in order to guarantee existence of derivatives of solutions of Stein equations as one moves from a Stein equation for a univariate distribution to its multivariate analog. Indeed, the classic bound $\|f_h^{(3)}\|_{\infty} \leq 2\|h'\|_{\infty}$ of \citet{steinmonograph} for the solution of the standard normal Stein equation $f''(x) - xf'(x) = h(x) - \EE[h(Z)]$, for $Z\sim \mathcal{N}(0,1)$, was shown by \citet{raic04} to not carry over to the solution of the multivariate normal Stein equation, in that there exist Lipschitz test functions $h$ for which the third-order partial derivatives of the solution of the multivariate normal Stein equation do not exist.

In Section~\ref{sec:applications}, we provide several applications of our framework for Stein's method for the Wishart distribution, with examples given that concern distributional approximation, information theory and parameter estimation. We begin by considering the Wishart approximation of uncentered group-mean scatter matrices in Multivariate Analysis of Variance (MANOVA). In Section~\ref{sec:Wishart.approximation.MANOVA}, we derive an explicit order $n^{-1}$ bound to quantify this classical distributional approximation. Our approach involves Wishart generalizations of two techniques that have proven to be highly effective in deriving chi-square approximations via Stein's method (see, for example, \citet{gpr17,gr23}), these being a connection between the Wishart and matrix normal Stein operators and symmetry considerations to attain the optimal $n^{-1}$ rate. In Section~\ref{sec:Wishart.approximation.Satterthwaite}, we derive explicit bounds to quantify the multivariate Satterthwaite approximation proposed by \citet{TanGupta1983}, in which the distribution of a positive linear combination of independent Wishart random matrices is approximated by a single Wishart distribution with matching first two moments, a multivariate analog of the classical Satterthwaite approximation introduced by \citet{Satterthwaite1941,Satterthwaite1946}.

Whilst Stein's method has traditionally been applied to derive distributional approximations, there has been a growing trend in recent years in which the method has found exciting applications in other areas such as information-theoretic inequalities \citep{lnp15} and computational statistics \citep{survey23}. In our applications of Sections~\ref{sec:Wishart.deBruijn.LSI} and \ref{sec:Wishart.parameter.estimation}, we exhibit the utility of our Wishart Stein framework in such domains. In Section~\ref{sec:Wishart.deBruijn.LSI}, we apply aspects of our generator approach formulation of Stein's method for the Wishart distribution developed in Section~\ref{sec:main.results} to derive local and integrated De Bruijn identities for the Wishart semigroup, and complement these results by deriving a logarithmic Sobolev inequality for the Wishart measure, thereby generalizing some of the main results of \citet{ArrasSwan2017} given in the gamma setting. In Section~\ref{sec:Wishart.parameter.estimation}, we utilize our Stein characterization of the Wishart distribution together with the recently introduced Stein's method of moments of \citet{EbnerFischerGauntPickerSwan2025} to derive new closed-form estimators for the Wishart distribution. Our simulations suggest that our estimators improve on the classical moment estimators in terms of the relative Frobenius error and offer an attractive alternative to the maximum likelihood estimator, with the closed-form of the estimators offering benefits in terms of computational cost and amenability to theoretical analysis.

\subsection{Outline of the paper}

Section~\ref{sec:definitions} introduces the definitions and notation regarding matrix spaces, the noncentral Wishart distribution, and infinitesimal and extended generators. Sections~\ref{sec:main.results}~and~\ref{sec:applications} present the main results and the applications, respectively. Section~\ref{sec:tech.lemmas} establishes preliminary lemmas regarding the noncentral Wishart distribution that are needed in the proofs of the main results. Sections~\ref{sec:proofs.main.results}~and~\ref{sec:proofs.application.results} provide the detailed proofs for the main results and the applications, respectively. Supplementary material for some of the applications is collected at the end of the paper.

\section{Definitions and notation}\label{sec:definitions}

Throughout, $[d] \leqdef \{1, \ldots, d\}$ for $d\in \N \equiv \{1, 2, \ldots\}$. Let $\mathcal{S}^d$, $\mathcal{S}_{+}^d$, and $\mathcal{S}_{++}^d$ denote the sets of real symmetric, nonnegative definite, and positive definite $d\times d$ matrices, respectively. Unless mentioned otherwise, these spaces are equipped with the Frobenius norm $\|\cdot\|_F$. Let $O(d)$ denote the orthogonal group of $d\times d$ matrices, consisting of all real matrices $H$ such that $H^{\top} H = I_d$. For any square matrix $A$, let $\tr(A)$ be its trace, $\etr(A) \leqdef \exp\{\tr(A)\}$, and $|A|$ its determinant. For $S\in \mathcal{S}_{+}^d$, the matrix $S^{1/2}$ denotes the unique nonnegative definite square root, $\|S\|_2$ the spectral norm, and $\vecp(S) = (S_{11}, S_{12}, S_{22}, \ldots, S_{1d}, \ldots, S_{dd})^{\top}$ its half-vectorization. The symbols $\bb{0}_d$, $0_{d\times d}$, and $I_d$ denote the $d$-dimensional zero vector, the $d\times d$ zero matrix, and the $d\times d$ identity, respectively.

If $B$ is an open subset of a finite-dimensional real Euclidean space, in particular if $B\subseteq \mathcal{S}^d$ is open in the relative topology, and $m\in \N_0$, let $C^{m}(B)$ be the class of real-valued functions $f:B\to\R$ that are $m$ times continuously differentiable on $B$ (all partial derivatives up to total order $m$ exist and are continuous), and let $C_b^{m}(B)$ be the subclass for which all partial derivatives up to total order $m$, including the order-$0$ derivative $f$ itself, are bounded on $B$. For a map $F:B\to E$, where $E$ is a finite-dimensional real Euclidean space, $D^kF(x)$ denotes the $k$-th Fr\'echet derivative of $F$ at $x$, viewed as a $k$-linear map on the ambient real vector space of $B$ with values in $E$; its evaluation in directions $u_1, \ldots,u_k$ is written $D^kF(x)[u_1, \ldots,u_k]$, with the convention $D^0F = F$, and, when $B\subseteq\mathcal{S}^d$, the directions $u_i$ lie in $\mathcal{S}^d$.

For any $d\in \N$, the multivariate gamma function~$\Gamma_d$ is defined by
\begin{equation}\label{eq:gamma.integral}
\Gamma_d(\nu) \leqdef \int_{\mathcal{S}_{++}^d} \etr(-X) |X|^{\nu - (d + 1)/2} \, \rd X, \qquad \textrm{Re}(\nu) > (d-1)/2,
\end{equation}
which is a natural generalization to the cone $\mathcal{S}_{++}^d$ of the classical gamma function.

For any shape parameter $\alpha \in (d-1, \infty)$, any scale matrix $\Sigma \in \mathcal{S}_{++}^d$, and any noncentrality parameter $\Theta\in \R^{d\times d}$ such that $\Theta \Sigma^{-1}\in \mathcal{S}_{+}^d$, the density of the noncentral Wishart distribution, henceforth denoted $\mathcal{W}_d(\alpha, \Sigma, \Theta)$, is given, for all $X \in \mathcal{S}_{++}^d$, by
\begin{equation}\label{eq:noncentral.Wishart.density}
f_{\alpha, \Sigma, \Theta}^{\mathcal{W}}(X) \leqdef \frac{|X|^{\alpha/2 - (d + 1)/2} \etr(-\Sigma^{-1}X/2)}{|2 \Sigma|^{\alpha/2} \Gamma_d(\alpha/2)} \etr(-\Theta/2) \, {}_0F_1(\alpha/2; \Theta\Sigma^{-1}X/4),
\end{equation}
where ${}_0F_1$ denotes the Bessel function of matrix argument introduced by \citet[Section~2]{Herz1955}. Whenever a random matrix $\mathfrak{X}$ of size $d \times d$ follows this distribution, one writes $\mathfrak{X}\sim \mathcal{W}_d(\alpha, \Sigma, \Theta)$. When $\Theta = 0_{d \times d}$, one recovers the (central) Wishart distribution, and one writes $\mathfrak{X}\sim \mathcal{W}_d(\alpha, \Sigma)$, omitting the noncentrality parameter.

\begin{remark}\label{rem:Wishart.mgf}
For any shape parameter $\alpha\in (d-1, \infty)$, any scale matrix $\Sigma \in \mathcal{S}_{++}^d$, and any noncentrality parameter $\Theta\in \R^{d\times d}$ such that $\Theta \Sigma^{-1}\in \mathcal{S}_{+}^d$, the Laplace transform of $\mathfrak{W}\sim \mathcal{W}_d(\alpha, \Sigma, \Theta)$ is
\[
\EE[\etr(-T \mathfrak{W})]
= \frac{\etr\{-T \Sigma(I_d + 2 T \Sigma)^{-1} \Theta\}}{|I_d + 2 T \Sigma|^{\alpha/2}}, \qquad T\in \mathcal{S}_{+}^d;
\]
see, e.g., Theorem~10.3.3 of \citet{Muirhead1982}.
\end{remark}

For a matrix-variate Markov process $(\mathfrak{M}_t)_{t\geq 0}$ taking values in $\mathcal{S}_{+}^d$, the transition semigroup of operators $(\mathcal{P}_t)_{t\geq 0}$ is defined, for every measurable function $f$ for which the expectation below is finite, by
\[
\mathcal{P}_t f(M) \leqdef \EE[f(\mathfrak{M}_t) \mid \mathfrak{M}_0 = M], \qquad t\geq 0.
\]
The corresponding infinitesimal generator of $(\mathfrak{M}_t)_{t\geq 0}$ is defined on its domain by
\[
\mathcal{A} f(M) \leqdef \lim_{s\downarrow 0} \frac{\mathcal{P}_s f(M) - f(M)}{s},
\]
provided that the limit exists. More generally, for a diffusion, we use the same notation for the extended generator: if $f$ is sufficiently smooth and there exists a measurable function $g$ such that $(f(\mathfrak{M}_t) - f(\mathfrak{M}_0)-\int_0^t g(\mathfrak{M}_s) \, \rd s)_{t\geq 0}$ is a local martingale, then we write $\mathcal{A}f = g$. When $f$ belongs to the domain of the infinitesimal generator, the two notions agree. In this sense, It\^o's formula identifies $\mathcal{A} f(M)$ with the drift term in the stochastic differential of $f(\mathfrak{M}_t)$. Such a characterization is central both to describing the dynamics of the Markov process $(\mathfrak{M}_t)_{t\geq 0}$ and to deriving Stein-type identities.

\section{Main results}\label{sec:main.results}

Following \citet[Section~5.2]{Bru1991}, we define the Wishart process $(\mathfrak{W}_t)_{t\geq 0}$ as a $\mathcal{S}_{++}^d$-valued generalization of the well-known Cox--Ingersoll--Ross process through the SDE
\begin{equation}\label{eq:Wishart.process}
\begin{aligned}
 \, \rd \mathfrak{W}_t
& \leqdef 2 \, (\alpha \Sigma - \mathfrak{W}_t) \, \rd t + \sqrt{2} \, (\mathfrak{W}_t^{1/2} \, \rd \mathfrak{B}_t \Sigma^{1/2} + \Sigma^{1/2} \, \rd \mathfrak{B}_t^{\top} \mathfrak{W}_t^{1/2}), \qquad \mathfrak{W}_0 \leqdef W, \\
\end{aligned}
\end{equation}
where $W, \Sigma\in \mathcal{S}_{++}^d$ are constant matrices, $\alpha > d-1$ is a scalar, and $(\mathfrak{B}_t)_{t\geq 0}$ is a $d\times d$ matrix of independent standard Brownian motions.

The explicit expression for the extended generator of $(\mathfrak{W}_t)_{t\geq 0}$, denoted $\mathcal{A}^{\mathcal{W}}$, is derived in Proposition~\ref{prop:generator} below. This result appears without proof in Eq.~(5.12) of \citet{Bru1991}, under a different parametrization. It also follows from the much more general result of \citet[Theorems~2.4 and~2.6]{CuchieroFilipovicMayerhoferTeichmann2011}, by taking $a = 2\Sigma$, $b = 2\alpha\Sigma$, $B(S) = -2S$, $c = 0$, $\gamma = 0_{d\times d}$, and $m(\cdot) = \mu(\cdot) = 0$, with the diffusion matrix parameter denoted here by $a$ to avoid confusion with the shape parameter $\alpha$. A short and self-contained proof is included in Section~\ref{sec:proofs.main.results} for completeness.

\begin{proposition}\label{prop:generator}
For any $f\in C^2(\mathcal{S}_{++}^d)$, the extended generator of $(\mathfrak{W}_t)_{t\geq 0}$ is given by
\begin{equation}\label{eq:Wishart.process.generator}
\mathcal{A}^{\mathcal{W}}f(S) = 2 \, \tr\{(\alpha \Sigma - S) \nabla f(S)\} + 4 \, \tr\{S \nabla \Sigma \nabla f(S)\}, \qquad S\in \mathcal{S}_{++}^d,
\end{equation}
where $\nabla \leqdef (\frac{1}{2} (1 + \delta_{ij}) \partial / \partial S_{ij})_{1\leq i,j \leq d}$ denotes the $d\times d$ symmetric gradient, and $\delta_{ij}$ denotes the Kronecker delta.
\end{proposition}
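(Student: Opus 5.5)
We apply It\^o's formula to $f(\mathfrak{W}_t)$, viewing $f$ as a function of the $d(d+1)/2$ independent coordinates $\vecp(S)$, and read off the drift. Since $\mathfrak{W}_t$ stays in $\mathcal{S}_{++}^d$ (for $\alpha>d-1$ and $W\in\mathcal{S}_{++}^d$, as in \citet{Bru1991}) and $f\in C^2(\mathcal{S}_{++}^d)$, It\^o's formula applies up to a localizing sequence of stopping times, e.g.\ exit times from compact subsets of the cone. Hence $f(\mathfrak{W}_t)-f(W)-\int_0^t g(\mathfrak{W}_s)\,\rd s$ is a local martingale, where $g$ is the drift, and it remains to show that $g$ equals the right-hand side of \eqref{eq:Wishart.process.generator}.

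\emph{First-order term.} With the symmetric gradient $\nabla$, we have $\sum_{i\le j}\partial f/\partial S_{ij}\,\rd S_{ij}=\sum_{i,j}(\nabla f)_{ij}\,\rd S_{ij}=\tr\{\nabla f(S)\,\rd S\}$. The factor $\tfrac12(1+\delta_{ij})$ is exactly what splits the off-diagonal derivative evenly between $(i,j)$ and $(j,i)$. Substituting the drift $2(\alpha\Sigma-S)\,\rd t$ gives $2\tr\{(\alpha\Sigma-S)\nabla f(S)\}$.

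\emph{Second-order term.} We compute the quadratic covariations of the entries. Write $\rd M=\sqrt2\,(S^{1/2}\rd\mathfrak{B}\,\Sigma^{1/2}+\Sigma^{1/2}\rd\mathfrak{B}^\top S^{1/2})$ and use $\rd\mathfrak{B}_{ab}\,\rd\mathfrak{B}_{ce}=\delta_{ac}\delta_{be}\,\rd t$. Expanding the four products gives the standard Wishart covariation
\[
\rd\langle S_{ij},S_{kl}\rangle_t = 2\,(S_{ik}\Sigma_{jl}+S_{il}\Sigma_{jk}+S_{jk}\Sigma_{il}+S_{jl}\Sigma_{ik})\,\rd t .
\]
The two cross terms, such as $S^{1/2}\rd\mathfrak{B}\,\Sigma^{1/2}$ paired with $\Sigma^{1/2}\rd\mathfrak{B}^\top S^{1/2}$, produce $(S^{1/2}\Sigma^{1/2})_{il}(\Sigma^{1/2}S^{1/2})_{kj}$-type terms; these must combine with the others after symmetrization. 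This needs care, and is the main obstacle. Rather than tracking indices, we will write the It\^o correction coordinate-free: for a symmetric direction $H=\nabla f$, set $\tfrac12\sum\partial^2 f\,\rd\langle\cdot,\cdot\rangle=\tfrac12\,D^2f(S)[\rd M,\rd M]$. We then use the identity $\EE$-free bilinear computation $\tr(A\,\rd M)\tr(B\,\rd M)=2\{2\tr(AS B\Sigma)+2\tr(A\Sigma B S)\}\rd t$ for symmetric $A,B$. To get this, expand $\tr(A\,\rd M)=2\sqrt2\,\tr(\Sigma^{1/2}AS^{1/2}\rd\mathfrak{B})$ using symmetry of $A$, so that the product equals $8\,\tr(\Sigma^{1/2}AS^{1/2}(\Sigma^{1/2}BS^{1/2})^\top)\,\rd t=8\tr(AS B\Sigma)\,\rd t$. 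Note that this symmetric-$A$ trick actually removes the cross terms, so the covariation reduces to $\tr(A\,\rd M)\tr(B\,\rd M)=8\tr(ASB\Sigma)\,\rd t$.

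\emph{Assembling.} Writing $D^2f(S)[\rd M,\rd M]$ in terms of second derivatives with symmetric gradients, the It\^o term becomes $\tfrac12\cdot 8\sum (\nabla)_{ij}(\nabla)_{kl}f\,S_{jk}\Sigma_{li}$, i.e.\ $4\tr\{S\nabla\Sigma\nabla f(S)\}$. Here the operator ordering convention means $\nabla$ acts on $f$ only, and $S$ is treated as constant, since the term comes from second derivatives. The care needed is to check that the $\tfrac12(1+\delta_{ij})$ weights in $\nabla$ exactly account for the off-diagonal double counting when passing from $\vecp$ coordinates to full matrix sums. We will verify this by testing on $f(S)=\tr(AS)\tr(BS)$, for which both sides equal $8\tr(ASB\Sigma)$ after symmetrization. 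By polarization and the fact that such quadratics span all second-order Taylor terms, the identity holds in general. Finally, the $d=1$ case, $4\sigma S f''$, matches the CIR/gamma generator and serves as a consistency check.
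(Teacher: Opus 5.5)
\textbf{Verdict.} Your computation is correct. It follows the paper's strategy: It\^o's formula, with the first-order term giving the drift and the It\^o correction obtained by contracting the quadratic covariation against $\nabla_{ij}\nabla_{k\ell}f$. The difference is only in how you do the contraction.

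\textbf{Comparison with the paper.} The paper takes the entrywise covariation $2(S_{ik}\Sigma_{j\ell}+S_{i\ell}\Sigma_{jk}+S_{jk}\Sigma_{i\ell}+S_{j\ell}\Sigma_{ik})\,\rd t$, which is the same formula you state. It then notes that, by symmetry of $\nabla$, each of the four terms contracts to $\tr\{S\nabla\Sigma\nabla f(S)\}$.

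You instead use $\tr(A\,\rd M)\tr(B\,\rd M)=8\tr(ASB\Sigma)\,\rd t$ for symmetric $A,B$, and pass to general $f$ by linearity in $D^2f(S)$, testing on $f(S)=\tr(AS)\tr(BS)$. This is valid because tensors symmetric in $(i,j)$ and in $(k,\ell)$ are spanned by the products $A_{ij}B_{k\ell}$ with $A,B\in\mathcal{S}^d$. It also checks the $\tfrac12(1+\delta_{ij})$ convention automatically through $\nabla\tr(AS)=A$, with no $\vecp$ bookkeeping. The paper's route is shorter and fully explicit.

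Your two expressions for $\tr(A\,\rd M)\tr(B\,\rd M)$ agree, since $\tr(A\Sigma BS)=\tr(ASB\Sigma)$ by transposition and cyclicity. The cross terms are absorbed into the factor $2\sqrt{2}$ in $\tr(A\,\rd M)=2\sqrt{2}\,\tr(\Sigma^{1/2}AS^{1/2}\rd\mathfrak{B})$; they are not removed.

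\textbf{Two side claims to correct.} Your final argument does not rely on either, but both are wrong as stated.

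\begin{itemize}
\item \emph{The cross terms are not an obstacle.} Since $\Sigma^{1/2}\rd\mathfrak{B}^{\top}S^{1/2}=(S^{1/2}\rd\mathfrak{B}\,\Sigma^{1/2})^{\top}$, pairing the $(i,j)$ entry of one half of $\rd M$ with the $(k,\ell)$ entry of the other gives $S_{i\ell}\Sigma_{jk}\,\rd t$. This is a product of entries of $S$ and $\Sigma$, not of $S^{1/2}\Sigma^{1/2}$. The covariation formula you already wrote down finishes the proof by direct contraction, which is exactly the paper's proof.

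\item \emph{The process need not stay in $\mathcal{S}_{++}^d$ for every $\alpha>d-1$.} This holds for $\alpha\geq d+1$. For $d-1<\alpha<d+1$ the boundary $\partial\mathcal{S}_{+}^d$ can be reached. For $d=1$ this is the failure of the Feller condition $\alpha\geq 2$ for $\rd W=2(\alpha\Sigma-W)\,\rd t+2\sqrt{2\Sigma W}\,\rd B$. In that range your exit times from compacts need not increase to infinity, so the local-martingale property holds only up to the boundary hitting time. The paper treats the process as $\mathcal{S}_{++}^d$-valued and does the It\^o computation formally. So this affects your justification of the localization, not the generator formula itself.
\end{itemize}
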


According to Theorem~4.20 of \citet{Pfaffel2012}, under the assumption that $\alpha\in [d + 1, \infty)\cap \N$,
\begin{equation}\label{eq:Theorem.4.20.Pfaffel.2012}
\mathfrak{W}_t \mid \{\mathfrak{W}_0 = W\}\sim \mathcal{W}_d(\alpha, \Sigma_t, e^{-2t} \Sigma_t^{-1} W), \qquad \Sigma_t \leqdef (1 - e^{-2t}) \Sigma.
\end{equation}
The strengthened assumption on $\alpha$ ensures positive definiteness of the Wishart process.

The next proposition extends \eqref{eq:Theorem.4.20.Pfaffel.2012} to the full range $\alpha\in (d - 1, \infty)$. It identifies the transition law of the matrix Wishart process started from any $W\in \mathcal{S}_{+}^d$, gives its limiting law, and records the semigroup representation and invariance of $\mathcal{W}_d(\alpha, \Sigma)$ needed below.

\begin{proposition}\label{prop:extension.Luk.1994.Lemma.2.4}
Let $\alpha\in (d - 1, \infty)$ and $\Sigma\in \mathcal{S}_{++}^d$ be given, and set $\Sigma_t \leqdef (1 - e^{-2t})\Sigma$. For the matrix Wishart process defined in \eqref{eq:Wishart.process}, with transition kernel on $\mathcal{S}_{+}^d$, for every initial state $W\in \mathcal{S}_{+}^d$ we have
\begin{align}
\mathfrak{W}_t \mid \{\mathfrak{W}_0 = W\} &\sim \mathcal{W}_d(\alpha, \Sigma_t, e^{-2t} \Sigma_t^{-1} W), \qquad t > 0, \label{eq:W.t.conditional} \\
\mathfrak{W}_t \mid \{\mathfrak{W}_0 = W\} &\stackrel{\mathrm{law}}{\longrightarrow} \mathfrak{W}_{\infty} \sim \mathcal{W}_d(\alpha, \Sigma), \qquad t\to \infty. \label{eq:W.infty}
\end{align}
In particular, let $(\mathcal{P}^{\mathcal{W}}_t)_{t\geq 0}$ be the transition semigroup with kernel $P_t(W, \, \rd Y)$ on $\mathcal{S}_{+}^d$, so that, for every bounded Borel measurable function $h:\mathcal{S}_{+}^d\to\R$,
\begin{equation}\label{semiformula}
(\mathcal{P}^{\mathcal{W}}_t h)(W) \leqdef \int_{\mathcal{S}_{+}^d} h(Y) \, P_t(W, \, \rd Y) = \EE\big[h(\mathfrak{W}_t)\mid \mathfrak{W}_0 = W\big].
\end{equation}
When $h$ is only defined on $\mathcal{S}_{++}^d$, we use the same notation for $t > 0$ and $W\in \mathcal{S}_{+}^d$ by choosing any bounded Borel extension of $h$ to $\mathcal{S}_{+}^d$ since $P_t(W, \partial\mathcal{S}_{+}^d) = 0$. For $\mathfrak{S}_t\sim \smash{\mathcal{W}_d(\alpha, I_d, e^{-2t} \Sigma_t^{-1/2} W \Sigma_t^{-1/2})}$, the above shows
\begin{equation}\label{eq:rep}
\mathcal{P}_t^{\mathcal{W}} h(W) = \EE\big[h(\Sigma_t^{1/2} \mathfrak{S}_t \Sigma_t^{1/2})\big].
\end{equation}
Moreover, given a probability measure $\mu$ on $\mathcal{S}_{+}^d$, the push-forward measure $\mu \mathcal{P}^{\mathcal{W}}_t$ is defined by
\[
(\mu \mathcal{P}^{\mathcal{W}}_t)(A) \leqdef \int_{\mathcal{S}_{+}^d} P_t(W, A) \, \mu(\rd W), \qquad A\subseteq \mathcal{S}_{+}^d \text{ Borel}.
\]
Hence, for $\xi \leqdef \mathcal{W}_d(\alpha, \Sigma)$, we have the invariance
\begin{equation}\label{eq:invariance}
\xi \mathcal{P}^{\mathcal{W}}_t = \xi.
\end{equation}
\end{proposition}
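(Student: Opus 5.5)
The plan is to identify the law of $\mathfrak{W}_t$ through its Laplace transform. The transform is obtained from an exponential-affine solution of the backward Kolmogorov equation associated with the generator $\mathcal{A}^{\mathcal{W}}$ of Proposition~\ref{prop:generator}. Throughout, the well-posedness of \eqref{eq:Wishart.process} on $\mathcal{S}_{+}^d$ for the full range $\alpha > d-1$ is taken from \citet[Theorems~2.4 and~2.6]{CuchieroFilipovicMayerhoferTeichmann2011}. With the parameters listed before Proposition~\ref{prop:generator}, the drift condition there reads $b = 2\alpha\Sigma \succeq (d-1)a = 2(d-1)\Sigma$. This yields a unique-in-law Markov (affine) process with the stated generator, which is what makes the transition kernel $P_t(W,\rd Y)$ on $\mathcal{S}_{+}^d$ meaningful.

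\textbf{Step 1 (Laplace transform of $\mathfrak{W}_t$).} Fix $T\in\mathcal{S}_{+}^d$ and consider the candidate
\[
F(t,W) \leqdef |I_d + 2T\Sigma_t|^{-\alpha/2}\,\etr\{-e^{-2t}\,T(I_d+2T\Sigma_t)^{-1}W\}.
\]
I will verify by direct differentiation that $\partial_t F = \mathcal{A}^{\mathcal{W}}F$ with $F(0,\cdot)=\etr(-T\,\cdot)$. Writing $F = \exp\{-\phi(t) - \tr(\psi(t)W)\}$, we have $\nabla F = -\psi F$ and $\nabla\Sigma\nabla F = \psi\Sigma\psi F$. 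The PDE therefore reduces to the pair of matrix Riccati equations
\[
\dot\psi = -2\psi - 4\psi\Sigma\psi, \qquad \dot\phi = 2\alpha\,\tr(\Sigma\psi),
\]
and one checks that $\psi(t) = e^{-2t}T(I_d+2T\Sigma_t)^{-1}$ and $\phi(t) = \tfrac{\alpha}{2}\log|I_d+2T\Sigma_t|$ solve them.

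Next, apply It\^o's formula to $s\mapsto F(t-s,\mathfrak{W}_s)$ on $[0,t]$. Since $\psi(t)\in\mathcal{S}_{+}^d$ and $\mathfrak{W}_s\in\mathcal{S}_{+}^d$, we have $0 < F\le 1$, so the resulting local martingale is bounded and hence a true martingale. This gives $\EE[\etr(-T\mathfrak{W}_t)\mid\mathfrak{W}_0=W] = F(t,W)$.

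Comparing with Remark~\ref{rem:Wishart.mgf} for $\mathcal{W}_d(\alpha,\Sigma_t,\Theta)$ with $\Theta = e^{-2t}\Sigma_t^{-1}W$, one has $T\Sigma_t(I_d+2T\Sigma_t)^{-1}\Theta = \psi(t)W$, so the two transforms coincide. Laplace transforms on $\mathcal{S}_{+}^d$ determine the law, which yields \eqref{eq:W.t.conditional}. Because $\alpha > d-1$, this noncentral Wishart law has a density on $\mathcal{S}_{++}^d$, so $P_t(W,\partial\mathcal{S}_{+}^d)=0$; this justifies the extension convention for $h$ defined only on $\mathcal{S}_{++}^d$.

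\textbf{Step 2 (limit, representation and invariance).} For the limit, as $t\to\infty$ we have $\psi(t)\to 0$ and $\Sigma_t\to\Sigma$, so $F(t,W)\to|I_d+2T\Sigma|^{-\alpha/2}$. The continuity theorem for Laplace transforms of measures on the cone then gives \eqref{eq:W.infty}.

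For \eqref{eq:rep}, I will show via Laplace transforms that $\Sigma_t^{1/2}\mathfrak{S}_t\Sigma_t^{1/2}\sim\mathcal{W}_d(\alpha,\Sigma_t,e^{-2t}\Sigma_t^{-1}W)$. The transform of $\Sigma_t^{1/2}\mathfrak{S}_t\Sigma_t^{1/2}$ is the transform of $\mathfrak{S}_t$ evaluated at $\Sigma_t^{1/2}T\Sigma_t^{1/2}$. The identities
\[
|I_d+2\Sigma_t^{1/2}T\Sigma_t^{1/2}| = |I_d+2T\Sigma_t|
\]
and cyclic invariance of the trace then match it with the transform in Step~1.

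For the invariance \eqref{eq:invariance}, I will integrate $F(t,W)$ against $\xi(\rd W)$. Using Remark~\ref{rem:Wishart.mgf} with $\Theta=0$ and argument $\psi(t)$, this gives
\[
|I_d+2T\Sigma_t|^{-\alpha/2}\,|I_d+2\psi(t)\Sigma|^{-\alpha/2}.
\]
The key identity is
\[
(I_d+2T\Sigma_t)\bigl(I_d+2\psi(t)\Sigma\bigr) = I_d+2T\Sigma_t+2e^{-2t}T\Sigma = I_d+2T\Sigma,
\]
so the product equals $|I_d+2T\Sigma|^{-\alpha/2}$, which is the Laplace transform of $\xi$. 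Uniqueness of Laplace transforms then gives $\xi\mathcal{P}_t^{\mathcal{W}}=\xi$.

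The main obstacle is the low-shape regime $\alpha\in(d-1,d+1)$, which is exactly where \eqref{eq:Theorem.4.20.Pfaffel.2012} does not apply. There one must be sure that the process exists on $\mathcal{S}_{+}^d$ with the generator of Proposition~\ref{prop:generator} despite the square root $\mathfrak{W}_t^{1/2}$ degenerating at the boundary. One must also ensure that It\^o's formula can be applied to the smooth function $F$, defined on a neighbourhood of $\mathcal{S}_{+}^d$, up to the boundary. I would handle both points by relying on the affine-process existence and uniqueness result cited above rather than on pathwise SDE theory, and by noting that $F$ is entire in $W$, so no boundary regularity issue arises for the test function itself.
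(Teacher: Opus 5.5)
\textbf{There is a concrete error in your Riccati solution, and several later steps rely on it.} You take $\psi(t)=e^{-2t}T(I_d+2T\Sigma_t)^{-1}$, but the factors are in the wrong order. Write $R_t\leqdef I_d+2T\Sigma_t$.
\begin{itemize}
\item The matrix $TR_t^{-1}$ is not symmetric in general. Take $d=2$, $T=\mathrm{diag}(1,0)$ and $\Sigma$ with a nonzero off-diagonal entry: then $TR_t^{-1}$ has a nonzero $(1,2)$ entry and a zero $(2,1)$ entry, and its symmetric part is indefinite. So $\psi(t)\in\mathcal{S}_{+}^d$ is false, and so is the bound $0<F\le 1$ that your martingale argument needs.
\item Differentiating gives $\dot\psi=-2\psi-4e^{-4t}TR_t^{-1}T\Sigma R_t^{-1}$, whereas $4\psi\Sigma\psi=4e^{-4t}TR_t^{-1}\Sigma TR_t^{-1}$. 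The Riccati equation therefore fails unless $T\Sigma=\Sigma T$, and the equation $\dot\phi=2\alpha\tr(\Sigma\psi)$ fails for the same reason.
\item The identity $T\Sigma_tR_t^{-1}\Theta=\psi(t)W$ is false. Its left side equals $e^{-2t}R_t^{-1}TW$, because $T\Sigma_t$ commutes with $R_t^{-1}$.
\item Your invariance factorization $R_t(I_d+2\psi(t)\Sigma)=I_d+2T\Sigma$ requires $R_t\psi(t)=e^{-2t}T$, i.e.\ $R_tT=TR_t$, which does not hold in general.
\end{itemize}
The correct choice, which is the one the paper uses, is
\[
\psi(t)=e^{-2t}R_t^{-1}T=e^{-2t}T(I_d+2\Sigma_tT)^{-1}=e^{-2t}T^{1/2}(I_d+2T^{1/2}\Sigma_tT^{1/2})^{-1}T^{1/2}.
\]
This lies in $\mathcal{S}_{+}^d$ and solves both Riccati equations. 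It also makes every identity above hold verbatim, including the cyclic-trace matching in your proof of \eqref{eq:rep}.

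\textbf{With this correction, your argument is essentially the paper's.} Both use the affine structure from Cuchiero et al., the explicit Riccati solution, identification with $\mathcal{W}_d(\alpha,\Sigma_t,e^{-2t}\Sigma_t^{-1}W)$ by Laplace transforms, the continuity theorem for the limit, rescaling for \eqref{eq:rep}, and the determinant identity for invariance.

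The one real difference is how you get the transform formula. You verify it yourself, via the backward equation, It\^o's formula for $s\mapsto F(t-s,\mathfrak{W}_s)$, and boundedness of the resulting local martingale. The paper instead quotes the affine transform formula of their Theorem~2.4. Your route is sound: $F$ is smooth on all of $\mathcal{S}^d$, and their Theorem~2.6 supplies the semimartingale representation up to the boundary. It also gives uniqueness of the one-dimensional marginals as a by-product. However, you still need the same reference for existence and for the Markov kernel, so what it buys is a self-contained computation rather than weaker inputs.
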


This leads to the following Stein characterization for the Wishart distribution.

\begin{corollary}[Stein characterization]\label{cor:Stein.Wishart.generator.step.1}
Let $\alpha\in (d-1,\infty)$ and $\Sigma\in \mathcal{S}_{++}^d$ be given. Then
\[
\mathfrak{W}\sim \mathcal{W}_d(\alpha, \Sigma) \qquad \Leftrightarrow \qquad \EE\big[\mathcal{A}^{\mathcal{W}} f(\mathfrak{W})\big] = 0 ~~\forall f\in C_{\mathcal{A}^{\mathcal{W}}}^2(\mathcal{S}_{++}^d),
\]
where, for $\mathfrak{W}_{\infty}\sim \mathcal{W}_d(\alpha, \Sigma)$,
\begin{equation}\label{eq:C.2.A.W}
\begin{aligned}
C_{\mathcal{A}^{\mathcal{W}}}^2(\mathcal{S}_{++}^d)
&\leqdef \Big\{f\in C^2(\mathcal{S}_{++}^d) :
\EE[|\tr\{\Sigma \nabla f(\mathfrak{W}_{\infty})\}|] < \infty, ~\EE[|\tr\{\mathfrak{W}_{\infty} \nabla f(\mathfrak{W}_{\infty})\}|] < \infty, \Big. \\[-2mm]
&\hspace{43.7mm} \Big. \EE[|\tr\{\mathfrak{W}_{\infty} \nabla \Sigma \nabla f(\mathfrak{W}_{\infty})\}|] < \infty, ~\EE[|f(\mathfrak{W}_{\infty})|] < \infty, \Big. \\[-2mm]
&\hspace{68.4mm} \Big.\text{and } \EE\big[\|\mathfrak{W}_{\infty}^{1/2}\nabla f(\mathfrak{W}_{\infty})\Sigma^{1/2}\|_F^2\big] < \infty~\Big\}.
\end{aligned}
\end{equation}
\end{corollary}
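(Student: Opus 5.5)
We prove the two implications separately. The forward direction ($\Rightarrow$) is an integration-by-parts identity against the Wishart density. The converse ($\Leftarrow$) uses the Laplace transform, exploiting the fact that exponential test functions lie in $C_{\mathcal{A}^{\mathcal{W}}}^2(\mathcal{S}_{++}^d)$.

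\emph{Forward direction.} Let $\mathfrak{W}\sim\mathcal{W}_d(\alpha,\Sigma)$ with density $p=f^{\mathcal{W}}_{\alpha,\Sigma,0}$. A direct computation with $\nabla|X| = |X|X^{-1}$ gives
\[
\nabla p(X) = \Big\{\big(\tfrac{\alpha}{2}-\tfrac{d+1}{2}\big)X^{-1} - \tfrac12\Sigma^{-1}\Big\}p(X).
\]
We then write
\[
\mathcal{A}^{\mathcal{W}}f(X)\,p(X) = 4\sum_{i,j}\nabla_{ij}\big\{p(X)(X\Sigma\nabla f(X))_{ji}\big\}\cdot c_{ij},
\]
that is, we verify that $\mathcal{A}^{\mathcal{W}}f\cdot p$ is a divergence $\mathrm{div}(p\,X\Sigma\nabla f)$ up to the constant factor. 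This uses the matrix identity $\sum_{j}\nabla_{ij}X_{jk} = \frac{d+1}{2}\delta_{ik}$ for the symmetric gradient: it generates the term $2\alpha\,\tr(\Sigma\nabla f)$ together with the $-(d+1)$ correction, which cancels against the $X^{-1}$ term of $\nabla p$. The $-\tfrac12\Sigma^{-1}$ term of $\nabla p$ produces $-2\tr(X\nabla f)$.

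We integrate over the truncated region $K_\varepsilon = \{\varepsilon I_d < X < \varepsilon^{-1}I_d\}$ and use the divergence theorem. The boundary terms must vanish as $\varepsilon\to0$. This is the main obstacle. We do not have pointwise control of $f$, only the moment conditions in \eqref{eq:C.2.A.W}, so we average over $\varepsilon$ (a co-area argument). Since $p\,X\Sigma\nabla f$ and the terms of $\mathcal{A}^{\mathcal{W}}f\cdot p$ are integrable, the integral of the boundary flux over a range of levels is controlled by the integral of $\|p\,X\Sigma\nabla f\|$ over a thin shell. This tends to zero along a suitable sequence $\varepsilon_n\to0$. Near $|X|=0$ we also use the vanishing of $|X|^{(\alpha-d+1)/2}p$-type weights, where $\alpha>d-1$ is needed.

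As an alternative route, which I would try first because it avoids the boundary analysis, we use the semigroup. For $f$ in the class, It\^o's formula gives that $f(\mathfrak{W}_t)-f(W)-\int_0^t\mathcal{A}^{\mathcal{W}}f(\mathfrak{W}_s)\,\rd s$ is a local martingale with quadratic variation
\[
8\int_0^t\|\mathfrak{W}_s^{1/2}\nabla f\,\Sigma^{1/2}\|_F^2\,\rd s .
\]
Start the process from $\mathfrak{W}_0\sim\xi$. By the invariance \eqref{eq:invariance}, $\mathfrak{W}_s\sim\xi$ for every $s$. The last condition in \eqref{eq:C.2.A.W} then makes the local martingale a true $L^2$ martingale, while the remaining conditions make $f(\mathfrak{W}_t)$ and $\mathcal{A}^{\mathcal{W}}f(\mathfrak{W}_s)$ integrable. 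Taking expectations gives
\[
0 = \EE f(\mathfrak{W}_t)-\EE f(\mathfrak{W}_0) = t\,\EE[\mathcal{A}^{\mathcal{W}}f(\mathfrak{W}_\infty)].
\]
This is exactly why the $L^2$ condition appears in the class, so this route seems intended.

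\emph{Converse.} Take $f_T(X)=\etr(-TX)$ for $T\in\mathcal{S}_{+}^d$. Then $\nabla f_T=-Tf_T$, and all conditions in \eqref{eq:C.2.A.W} hold because Wishart moments are finite. We get
\[
\mathcal{A}^{\mathcal{W}}f_T(X) = \big\{-2\alpha\tr(\Sigma T)+2\tr(XT)+4\tr(XT\Sigma T)\big\}f_T(X).
\]
Set $\phi(T)=\EE[\etr(-T\mathfrak{W})]$, which is finite on $\mathcal{S}_{+}^d$. Assume $\mathfrak{W}$ has a first moment; this is implicit, since the $f_T$ must be admissible test functions for $\mathfrak{W}$, and if not one restricts to $T$ in the interior. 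Differentiating under the expectation turns the identity $\EE[\mathcal{A}^{\mathcal{W}}f_T(\mathfrak{W})]=0$ into the first-order PDE
\[
-2\alpha\tr(\Sigma T)\phi(T) - 2\tr\{(T+2T\Sigma T)\nabla\phi(T)\}=0,\qquad \phi(0)=1.
\]
This is solved along the characteristics $T(s)$ of the matrix Riccati flow $\dot T = -(T+2T\Sigma T)$, or more simply by observing that $\log\phi$ is uniquely determined along rays by a linear ODE. The unique solution is $|I_d+2T\Sigma|^{-\alpha/2}$, which by Remark~\ref{rem:Wishart.mgf} is the Wishart Laplace transform. Uniqueness of Laplace transforms on $\mathcal{S}_{+}^d$ then gives $\mathfrak{W}\sim\mathcal{W}_d(\alpha,\Sigma)$. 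The delicate point in this direction is the ODE uniqueness: the characteristics flow into $T=0$, so we should integrate from $0$, where $\phi$ is continuous and $\phi(0)=1$.
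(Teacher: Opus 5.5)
Your preferred forward route (start the Wishart process from its invariant law, use the last integrability condition to upgrade the local martingale to an $L^2$ martingale, and take expectations using stationarity) and your converse are essentially the paper's proof, so the argument is correct; the converse likewise uses exponential test functions $\mathrm{etr}(-TX)$ with $T\in\mathcal{S}_{++}^d$, the resulting first-order equation for the Laplace transform, and integration of its logarithm along the Riccati characteristics $\dot T=-(T+2T\Sigma T)$ into $T=0$. The one thing to drop is the aside about solving ``along rays'': the vector field $T+2T\Sigma T$ is not radial, so the equation does not reduce to an ODE on rays, and you should instead use the explicit characteristic $T_t^{-1}=e^{t}T^{-1}+2(e^{t}-1)\Sigma$, as the paper does.
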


\begin{remark}\label{rem:Wishart.Stein.operator.d.1}
When $d = 1$, the space $\mathcal{S}_{++}^1$ is naturally identified with $(0, \infty)$ and the scale matrix $\Sigma$ is a positive scalar. Under this identification, $\mathfrak{W}\sim \mathcal{W}_1(\alpha, \Sigma) \equiv \Gamma(r = \alpha/2, \lambda = 1/(2\Sigma))$, where the gamma distribution is parametrized as in \eqref{Steinchargamma}. Moreover, \eqref{eq:Wishart.process.generator} becomes
\[
\mathcal{A}^{\mathcal{W}} f(x)
= 2(\alpha \Sigma - x) f'(x) + 4 \, \Sigma x f''(x)
= 4 \, \Sigma \left\{x f''(x) + (r - \lambda x) f'(x)\right\}.
\]
Thus Corollary~\ref{cor:Stein.Wishart.generator.step.1} reduces, up to the positive multiplicative factor $4 \, \Sigma$ and a slightly more restrictive class of test functions, to the gamma Stein characterization \eqref{Steinchargamma}.
\end{remark}

\begin{remark}\label{rem:Stein.Haff.identity}
Aside from the different class of test functions, the forward implication in Corollary~\ref{cor:Stein.Wishart.generator.step.1} is a special case of the classical Stein--Haff identity for the Wishart distribution \citep[Theorem~2.1]{Haff1979WishartIdentity}. To compare notations, set $p = d$ and $k = \alpha$. If $\mathfrak{W}\sim \mathcal{W}_d(\alpha, \Sigma)$, then, under the regularity, integrability and boundary assumptions in that theorem, for a scalar function $h:\mathcal{S}_{++}^d\to\R$ and a matrix-valued function $F:\mathcal{S}_{++}^d\to\R^{d\times d}$,
\begin{equation}\label{eq:Stein.Haff.identity}
\begin{aligned}
\EE[h(\mathfrak{W})\tr\{F(\mathfrak{W})\Sigma^{-1}\}]
&= 2 \, \EE[h(\mathfrak{W})(\mathrm{div}^*F_{(1/2)})(\mathfrak{W})] + 2 \, \EE\left[\tr\left\{\frac{\partial h}{\partial S}(\mathfrak{W})F_{(1/2)}(\mathfrak{W})\right\}\right] \\
&\qquad + \{\alpha - (d + 1)\} \, \EE[h(\mathfrak{W})\tr\{\mathfrak{W}^{-1}F(\mathfrak{W})\}].
\end{aligned}
\end{equation}
Here, $F_{(1/2)}(S)$ denotes the matrix obtained by multiplying the off-diagonal entries of $F(S)$ by $1/2$, and $\mathrm{div}^*$ denotes the matrix divergence, namely
\[
(\mathrm{div}^*F)(S) \leqdef \sum_{i,j = 1}^d \frac{\partial F_{ij}(S)}{\partial S_{ij}}.
\]
Taking $h = 1$ in \eqref{eq:Stein.Haff.identity} gives
\begin{equation}\label{eq:Stein.Haff.vector.identity}
\EE\left[2 \, (\mathrm{div}^*F_{(1/2)})(\mathfrak{W}) + \{\alpha - (d + 1)\} \, \tr\{\mathfrak{W}^{-1}F(\mathfrak{W})\} - \tr\{F(\mathfrak{W})\Sigma^{-1}\}\right] = 0.
\end{equation}
For a scalar function $f\in C^2(\mathcal{S}_{++}^d)$ for which this specialization is justified, take
\[
F(S) = 2 S \nabla f(S) \Sigma.
\]
Using the symmetric-gradient convention from Proposition~\ref{prop:generator}, so that $\nabla_{ij}S_{k\ell} = (\delta_{ik}\delta_{j\ell} + \delta_{i\ell}\delta_{jk})/2$, and using the symmetries of $\Sigma$ and $\nabla$, one has
\[
\begin{aligned}
4 \, \mathrm{div}^*\{(S \nabla f(S) \Sigma)_{(1/2)}\}
&= 4 \, \sum_{i,j,k = 1}^d \nabla_{ij}\{S_{ik}(\nabla f(S)\Sigma)_{kj}\} \\
&= 4 \, \sum_{i,j,k = 1}^d \frac{\delta_{jk} + \delta_{ij}\delta_{ik}}{2}(\nabla f(S)\Sigma)_{kj} + 4 \, \sum_{i,j,k, \ell = 1}^d S_{ik}\Sigma_{\ell j}\nabla_{ij}\nabla_{k\ell} f(S) \\
&= 2 \, (d + 1) \, \tr\{\Sigma \nabla f(S)\} + 4 \, \tr\{S \nabla \Sigma \nabla f(S)\}.
\end{aligned}
\]
Substituting $F(S) = 2S\nabla f(S)\Sigma$ into \eqref{eq:Stein.Haff.vector.identity} therefore yields
\[
\EE\left[2 \, \tr\{(\alpha \Sigma - \mathfrak{W})\nabla f(\mathfrak{W})\} + 4 \, \tr\{\mathfrak{W}\nabla\Sigma\nabla f(\mathfrak{W})\}\right] = 0,
\]
that is,
\[
\EE\big[\mathcal{A}^{\mathcal{W}}f(\mathfrak{W})\big] = 0.
\]
Thus the forward implication in Corollary~\ref{cor:Stein.Wishart.generator.step.1} is recovered from the Stein--Haff identity by restricting Haff's arbitrary matrix field to the gradient-type field $F(S) = 2S\nabla f(S)\Sigma$. Conversely, the full Stein--Haff identity is not a formal consequence of the forward implication in Corollary~\ref{cor:Stein.Wishart.generator.step.1}, since it applies to arbitrary matrix fields $F(S)$ and scalar multipliers $h(S)$, whereas the generator identity tests only fields generated by scalar potentials $f$.
\end{remark}

The fundamental differential operator matrix appearing in the sequel is
\[
\mathcal{D} \leqdef \nabla_{\!X} \, \mathrm{adj}(I_d - 2\nabla_{\!X}),
\]
where $\nabla_{\!X}$ denotes the symmetric gradient with respect to the matrix variable $X$. This is the transfer operator behind Lemma~\ref{lem:Luk.Wishart.Lemma.2.5}: derivatives in the noncentrality parameter of the Wishart kernel are rewritten as applications of $\mathcal{D}$ to the test function. The seminorms built from $\mathcal{D}$, and later from its time-dependent analogue $\mathcal{D}_{t, \Sigma}$, are therefore the quantities that control the semigroup contraction estimate (Lemma~\ref{lem:contraction}), the absolute convergence of the Stein-solution representation (Theorem~\ref{thm:Stein.solutions.Wishart}), and the derivative bounds for the Stein solution (Theorem~\ref{thm:smoothness.estimates.wishart}).

The next lemma bounds the Wishart semigroup's deviation from its stationary limit by leveraging the test function's Lipschitz and $\mathcal{D}$-regularity. This decay rate is crucial to obtain well-defined solutions to the Wishart Stein equation, as it ensures the absolute convergence of the integral in the semigroup representation presented in Theorem~\ref{thm:Stein.solutions.Wishart} below.

\begin{lemma}\label{lem:contraction}
Let $\alpha > 3d-3$ and $\Sigma\in \mathcal{S}_{++}^d$ be given. Let $(\mathfrak{W}_t)_{t\geq 0}$ be the Wishart process defined in \eqref{eq:Wishart.process}, with transition semigroup $(\mathcal{P}^{\mathcal{W}}_t)_{t\geq 0}$ and stationary limiting distribution $\xi = \mathcal{W}_d(\alpha, \Sigma)$. For any given $t > 0$, let $\Sigma_t = (1 - e^{-2t})\Sigma$ and define
\[
h_t(X) \leqdef h(\Sigma_t^{1/2}X\Sigma_t^{1/2}), \qquad X\in \mathcal{S}_{++}^d,
\]
with $h:\mathcal{S}_{++}^d\to\R$ being bounded and Lipschitz. Assume that $h\in C^d(\mathcal{S}_{++}^d)$ and that for every differential monomial $\overline{\nabla}$ of order at most $d$ in the entries of $\nabla_{\!X}$, there exist constants $C_{\overline{\nabla}}, N_{\overline{\nabla}}\geq 0$ such that
\begin{equation}\label{eq:contraction.asump.h.t}
|\overline{\nabla}h(X)| \leq C_{\overline{\nabla}} (1 + \|X\|_F^{N_{\overline{\nabla}}}), \qquad X\in \mathcal{S}_{++}^d,
\end{equation}
and assume that
\begin{equation}\label{eq:def.M.t.D.h}
M_t^{\mathcal{D}}(h) \leqdef \sup_{X\in \mathcal{S}_{++}^d} \sup_{\substack{U\in \mathcal{S}^d\\ \|U\|_F = 1}}\big|\langle U, \mathcal{D}\rangle_F \, h_t(X)\big| < \infty.
\end{equation}
Then, for every $W\in \mathcal{S}_{++}^d$, we have
\begin{equation}\label{eq:Wishart.Holder.contraction}
\begin{aligned}
\big|(\mathcal{P}^{\mathcal{W}}_t h)(W)-\EE[h(\mathfrak{W}_{\infty})]\big|
&\leq \frac{e^{-2t}}{1 - e^{-2t}} \, M_t^{\mathcal{D}}(h) \, \|\Sigma^{-1/2}W\Sigma^{-1/2}\|_F \\
&\qquad + e^{-2t} [h]_1 \, \|\Sigma\|_2 \, \EE[\|\mathfrak{V}\|_F],
\end{aligned}
\end{equation}
where $\mathfrak{V}\sim \mathcal{W}_d(\alpha, I_d)$.
\end{lemma}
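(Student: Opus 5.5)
Using the representation \eqref{eq:rep} of Proposition~\ref{prop:extension.Luk.1994.Lemma.2.4}, I will split the deviation into two pieces by inserting the semigroup started at $0$:
\[
(\mathcal{P}^{\mathcal{W}}_t h)(W)-\EE[h(\mathfrak{W}_\infty)] = \big\{(\mathcal{P}^{\mathcal{W}}_t h)(W) - (\mathcal{P}^{\mathcal{W}}_t h)(0)\big\} + \big\{(\mathcal{P}^{\mathcal{W}}_t h)(0) - \EE[h(\mathfrak{W}_\infty)]\big\}.
\]
The second piece is elementary. Started at $0$, the law is central, $\mathfrak{W}_t\sim\mathcal{W}_d(\alpha,\Sigma_t)$, so it equals $\Sigma_t^{1/2}\mathfrak{V}\Sigma_t^{1/2}$ with $\mathfrak{V}\sim\mathcal{W}_d(\alpha,I_d)$. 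Likewise $\mathfrak{W}_\infty\stackrel{\mathrm{law}}{=}\Sigma^{1/2}\mathfrak{V}\Sigma^{1/2}$, and I couple the two through the same $\mathfrak{V}$. The difference of the two matrices is $e^{-2t}\Sigma^{1/2}\mathfrak{V}\Sigma^{1/2}$, since $\Sigma_t^{1/2}=(1-e^{-2t})^{1/2}\Sigma^{1/2}$ gives $\Sigma_t^{1/2}\mathfrak{V}\Sigma_t^{1/2}=(1-e^{-2t})\Sigma^{1/2}\mathfrak{V}\Sigma^{1/2}$. Its Frobenius norm is at most $e^{-2t}\|\Sigma\|_2\|\mathfrak{V}\|_F$. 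The Lipschitz property of $h$ then gives the second term of \eqref{eq:Wishart.Holder.contraction}.

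For the first piece, I write $(\mathcal{P}^{\mathcal{W}}_t h)(W)=(\mathcal{Q}_t h_t)(\Lambda)$ with $\Lambda=\Sigma_t^{-1/2}W\Sigma_t^{-1/2}$. Here $\mathcal{Q}_t g(\Lambda)=\EE[g(\mathfrak{Y})]$ and $\mathfrak{Y}\sim\mathcal{W}_d(\alpha,I_d,e^{-2t}\Lambda)$, consistent with \eqref{eq:Wishart.Q.operator.def}. The value at $W=0$ is $(\mathcal{Q}_t h_t)(0)$. I set $\phi(s)=(\mathcal{Q}_t h_t)(s\Lambda)$ for $s\in[0,1]$, so that $\phi(1)-\phi(0)=\int_0^1\langle \Lambda,\nabla_{\Lambda}(\mathcal{Q}_t h_t)(s\Lambda)\rangle_F\,\rd s$. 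On $\mathcal{S}^d$ the off-diagonal weighting of the symmetric gradient must be checked, so that the pairing is the directional derivative in direction $\Lambda$. I then invoke Lemma~\ref{lem:Luk.Wishart.Lemma.2.5} with $k=1$, applied to the test function $h_t$:
\[
\nabla_{\Lambda,ij}(\mathcal{Q}_t h_t)(s\Lambda)=e^{-2t}\,\EE[\mathcal{D}_{ij}h_t(\mathfrak{Y}_{1,t})], \qquad \mathfrak{Y}_{1,t}\sim\mathcal{W}_d(\alpha+2,I_d,e^{-2t}s\Lambda).
\]
By linearity, the directional derivative equals $e^{-2t}\EE[\langle \Lambda,\mathcal{D}\rangle_F h_t(\mathfrak{Y}_{1,t})]$. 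Normalizing $U=\Lambda/\|\Lambda\|_F$ bounds its absolute value by $e^{-2t}\|\Lambda\|_F M_t^{\mathcal{D}}(h)$. Finally, $\|\Lambda\|_F=(1-e^{-2t})^{-1}\|\Sigma^{-1/2}W\Sigma^{-1/2}\|_F$, which yields the first term.

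The main obstacle is verifying the hypotheses of Lemma~\ref{lem:Luk.Wishart.Lemma.2.5} for $h_t$, and justifying the differentiation along the segment, including at the endpoint $s=0$ where the noncentrality degenerates. The hypotheses to check are smoothness of order $d$, the polynomial growth \eqref{eq:contraction.asump.h.t}, and the restriction $\alpha>3d-3$. The growth bound transfers to $h_t$ because $X\mapsto\Sigma_t^{1/2}X\Sigma_t^{1/2}$ is linear. Derivatives of $h_t$ are linear combinations of derivatives of $h$ of the same order, with bounded coefficients for fixed $t$. Polynomial growth, together with the finite moments of noncentral Wishart laws locally uniformly in the noncentrality, justifies differentiating under the expectation and continuity of $s\mapsto\phi'(s)$ on $[0,1]$. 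I will assume that $\alpha>3d-3$ is exactly the shape condition under which the integration by parts in Lemma~\ref{lem:Luk.Wishart.Lemma.2.5} has vanishing boundary terms for order-$d$ operators at shape $\alpha+2$. If the lemma is stated only for $\Lambda\in\mathcal{S}_{++}^d$, I will apply it for $s\in(0,1]$ and use continuity of $\phi$ at $0$ (weak continuity of the noncentral Wishart law in its parameter, plus boundedness of $h$). The fundamental theorem of calculus then holds as an improper integral, which suffices.
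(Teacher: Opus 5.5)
Your proposal is correct and follows essentially the same route as the paper: you split at the central value $(\mathcal{Q}_t h_t)(0)$, apply the fundamental theorem of calculus along $s\mapsto s\Lambda_t$ with Lemma~\ref{lem:Luk.Wishart.Lemma.2.5} at $k=1$ for the first piece, and couple through a common $\mathfrak{V}\sim\mathcal{W}_d(\alpha,I_d)$ with the Lipschitz bound for the second piece. Your treatment of the endpoint $s=0$ is slightly more careful than the paper's, which runs the segment argument without comment: you apply the lemma only on $(0,1]$ and pass to the limit using continuity of $\phi$.
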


Define the smoothing operator
\begin{equation}\label{eq:Wishart.Q.operator.def}
(\mathcal{Q}_t h)(\Lambda) \leqdef \int_{\mathcal{S}_{++}^d} h(X) \, f_{\alpha, I_d, e^{-2t}\Lambda}^{\mathcal{W}}(X) \, \rd X, \qquad \Lambda\in \mathcal{S}_{+}^d, ~~t\geq 0.
\end{equation}
By Proposition~\ref{prop:extension.Luk.1994.Lemma.2.4}, for every bounded Borel measurable function $h:\mathcal{S}_{++}^d\to\R$ and every $W\in \mathcal{S}_{+}^d$, we have
\[
\mathcal{P}_t^{\mathcal{W}} h(W) = (\mathcal{Q}_t h_t)(\Lambda_t(W)), \qquad t > 0,
\]
where $\Lambda_t(W) \leqdef \Sigma_t^{-1/2}W\Sigma_t^{-1/2}$, $h_t(X) = h(\Sigma_t^{1/2}X\Sigma_t^{1/2})$, and $\Sigma_t = (1 - e^{-2t})\Sigma$. Theorem~\ref{thm:Stein.solutions.Wishart} below provides an explicit semigroup solution of the Wishart Stein equation for bounded Lipschitz test functions whose rescaled family $(h_t)_{t > 0}$ has uniformly bounded first $\mathcal{D}$-seminorm away from $t = 0$.

\begin{theorem}[Solution of the Wishart Stein equation for $C_b^d$ test functions]\label{thm:Stein.solutions.Wishart}
Let $\alpha > 3d-3$ and $\Sigma\in \mathcal{S}_{++}^d$ be given. Let $(\mathfrak{W}_t)_{t\geq 0}$ be the Wishart process in \eqref{eq:Wishart.process} with transition semigroup $(\mathcal{P}^{\mathcal{W}}_t)_{t\geq 0}$, extended generator $\mathcal{A}^{\mathcal{W}}$ given by \eqref{eq:Wishart.process.generator}, and stationary limiting distribution $\xi = \mathcal{W}_d(\alpha, \Sigma)$ from Proposition~\ref{prop:extension.Luk.1994.Lemma.2.4}. Let $h\in C_b^d(\mathcal{S}_{++}^d)$ be real-valued. Since $\mathcal{S}_{++}^d$ is convex, $h$ is bounded and Lipschitz; let $[h]_1$ denote its minimum Lipschitz constant with respect to the Frobenius norm. Set
\[
M_{1, \Sigma}^{\mathcal{D}}(h)
 \leqdef \sup_{t\geq 1} \sup_{X\in \mathcal{S}_{++}^d} \sup_{\substack{U\in \mathcal{S}^d\\ \|U\|_F = 1}} \big|\langle U, \mathcal{D}\rangle_F \, h_t(X)\big|,
\qquad \langle U, \mathcal{D}\rangle_F \leqdef \sum_{i,j = 1}^d U_{ij} \, \mathcal{D}_{ij}.
\]
Then $M_{1, \Sigma}^{\mathcal{D}}(h) < \infty$, and the following hold. The function
\begin{equation}\label{eq:fh.def.Wishart.beta}
f_h(W) \leqdef -\int_0^{\infty}\Big\{\mathcal{P}^{\mathcal{W}}_t h(W)-\EE[h(\mathfrak{W}_{\infty})]\Big\} \, \rd t
\end{equation}
is well defined for every $W\in \mathcal{S}_{++}^d$ and solves the Wishart Stein equation
\begin{equation}\label{eq:Stein.equation.Wishart}
\mathcal{A}^{\mathcal{W}} f_h(W) = h(W) - \EE[h(\mathfrak{W}_{\infty})], \qquad W\in \mathcal{S}_{++}^d,
\end{equation}
where $\mathfrak{W}_{\infty}\sim \mathcal{W}_d(\alpha, \Sigma)$. Moreover, if $\mathfrak{V}\sim \mathcal{W}_d(\alpha, I_d)$, then
\begin{equation}\label{eq:fh.bound.Wishart.beta}
|f_h(W)| \leq 2\|h\|_{\infty} + \frac{1}{2} \left\{\frac{M_{1, \Sigma}^{\mathcal{D}}(h)}{1 - e^{-2}} \, \|\Sigma^{-1/2}W\Sigma^{-1/2}\|_F + [h]_1 \, \|\Sigma\|_2 \, \EE[\|\mathfrak{V}\|_F]\right\}.
\end{equation}
\end{theorem}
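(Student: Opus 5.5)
The proof splits into four parts: finiteness of $M_{1,\Sigma}^{\mathcal{D}}(h)$, absolute convergence of \eqref{eq:fh.def.Wishart.beta} together with the bound \eqref{eq:fh.bound.Wishart.beta}, and the Stein equation \eqref{eq:Stein.equation.Wishart}.

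\textbf{Finiteness of $M_{1,\Sigma}^{\mathcal{D}}(h)$.} By the chain rule, $h_t(X) = h(\Sigma_t^{1/2}X\Sigma_t^{1/2})$ gives $\nabla_{\!X} h_t(X) = \Sigma_t^{1/2}(\nabla h)(\Sigma_t^{1/2}X\Sigma_t^{1/2})\Sigma_t^{1/2}$. Iterating, every monomial $\overline{\nabla}$ of order $k\leq d$ applied to $h_t$ is a finite linear combination of order-$k$ derivatives of $h$ evaluated at $\Sigma_t^{1/2}X\Sigma_t^{1/2}$. The coefficients are products of entries of $\Sigma_t^{1/2}$, hence bounded by $\|\Sigma\|_2^{k/2}$ uniformly in $t\geq 1$ (indeed in $t>0$).

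Each entry $\mathcal{D}_{ij}$ is a polynomial in $\nabla_{\!X}$ of degree between $1$ and $d$, with coefficients depending only on $d$. Since $h\in C_b^d$, we get $\sup_{t\geq1}\sup_X|\mathcal{D}_{ij}h_t(X)|<\infty$. Taking the supremum over $U$ with $\|U\|_F=1$ costs at most a factor $d$ by Cauchy--Schwarz. The same computation shows that hypothesis \eqref{eq:contraction.asump.h.t} of Lemma~\ref{lem:contraction} holds with $N_{\overline{\nabla}}=0$, and that $M_t^{\mathcal{D}}(h)\leq M_{1,\Sigma}^{\mathcal{D}}(h)$ for $t\geq1$. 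Lipschitz continuity of $h$ follows from boundedness of $\nabla h$ on the convex set $\mathcal{S}_{++}^d$.

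\textbf{Convergence and the bound on $|f_h|$.} Split the integral in \eqref{eq:fh.def.Wishart.beta} at $t=1$. On $[0,1]$, bound the integrand crudely by $2\|h\|_\infty$, which contributes $2\|h\|_\infty$. On $[1,\infty)$, apply Lemma~\ref{lem:contraction}. Using $1-e^{-2t}\geq 1-e^{-2}$ and $\int_1^\infty e^{-2t}\,\rd t=e^{-2}/2\leq 1/2$, this gives the remaining terms of \eqref{eq:fh.bound.Wishart.beta}. The integrand is therefore absolutely integrable and $f_h$ is well defined.

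\textbf{The Stein equation.} I would use the generator route rather than differentiating $f_h$ twice directly. Absolute integrability and the semigroup property give, for $s>0$,
\[
\mathcal{P}_s^{\mathcal{W}}f_h(W)-f_h(W)=\int_0^s\{\mathcal{P}_u^{\mathcal{W}}h(W)-\EE[h(\mathfrak{W}_\infty)]\}\,\rd u .
\]
Interchanging $\mathcal{P}_s^{\mathcal{W}}$ with the $t$-integral requires Fubini. I would justify it by the bound \eqref{eq:fh.bound.Wishart.beta} applied at the random point $\mathfrak{W}_s$, since $\EE\|\mathfrak{W}_s\|_F<\infty$ by \eqref{eq:W.t.conditional}. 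Dividing by $s$ and letting $s\downarrow0$, continuity of $u\mapsto\mathcal{P}_u^{\mathcal{W}}h(W)$ at $0$ (from continuity of paths and bounded continuous $h$) shows that $f_h$ lies in the domain of the infinitesimal generator. The limit there equals $h(W)-\EE[h(\mathfrak{W}_\infty)]$.

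To identify this limit with the differential expression \eqref{eq:Wishart.process.generator}, one needs $f_h\in C^2(\mathcal{S}_{++}^d)$, so that Proposition~\ref{prop:generator} and Itô's formula give $\mathcal{A}^{\mathcal{W}}f_h=\lim_{s\downarrow 0}(\mathcal{P}_s^{\mathcal{W}}f_h-f_h)/s$. This is the main obstacle. I would obtain it by differentiating $\mathcal{P}_t^{\mathcal{W}}h(W)=(\mathcal{Q}_th_t)(\Lambda_t(W))$ twice in $W$ using Lemma~\ref{lem:Luk.Wishart.Lemma.2.5} with $k=1,2$. This bounds the derivatives by $e^{-2kt}$ times $\mathcal{D}$-seminorms of $h_t$, multiplied by $\Sigma_t^{-1/2}$ factors of size $(1-e^{-2t})^{-1/2}$ each.

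For $t\geq1$ these bounds are integrable, but $k=2$ requires derivatives of $h$ of order up to $2d$, which $C_b^d$ does not supply. Near $t=0$ the factors $(1-e^{-2t})^{-k/2}$ are also not integrable for $k=2$. I would therefore first prove the identity for smooth $h$, say $h\in C_b^{2d}$ with compact support, where the interchange of derivatives and integral is justified. I would then approximate a general $h\in C_b^d$ by mollification. The Stein identity, with $\mathcal{A}^{\mathcal{W}}f_h$ understood via the extended generator, passes to the limit by dominated convergence using the uniform bound \eqref{eq:fh.bound.Wishart.beta}.

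I expect this regularity and approximation step to be the delicate point of the proof.
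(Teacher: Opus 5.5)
Up to where the paper's proof ends, your proposal follows it step for step. You get finiteness of $M_{1,\Sigma}^{\mathcal{D}}(h)$ from the chain rule for $h_t$, split the time integral at $t=1$ and use Lemma~\ref{lem:contraction} on $[1,\infty)$, and justify Fubini by the bound evaluated at $\mathfrak{W}_s$ together with $\EE[\|\mathfrak{W}_s\|_F]<\infty$. Finally you take the limit of $\{(\mathcal{P}_s^{\mathcal{W}}f_h)(W)-f_h(W)\}/s$ using path continuity. The paper stops exactly there: it identifies $\mathcal{A}^{\mathcal{W}}f_h(W)$ with this pointwise limit and never proves $f_h\in C^2(\mathcal{S}_{++}^d)$ for $h\in C_b^d$.

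Your additional regularity step, however, does not deliver what you want from it.

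First, the singularity as $t\downarrow 0$ comes from a crude bound, not from the problem. Since $\nabla_{\!X}=\Sigma_t^{1/2}\nabla_{\!Y}\Sigma_t^{1/2}$ when acting on $h_t$, one has $\langle \Sigma_t^{-1/2}U\Sigma_t^{-1/2},\mathcal{D}\rangle_F h_t(X)=\langle U,\mathcal{D}_{t,\Sigma}\rangle_F h(\Sigma_t^{1/2}X\Sigma_t^{1/2})$. So the $\Sigma_t^{-1/2}$ factors cancel exactly, and $D^m(\mathcal{P}_t^{\mathcal{W}}h)(W)[U_1,\ldots,U_m]$ is $e^{-2mt}$ times an expectation of $(\prod_{\ell}\langle U_\ell,\mathcal{D}_{t,\Sigma}\rangle_F)h$. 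The operator $\mathcal{D}_{t,\Sigma}=\nabla_{\!W}\mathrm{adj}(I_d-2\Sigma_t\nabla_{\!W})$ is harmless as $t\downarrow0$. This is the computation behind Theorem~\ref{thm:smoothness.estimates.wishart}. Consequently, for $h\in C_b^{2d}$ no compact support is needed, and the only real obstruction is the derivative count $2d>d$.

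Second, and more importantly, the approximation cannot transfer $C^2$ regularity. The only available control of $D^2 f_{h_n}$ is through $\mathcal{M}_2^{\mathcal{D},\Sigma}(h_n)$. That quantity involves derivatives of $h_n$ up to order $2d$, and it is not uniformly bounded in $n$ when $h$ is merely $C_b^d$. What survives the limit is only the martingale (extended-generator) identity. That identity already follows from $\mathcal{P}_s^{\mathcal{W}}f_h-f_h=\int_0^s\{\mathcal{P}_u^{\mathcal{W}}h-\EE[h(\mathfrak{W}_\infty)]\}\,\rd u$ and the Markov property, so the detour adds nothing.

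To genuinely obtain the classical identity with the differential operator \eqref{eq:Wishart.process.generator} for $h\in C_b^d$, you need one more ingredient:
\begin{itemize}
\item Pass to the limit in the sense of distributions on $\mathcal{S}_{++}^d$, using \eqref{eq:fh.bound.Wishart.beta} uniformly in $n$.
\item Then invoke interior elliptic (Schauder) regularity. This applies because $\mathcal{A}^{\mathcal{W}}$ has affine coefficients and is uniformly elliptic on compact subsets of the cone: its second-order part has symbol $4\tr(S\xi\Sigma\xi)=4\|S^{1/2}\xi\Sigma^{1/2}\|_F^2$ for symmetric $\xi$.
\end{itemize}
Alternatively, under $h\in C_b^{2d}$ with $\mathcal{M}_1^{\mathcal{D},\Sigma}(h)+\mathcal{M}_2^{\mathcal{D},\Sigma}(h)<\infty$, Theorem~\ref{thm:smoothness.estimates.wishart} gives $f_h\in C^2$ directly. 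That is the setting in which the applications use the differential form.
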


\begin{remark}\label{rem:alpha.threshold.Stein.solutions.Wishart}
The condition $\alpha > 3d-3$ is a sufficient condition imposed by the differentiation argument used in the proof of Theorem~\ref{thm:Stein.solutions.Wishart} and explained below, and is not a sharp existence condition for the Wishart process or for its transition law. Indeed, Proposition~\ref{prop:extension.Luk.1994.Lemma.2.4} works already in the larger range $\alpha > d-1$. The stronger lower bound enters through Lemma~\ref{lem:Luk.Wishart.Lemma.2.5}, whose proof transfers derivatives in the noncentrality parameter to $X$-derivatives using Lemma~\ref{lem:Laplace.derivative.Wishart} and the integration-by-parts estimate of Lemma~\ref{lem:Laplace.polynomial.calculus}. It is not required for the existence of the expectation on the right-hand side of \eqref{eq:claim}; as shown in Remark~\ref{rem:eq.claim.expectation.exists}, that expectation is finite under the natural condition $\alpha > d-1$.

A single differentiation with respect to an entry of the noncentrality parameter $\Theta$ gives, at the level of Laplace transforms and up to sign, an entry of the matrix $T(I_d + 2T)^{-1}$. We then write
\[
T(I_d + 2T)^{-1} = \frac{T \, \mathrm{adj}(I_d + 2T)}{|I_d + 2T|}.
\]
This identity has two effects. The denominator $|I_d + 2T|^{-1}$ shifts the Wishart shape parameter from $\alpha$ to $\alpha + 2$, since it changes the determinant factor $|I_d + 2T|^{-\alpha/2}$ into $|I_d + 2T|^{-(\alpha + 2)/2}$. The numerator $T \, \mathrm{adj}(I_d + 2T)$ is a polynomial of degree at most $d$ in the entries of $T$, so Lemma~\ref{lem:Laplace.polynomial.calculus} has to be applied to $f_{\alpha + 2, I_d, \Theta}^{\mathcal{W}}$ with $m = d$. Its condition is therefore $\alpha + 2 > d-1 + 2d$. Equivalently, after dividing by $2$, this is $(\alpha + 2)/2 > (d-1)/2 + d$.

The term $(d-1)/2$ is the usual base integrability threshold in the multivariate gamma integral on $\mathcal{S}_{++}^d$. For a shape parameter $\gamma$, the determinant factor in the Wishart density is $|X|^{\gamma/2-(d + 1)/2}$, and the matrix-gamma integrability condition near $\partial\mathcal{S}_{+}^d$ is $\gamma/2 > (d-1)/2$. In the one-step argument of Lemma~\ref{lem:Laplace.derivative.Wishart}, the relevant shifted shape is $\gamma = \alpha + 2$, not $\alpha$, because of the denominator above. The additional $ + d$ in $(\alpha + 2)/2 > (d-1)/2 + d$ accounts for the worst possible loss caused by the differential operator of order at most $d$ coming from $T \, \mathrm{adj}(I_d + 2T)$. Heuristically, each $X$-derivative may lower by one the exponent of the determinant factor controlling the behavior near $\partial\mathcal{S}_{+}^d$. Thus, after up to $d$ derivatives, the determinant factor behaves as if its exponent were $\gamma/2-(d + 1)/2-d$. Requiring this exponent to remain larger than $-1$ gives $\gamma/2 > (d-1)/2 + d$, and with $\gamma = \alpha + 2$ this is exactly $\alpha > 3d-3$.

Thus $\alpha > 3d-3$ guarantees enough boundary decay to justify the integration by parts and to avoid boundary contributions in the proof. It should be viewed as a regularity assumption for the method used here, rather than as a proved necessary condition for the existence of the expectation in \eqref{eq:claim}, for the existence of the solution of the Stein equation, or for derivative estimates obtained by a different method. It remains an open problem to extend Lemma~\ref{lem:Luk.Wishart.Lemma.2.5}, and the resulting Stein solution estimates, under the natural condition $\alpha > d-1$.
\end{remark}

Next, we derive bounds on the derivatives of the semigroup solution of the Wishart Stein equation. Define the following time-dependent differential operator matrix:
\[
\mathcal{D}_{t, \Sigma} \leqdef \nabla_{\!W} \, \mathrm{adj}(I_d - 2 \Sigma_t \nabla_{\!W}), \qquad \Sigma_t = (1 - e^{-2t})\Sigma.
\]
For any integer $p\geq 0$, let $\mathcal{B}_{p}^{\mathcal{D}, \Sigma}(\mathcal{S}_{++}^d)$ denote the class of functions $h:\mathcal{S}_{++}^d\to\R$ such that, for every choice of index pairs $(i_1, j_1), \ldots, (i_p, j_p)\in [d]^2$, the iterated derivatives $\smash{\left(\prod_{\ell=1}^p (\mathcal{D}_{t, \Sigma})_{i_{\ell} j_{\ell}}\right) h}$ exist for every $t > 0$ and satisfy
\[
\sup_{t > 0} \left\|\left(\prod_{\ell=1}^p (\mathcal{D}_{t, \Sigma})_{i_{\ell} j_{\ell}}\right) h\right\|_{\infty} < \infty,
\]
with the convention that the order-$0$ iterate is $h$ itself.

\begin{theorem}[Regularity of the semigroup solution of the Wishart Stein equation]\label{thm:smoothness.estimates.wishart}
Let $m\in \N$, let $\alpha > 3d-3$ and $\Sigma\in \mathcal{S}_{++}^d$. Let $h\in C_b^{md}(\mathcal{S}_{++}^d)$ be real-valued, and let $f_h$ denote the semigroup solution of the Wishart Stein equation defined in \eqref{eq:fh.def.Wishart.beta}. Then the following hold:
\begin{itemize}
\item[(i)] If $h\in \mathcal{B}^{\mathcal{D}, \Sigma}_{m}(\mathcal{S}_{++}^d)$, then for any index pairs $(i_1,j_1), \ldots,(i_m,j_m)\in [d]^2$,
\begin{equation}\label{eq:Wishart.Stein.bound.1}
\left\|\left(\prod_{\ell=1}^m \nabla_{\!W, i_{\ell} j_{\ell}}\right) f_h\right\|_{\infty}
\leq \frac{1}{2m} \sup_{t > 0} \left\|\left(\prod_{\ell=1}^m (\mathcal{D}_{t, \Sigma})_{i_{\ell} j_{\ell}}\right) h\right\|_{\infty}.
\end{equation}

\item[(ii)] For any sufficiently smooth $g:\mathcal{S}_{++}^d\to\R$, define
\begin{equation}\label{eq:M.m}
\mathcal{M}_m(g)
 \leqdef \sup_{W\in \mathcal{S}_{++}^d} \, \sup_{\substack{U_1, \ldots,U_m\in \mathcal{S}^d\\ \|U_1\|_F = \cdots = \|U_m\|_F = 1}} \big|D^m g(W)[U_1, \ldots,U_m]\big|.
\end{equation}
Define also
\begin{equation}\label{eq:M.m.D.Sigma}
\mathcal{M}_m^{\mathcal{D}, \Sigma}(h)
 \leqdef \sup_{t > 0} \, \sup_{W\in \mathcal{S}_{++}^d} \, \sup_{\substack{U_1, \ldots,U_m\in \mathcal{S}^d\\ \|U_1\|_F = \cdots = \|U_m\|_F = 1}}
\left|\Big(\prod_{\ell=1}^m \langle U_{\ell}, \mathcal{D}_{t, \Sigma}\rangle_F\Big) h(W)\right|,
\end{equation}
where
\[
\langle U, \mathcal{D}_{t, \Sigma}\rangle_F \leqdef \sum_{i,j = 1}^d U_{ij} \, (\mathcal{D}_{t, \Sigma})_{ij}.
\]
If $\mathcal{M}_m^{\mathcal{D}, \Sigma}(h) < \infty$, then
\begin{equation}\label{eq:Wishart.Stein.bound.4}
\mathcal{M}_m(f_h) \leq \frac{1}{2m} \, \mathcal{M}_m^{\mathcal{D}, \Sigma}(h).
\end{equation}
\end{itemize}
\end{theorem}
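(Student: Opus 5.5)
We follow the gamma-case route of \citet{Luk1994PhD}: push the derivatives inside the time integral in \eqref{eq:fh.def.Wishart.beta}, bound each time slice by $e^{-2mt}$ times a $\mathcal{D}_{t,\Sigma}$-seminorm of $h$, and integrate. The derivative bound comes from Lemma~\ref{lem:Luk.Wishart.Lemma.2.5} with $k=m$ applied to the scale-normalized semigroup $\mathcal{Q}_t$. Fix $t>0$ and write $(\mathcal{P}^{\mathcal{W}}_t h)(W) = (\mathcal{Q}_t h_t)(\Lambda_t(W))$ with $\Lambda_t(W)=\Sigma_t^{-1/2}W\Sigma_t^{-1/2}$.

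The map $W\mapsto\Lambda_t(W)$ is linear. By the chain rule, for any directions $U_1,\ldots,U_m\in\mathcal{S}^d$,
\[
D^m(\mathcal{P}^{\mathcal{W}}_t h)(W)[U_1,\ldots,U_m] = D^m(\mathcal{Q}_t h_t)(\Lambda_t(W))[\Sigma_t^{-1/2}U_1\Sigma_t^{-1/2},\ldots,\Sigma_t^{-1/2}U_m\Sigma_t^{-1/2}].
\]
Lemma~\ref{lem:Luk.Wishart.Lemma.2.5} with $k=m$ then gives this as $e^{-2mt}\,\EE[(\prod_\ell \langle \Sigma_t^{-1/2}U_\ell\Sigma_t^{-1/2},\mathcal{D}\rangle_F) h_t(\mathfrak{Y}_{m,t})]$, where $\mathfrak{Y}_{m,t}\sim\mathcal{W}_d(\alpha+2m,I_d,e^{-2t}\Lambda_t(W))$. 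The hypothesis $\alpha>3d-3$ and $h\in C_b^{md}$ are exactly what that lemma needs.

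Next we undo the scaling inside $h_t$. With $X\mapsto W'=\Sigma_t^{1/2}X\Sigma_t^{1/2}$ we have $\nabla_X h_t(X) = \Sigma_t^{1/2}(\nabla_{W'}h)(W')\Sigma_t^{1/2}$, and hence
\[
\mathrm{adj}(I_d-2\nabla_X)=\Sigma_t^{1/2}\,\mathrm{adj}(I_d-2\Sigma_t\nabla_{W'})\,\Sigma_t^{-1/2}.
\]
This uses $\mathrm{adj}(AB)=\mathrm{adj}(B)\mathrm{adj}(A)$, $|\Sigma_t^{1/2}|\,|\Sigma_t^{-1/2}|=1$, and the fact that the entries of $\nabla$ commute. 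Therefore
\[
\langle \Sigma_t^{-1/2}U\Sigma_t^{-1/2},\mathcal{D}\rangle_F\, h_t = \big(\langle U,\mathcal{D}_{t,\Sigma}\rangle_F h\big)\circ(\Sigma_t^{1/2}\cdot\Sigma_t^{1/2}),
\]
with the $\Sigma_t^{\pm1/2}$ factors cancelling under the trace. Iterating $m$ times yields the pointwise bound
\[
|D^m(\mathcal{P}^{\mathcal{W}}_t h)(W)[U_1,\ldots,U_m]| \le e^{-2mt}\,\sup_W\Big|\Big(\prod_{\ell}\langle U_\ell,\mathcal{D}_{t,\Sigma}\rangle_F\Big)h(W)\Big|.
\]
Specializing $U_\ell$ to the coordinate directions realizing $\nabla_{W,i_\ell j_\ell}$, equivalently reading the identity entrywise, gives the version needed for (i).

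To finish, we differentiate under the integral sign in \eqref{eq:fh.def.Wishart.beta}. The constant $\EE[h(\mathfrak{W}_\infty)]$ disappears under differentiation, and the $t$-integrand's derivatives are dominated uniformly in $W$ by $e^{-2mt}\sup_{s>0}\|\cdot\|_\infty$, which is integrable on $(0,\infty)$. Since $\int_0^\infty e^{-2mt}\,\rd t=1/(2m)$, this gives \eqref{eq:Wishart.Stein.bound.1} and \eqref{eq:Wishart.Stein.bound.4}.

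The main obstacle is justifying the interchange of $D^m$ and $\int_0^\infty$, together with the corresponding interchanges inside $\mathcal{Q}_t$, rigorously and uniformly near $t=0$, where $\Sigma_t^{-1/2}$ blows up. The plan is to proceed by induction on the derivative order $j\le m$. At each stage, differentiate the truncated integrals $\int_\varepsilon^R$, where everything is smooth. Use the uniform-in-$W$ domination $e^{-2jt}\mathcal{M}_j^{\mathcal{D},\Sigma}(h)$; for $j<m$ this should follow since $\mathcal{B}_m$-type control at level $m$ can be arranged to control lower iterates, and otherwise we add it as an assumption. Then apply the classical theorem on uniform convergence of derivatives as $\varepsilon\to0$ and $R\to\infty$, with convergence of $f$ itself supplied by Theorem~\ref{thm:Stein.solutions.Wishart}.
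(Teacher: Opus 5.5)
Your proposal is the paper's proof. It uses the same representation $(\mathcal{P}^{\mathcal{W}}_t h)(W)=(\mathcal{Q}_t h_t)(\Lambda_t(W))$ and Lemma~\ref{lem:Luk.Wishart.Lemma.2.5} with $k=m$ applied to $h_t$. It then converts $\mathcal{D}$ acting on $h_t$ into $\mathcal{D}_{t,\Sigma}$ acting on $h$, and integrates using $\int_0^\infty e^{-2mt}\,\rd t=1/(2m)$. Your truncation-plus-uniform-convergence justification of the interchange is, if anything, more explicit than the paper's one-line appeal to dominated convergence. Three points need correcting, none of them structural.

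\emph{The adjugate identity.} You have the conjugating factors reversed. Since $I_d-2\Sigma_t^{1/2}\nabla_{W'}\Sigma_t^{1/2}=\Sigma_t^{-1/2}(I_d-2\Sigma_t\nabla_{W'})\Sigma_t^{1/2}$, the correct identity is $\mathrm{adj}(I_d-2\nabla_X)=\Sigma_t^{-1/2}\,\mathrm{adj}(I_d-2\Sigma_t\nabla_{W'})\,\Sigma_t^{1/2}$. This gives $\mathcal{D}h_t(X)=\Sigma_t^{1/2}(\mathcal{D}_{t,\Sigma}h)(W')\Sigma_t^{1/2}$. With your version, a stray factor $\Sigma_t$ sits between $\nabla_{W'}$ and the adjugate, and the trace cancellation you invoke does not happen.

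\emph{Part (i).} If you obtain (i) by taking $U_\ell=\frac12(\bb{e}_{i_\ell}\bb{e}_{j_\ell}^\top+\bb{e}_{j_\ell}\bb{e}_{i_\ell}^\top)$, you only get $\frac12\{(\mathcal{D}_{t,\Sigma})_{ij}+(\mathcal{D}_{t,\Sigma})_{ji}\}$. You therefore need $\mathcal{D}_{t,\Sigma}$ to be symmetric, which the paper deduces from $A\,\mathrm{adj}(I_d-BA)=\mathrm{adj}(I_d-AB)A$ with $A=\nabla_{\!W}$ and $B=2\Sigma_t$. Your alternative of reading the first-order matrix identity entrywise at each step avoids this.

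\emph{Lower-order domination.} This needs no extra hypothesis. It also does not follow from control at level $m$, since bounds on top-order iterates do not control lower ones in general. For $j\le m$, the operator $\prod_{\ell\le j}\langle U_\ell,\mathcal{D}_{t,\Sigma}\rangle_F$ has constant coefficients and order at most $jd\le md$. Its coefficients are bounded uniformly in $t>0$ and in unit $U_\ell$, because $\|\Sigma_t\|_2\le\|\Sigma\|_2$. So $h\in C_b^{md}$ alone gives $\mathcal{M}_j^{\mathcal{D},\Sigma}(h)\le B_{d,\Sigma}^j\,\mathcal{N}_{jd}(h)<\infty$, as in Remark~\ref{rem:explicit.h.norms.Wishart.Stein}.

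The same observation removes your concern as $t\downarrow 0$. The blow-up of $\Sigma_t^{-1/2}$ is entirely absorbed into $\mathcal{D}_{t,\Sigma}$, so the bound $e^{-2jt}\mathcal{M}_j^{\mathcal{D},\Sigma}(h)$ on $D^j(\mathcal{P}^{\mathcal{W}}_t h)$ holds uniformly in $W$ and $t>0$. Successive differentiation under $\int_0^\infty$ is then routine.
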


\begin{remark}\label{rem:explicit.h.norms.Wishart.Stein}
The seminorms of $h$ appearing in \eqref{eq:Wishart.Stein.bound.1} and \eqref{eq:M.m.D.Sigma} can be bounded explicitly in terms of ordinary derivatives of $h$, at the cost of rather large constants depending on $d$ and $\Sigma$. For $r\in \N$, set
\[
\mathcal{N}_r(h) \leqdef \max_{1\leq q\leq r}\mathcal{M}_q(h), \qquad
A_{d, \Sigma} \leqdef d!\big(1 + 2\sqrt{d}\|\Sigma\|_2\big)^{d-1}, \qquad
B_{d, \Sigma} \leqdef d A_{d, \Sigma} \, ,
\]
where $\mathcal{M}_q$ is defined as in \eqref{eq:M.m}. Then, for any index pairs $(i_{\ell},j_{\ell})\in [d]^2$,
\begin{equation}\label{eq:explicit.bound.indexed.D.h}
\sup_{t > 0}\left\|\left(\prod_{\ell=1}^m(\mathcal{D}_{t, \Sigma})_{i_{\ell}j_{\ell}}\right)h\right\|_{\infty} \leq A_{d, \Sigma}^m \, \mathcal{N}_{md}(h),
\end{equation}
and
\begin{equation}\label{eq:explicit.bound.directional.D.h}
\mathcal{M}_m^{\mathcal{D}, \Sigma}(h) \leq B_{d, \Sigma}^m \, \mathcal{N}_{md}(h).
\end{equation}
Consequently, \eqref{eq:Wishart.Stein.bound.1} gives
\[
\left\|\left(\prod_{\ell=1}^m \nabla_{\!W,i_{\ell}j_{\ell}}\right)f_h\right\|_{\infty} \leq \frac{A_{d, \Sigma}^m}{2m} \, \mathcal{N}_{md}(h),
\]
and \eqref{eq:Wishart.Stein.bound.4} gives
\[
\mathcal{M}_m(f_h) \leq \frac{B_{d, \Sigma}^m}{2m} \, \mathcal{N}_{md}(h).
\]

Indeed, the entries of $I_d-2\Sigma_t\nabla_{\!W}$ have coefficient $\ell^1$-norm at most $1 + 2\sqrt{d}\|\Sigma\|_2$, uniformly in $t > 0$, because $\|\Sigma_t\|_2\leq\|\Sigma\|_2$. Each entry of $\mathrm{adj}(I_d-2\Sigma_t\nabla_{\!W})$ is a signed sum of $(d-1)!$ products of $d-1$ such entries. Multiplication by the outer matrix $\nabla_{\!W}$ and summation over one index give the factor $d$, and therefore each entry $(\mathcal{D}_{t, \Sigma})_{ij}$ is a constant-coefficient differential operator of order at most $d$ whose total coefficient size is at most $A_{d, \Sigma}$. Iterating $m$ such operators gives \eqref{eq:explicit.bound.indexed.D.h}. For the directional seminorm, if $\|U\|_F = 1$, then $\sum_{i,j = 1}^d |U_{ij}|\leq d$, so each directional operator $\langle U, \mathcal{D}_{t, \Sigma}\rangle_F$ has total coefficient size at most $B_{d, \Sigma}$, which gives \eqref{eq:explicit.bound.directional.D.h}.

The constants in \eqref{eq:explicit.bound.indexed.D.h} and \eqref{eq:explicit.bound.directional.D.h} are included to make the dependence on $d$ and $\Sigma$ explicit and to remove the supremum over $t$ from the test-function seminorms. The sharper seminorm $\mathcal{M}_m^{\mathcal{D}, \Sigma}(h)$ is kept in the main statement because it tracks the actual differential operator generated by the Wishart kernel and can be substantially smaller than the right-hand side of \eqref{eq:explicit.bound.directional.D.h}.
\end{remark}

\begin{remark}\label{rem:Wishart.derivative.loss.md}
The appearance of derivatives of $h$ up to order $md$ in Theorem~\ref{thm:smoothness.estimates.wishart} comes from the way derivatives of the Wishart transition semigroup are transferred from the initial state to the integration variable in the noncentral Wishart kernel. By Proposition~\ref{prop:extension.Luk.1994.Lemma.2.4}, the initial state enters $(\mathcal{P}^{\mathcal{W}}_t h)(W)$ through the noncentrality parameter $e^{-2t}\Lambda_t(W)$, where $\Lambda_t(W) = \Sigma_t^{-1/2}W\Sigma_t^{-1/2}$. Since $W\mapsto \Lambda_t(W)$ is linear, each derivative with respect to $W$ gives one derivative with respect to the noncentrality parameter. Lemma~\ref{lem:Luk.Wishart.Lemma.2.5} then transfers such a derivative to an $X$-differential operator:
\begin{equation}\label{eq:Wishart.one.step.derivative.loss}
\nabla_{\Lambda,ij}(\mathcal{Q}_t^{(\gamma)}g)(\Lambda) = e^{-2t} \, \mathcal{Q}_t^{(\gamma + 2)}(\mathcal{D}_{ij}g)(\Lambda), \qquad \mathcal{D} = \nabla_{\!X} \, \mathrm{adj}(I_d - 2\nabla_{\!X}),
\end{equation}
where $\mathcal{Q}_t^{(\gamma)}$ denotes the operator in \eqref{eq:Wishart.Q.operator.def} with shape parameter $\gamma$ in place of $\alpha$. The operator $\mathrm{adj}(I_d - 2\nabla_{\!X})$ is a polynomial of degree at most $d-1$ in the entries of $\nabla_{\!X}$, and the additional factor $\nabla_{\!X}$ makes $\mathcal{D}$ an operator of order at most $d$. Thus one derivative in the initial state may cost up to $d$ derivatives of the test function. Iterating \eqref{eq:Wishart.one.step.derivative.loss} $m$ times gives a product of $m$ such operators. The resulting operator has order at most $md$, and the proof therefore requires enough regularity of $h$ to control all the derivatives that can arise from this product. This is the reason for the hypotheses involving $C_b^{md}(\mathcal{S}_{++}^d)$, $\mathcal{B}^{\mathcal{D}, \Sigma}_{m}(\mathcal{S}_{++}^d)$, and $\mathcal{M}_m^{\mathcal{D}, \Sigma}(h)$.

This derivative loss is not merely an artifact of estimating too many terms separately. The product of $m$ transfer operators obtained by iterating \eqref{eq:Wishart.one.step.derivative.loss} has genuine order $md$ in general: although there are cancellations in some entries of $\mathcal{D}$, these cancellations do not remove the highest-order part of the operator in general. For instance, in the scale-normalized case the diagonal entries of $\mathcal{D}$ retain order $d$, and iterating such entries gives operators of order $md$. Thus the $md$ derivative count is sharp for the transfer identity and for the $\mathcal{D}$-seminorm bounds used in Theorem~\ref{thm:smoothness.estimates.wishart}. This does not by itself prove that $md$ derivatives of $h$ are necessary for the mere existence of the $m$-th derivative of the solution of the Stein equation, since a different argument might avoid transferring all derivatives onto $h$ through \eqref{eq:Wishart.one.step.derivative.loss}.
\end{remark}

\begin{remark}\label{rem:gamma.smoothness.Wishart.Stein}
When $d = 1$, the adjugate in the definition of $\mathcal{D}_{t, \Sigma}$ is the adjugate of a $1\times 1$ matrix and hence is equal to $1$. Since the symmetric gradient is then the ordinary derivative, one has $\mathcal{D}_{t, \Sigma} = \, \rd/\rd x, ~ t > 0$. Thus \eqref{eq:Wishart.Stein.bound.1} becomes
\[
\|f_h^{(m)}\|_{\infty} \leq \frac{1}{2m}\|h^{(m)}\|_{\infty}, \qquad m\geq1.
\]
As in Remark~\ref{rem:Wishart.Stein.operator.d.1}, $\mathcal{W}_1(\alpha, \Sigma) \equiv \Gamma(r = \alpha/2, \lambda = 1/(2\Sigma))$ and $\mathcal{A}^{\mathcal{W}} = 4\Sigma \, \mathcal{A}^{\Gamma}$. Thus, if $f_h^{\Gamma} \leqdef 4\Sigma f_h$ denotes the corresponding solution of the gamma Stein equation, then
\[
\|(f_h^{\Gamma})^{(m)}\|_{\infty} \leq \frac{2\Sigma}{m}\|h^{(m)}\|_{\infty} = \frac{1}{m\lambda}\|h^{(m)}\|_{\infty}, \qquad m\geq1,
\]
which is \eqref{lukbound}.
\end{remark}

\section{Applications}\label{sec:applications}

\subsection{Quantitative Wishart approximation of uncentered group-mean scatter matrices}\label{sec:Wishart.approximation.MANOVA}

The Wishart distribution naturally arises in multivariate analysis of variance (MANOVA) as the exact finite-sample distribution of the uncentered group-mean scatter matrix when the underlying populations are Gaussian. Indeed, consider a balanced design with $\nu$ independent groups, where each group $k \in [\nu]$ consists of $n$ independent observations $\bb{Z}_{k,i} \in \R^d$ drawn from a normal distribution $\mathcal{N}_d(\bb{0}_d, \Sigma)$ with $\nu\in \N \cap [d, \infty)$ and $\Sigma\in \mathcal{S}_{++}^d$. If we define the standardized group means as $\smash{\bb{X}_k^{(n)} \leqdef n^{-1/2} \sum_{i=1}^n \bb{Z}_{k,i}}$, the matrix composed of these vectors,
\[
\mathfrak{X}^{(n)} \leqdef [\bb{X}_1^{(n)}, \ldots, \bb{X}_{\nu}^{(n)}]^{\top},
\]
follows a matrix normal distribution $\mathcal{N}_{\nu\times d}(0_{\nu\times d}, I_{\nu} \otimes \Sigma)$. Its density with respect to the Lebesgue measure is given, for all $X \in \R^{\nu\times d}$, by $\smash{\phi_{0_{\nu\times d}, I_{\nu}, \Sigma}(X) = (2\pi)^{-\nu d/2} |\Sigma|^{-\nu/2} \etr\{-\frac{1}{2} X \Sigma^{-1} X^{\top}\}}$; see, e.g., \citet[Theorem~2.2.1]{GuptaNagar2000}. The uncentered group-mean scatter matrix $(\mathfrak{X}^{(n)})^{\top} \mathfrak{X}^{(n)}$ then has a Wishart distribution, namely
\[
(\mathfrak{X}^{(n)})^{\top}\mathfrak{X}^{(n)}\sim \mathcal{W}_d(\nu, \Sigma);
\]
see, e.g., \citet[Theorem~10.3.2]{Muirhead1982}.

When the underlying raw observations $\bb{Z}_{k,i}$ have mean zero and covariance $\Sigma$ but are not exactly Gaussian, the multivariate central limit theorem implies that the standardized group means $\smash{\bb{X}_k^{(n)}}$ converge to a Gaussian distribution as the group size $n \to\infty$. By the continuous mapping theorem, the scatter matrix converges in law to a Wishart distribution for any fixed number of groups $\nu$. Instead of applying the Wishart Stein equation directly, we quantify this convergence by applying Stein's method for matrix normal approximation \citep{GauntOuimetRichards2026} to the matrix of standardized group means and then by pushing the result forward through the quadratic map $X\mapsto X^{\top}X$.

The connection between the two Stein operators is explicit and is isolated in the following lemma. In the mean-zero case considered here, the matrix normal Stein operator of \citet{GauntOuimetRichards2026}, specialized to row covariance $\Psi = I_{\nu}$ and column covariance $\Sigma$, is
\begin{equation}\label{eq:MN.OU.generator.mean.zero.application}
\mathcal{A}^{\mathrm{OU}}g(X) = -\tr\{X^{\top}\nabla_{\!X} g(X)\} + \tr\{\Sigma \nabla_{\!X}^{\top}\nabla_{\!X} g(X)\}, \qquad X\in \R^{\nu\times d},
\end{equation}
where $\nabla_{\!X} = (\nabla_{\!X,ar})_{a\in [\nu],r\in [d]}$ denotes the ordinary rectangular gradient, with $\nabla_{\!X,ar} = \partial/\partial X_{ar}$.

\begin{lemma}[Wishart pullback of the matrix normal Stein operator]\label{lem:matrix.normal.Wishart.operator.connection}
Let $\nu\in [d, \infty)\cap \N$, $\Sigma\in \mathcal{S}_{++}^d$, and let $f:\mathcal{S}_{++}^d\to\R$ be smooth. Set
\[
\mathcal{R}_{\nu,d} \leqdef \{X\in \R^{\nu\times d}: \mathrm{rank}(X)=d\}.
\]
The set $\mathcal{R}_{\nu,d}$ is open, and its complement has Lebesgue measure zero. For $X\in \mathcal{R}_{\nu,d}$, define $g_f(X) = f(X^{\top}X)$. Then, pointwise on $\mathcal{R}_{\nu,d}$,
\begin{equation}\label{eq:MN.Wishart.pullback.identity}
\left(\mathcal{A}^{\mathrm{OU}}g_f\right)(X) = \left(\mathcal{A}^{\mathcal{W}}_{\nu, \Sigma} \, f\right)(X^{\top}X), \qquad X\in \mathcal{R}_{\nu,d},
\end{equation}
where $\nabla f$ denotes the symmetric gradient in the variable $S$ and
\[
\left(\mathcal{A}^{\mathcal{W}}_{\nu, \Sigma} \, f\right)(S) = 2 \, \tr\{(\nu\Sigma - S)\nabla f(S)\} + 4 \, \tr\{S \nabla \Sigma \nabla f(S)\}, \qquad S\in \mathcal{S}_{++}^d.
\]
\end{lemma}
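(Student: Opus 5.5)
The plan is a direct chain-rule computation on $\mathcal{R}_{\nu,d}$, followed by matching the resulting terms with the two parts of $\mathcal{A}^{\mathcal{W}}_{\nu,\Sigma}$.

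\textbf{Preliminary facts.} The set $\mathcal{R}_{\nu,d}$ is the complement of the zero set of the polynomial $X\mapsto|X^{\top}X|$. This polynomial is not identically zero because $\nu\geq d$. Hence $\mathcal{R}_{\nu,d}$ is open, and its complement is a proper algebraic variety, so it has Lebesgue measure zero. On $\mathcal{R}_{\nu,d}$ the map $S(X)=X^{\top}X$ takes values in $\mathcal{S}_{++}^d$, so $g_f=f\circ S$ is smooth there.

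\textbf{First derivatives.} The key convention is that the symmetric gradient $\nabla=(\tfrac12(1+\delta_{ij})\partial/\partial S_{ij})$ satisfies
\[
\rd f = \tr\{\nabla f(S)\,\rd S\}
\]
for symmetric increments $\rd S$. Since $\rd S = \rd X^{\top}X + X^{\top}\rd X$ and $\nabla f$ is symmetric,
\[
\rd g_f = 2\,\tr\{X\,\nabla f(S)\,\rd X^{\top}\}.
\]
Hence the rectangular gradient is $\nabla_{\!X} g_f(X) = 2X\,\nabla f(X^{\top}X)$. The drift part of $\mathcal{A}^{\mathrm{OU}}$ then becomes
\[
-\tr\{X^{\top}\nabla_{\!X} g_f\} = -2\,\tr\{S\,\nabla f(S)\}.
\]

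\textbf{Second derivatives (the main step).} I would write $G(X)=2X\,\nabla f(S(X))$ and compute
\[
\tr\{\Sigma\,\nabla_{\!X}^{\top}\nabla_{\!X} g_f\} = \sum_{a,r,s}\Sigma_{rs}\,\partial_{X_{as}}G_{ar}.
\]
The product rule splits this into two terms.

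The first term differentiates the factor $X$ and gives
\[
2\sum_{a,r,s}\Sigma_{rs}\,\delta_{rs}\,(\nabla f)_{ss}\cdot\text{(sum over $a$)} = 2\nu\,\tr\{\Sigma\nabla f(S)\}.
\]
This is exactly where $\nu$ enters and produces the $\nu\Sigma$ drift.

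The second term differentiates $\nabla f(S)$ and gives
\[
2\sum \Sigma_{rs}X_{ak}\,\partial_{X_{as}}(\nabla f)_{kr}, \qquad \partial_{X_{as}}(\nabla f)_{kr}=\tr\{\nabla(\nabla f)_{kr}\,\partial_{X_{as}}S\}=2\sum_b X_{ab}\,\nabla_{sb}(\nabla f)_{kr},
\]
using the symmetric-gradient identity again. Summing over $a$ produces $S_{kb}$, so this term equals
\[
4\sum \Sigma_{rs}S_{kb}\,\nabla_{sb}\nabla_{kr} f = 4\,\tr\{S\nabla\Sigma\nabla f(S)\},
\]
after using the symmetry of $\Sigma$, of $S$, and of the commuting operators $\nabla_{ij}$.

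The main obstacle is this bookkeeping of index placement and the factors $\tfrac12(1+\delta_{ij})$. The factor of $2$ from the symmetric-gradient chain rule must appear once in each derivative, and no stray diagonal terms should be created. I would check the conventions against the computation in Remark~\ref{rem:Stein.Haff.identity}, which handles the same expression $4\,\tr\{S\nabla\Sigma\nabla f\}$. I would also sanity-check the result at $d=\nu=1$, where $g(x)=f(x^2)$ and $-xg'+\Sigma g''$ should equal $2(\Sigma-x^2)f'+4\Sigma x^2 f''$.

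\textbf{Conclusion.} Adding the drift and diffusion pieces gives
\[
2\,\tr\{(\nu\Sigma-S)\nabla f(S)\}+4\,\tr\{S\nabla\Sigma\nabla f(S)\},
\]
which is \eqref{eq:MN.Wishart.pullback.identity}.
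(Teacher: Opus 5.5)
Your proposal is correct and follows essentially the same route as the paper: a direct coordinate chain-rule computation based on $\rd f=\tr\{\nabla f\,\rd S\}$, which gives $\nabla_{\!X}g_f=2X\nabla f(S)$. Differentiating the explicit factor $X$ and summing over $a\in[\nu]$ produces $2\nu\,\tr\{\Sigma\nabla f(S)\}$, and differentiating $\nabla f(S)$ produces $4\,\tr\{S\nabla\Sigma\nabla f(S)\}$; using $|X^{\top}X|$ instead of a single $d\times d$ minor for the null-set claim is an inessential variation. One slip needs fixing: since $\partial_{X_{as}}X_{ak}=\delta_{ks}$, the first product-rule term is $2\sum_{a,r,s}\Sigma_{rs}(\nabla f)_{sr}$, not $2\sum\Sigma_{rs}\delta_{rs}(\nabla f)_{ss}$, which as written keeps only the diagonal entries of $\Sigma$ and does not equal $2\nu\,\tr\{\Sigma\nabla f(S)\}$.
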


For the proof of Proposition~\ref{prop:Wishart.MANOVA} below, we use a standard positive-definite regularization of the quadratic pullback. This regularization is not meant to extend the unregularized Wishart generator to the boundary of $\mathcal{S}_{++}^d$; rather, it gives a rectangular test function whose derivatives are defined on all of $\R^{\nu\times d}$, which is needed for the Taylor expansions along line segments in $\R^{\nu\times d}$. With $f$ as in Lemma~\ref{lem:matrix.normal.Wishart.operator.connection}, for $\varepsilon>0$, set
\[
g_{f, \varepsilon}(X) \leqdef f(X^{\top}X+\varepsilon I_d), \qquad S_{\varepsilon}(X) \leqdef X^{\top}X+\varepsilon I_d, \qquad X\in \R^{\nu\times d}.
\]
Then the regularized identity corresponding to \eqref{eq:MN.Wishart.pullback.identity} is, for every $X\in \R^{\nu\times d}$,
\begin{equation}\label{eq:MN.Wishart.regularized.identity}
\begin{aligned}
\left(\mathcal{A}^{\mathrm{OU}}g_{f, \varepsilon}\right)(X)
&= 2 \, \tr\{(\nu\Sigma - X^{\top}X)\nabla f(S_{\varepsilon}(X))\} + 4 \, \tr\{X^{\top}X \nabla \Sigma \nabla f(S_{\varepsilon}(X))\} \\
&= \left(\mathcal{A}^{\mathcal{W}}_{\nu, \Sigma} \, f\right)(S_{\varepsilon}(X)) + 2\varepsilon \, \tr\{\nabla f(S_{\varepsilon}(X))\} - 4\varepsilon \, \tr\{I_d \nabla \Sigma \nabla f(S_{\varepsilon}(X))\}.
\end{aligned}
\end{equation}

Since $\mathcal{S}_{++}^d$ is convex, every $h\in C_b^1(\mathcal{S}_{++}^d)$ is Lipschitz on $\mathcal{S}_{++}^d$ and admits a unique bounded continuous extension to $\smash{\mathcal{S}_{+}^d}$. In the statement of the proposition below, $\smash{h(\mathfrak{W}_{\nu}^{(n)})}$ is understood through this extension whenever $\smash{\mathfrak{W}_{\nu}^{(n)}}$ is possibly singular. We make use of the connection \eqref{eq:MN.Wishart.regularized.identity} to generalize Theorem~3.1 of \citet{gpr17} to the Wishart setting.

\begin{proposition}[Quantitative Wishart approximation for MANOVA]\label{prop:Wishart.MANOVA}
Let $\nu\in [d, \infty)\cap \N$ satisfy $\nu > 3d-3$, and $\Sigma\in \mathcal{S}_{++}^d$. Let $\mathfrak{W}\sim \mathcal{W}_d(\nu, \Sigma)$, and let $\mathfrak{Z}_1, \ldots, \mathfrak{Z}_n$ be independent and identically distributed (iid) random matrices in $\R^{\nu\times d}$ satisfying $\EE[\mathfrak{Z}_1] = 0_{\nu\times d}$, $\EE[\vecc(\mathfrak{Z}_1^{\top})\vecc(\mathfrak{Z}_1^{\top})^{\top}] = I_{\nu}\otimes \Sigma$, and assume that $\EE[\|\mathfrak{Z}_1\|_F^8] < \infty$. For $n\in \N$, set
\[
\mathfrak{X}^{(n)} \leqdef \frac{1}{\sqrt{n}}\sum_{i=1}^n \mathfrak{Z}_i, \qquad \mathfrak{W}_{\nu}^{(n)} \leqdef (\mathfrak{X}^{(n)})^{\top}\mathfrak{X}^{(n)}.
\]
Let $h\in C_b^{5d}(\mathcal{S}_{++}^d)$ be real-valued and assume that $\smash{\sum_{m=1}^5 \mathcal{M}_m^{\mathcal{D}, \Sigma}(h) < \infty}$, where $\mathcal{M}_m^{\mathcal{D}, \Sigma}(h)$ is defined in \eqref{eq:M.m.D.Sigma}. Define
\[
\alpha_j \leqdef r_j + s_j\sum_{(a,r),(b,s),(c,t)\in [\nu]\times [d]}|\EE[(\mathfrak{Z}_1)_{ar} (\mathfrak{Z}_1)_{bs} (\mathfrak{Z}_1)_{ct}]|, \qquad j\in\{2,3,4,5\},
\]
where
\[
(r_2,r_3,r_4,r_5) \leqdef (2, 54, 1547,0), \qquad (s_2,s_3,s_4,s_5) \leqdef (2, 85, 6891, 629\,799).
\]
Then, for all $n\geq2$,
\[
\begin{aligned}
|\EE[h(\mathfrak{W}_{\nu}^{(n)})] - \EE[h(\mathfrak{W})]|
&\leq \frac{1}{n} \times (\nu d)^8(1 + K) \left(1\vee \max_{(a,r)\in [\nu]\times [d]}\EE[|(\mathfrak{Z}_1)_{ar}|^8]\right) \\
&\qquad\times \left\{\alpha_2\mathcal{M}_2^{\mathcal{D}, \Sigma}(h)+\alpha_3\mathcal{M}_3^{\mathcal{D}, \Sigma}(h)+\alpha_4\mathcal{M}_4^{\mathcal{D}, \Sigma}(h)+\alpha_5\mathcal{M}_5^{\mathcal{D}, \Sigma}(h)\right\},
\end{aligned}
\]
where $K \leqdef 8\pi \|\Sigma^{-1}\|_{\infty}\left\{1+\|\Sigma\|_2^3 \nu d(\nu d+2)(\nu d+4)\right\}$ and $\|\Sigma^{-1}\|_{\infty}\leqdef \max_{r\in[d]}\sum_{s=1}^d |(\Sigma^{-1})_{rs}|$.
\end{proposition}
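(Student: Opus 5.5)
We follow the gamma--chi-square strategy of \citet{gpr17}, transposed to the matrix setting. Let $f = f_h$ be the semigroup solution of the Wishart Stein equation with shape $\nu$ (Theorem~\ref{thm:Stein.solutions.Wishart}; $\nu > 3d-3$ makes it applicable). By Theorem~\ref{thm:smoothness.estimates.wishart}(ii), $\mathcal{M}_m(f)\leq \mathcal{M}_m^{\mathcal{D},\Sigma}(h)/(2m)$ for $m=1,\ldots,5$. Then
\[
\EE[h(\mathfrak{W}_\nu^{(n)})]-\EE[h(\mathfrak{W})] = \EE[\mathcal{A}^{\mathcal{W}}_{\nu,\Sigma} f(\mathfrak{W}_\nu^{(n)})].
\]
Since $\mathfrak{X}^{(n)}$ may be rank deficient, we use the regularized function $g_\varepsilon = g_{f,\varepsilon}$. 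By \eqref{eq:MN.Wishart.regularized.identity},
\[
\EE[\mathcal{A}^{\mathcal{W}}_{\nu,\Sigma} f(S_\varepsilon(\mathfrak{X}^{(n)}))] = \EE[\mathcal{A}^{\mathrm{OU}} g_\varepsilon(\mathfrak{X}^{(n)})] + \OO(\varepsilon),
\]
where the $\OO(\varepsilon)$ term is controlled by $\mathcal{M}_1(f)$ and $\mathcal{M}_2(f)$. We let $\varepsilon\downarrow0$ at the end, using continuity of $h$ on $\mathcal{S}_+^d$ together with the bounds on $f$ and its derivatives (uniform on $\mathcal{S}_{++}^d$, hence extendable to $\mathcal{S}_+^d$). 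This reduces the problem to bounding the matrix normal Stein discrepancy $\EE[\mathcal{A}^{\mathrm{OU}} g_\varepsilon(\mathfrak{X}^{(n)})]$ for a sum of iid matrices. The derivatives of $g_\varepsilon(X)=f(X^\top X+\varepsilon I_d)$ up to order $k$ are obtained by the chain rule. They are sums of terms $D^j f[\cdot]$ with $\lceil k/2\rceil\leq j\leq k$, multiplied by polynomials in $X$ of degree $2j-k$. Thus $|D^k g_\varepsilon(X)|\lesssim \sum_j \mathcal{M}_j(f)(1+\|X\|_F)^{2j-k}$, which is where the polynomial growth and the eighth moments enter.

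Next we run the standard leave-one-out Taylor expansion. Write $\mathfrak{X}^{(n)}_i=\mathfrak{X}^{(n)}-n^{-1/2}\mathfrak{Z}_i$. The drift term $-\EE\tr\{(\mathfrak{X}^{(n)})^\top\nabla g_\varepsilon\}$ equals $-n^{-1/2}\sum_i\EE\,\tr\{\mathfrak{Z}_i^\top\nabla g_\varepsilon(\mathfrak{X}^{(n)})\}$. We expand $\nabla g_\varepsilon(\mathfrak{X}^{(n)})$ around $\mathfrak{X}^{(n)}_i$. The zeroth-order term vanishes because $\EE\mathfrak{Z}_i=0$ and independence holds. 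The first-order term cancels the diffusion term $\tr\{\Sigma\nabla^\top\nabla g_\varepsilon\}$, via the covariance assumption $I_\nu\otimes\Sigma$, after re-expanding $\nabla^2 g_\varepsilon(\mathfrak{X}^{(n)}_i)$ back to $\mathfrak{X}^{(n)}$. A naive stop here leaves third-moment terms of order $n^{-1/2}$. To reach $n^{-1}$, we expand one order further. The $n^{-1/2}$ term has the form $n^{-1/2}\,\EE[\mathfrak{Z}\otimes\mathfrak{Z}\otimes\mathfrak{Z}]$ contracted with $\EE[\nabla^3 g_\varepsilon(\mathfrak{X}^{(n)}_i)]$, so we must show that this third-derivative expectation is itself $\OO(n^{-1/2})$.

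This is the symmetry argument of \citet{gpr17}, and we expect it to be the main obstacle. Since $g_\varepsilon(X)=g_\varepsilon(-X)$, the third derivative $\nabla^3 g_\varepsilon$ is odd. If $\mathfrak{X}^{(n)}_i$ were exactly matrix normal, and hence symmetric, its expectation would vanish. We therefore bound $\EE[\nabla^3 g_\varepsilon(\mathfrak{X}^{(n)}_i)]-\EE[\nabla^3 g_\varepsilon(\mathfrak{G})]$, with $\mathfrak{G}\sim\mathcal{N}_{\nu\times d}(0,I_\nu\otimes\Sigma)$, by a second application of the matrix normal Stein method of \citet{GauntOuimetRichards2026}. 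The test function is now a component of $\nabla^3 g_\varepsilon$. This test function has only polynomial-growth derivatives, not bounded ones, so we need the matrix normal Stein bound for such functions; its solution bounds involve $\Sigma^{-1}$ and Gaussian moments. This is the source of the constant $K=8\pi\|\Sigma^{-1}\|_\infty\{1+\|\Sigma\|_2^3\nu d(\nu d+2)(\nu d+4)\}$. That inner step consumes two more derivatives, which is why derivatives up to order five of $f$ (hence $\mathcal{M}_5^{\mathcal{D},\Sigma}(h)$ and $h\in C_b^{5d}$) appear, and why the third-moment sum multiplies the $s_j$.

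Finally, we collect the terms. The fourth-order Taylor remainders give direct $n^{-1}$ contributions with coefficients $r_j$. The symmetry-corrected third-order terms give $n^{-1/2}\cdot n^{-1/2}$ with coefficients $s_j$ times the absolute third-moment sum. All moments of $\mathfrak{X}^{(n)}$ and $\mathfrak{Z}_1$ up to order eight are bounded by $(\nu d)^8\bigl(1\vee\max\EE|(\mathfrak{Z}_1)_{ar}|^8\bigr)$ via Rosenthal/H\"older-type estimates. Substituting $\mathcal{M}_j(f)\leq\mathcal{M}_j^{\mathcal{D},\Sigma}(h)/(2j)$ then gives the stated bound, with the explicit numerical constants coming from bookkeeping of the chain-rule combinatorics.
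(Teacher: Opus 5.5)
Your proposal is correct and follows essentially the same route as the paper. The shared steps are: the regularized pullback $g_{h,\varepsilon}(X)=f_h(X^{\top}X+\varepsilon I_d)$ to the matrix normal Ornstein--Uhlenbeck operator, Fa\`a di Bruno growth bounds, a leave-one-out Taylor expansion, oddness of $\partial^3 g_{h,\varepsilon}$ combined with a second matrix normal Stein argument (whose third derivatives are bounded via Gaussian integration by parts, producing $K$ and the fifth-order seminorms), moment bounds, and finally $\mathcal{M}_m(f_h)\leq \mathcal{M}_m^{\mathcal{D},\Sigma}(h)/(2m)$ and $\varepsilon\downarrow 0$.

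The only differences are in bookkeeping. The paper expands the diffusion term around the leave-one-out point, so its first-order term vanishes by independence. It also re-expands the third-order term back to $\mathfrak{X}^{(n)}$ before the inner Stein step, so that step uses the exact covariance $I_\nu\otimes\Sigma$. Your choices, re-expanding toward $\mathfrak{X}^{(n)}$ and applying the inner step directly to $\mathfrak{X}^{(n)}_i$, also work. They cost one extra expansion, a harmless $\OO(n^{-1})$ covariance-mismatch term, and a different accounting of the constants $r_j,s_j$.
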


\subsection{Quantitative multivariate Satterthwaite approximation}\label{sec:Wishart.approximation.Satterthwaite}

We now turn to the multivariate Satterthwaite approximation, in which the distribution of a positive linear combination of independent Wishart random matrices is approximated by a single Wishart distribution with matching first two moments. Such linear combinations of Wishart random matrices arise in many problems in multivariate statistics: MANOVA when the usual homoscedastic, independent-error covariance structure is relaxed but can be expressed as a Kronecker product \citep{TanGupta1983,NaikRao2001,Mortarino2005}, matrix quadratic forms \citep{Khatri1966,SingullKoski2012}, and robustness studies involving multivariate normal mixture distributions \citep{Tan1980}. This is the multivariate analog of the classical Satterthwaite approximation introduced by \citet{Satterthwaite1941,Satterthwaite1946}, in which the distribution of a linear combination of chi-square random variables is approximated by a single gamma distribution with matching first two moments, providing a tractable sidestep to an otherwise difficult to handle distribution.

\citet{Khatri1971} developed exact series representations, whose truncations yield approximations, for the density function of positive definite quadratic forms in normal vectors. In the central case, these quadratic forms are equivalent to positive linear combinations of independent rank-one Wishart random matrices with a common scale matrix and, by grouping equal coefficients, to sums of independent Wishart random matrices with integer shape parameters and proportional scale matrices. While such truncated series approximations appear to be more accurate than a Satterthwaite approximation, they do not directly apply to the heterogeneous setting considered below, where $\mathfrak{G}_j\sim \mathcal{W}_d(\alpha_j, \Sigma_j)$ may have arbitrary integer shape parameters and non-proportional scale matrices. Furthermore, theoretically assessing the closeness between the true distribution and the truncated series representation remains an open question. On the other hand, Stein's method is tailored to treating this problem in the case of approximation by a single Wishart distribution.

Let $N\in \N$, $\alpha_1, \ldots, \alpha_N\in \N$, and let $\mathfrak{G}_1, \ldots, \mathfrak{G}_N$ be independent random matrices such that $\mathfrak{G}_j\sim \mathcal{W}_d(\alpha_j, \Sigma_j)$ for some fixed $\Sigma_1, \ldots, \Sigma_N\in \mathcal{S}_{++}^d$. Here, for integer $\alpha_j$, the law $\mathcal{W}_d(\alpha_j, \Sigma_j)$ is understood through the standard Gaussian representation, possibly singular when $\alpha_j<d$. Define
\[
\mathfrak{T} \leqdef \sum_{j=1}^N \mathfrak{G}_j,
\]
such that
\[
\EE[\mathfrak{T}] = \sum_{j=1}^N \alpha_j\Sigma_j \equiv \overline{\Sigma}, \qquad \Var\{\vecp(\mathfrak{T})\} = 2\sum_{j=1}^N \alpha_j\mathcal{V}(\Sigma_j),
\]
where we use the notation
\[
\mathcal{V}(W) \leqdef B_d^{\top} (W\otimes W) B_d,
\]
and $B_d$ denotes the $d^2 \times d(d + 1)/2$ transition matrix such that $\mathrm{vecp}(X) = B_d^{\top} \mathrm{vec}(X), ~X\in \mathcal{S}^d$; see, e.g., \citet[Definition~2.5.1~and~Eq.~(1.2.11)]{GuptaNagar2000}. Introduce the Wishart random matrix $\smash{\mathfrak{W}_{\mathrm{Sat}}\sim \mathcal{W}_d(\nu, \widetilde{\Sigma})}$ matching the first two moments of $\mathfrak{T}$:
\begin{align}
\EE[\mathfrak{W}_{\mathrm{Sat}}] &= \nu\widetilde{\Sigma} = \overline{\Sigma}, \notag \\
\Var\{\vecp(\mathfrak{W}_{\mathrm{Sat}})\} &= \frac{2}{\nu}\mathcal{V}(\overline{\Sigma}) = 2\sum_{j=1}^N \alpha_j\mathcal{V}(\Sigma_j). \label{eq:moment-matching-2}
\end{align}
Matching the first moment yields $\widetilde{\Sigma} = \nu^{-1}\overline{\Sigma}$, and it remains to find $\nu$ by matching the second moment. However, in general, Eq.~\eqref{eq:moment-matching-2} is an overdetermined matrix equation for the single scalar $\nu$ and need not admit an exact solution. An exact scalar solution exists in the proportional-scale case $\Sigma_j = \lambda_j\Sigma$ with $\lambda_j > 0$ for all $j\in [N]$; then $\nu$ is uniquely determined by
\[
m_1 \leqdef \sum_{j=1}^N \lambda_j\alpha_j, \qquad m_2 \leqdef \sum_{j=1}^N \lambda_j^2\alpha_j, \qquad \nu = \frac{m_1^2}{m_2}, \qquad \widetilde{\Sigma} = \frac{m_2}{m_1} \Sigma.
\]

On the other hand, when the condition $\Sigma_j = \lambda_j\Sigma$ does not hold, alternative methods are required to circumvent solving~\eqref{eq:moment-matching-2} for $\nu$. Using the Stein equation associated with the approximating law $\smash{\mathcal{W}_d(\nu, \widetilde{\Sigma}_{\nu})}$, where $\smash{\widetilde{\Sigma}_{\nu} = \nu^{-1} \overline{\Sigma}}$ and $\nu$ is the proposed effective degrees of freedom, we can motivate a particular choice of $\nu$ by comparing the upper bounds on the discrepancy in~\eqref{eq:heterogeneous.Satterthwaite.bound.nu}. Some of these methods are presented and numerically compared in Section~\ref{sec:supp.Satterthwaite-details} of the \hyperref[supp]{Supplementary material}.

\begin{proposition}[Quantitative multivariate Satterthwaite approximation]\label{prop:Wishart.Satterthwaite}
In the setting described above, assume that we have $\sum_{j=1}^N \alpha_j \geq d + 1$. Also, fix $\nu > 3d-3$ and set $\smash{\mathfrak{W}_{\nu} \sim \mathcal{W}_d(\nu, \widetilde{\Sigma}_{\nu})}$ with $\smash{\widetilde{\Sigma}_{\nu} \leqdef \nu^{-1}\overline{\Sigma}}$, so that, in particular, $\smash{\EE[\mathfrak{W}_{\nu}] = \overline{\Sigma} = \EE[\mathfrak{T}]}$. Let $h\in C_b^{2d}(\mathcal{S}_{++}^d)$ and assume that $\smash{\mathcal{M}_1^{\mathcal{D}, \widetilde{\Sigma}_{\nu}}(h) + \mathcal{M}_2^{\mathcal{D}, \widetilde{\Sigma}_{\nu}}(h) < \infty}$ in Theorem~\ref{thm:smoothness.estimates.wishart}. Then,
\begin{equation}\label{eq:heterogeneous.Satterthwaite.bound.nu}
\big|\EE[h(\mathfrak{T})]-\EE[h(\mathfrak{W}_{\nu})]\big| \leq \frac{d + 1}{2}\mathcal{M}_2^{\mathcal{D}, \widetilde{\Sigma}_{\nu}}(h)\sum_{j=1}^N \alpha_j\tr(\Sigma_j)\|\Sigma_j-\widetilde{\Sigma}_{\nu}\|_F.
\end{equation}
\end{proposition}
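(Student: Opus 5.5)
Apply Stein's method with the Wishart Stein equation for $\mathcal{W}_d(\nu,\widetilde{\Sigma}_{\nu})$. Let $f = f_h$ be the semigroup solution of Theorem~\ref{thm:Stein.solutions.Wishart} with $\Sigma$ replaced by $\widetilde{\Sigma}_{\nu}$. Then
\[
\EE[h(\mathfrak{T})]-\EE[h(\mathfrak{W}_{\nu})] = \EE\big[2\,\tr\{(\overline{\Sigma}-\mathfrak{T})\nabla f(\mathfrak{T})\} + 4\,\tr\{\mathfrak{T}\nabla\widetilde{\Sigma}_{\nu}\nabla f(\mathfrak{T})\}\big],
\]
using $\nu\widetilde{\Sigma}_{\nu}=\overline{\Sigma}$. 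The assumption $\sum_j\alpha_j\ge d+1$ makes $\mathfrak{T}$ almost surely positive definite, so evaluating the Stein equation at $\mathfrak{T}$ is legitimate.

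The key step is to write $\overline{\Sigma}-\mathfrak{T}=\sum_j(\alpha_j\Sigma_j-\mathfrak{G}_j)$ and handle each summand through the exact Stein identity for $\mathfrak{G}_j\sim\mathcal{W}_d(\alpha_j,\Sigma_j)$, conditionally on $\mathfrak{T}_{-j}\leqdef\mathfrak{T}-\mathfrak{G}_j$. Applied to $S\mapsto f(S+\mathfrak{T}_{-j})$, Corollary~\ref{cor:Stein.Wishart.generator.step.1} (or, for $\alpha_j<d$, the Stein--Haff identity in Remark~\ref{rem:Stein.Haff.identity} via the Gaussian representation $\mathfrak{G}_j=\sum_{k}\bb{x}_k\bb{x}_k^{\top}$ with Gaussian integration by parts) gives
\[
\EE\big[2\,\tr\{(\alpha_j\Sigma_j-\mathfrak{G}_j)\nabla f(\mathfrak{T})\}\big] = -\EE\big[4\,\tr\{\mathfrak{G}_j\nabla\Sigma_j\nabla f(\mathfrak{T})\}\big].
\]
Summing over $j$ and using $\mathfrak{T}=\sum_j\mathfrak{G}_j$, the right-hand side of the transfer identity collapses to
\[
4\sum_{j=1}^N \EE\big[\tr\{\mathfrak{G}_j\nabla(\widetilde{\Sigma}_{\nu}-\Sigma_j)\nabla f(\mathfrak{T})\}\big].
\]
Thus only second derivatives of $f$ appear, each paired with the mismatch $\Sigma_j-\widetilde{\Sigma}_{\nu}$.

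To bound each term, write $\tr\{G\nabla A\nabla f\}=\sum_{i,k,\ell,m} G_{ik}A_{\ell m}\nabla_{k\ell}\nabla_{mi}f$. Diagonalize $G=\sum_p\lambda_p\bb{u}_p\bb{u}_p^{\top}$ with $\lambda_p\ge0$, so the expression becomes $\sum_p\lambda_p\,\bb{u}_p^{\top}(\nabla A\nabla f)\bb{u}_p$. Each such quadratic form is a bilinear pairing of $D^2f$ in symmetric directions built from $\bb{u}_p$ and $A$. Symmetrizing, as in Remark~\ref{rem:Stein.Haff.identity}, should give
\[
|\tr\{G\nabla A\nabla f\}|\le c_d\,\tr(G)\,\|A\|_F\,\mathcal{M}_2(f).
\]
The aim is $c_d=(d+1)/4$, where the $(d+1)$ comes from the symmetric-gradient factors $\tfrac12(1+\delta_{ij})$ and the counting seen in Remark~\ref{rem:Stein.Haff.identity}. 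Then Theorem~\ref{thm:smoothness.estimates.wishart}(ii) with $m=2$ gives $\mathcal{M}_2(f)\le\frac14\mathcal{M}_2^{\mathcal{D},\widetilde{\Sigma}_{\nu}}(h)$. Taking $\EE[\tr\mathfrak{G}_j]=\alpha_j\tr(\Sigma_j)$ and collecting constants, $4\cdot c_d\cdot\frac14$, should yield the factor $\frac{d+1}{2}$ in \eqref{eq:heterogeneous.Satterthwaite.bound.nu}.

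I expect the main obstacle to be the constant in the trace inequality. Getting exactly $(d+1)/2$ requires care with the symmetric gradient convention. It also requires checking that the operators $\bb{u}\bb{u}^{\top}$ and $A$ can be recast as unit-Frobenius symmetric directions at the correct cost. A secondary technical point is justifying the conditional Stein identity, in particular the integrability conditions of \eqref{eq:C.2.A.W}. These follow from the boundedness of $\nabla f$ and $\nabla^2 f$ supplied by Theorem~\ref{thm:smoothness.estimates.wishart} for $m=1,2$, which is exactly where the hypothesis $\mathcal{M}_1^{\mathcal{D},\widetilde{\Sigma}_{\nu}}(h)<\infty$ is used.
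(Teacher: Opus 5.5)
Your reduction matches the paper's. You use the Stein equation for $\mathcal{W}_d(\nu,\widetilde{\Sigma}_{\nu})$, then an exact integration-by-parts identity for each summand. This collapses the discrepancy to $4\sum_j\EE[\tr\{\mathfrak{G}_j\nabla(\widetilde{\Sigma}_{\nu}-\Sigma_j)\nabla f_h(\mathfrak{T})\}]$, and Theorem~\ref{thm:smoothness.estimates.wishart}(ii) with $m=2$ finishes. The one difference in the reduction is minor. The paper integrates by parts one Gaussian vector at a time, writing $\mathfrak{G}_j=\sum_r\bb{X}_{j,r}\bb{X}_{j,r}^{\top}$ and leaving out $\bb{X}_{j,r}$; it uses $\alpha_{\bullet}\geq d+1$ to keep $\mathfrak{T}_{j,r}\in\mathcal{S}_{++}^d$. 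This treats all integer $\alpha_j$ uniformly. Your conditional use of Corollary~\ref{cor:Stein.Wishart.generator.step.1} needs the split $\alpha_j\geq d$ versus $\alpha_j<d$, plus a check that $S\mapsto f_h(S+\mathfrak{T}_{-j})$ lies in $C^2_{\mathcal{A}^{\mathcal{W}}}$. Both routes give the same expression, since $\sum_r\bb{X}_{j,r}^{\top}M\bb{X}_{j,r}=\tr(\mathfrak{G}_jM)$.

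The genuine gap is the last step. You only say it ``should give'' a bound, and your target constant is wrong.
\begin{itemize}
\item The inequality $|\tr\{G\nabla A\nabla f\}|\leq\frac{d+1}{4}\tr(G)\|A\|_F\mathcal{M}_2(f)$ is false. For $d=1$ the left side is $|GAf''|$ and $\mathcal{M}_2(f)=\|f''\|_\infty$, so the best constant is $1=(d+1)/2$.
\item Your bookkeeping is also inconsistent: $4\cdot\frac{d+1}{4}\cdot\frac14=\frac{d+1}{4}$, not $\frac{d+1}{2}$.
\end{itemize}
The constant you need, and can prove, is $c_d=(d+1)/2$. The missing ingredient is the polarization identity
\[
\bb{x}^{\top}\nabla B\nabla f(S)\bb{x}=\frac14\sum_{a,b=1}^dB_{ab}\,D^2f(S)[H^{(b)}_{\bb{x}},H^{(a)}_{\bb{x}}],\qquad H^{(a)}_{\bb{x}}\leqdef\bb{x}\bb{e}_a^{\top}+\bb{e}_a\bb{x}^{\top}.
\]
It follows from $D^2f[E,F]=\sum_{i,j,k,\ell}E_{ij}F_{k\ell}\nabla_{ij}\nabla_{k\ell}f$ and $\nabla_{ij}=\nabla_{ji}$: each of the four expanded terms equals $\sum_{i,k}x_ix_k\nabla_{ib}\nabla_{ka}f$.

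From there the argument runs as follows.
\begin{enumerate}
\item Bound $|D^2f[H^{(b)}_{\bb{x}},H^{(a)}_{\bb{x}}]|\leq\mathcal{M}_2(f)\|H^{(b)}_{\bb{x}}\|_F\|H^{(a)}_{\bb{x}}\|_F$.
\item Apply Cauchy--Schwarz in $(a,b)$.
\item Use $\|H^{(a)}_{\bb{x}}\|_F^2=2\|\bb{x}\|_2^2+2x_a^2$, so $\sum_a\|H^{(a)}_{\bb{x}}\|_F^2=2(d+1)\|\bb{x}\|_2^2$. This is where $d+1$ comes from, not from the factors $\frac12(1+\delta_{ij})$.
\item This yields $|\bb{x}^{\top}\nabla B\nabla f\bb{x}|\leq\frac{d+1}{2}\mathcal{M}_2(f)\|B\|_F\|\bb{x}\|_2^2$.
\item Your spectral decomposition of $\mathfrak{G}_j$, with $\lambda_p\geq0$, then gives $c_d=(d+1)/2$.
\end{enumerate}
With $\EE[\tr\mathfrak{G}_j]=\alpha_j\tr(\Sigma_j)$, the constants combine as $4\cdot\frac{d+1}{2}\cdot\frac14=\frac{d+1}{2}$, which is the stated bound.
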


\begin{remark}
While the proof of this result relies on Gaussian integration by parts for each $\mathfrak{G}_j$, the results from Section~\ref{sec:main.results} are essential in the approximation step to derive the upper bound. Indeed, in general, the selected $\nu$ is not an integer.
\end{remark}

\begin{remark}\label{rem:formal.convergence-lower.order.distance}
Since $\widetilde{\Sigma}_{\nu} = \nu^{-1}\overline{\Sigma}\to 0_{d\times d}$ as $\nu\to\infty$, the operator $\mathcal{D}_{t, \widetilde{\Sigma}_{\nu}} = \nabla_{\!W}\mathrm{adj}(I_d-2\widetilde{\Sigma}_{\nu,t}\nabla_{\!W})$ converges formally to the ordinary symmetric gradient $\nabla_{\!W}$. Consequently, although the bound in Proposition~\ref{prop:Wishart.Satterthwaite} is stated for test functions in $C_b^{2d}(\mathcal{S}_{++}^d)$ through the seminorm $\smash{\mathcal{M}_2^{\mathcal{D}, \widetilde{\Sigma}_{\nu}}(h)}$, in the regime $\nu\to\infty$ this seminorm becomes asymptotically equivalent to the usual second-order seminorm $\mathcal{M}_2(h)$. Thus the bound asymptotically behaves like a control in a smooth distance based on bounded second-order derivatives.
\end{remark}

\subsection{De Bruijn identities and logarithmic Sobolev inequalities for the Wishart measure}\label{sec:Wishart.deBruijn.LSI}

We begin this section by obtaining a local De Bruijn identity for the Wishart measure, which we state in Proposition~\ref{prop:Wishart.deBruijn}. Our result provides a natural generalization of the local De Bruijn identity for the gamma measure of Theorem~1 of \citet{ArrasSwan2017} that was proved under minimal conditions. Previously, in the context of the Gamma calculus of \citet{BakryEmery1985} and \citet{BakryGentilLedoux2014}, a local De Bruijn identity had been known to to hold under a density assumption; see Proposition~5.2.2 of \citet{bakry96}.

Fix $\alpha\in (d-1, \infty)$ and $\Sigma\in \mathcal{S}_{++}^d$, and write $\xi = \mathcal{W}_d(\alpha, \Sigma)$ for the stationary law from Proposition~\ref{prop:extension.Luk.1994.Lemma.2.4}. Let $\mu$ be a probability measure on $\mathcal{S}_{++}^d$, which we suppose is absolutely continuous with respect to the Wishart measure $\xi$ with Radon--Nikodym density $g\in C_b^2(\mathcal{S}_{++}^d)$. That is,
\[
\mu(\rd X) = g(X) \, \xi(\rd X).
\]
For $t\geq 0$, define the evolved law and its density with respect to $\xi$ by
\[
\mu_t \leqdef \mu \mathcal{P}_t^{\mathcal{W}}, \qquad g_t \leqdef \frac{\rd \mu_t}{\rd \xi}.
\]
Here the measure $\mu_t$ is the Wishart generalisation of the law of the random variable $X_{\tau}$ given in Definition 1 of \citet{ArrasSwan2017}. It should, however, be noted that Theorem~1 of \citet{ArrasSwan2017} is stated without a density assumption. We also define the relative entropy and the Wishart Fisher information by
\[
\begin{aligned}
\mathrm{Ent}(\mu_t \, \| \, \xi) & \leqdef \int_{\mathcal{S}_{++}^d} g_t(X)\log(g_t(X)) \, \xi(\rd X), \\
J_{\mathcal{W}, \Sigma}(\mu_t \mid \xi) & \leqdef \int_{\mathcal{S}_{++}^d} \Gamma_{\mathcal{W}, \Sigma}(\log(g_t))(X) \, \mu_t(\rd X),
\end{aligned}
\]
\citep[see, e.g.,][Eq.~(5.1.5) and (5.1.6)]{BakryGentilLedoux2014} where the Wishart {\it carr\'e du champ} operator $\Gamma_{\mathcal{W}, \Sigma}$ is formally defined, for every $X\in \mathcal{S}_{++}^d$, by
\[
\begin{aligned}
\Gamma_{\mathcal{W}, \Sigma}(\varphi)(X) & \leqdef \Gamma_{\mathcal{W}, \Sigma}(\varphi, \varphi)(X), \\
\Gamma_{\mathcal{W}, \Sigma}(\varphi, \psi)(X) & \leqdef \frac{1}{2} \left\{\mathcal{A}^{\mathcal{W}}(\varphi\psi)(X) - \varphi(X)\mathcal{A}^{\mathcal{W}}\psi(X) - \psi(X)\mathcal{A}^{\mathcal{W}}\varphi(X)\right\}.
\end{aligned}
\]

Let $\varphi, \psi\in C_b^2(\mathcal{S}_{++}^d)$. We now use the extended generator identity \eqref{eq:Wishart.process.generator} to obtain a useful representation of the Wishart carr\'e du champ operator that we will employ in our proof. From the extended generator identity \eqref{eq:Wishart.process.generator} and the Leibniz rule, the drift term for $f = \varphi \psi$ is
\[
2 \, \tr\{(\alpha \Sigma - X)\nabla(\varphi\psi)(X)\} = 2 \, \varphi(X) \, \tr\{(\alpha \Sigma - X)\nabla \psi(X)\} + 2 \, \psi(X) \, \tr\{(\alpha \Sigma - X)\nabla \varphi(X)\},
\]
and the diffusion term is
\[
\begin{aligned}
4 \, \tr\{X \nabla \Sigma \nabla(\varphi\psi)(X)\}
&= 4 \, \varphi(X) \, \tr\{X \nabla \Sigma \nabla \psi(X)\} + 4 \, \psi(X) \, \tr\{X \nabla \Sigma \nabla \varphi(X)\} \\
&\qquad + 8 \, \tr\{X \nabla \varphi(X)\Sigma \nabla \psi(X)\}.
\end{aligned}
\]
Here the two cross terms in the second-order Leibniz rule are equal by the symmetry of $\nabla\varphi(X)$ and $\nabla\psi(X)$. Consequently,
\begin{equation}\label{eq:Wishart.product.rule}
\mathcal{A}^{\mathcal{W}}(\varphi\psi)(X) = \varphi(X)\mathcal{A}^{\mathcal{W}}\psi(X) + \psi(X)\mathcal{A}^{\mathcal{W}}\varphi(X) + 8 \, \tr\{X \nabla \varphi(X)\Sigma \nabla \psi(X)\},
\end{equation}
and thus,
\[
\Gamma_{\mathcal{W}, \Sigma}(\varphi, \psi)(X) = 4 \, \tr\{X \nabla \varphi(X)\Sigma \nabla \psi(X)\}.
\]
In particular, for all $X\in \mathcal{S}_{++}^d$, $\Gamma_{\mathcal{W}, \Sigma}(\varphi)(X) = 4 \, \|X^{1/2}\nabla\varphi(X)\Sigma^{1/2}\|_F^2 \geq 0$, and
\[
J_{\mathcal{W}, \Sigma}(\mu_t \mid \xi) = \int_{\mathcal{S}_{++}^d} 4 \, \tr\{X \nabla \log(g_t)(X)\Sigma \nabla \log(g_t)(X)\} \, \mu_t(\rd X) \geq 0.
\]

\begin{proposition}[Local De Bruijn identity for the Wishart measure]\label{prop:Wishart.deBruijn}
Assume that there exist constants $m_g,M_g\in (0, \infty)$ such that $m_g \leq g(X)\leq M_g$ for all $X\in \mathcal{S}_{++}^d$. Then, for every $t > 0$, we have $g_t = \mathcal{P}_t^{\mathcal{W}} g$ and
\begin{equation}\label{eq:Wishart.deBruijn}
\frac{\rd}{\rd t}\mathrm{Ent}(\mu_t \, \| \, \xi) = - J_{\mathcal{W}, \Sigma}(\mu_t \mid \xi).
\end{equation}
In particular, the map $t\mapsto \mathrm{Ent}(\mu_t \, \| \, \xi)$ is nonincreasing. Equivalently, after the change of variable $\tau = e^{-2t}$ and the definition $\mu_{\tau} \leqdef \mu \mathcal{P}^{\mathcal{W}}_{-(1/2)\log(\tau)}$, we have
\begin{equation}\label{eq:Wishart.deBruijn.tau}
\frac{\rd}{\rd \tau}\mathrm{Ent}(\mu_{\tau} \, \| \, \xi) = \frac{1}{2\tau} J_{\mathcal{W}, \Sigma}(\mu_{\tau} \mid \xi), \qquad \tau\in (0, 1).
\end{equation}
\end{proposition}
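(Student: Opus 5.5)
The plan is to first identify $g_t$ through reversibility of the Wishart semigroup. Then differentiate the entropy along the semigroup and convert the resulting generator term into the Fisher information using the product rule \eqref{eq:Wishart.product.rule}.

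\textbf{Step 1: $g_t = \mathcal{P}_t^{\mathcal{W}} g$.} By Proposition~\ref{prop:extension.Luk.1994.Lemma.2.4}, the kernel $P_t(W,\rd Y)$ has the density $f^{\mathcal{W}}_{\alpha,\Sigma_t,e^{-2t}\Sigma_t^{-1}W}(Y)$. Multiply it by the density of $\xi$ at $W$. Up to constants this gives
\[
|W|^{\alpha/2-(d+1)/2}|Y|^{\alpha/2-(d+1)/2}\,\etr\{-\Sigma^{-1}(W+Y)/(2(1-e^{-2t}))\}\;{}_0F_1\big(\alpha/2;\,e^{-2t}\Sigma_t^{-1}W\Sigma_t^{-1}Y/4\big).
\]
This follows from \eqref{eq:noncentral.Wishart.density} after collecting the exponential factors $\etr(-\Sigma^{-1}W/2)\,\etr(-e^{-2t}\Sigma_t^{-1}W/2)\,\etr(-\Sigma_t^{-1}Y/2)$. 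The ${}_0F_1$ factor depends only on the eigenvalues of its argument, and $\Sigma_t^{-1}W\Sigma_t^{-1}Y$ and $\Sigma_t^{-1}Y\Sigma_t^{-1}W$ are similar. Hence the measure $\xi(\rd W)P_t(W,\rd Y)$ is symmetric in $(W,Y)$, so the semigroup is reversible with respect to $\xi$.

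Consequently, for bounded $\varphi$,
\[
\int\varphi\,\rd\mu_t=\int \mathcal{P}_t^{\mathcal{W}}\varphi\, g\,\rd\xi=\int\varphi\,\mathcal{P}_t^{\mathcal{W}}g\,\rd\xi,
\]
so $g_t=\mathcal{P}_t^{\mathcal{W}}g$. It also follows that $m_g\le g_t\le M_g$, so $\log g_t$ is bounded.

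\textbf{Step 2: time derivative.} For $t>0$, the function $g_t(W)=\int g(Y)P_t(W,\rd Y)$ is smooth in $(t,W)$. The kernel is an explicit analytic function of $(t,W)$ with Gaussian-type decay in $Y$, so derivatives can be passed under the integral. The backward equation $\partial_t g_t=\mathcal{A}^{\mathcal{W}}g_t$ then follows from the semigroup property together with the extended-generator/Itô characterization of Proposition~\ref{prop:generator}. Alternatively, one can verify directly that the kernel solves this PDE.

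Next I differentiate $\mathrm{Ent}(\mu_t\,\|\,\xi)=\int g_t\log g_t\,\rd\xi$ under the integral on a compact interval $[t_0,t_1]\subset(0,\infty)$. This gives
\[
\frac{\rd}{\rd t}\mathrm{Ent}=\int \mathcal{A}^{\mathcal{W}}g_t\,(1+\log g_t)\,\rd\xi .
\]
Since $g_t\in C^2_{\mathcal{A}^{\mathcal{W}}}$, Corollary~\ref{cor:Stein.Wishart.generator.step.1} gives $\int \mathcal{A}^{\mathcal{W}}g_t\,\rd\xi=0$.

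\textbf{Step 3: integration by parts.} Applying Corollary~\ref{cor:Stein.Wishart.generator.step.1} to $f=\varphi\psi$ with $\varphi=\log g_t$, $\psi=g_t$, and using \eqref{eq:Wishart.product.rule}, gives
\[
\int(\varphi\mathcal{A}^{\mathcal{W}}\psi+\psi\mathcal{A}^{\mathcal{W}}\varphi)\,\rd\xi=-2\int\Gamma_{\mathcal{W},\Sigma}(\varphi,\psi)\,\rd\xi .
\]
Reversibility from Step 1, differentiated at time $0$, gives $\int\varphi\mathcal{A}^{\mathcal{W}}\psi\,\rd\xi=\int\psi\mathcal{A}^{\mathcal{W}}\varphi\,\rd\xi$. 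Combining the two yields
\[
\int\log g_t\,\mathcal{A}^{\mathcal{W}}g_t\,\rd\xi=-\int\Gamma_{\mathcal{W},\Sigma}(\log g_t,g_t)\,\rd\xi .
\]
By the chain rule, $\Gamma_{\mathcal{W},\Sigma}(\log g_t,g_t)=g_t\,\Gamma_{\mathcal{W},\Sigma}(\log g_t)$. Since $g_t\,\rd\xi=\rd\mu_t$, this gives \eqref{eq:Wishart.deBruijn}. Monotonicity follows because $\Gamma_{\mathcal{W},\Sigma}\ge0$.

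The $\tau$-form \eqref{eq:Wishart.deBruijn.tau} follows from the chain rule with $t=-\tfrac12\log\tau$, since $\rd t/\rd\tau=-1/(2\tau)$.

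\textbf{Main obstacle.} The delicate part is analytic: justifying differentiation under $\int\cdot\,\rd\xi$ in $t$, and checking that $g_t$, $\log g_t$ and $g_t\log g_t$ lie in $C^2_{\mathcal{A}^{\mathcal{W}}}$. This requires integrable bounds on $\nabla g_t$ and $\nabla^2 g_t$, weighted by $\|W\|_F$, near $\partial\mathcal{S}^d_+$ and at infinity, uniformly for $t\in[t_0,t_1]$.

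I would first attempt these bounds using only $g\in C_b^2$. Differentiating $\mathcal{P}_t^{\mathcal{W}}g(W)=\EE[g(\Sigma_t^{1/2}\mathfrak{S}_t\Sigma_t^{1/2})]$ in $W$ through the Laplace-transform representation (Remark~\ref{rem:Wishart.mgf}), combined with the moment bounds on $\mathfrak{S}_t$, should give polynomial growth in $\|W\|_F$, which $\xi$ integrates. The lower bound $g_t\ge m_g$ keeps $\log g_t$ and its derivatives under control.
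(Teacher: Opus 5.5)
Your proposal is correct and follows essentially the same route as the paper: detailed balance $p_t(W,Y)f_{\xi}(W)=p_t(Y,W)f_{\xi}(Y)$ from the explicit kernel gives $g_t=\mathcal{P}_t^{\mathcal{W}}g$. The detailed balance comes from collapsing the exponentials to $\etr\{-\Sigma_t^{-1}(W+Y)/2\}$ and using the $AB\leftrightarrow BA$ invariance of ${}_0F_1$. You then use the backward equation, differentiate the entropy, use $\int\mathcal{A}^{\mathcal{W}}g_t\,\rd\xi=0$, and combine the product rule \eqref{eq:Wishart.product.rule} with the symmetry of $\mathcal{A}^{\mathcal{W}}$ in $L^2(\xi)$ to produce $-J_{\mathcal{W},\Sigma}(\mu_t\mid\xi)$.

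The differences are cosmetic. You cite the forward half of Corollary~\ref{cor:Stein.Wishart.generator.step.1} where the paper cites invariance plus Dynkin's formula, which is how that corollary is proved anyway. You aim for polynomial-growth bounds on $\nabla g_t$ and $\nabla^2 g_t$, whereas the paper simply invokes bounded derivatives on compact time intervals and the domination $|\mathcal{A}^{\mathcal{W}}g_t(X)|\le C_I(1+\tr(X))$.
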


\begin{remark}[On the assumptions in Proposition~\ref{prop:Wishart.deBruijn}]\label{rem:Wishart.deBruijn.assumptions}
The boundedness assumption in Proposition~\ref{prop:Wishart.deBruijn} is a simple sufficient condition for the two analytic justifications needed in the proof. More precisely, the proof only requires the following two assumptions.
\begin{itemize}\setlength\itemsep{0em}
\item[\textbf{(A)}]\phantomsection\label{ass:Wishart.deBruijn.differentiation} For every $t > 0$, all integrals below are finite and differentiation under the integral sign in the entropy functional is valid, namely
\[
\frac{\rd}{\rd t}\mathrm{Ent}(\mu_t \, \| \, \xi) = \int_{\mathcal{S}_{++}^d} (1 + \log(g_t(X))) \, \mathcal{A}^{\mathcal{W}}g_t(X) \, \xi(\rd X).
\]
\item[\textbf{(B)}]\phantomsection\label{ass:Wishart.deBruijn.integration.by.parts} For every $t > 0$, the functions $g_t$ and $\log(g_t)$ satisfy the two integration-by-parts identities:
\[
\int_{\mathcal{S}_{++}^d} \mathcal{A}^{\mathcal{W}}g_t(X) \, \xi(\rd X) = 0
\]
and
\[
\int_{\mathcal{S}_{++}^d} \log(g_t(X)) \, \mathcal{A}^{\mathcal{W}}g_t(X) \, \xi(\rd X) = -4 \int_{\mathcal{S}_{++}^d} \tr\{X \nabla g_t(X)\Sigma \nabla \log(g_t)(X)\} \, \xi(\rd X).
\]
\end{itemize}
A simple sufficient condition for Assumptions~\hyperref[ass:Wishart.deBruijn.differentiation]{\textup{(A)}} and~\hyperref[ass:Wishart.deBruijn.integration.by.parts]{\textup{(B)}} is precisely the boundedness condition used in Proposition~\ref{prop:Wishart.deBruijn}, namely that there exist constants $m_g,M_g\in (0, \infty)$ such that $m_g \leq g(X)\leq M_g$ for all $X\in \mathcal{S}_{++}^d$. Indeed, since $\mathcal{P}_t^{\mathcal{W}}$ is Markov, this condition gives $m_g \leq g_t(X)\leq M_g$ for all $X\in \mathcal{S}_{++}^d$ and $t\geq 0$, so $\log(g_t)$ is bounded. Moreover, the regularity of the Wishart semigroup on $C_b^2(\mathcal{S}_{++}^d)$ gives bounded first and second derivatives of $g_t$ on compact time intervals contained in $(0, \infty)$. Since the coefficients of $\mathcal{A}^{\mathcal{W}}$ grow at most linearly and $\xi$ has finite first moment, dominated convergence justifies Assumption~\hyperref[ass:Wishart.deBruijn.differentiation]{\textup{(A)}}, while Assumption~\hyperref[ass:Wishart.deBruijn.integration.by.parts]{\textup{(B)}} follows from the invariant Wishart integration-by-parts formula. Thus Proposition~\ref{prop:Wishart.deBruijn} remains valid under Assumptions~\hyperref[ass:Wishart.deBruijn.differentiation]{\textup{(A)}} and~\hyperref[ass:Wishart.deBruijn.integration.by.parts]{\textup{(B)}} alone. The same analytic conditions are also the ones needed for the forthcoming Proposition~\ref{prop:Wishart.integrated.deBruijn}, since the proof of that result integrates \eqref{eq:Wishart.deBruijn} from Proposition~\ref{prop:Wishart.deBruijn}.
\end{remark}

The next proposition considers the integrated counterpart of the local identity. The first part of the proposition generalizes the integrated De Bruijn identity for the gamma measure given by Theorem~18 of \citet{ArrasSwan2017}. The second part, which turns this relative entropy formula into an entropy jump statement when the initial law matches the stationary Wishart law in mean and log-determinant, generalizes the differential entropy gap identity for the gamma case that was treated in Remark~19 of \citet{ArrasSwan2017}, which also required a matching mean and logarithmic moment condition.

\begin{proposition}[Integrated De Bruijn identity for the Wishart measure and entropy jump]\label{prop:Wishart.integrated.deBruijn}
Under the assumptions of Proposition~\ref{prop:Wishart.deBruijn}, we have
\begin{equation}\label{eq:Wishart.integrated.deBruijn}
\mathrm{Ent}(\mu \, \| \, \xi)
= \int_0^{\infty} J_{\mathcal{W}, \Sigma}(\mu_t \mid \xi) \, \rd t
= \int_0^1 \frac{1}{2\tau} J_{\mathcal{W}, \Sigma}(\mu_{\tau} \mid \xi) \, \rd \tau.
\end{equation}
Assume furthermore that $\mu$ is the law of a random matrix $\mathfrak{X}$ and that this law has Lebesgue density $f_{\mathfrak{X}}$, that is, $\mu(\rd X) = f_{\mathfrak{X}}(X) \, \rd X$. Equivalently, if $f_{\xi}$ denotes the Lebesgue density of $\xi$, then the initial relative density is $g(X) = f_{\mathfrak{X}}(X)/f_{\xi}(X)$. Assume also that $\mathfrak{X}$ has finite differential entropy
\[
H(\mathfrak{X}) \leqdef -\int_{\mathcal{S}_{++}^d} f_{\mathfrak{X}}(X)\log(f_{\mathfrak{X}}(X)) \, \rd X.
\]
If $\EE[\mathfrak{X}] = \EE[\mathfrak{W}_{\infty}] = \alpha \Sigma$ and $\EE[\log(|\mathfrak{X}|)] = \EE[\log(|\mathfrak{W}_{\infty}|)]\in \R$ with $\mathfrak{W}_{\infty}\sim \xi$, then
\begin{equation}\label{eq:Wishart.entropy.jump}
H(\mathfrak{W}_{\infty}) - H(\mathfrak{X})
= \int_0^{\infty} J_{\mathcal{W}, \Sigma}(\mu_t \mid \xi) \, \rd t
= \int_0^1 \frac{1}{2\tau} J_{\mathcal{W}, \Sigma}(\mu_{\tau} \mid \xi) \, \rd \tau.
\end{equation}
\end{proposition}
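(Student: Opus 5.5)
The plan is to integrate the local identity \eqref{eq:Wishart.deBruijn} of Proposition~\ref{prop:Wishart.deBruijn} over $t\in[\varepsilon,T]$ and then send $\varepsilon\downarrow0$ and $T\to\infty$. For $0<\varepsilon<T$ this gives
\[
\mathrm{Ent}(\mu_\varepsilon\,\|\,\xi)-\mathrm{Ent}(\mu_T\,\|\,\xi)=\int_\varepsilon^T J_{\mathcal{W},\Sigma}(\mu_t\mid\xi)\,\rd t .
\]
Since $J_{\mathcal{W},\Sigma}\geq0$, monotone convergence handles the right-hand side. It therefore suffices to prove two limits:
\[
\mathrm{Ent}(\mu_\varepsilon\,\|\,\xi)\to\mathrm{Ent}(\mu\,\|\,\xi) \quad (\varepsilon\downarrow0),
\qquad
\mathrm{Ent}(\mu_T\,\|\,\xi)\to 0 \quad (T\to\infty).
\]

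\textbf{Limits of the entropy.} Both limits rest on the bounds $m_g\le g_t\le M_g$ and on $g_t=\mathcal{P}_t^{\mathcal{W}}g$. These make $\phi(u)=u\log u$ uniformly continuous and bounded on $[m_g,M_g]$, so dominated convergence under the probability measure $\xi$ reduces everything to pointwise convergence of $g_t$.

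For $\varepsilon\downarrow0$, I use $\mathcal{P}_\varepsilon^{\mathcal{W}}g(X)\to g(X)$ pointwise. This follows from the representation \eqref{eq:rep}, since $\Sigma_\varepsilon^{1/2}\mathfrak{S}_\varepsilon\Sigma_\varepsilon^{1/2}\to X$ in law (the mean is $\alpha\Sigma_\varepsilon+e^{-2\varepsilon}X$ and the variance vanishes), together with continuity and boundedness of $g$.

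For $T\to\infty$, I use $\mathcal{P}_T^{\mathcal{W}}g(X)\to\int g\,\rd\xi=1$ pointwise, which follows from the convergence in law \eqref{eq:W.infty}. Then $\phi(g_T)\to\phi(1)=0$. Alternatively, since $g\in C_b^2$ is Lipschitz, the second term of Lemma~\ref{lem:contraction}-type arguments gives the same conclusion, but weak convergence already suffices.

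The second equality in \eqref{eq:Wishart.integrated.deBruijn} is the substitution $\tau=e^{-2t}$, $\rd t=-\rd\tau/(2\tau)$, which is consistent with \eqref{eq:Wishart.deBruijn.tau}.

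\textbf{Entropy jump.} Write $\log g=\log f_{\mathfrak{X}}-\log f_\xi$. With the density \eqref{eq:noncentral.Wishart.density} at $\Theta=0$,
\[
\log f_\xi(X)=c_{\alpha,\Sigma}+\tfrac{\alpha-d-1}{2}\log|X|-\tfrac12\tr(\Sigma^{-1}X).
\]
Hence
\[
\mathrm{Ent}(\mu\,\|\,\xi)=-H(\mathfrak{X})-c_{\alpha,\Sigma}-\tfrac{\alpha-d-1}{2}\EE[\log|\mathfrak{X}|]+\tfrac12\tr\{\Sigma^{-1}\EE[\mathfrak{X}]\}.
\]
All these terms are finite by assumption. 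The same computation with $\mathfrak{X}$ replaced by $\mathfrak{W}_\infty$ gives
\[
0=\mathrm{Ent}(\xi\,\|\,\xi)=-H(\mathfrak{W}_\infty)-c_{\alpha,\Sigma}-\tfrac{\alpha-d-1}{2}\EE[\log|\mathfrak{W}_\infty|]+\tfrac12\tr\{\Sigma^{-1}\alpha\Sigma\}.
\]
Subtracting, and using the matching of the mean and of the log-determinant, gives $\mathrm{Ent}(\mu\,\|\,\xi)=H(\mathfrak{W}_\infty)-H(\mathfrak{X})$. Combined with \eqref{eq:Wishart.integrated.deBruijn}, this yields \eqref{eq:Wishart.entropy.jump}.

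\textbf{Main obstacle.} The delicate points are the boundary limits. The $\varepsilon\downarrow0$ limit requires pointwise convergence of the semigroup to the identity, which I would justify from \eqref{eq:rep} via the explicit moments of the noncentral Wishart law. One must also check that the local identity holds on all of $(0,\infty)$, so that integrating over $[\varepsilon,T]$ is legitimate; $t\mapsto\mathrm{Ent}(\mu_t\,\|\,\xi)$ is differentiable there by assumption~(A). In the entropy-jump part, the care needed is only to split the logarithm into separately integrable pieces, which the finiteness assumptions guarantee.
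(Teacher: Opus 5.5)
Your proposal is correct and follows essentially the same route as the paper. You integrate the local identity over $[\varepsilon,T]$, use monotone convergence for the nonnegative Fisher information, get both boundary limits of the entropy by dominated convergence from pointwise convergence of $g_t=\mathcal{P}_t^{\mathcal{W}}g$, and obtain the entropy jump by expanding $\log f_{\xi}$ and cancelling the mean and log-determinant terms.

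The differences are minor. For $T\to\infty$, you get $g_T\to 1$ directly from the weak convergence \eqref{eq:W.infty}, which is valid because $g$ is bounded and continuous and $\xi$ charges only $\mathcal{S}_{++}^d$. The paper instead passes through pointwise convergence of the transition densities and Scheff\'e's theorem. For $\varepsilon\downarrow0$, you justify $g_\varepsilon\to g$ through the noncentral Wishart mean and a vanishing variance, whereas the paper appeals to stochastic continuity.
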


We now derive a logarithmic Sobolev inequality for the Wishart measure with $\alpha\geq d$, thereby generalizing the logarithmic Sobolev inequality for the gamma case; see \citet{bakry96} and Proposition~2 of \citet{ArrasSwan2017} for an alternative proof. The gamma logarithmic Sobolev inequality is stated for $\alpha\geq1$ (if one uses the same parametrization for the gamma distribution as we do for the Wishart distribution), and so our condition $\alpha\geq d$ reduces exactly to this condition in the univariate gamma setting.

For a probability measure $\mu\ll \xi$ with density $g = \rd\mu/\rd\xi$, we define the Wishart Fisher information in the weak sense by
\[
J_{\mathcal{W}, \Sigma}(\mu \mid \xi) \leqdef 16 \int_{\mathcal{S}_{++}^d} \tr\{X \nabla \sqrt{g}(X)\Sigma \nabla \sqrt{g}(X)\} \, \xi(\rd X),
\]
whenever $\sqrt{g}$ belongs to the corresponding weighted Sobolev domain, and we set $J_{\mathcal{W}, \Sigma}(\mu \mid \xi) = +\infty$ otherwise. For smooth positive $g$, this is equivalent to
\[
J_{\mathcal{W}, \Sigma}(\mu \mid \xi) = \int_{\mathcal{S}_{++}^d} 4 \, \tr\{X \nabla \log(g)(X)\Sigma \nabla \log(g)(X)\} \, \mu(\rd X).
\]

\begin{proposition}[Logarithmic Sobolev inequality for the Wishart measure]\label{prop:Wishart.LSI}
Let $\alpha\in [d, \infty)$, $\Sigma\in \mathcal{S}_{++}^d$ and $\xi = \mathcal{W}_d(\alpha, \Sigma)$. Let $\mu\ll \xi$ with density $g = \rd\mu/\rd\xi$. If $J_{\mathcal{W}, \Sigma}(\mu \mid \xi) < \infty$, then
\begin{equation}\label{eq:Wishart.LSI}
\mathrm{Ent}(\mu \, \| \, \xi) \leq \frac{1}{2} J_{\mathcal{W}, \Sigma}(\mu \mid \xi).
\end{equation}
\end{proposition}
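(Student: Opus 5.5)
The plan is to transport the inequality from a product measure on triangular matrices through the Bartlett decomposition, instead of computing a $\Gamma_2$ curvature bound for $\mathcal{A}^{\mathcal{W}}$ directly. For $\xi=\mathcal{W}_d(\alpha,\Sigma)$ with $\alpha>d-1$ we have the representation $\mathfrak{W}_\infty \stackrel{\mathrm{law}}{=} \Sigma^{1/2}\mathfrak{T}\mathfrak{T}^{\top}\Sigma^{1/2}$. Here $\mathfrak{T}$ is lower triangular with independent entries: $\mathfrak{T}_{ij}\sim\mathcal{N}(0,1)$ for $i>j$, and $\mathfrak{T}_{ii}>0$ with $\mathfrak{T}_{ii}^2\sim\chi^2_{\alpha-i+1}$. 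Let $\pi$ be the law of $\mathfrak{T}$ on the convex open set $\mathcal{T}=\{T \text{ lower triangular}: T_{ii}>0\}$, and set $\varphi(T)\leqdef\Sigma^{1/2}TT^{\top}\Sigma^{1/2}$, so that $\xi=\pi\circ\varphi^{-1}$.

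\textbf{Step 1: LSI for $\pi$ with the Euclidean gradient.} The off-diagonal factors are standard Gaussians. The $i$-th diagonal factor has density proportional to $t^{k_i-1}e^{-t^2/2}$ on $(0,\infty)$, with $k_i=\alpha-i+1$. Its potential $V_i(t)=t^2/2-(k_i-1)\log t$ satisfies $V_i''\geq 1$ exactly when $k_i\geq1$. The worst case is $i=d$, where $k_d=\alpha-d+1\geq1$ is precisely the hypothesis $\alpha\geq d$. By the Bakry--\'Emery criterion on a convex domain (a half-line), each factor satisfies $\mathrm{Ent}(F^2)\leq 2\int|F'|^2$. Tensorization then gives, for $\pi$,
\[
\mathrm{Ent}_\pi(F^2)\leq 2\int_{\mathcal{T}}\|\nabla_T F\|_F^2\,\rd\pi,
\]
where $\nabla_T$ is the gradient in the free coordinates $(T_{ij})_{i\geq j}$.

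\textbf{Step 2: compare gradients under $\varphi$.} Take $F=\sqrt{g}\circ\varphi$ with $g$ smooth and positive, and write $S\leqdef\Sigma^{1/2}$. For a lower-triangular direction $H$, using the symmetric-gradient convention of Proposition~\ref{prop:generator},
\[
DF(T)[H]=\tr\{\nabla\sqrt g(\varphi(T))(SHT^{\top}S+STH^{\top}S)\}=2\,\tr\{T^{\top}S\nabla\sqrt g\, S\,H\}.
\]
Hence $\nabla_T F$ is the orthogonal projection, onto lower-triangular matrices, of the full matrix gradient $2S\nabla\sqrt g\,S\,T$. Its squared norm is therefore at most
\[
4\,\tr\{T^{\top}S\nabla\sqrt g\,\Sigma\,\nabla\sqrt g\,ST\}=4\,\tr\{\varphi(T)\nabla\sqrt g\,\Sigma\nabla\sqrt g\}.
\]
Since $\varphi$ pushes $\pi$ to $\xi$, we have $\mathrm{Ent}_\pi(F^2)=\mathrm{Ent}(\mu\,\|\,\xi)$. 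Step~1 then yields
\[
\mathrm{Ent}(\mu\,\|\,\xi)\leq 8\int\tr\{X\nabla\sqrt g\,\Sigma\nabla\sqrt g\}\,\xi(\rd X)=\tfrac12 J_{\mathcal{W},\Sigma}(\mu\mid\xi).
\]
This is \eqref{eq:Wishart.LSI}, and the constants match exactly.

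\textbf{Step 3: extend to general $g$ with $J_{\mathcal{W},\Sigma}(\mu\mid\xi)<\infty$.} This is a routine approximation argument. Approximate $\sqrt g$ in the weighted Sobolev domain by smooth, bounded, strictly positive functions, for example by truncating between $\varepsilon$ and $1/\varepsilon$ and mollifying. Then pass to the limit using lower semicontinuity of entropy and convergence of the Dirichlet form.

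I expect the main obstacle to be Step~1 at the boundary $t\downarrow0$ of the diagonal factors. One must check that Bakry--\'Emery applies to the chi-type law on the half-line without boundary terms, which is exactly where $k_d\geq1$ matters: for $k_d<1$ the potential is not convex. If a direct citation is not available, I would instead verify the one-dimensional LSI via the gamma LSI of \citet{bakry96} in the variable $t^2$, which is the known case $\alpha\geq1$ when $d=1$. The remaining work is checking that the Dirichlet-form approximation in Step~3 is compatible with the pullback by $\varphi$. This holds because $\varphi$ is a smooth diffeomorphism from $\mathcal{T}$ onto $\mathcal{S}_{++}^d$.
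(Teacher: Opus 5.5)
Your proposal is correct and follows essentially the same route as the paper: the Bartlett decomposition, tensorization of one-dimensional Gaussian and diagonal-factor logarithmic Sobolev inequalities, and the chain-rule identity showing that the gradient in the free lower-triangular coordinates is $2$ times the lower-triangular part of $\Sigma^{1/2}\nabla g\,\Sigma^{1/2}L$, whose full squared Frobenius norm is $\tr\{X\nabla g\,\Sigma\nabla g\}$ (with $\sqrt{g}$ in place of $g$ in your version). The only difference is cosmetic: you parametrize the diagonal by the chi variables $T_{ii}$ and get their LSI from Bakry--\'Emery (requiring $\alpha-d+1\geq 1$), while the paper uses $y_i=T_{ii}^2$ and cites the gamma LSI of Arras and Swan (requiring shape $(\alpha-i+1)/2\geq 1/2$); these are the same inequality after the change of variable $y=t^2$.
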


In the gamma case, Theorem~3 of \citet{ArrasSwan2017} provides an HSI inequality (that connects entropy H, Stein discrepancy S and Fisher information I), which improves on the classical logarithmic Sobolev inequality (their Proposition~2), and provides a gamma analogue of the Gaussian HSI inequality of \citet{lnp15} which improved upon the classical Gaussian logarithmic Sobolev inequality of \citet{Gross1975}. Given that we have established natural Wishart generalizations of the local De Bruijn identity and logarithmic Sobolev inequality for the gamma case, it is natural to ask for a Wishart HSI inequality, which we leave as an open problem.

\begin{open}
Generalize the HSI inequality of \citet[Theorem~3]{ArrasSwan2017} to our Wishart setting.
\end{open}

\subsection{Estimation of the parameters of a Wishart distribution}\label{sec:Wishart.parameter.estimation}

Let $\alpha\in (d-1, \infty)$ and $\smash{\Sigma\in \mathcal{S}_{++}^d}$. Given a random sample $\mathfrak{W}^{(1)}, \ldots, \mathfrak{W}^{(n)} \stackrel{\mathrm{iid}}{\sim} \mathcal{W}_d(\alpha, \Sigma)$ and any function $g:\mathcal{S}_{++}^d\to \R$, define the averaging operator
\[
\overline{g(\mathfrak{W})} = \frac{1}{n} \sum_{k=1}^n g(\mathfrak{W}^{(k)}).
\]
We apply a method of moments (MOM) to the Stein characterization found in Corollary~\ref{cor:Stein.Wishart.generator.step.1} by replacing the expectation with the empirical average.

Suppose first that $\alpha$ is known. This suggests estimating $\Sigma$ by imposing, for a chosen collection of test functions $f\in C_{\mathcal{A}^{\mathcal{W}}}^2(\mathcal{S}_{++}^d)$, the empirical Stein equations
\begin{equation}\label{eq:Wishart.scalar.identity}
\overline{\mathcal{A}^{\mathcal{W}} f(\mathfrak{W})}
= 2 \, \overline{\tr\{(\alpha \widehat{\Sigma} - \mathfrak{W}) \nabla f(\mathfrak{W})\}} + 4 \, \overline{\tr\{\mathfrak{W} \nabla \widehat{\Sigma} \nabla f(\mathfrak{W})\}} = 0.
\end{equation}
For symmetric $U\in \mathcal{S}^d$, define the linear probe
\begin{equation}\label{eq:Wishart.linear.probe}
f_U(S) = \tr(U S), \qquad S\in \mathcal{S}_{++}^d.
\end{equation}

\begin{proposition}[Known shape parameter]\label{prop:Wishart.known.alpha.estimator}
The Stein's method-of-moments estimator of $\Sigma$ based on the linear probes in \eqref{eq:Wishart.linear.probe} is
\[
\widehat{\Sigma} = \frac{1}{\alpha n} \sum_{k=1}^n \mathfrak{W}^{(k)}.
\]
\end{proposition}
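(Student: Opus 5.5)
The plan is to plug the linear probe $f_U(S)=\tr(US)$ into the empirical Stein equation \eqref{eq:Wishart.scalar.identity} and solve the resulting linear system for $\widehat{\Sigma}$.

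\emph{Derivatives of the probe.} With the symmetric-gradient convention $\nabla_{ij}S_{k\ell}=(\delta_{ik}\delta_{j\ell}+\delta_{i\ell}\delta_{jk})/2$ from Proposition~\ref{prop:generator}, I will check that $\nabla f_U(S)=U$ for symmetric $U$. The diagonal entries give $U_{ii}$ directly. For an off-diagonal entry, $\partial f_U/\partial S_{ij}=2U_{ij}$, and this is multiplied by the factor $\tfrac12$. Since $\nabla f_U$ is constant, the second-order term $\tr\{S\nabla\Sigma\nabla f_U(S)\}$ vanishes identically.

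\emph{Admissibility.} I will then verify that $f_U\in C_{\mathcal{A}^{\mathcal{W}}}^2(\mathcal{S}_{++}^d)$. The conditions in \eqref{eq:C.2.A.W} only require first and second moments of $\mathfrak{W}_\infty$, and these are finite. For instance, the last condition asks that $\EE\|\mathfrak{W}_\infty^{1/2}U\Sigma^{1/2}\|_F^2\le \|U\Sigma^{1/2}\|^2\EE\tr\mathfrak{W}_\infty<\infty$.

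\emph{Solving the system.} Substituting into \eqref{eq:Wishart.scalar.identity} gives
\[
2\,\tr\{(\alpha\widehat{\Sigma}-\overline{\mathfrak{W}})U\}=0 \quad\text{for all } U\in\mathcal{S}^d.
\]
The matrix $\alpha\widehat{\Sigma}-\overline{\mathfrak{W}}$ is symmetric, and the Frobenius pairing on $\mathcal{S}^d$ is nondegenerate. Taking $U=\alpha\widehat{\Sigma}-\overline{\mathfrak{W}}$ therefore forces $\|\alpha\widehat{\Sigma}-\overline{\mathfrak{W}}\|_F^2=0$. Equivalently, one can run $U$ over the basis $E_{ii}$, $E_{ij}+E_{ji}$. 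Either way, $\widehat{\Sigma}=\overline{\mathfrak{W}}/\alpha$.

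I would also remark that $\widehat{\Sigma}$ is uniquely determined, symmetric, and positive definite, being an average of positive definite matrices divided by $\alpha>0$.

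The only delicate point is the factor bookkeeping for off-diagonal entries of the symmetric gradient. Getting this wrong would produce a spurious factor of $2$ on the off-diagonal entries of $\widehat{\Sigma}$. I will therefore check it explicitly on $E_{ij}+E_{ji}$.
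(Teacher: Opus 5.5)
Your proposal is correct and follows the paper's proof essentially verbatim. You compute $\nabla f_U = U$, note that the second-order term vanishes, and substitute into the empirical Stein equation to get $2\,\tr\{(\alpha\widehat{\Sigma}-\overline{\mathfrak{W}})U\}=0$ for all $U\in\mathcal{S}^d$, concluding by trace duality; your admissibility check and off-diagonal bookkeeping are harmless details the paper leaves implicit.
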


\begin{remark}
The estimator $\widehat{\Sigma}$ coincides with the classical method-of-moments estimator obtained from the first moment identity $\EE[\mathfrak{W}^{(1)}] = \alpha \Sigma$. Nonlinear probes would produce other Stein's method-of-moments estimators, but they are not expected to improve on $\smash{\widehat{\Sigma}}$ when $\alpha$ is known.
\end{remark}

Suppose now that $\alpha$ is unknown. Then linear probes only identify the product $M \leqdef \alpha \Sigma$. Indeed, rewriting \eqref{eq:Wishart.process.generator} in terms of $(M, \alpha)$ gives
\[
\mathcal{A}^{\mathcal{W}}_{M, \alpha} f(S) = 2 \, \tr\{(M - S) \nabla f(S)\} + \frac{4}{\alpha} \, \tr\{S \nabla M \nabla f(S)\}, \qquad S\in \mathcal{S}_{++}^d.
\]
Accordingly, one seeks a pair $(\widehat{M}, \widehat{\alpha})$ satisfying, for suitable test functions $f$,
\begin{equation}\label{eq:Wishart.scalar.identity.unknown.alpha}
\overline{\mathcal{A}^{\mathcal{W}}_{\widehat{M}, \widehat{\alpha}} f(\mathfrak{W})} = 2 \, \overline{\tr\{(\widehat{M} - \mathfrak{W}) \nabla f(\mathfrak{W})\}} + \frac{4}{\widehat{\alpha}} \, \overline{\tr\{\mathfrak{W} \nabla \widehat{M} \nabla f(\mathfrak{W})\}} = 0.
\end{equation}

Following Example~2.4 of \citet{EbnerFischerGauntPickerSwan2025}, which treats both the classical moment choice and a logarithmic Stein's method-of-moments choice for the gamma distribution, we consider the following matrix analogs. Since the Wishart Stein operator is written in extended-generator form, these scalar gamma test functions are implemented through scalar potentials whose gradients are proportional to $S$ and equal to $\log(S)$, respectively:
\begin{equation}\label{eq:Wishart.quad.log.probe}
q(S) = \tr(S^2), \qquad \ell(S) = \tr(S \log(S) - S), \qquad S\in \mathcal{S}_{++}^d,
\end{equation}
where $\log(S)$ denotes the symmetric matrix logarithm. For $x,y > 0$, let
\[
\log^{[1]}(x,y) \leqdef
\begin{cases}
\displaystyle \frac{\log(x) - \log(y)}{x-y}, & x\neq y, \\
1/x, & x = y.
\end{cases}
\]
If $S = H\Lambda H^{\top}$, with $H\in O(d)$ and $\Lambda = \mathrm{diag}(\lambda_1, \ldots, \lambda_d)$, let $M^{(S)} \leqdef H^{\top} M H$ denote the representation of $M$ in the eigenbasis of $S$, and define
\[
\mathcal{J}_S(M) \leqdef \frac{1}{2}\tr(M) + \frac{1}{2}\sum_{i,j = 1}^d \lambda_i M^{(S)}_{jj} \log^{[1]}(\lambda_j, \lambda_i).
\]

\begin{proposition}[Unknown shape parameter]\label{prop:Wishart.unknown.alpha.estimator}
Let $n\geq 2$ and $\widehat{M} = \frac{1}{n} \sum_{k=1}^n \mathfrak{W}^{(k)}$. The Stein's method-of-moments estimators based on the linear probes in \eqref{eq:Wishart.linear.probe} together with $q$ and $\ell$ in \eqref{eq:Wishart.quad.log.probe}, respectively, are given by
\begin{equation}\label{eq:Wishart.unknown.alpha.estimator.quad}
\widehat{\alpha}_{\mathrm{quad}} = \frac{\{\tr(\widehat{M})\}^2 + \tr(\widehat{M}^2)}{\overline{\tr(\mathfrak{W}^{ \, 2})} - \tr(\widehat{M}^2)}, \qquad
\widehat{\Sigma}_{\mathrm{quad}} = \frac{\widehat{M}}{\widehat{\alpha}_{\mathrm{quad}}},
\end{equation}
\begin{equation}\label{eq:Wishart.unknown.alpha.estimator.log}
\widehat{\alpha}_{\mathrm{log}} = \frac{2 \, \overline{\mathcal{J}_{\mathfrak{W}}(\widehat{M})}}{\overline{\tr\{\mathfrak{W}\log(\mathfrak{W})\}} - \tr\{\widehat{M} \, \overline{\log(\mathfrak{W})}\}}, \qquad
\widehat{\Sigma}_{\mathrm{log}} = \frac{\widehat{M}}{\widehat{\alpha}_{\mathrm{log}}},
\end{equation}
provided that the denominators of $\widehat{\alpha}_{\mathrm{quad}}$ and $\widehat{\alpha}_{\mathrm{log}}$ are nonzero.
\end{proposition}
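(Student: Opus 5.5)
The plan is to impose the empirical Stein equation \eqref{eq:Wishart.scalar.identity.unknown.alpha} successively for the three families of probes. For each probe $f$ I compute $\nabla f$ and the second-order term $\tr\{S\nabla M\nabla f(S)\}$ explicitly. The resulting relations are linear in $\widehat M$ and in $1/\widehat\alpha$, so they can be solved in closed form. Throughout I use the index form
\[
\tr\{S\nabla M\nabla f(S)\}=\sum_{i,j,k,l}S_{ik}M_{lj}\nabla_{ij}(\nabla f)_{kl}(S),
\]
together with the symmetric-gradient rule $\nabla_{ij}S_{kl}=(\delta_{ik}\delta_{jl}+\delta_{il}\delta_{jk})/2$ already used in Remark~\ref{rem:Stein.Haff.identity}.

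\textbf{Step 1: linear probes.} For $f_U(S)=\tr(US)$ we have $\nabla f_U=U$, and all second derivatives vanish. The empirical equation therefore reduces to $2\,\tr\{(\widehat M-\overline{\mathfrak W})U\}=0$ for every $U\in\mathcal S^d$. Hence $\widehat M=\overline{\mathfrak W}$, which is the claimed $\widehat M$.

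\textbf{Step 2: quadratic probe.} For $q(S)=\tr(S^2)$ we get $\nabla q=2S$. The index formula then gives
\[
\tr\{S\nabla M\nabla q\}=\sum S_{ik}M_{lj}(\delta_{ik}\delta_{jl}+\delta_{il}\delta_{jk})=\tr(S)\tr(M)+\tr(SM).
\]
Plugging this into \eqref{eq:Wishart.scalar.identity.unknown.alpha}, and using $\overline{\tr(\widehat M\mathfrak W)}=\tr(\widehat M^2)$ and $\overline{\tr\mathfrak W}=\tr\widehat M$ from Step 1, yields
\[
-4\{\overline{\tr(\mathfrak W^2)}-\tr(\widehat M^2)\}+\frac{4}{\widehat\alpha}\{(\tr\widehat M)^2+\tr(\widehat M^2)\}=0 .
\]
Solving for $\widehat\alpha$ gives \eqref{eq:Wishart.unknown.alpha.estimator.quad}, provided the denominator is nonzero. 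Then $\widehat\Sigma_{\mathrm{quad}}=\widehat M/\widehat\alpha_{\mathrm{quad}}$ follows from $M=\alpha\Sigma$.

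\textbf{Step 3: logarithmic probe.} For $\ell(S)=\tr\,\varphi(S)$ with $\varphi(x)=x\log x-x$, the standard spectral-function gradient gives $\nabla\ell=\varphi'(S)=\log S$. The first-order term in \eqref{eq:Wishart.scalar.identity.unknown.alpha} is then
\[
2\,\overline{\tr\{(\widehat M-\mathfrak W)\log\mathfrak W\}}=2\big(\tr\{\widehat M\,\overline{\log\mathfrak W}\}-\overline{\tr\{\mathfrak W\log\mathfrak W\}}\big).
\]
Consequently \eqref{eq:Wishart.unknown.alpha.estimator.log} follows once we prove the key identity
\begin{equation*}
\tr\{S\nabla M\nabla \log S\}=\mathcal J_S(M).
\end{equation*}

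This identity is the main obstacle, and I would prove it as follows. Conjugation $S\mapsto H^\top SH$, $M\mapsto H^\top MH$ with $H\in O(d)$ leaves both sides invariant. This holds because $\nabla$ transforms covariantly and $\log(H^\top SH)=H^\top\log(S)H$. So it suffices to take $S=\Lambda$ diagonal. Next I invoke the Daleckii--Krein formula $D\log(\Lambda)[E]_{ab}=E_{ab}\log^{[1]}(\lambda_a,\lambda_b)$, applied to the direction $E^{(ij)}=(e_ie_j^\top+e_je_i^\top)/2$ that represents $\nabla_{ij}$. With $S_{ik}=\lambda_i\delta_{ik}$, the index sum becomes
\[
\sum_{i,j,l}\lambda_iM_{lj}\,\tfrac12(\delta_{jl}+\delta_{ij}\delta_{il})\log^{[1]}(\lambda_i,\lambda_l).
\]
The first piece gives $\tfrac12\sum_{i,j}\lambda_iM_{jj}\log^{[1]}(\lambda_j,\lambda_i)$, using the symmetry of $\log^{[1]}$. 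The second piece gives $\tfrac12\sum_i\lambda_iM_{ii}/\lambda_i=\tfrac12\tr M$. Together these are exactly $\mathcal J_S(M)$ once we recall $M^{(S)}=H^\top MH$. Substituting into the empirical equation and solving the resulting linear equation in $1/\widehat\alpha$ gives $\widehat\alpha_{\log}$, and $\widehat\Sigma_{\log}=\widehat M/\widehat\alpha_{\log}$ follows as before. The care needed here is in matching the factor-$\tfrac12$ convention of $\nabla$ with the divided-difference formula, including repeated eigenvalues, where $\log^{[1]}(x,x)=1/x$ is the continuous extension.

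Finally, I would note briefly that $f_U$, $q$ and $\ell$ belong to $C^2_{\mathcal A^{\mathcal W}}(\mathcal S_{++}^d)$, since Wishart laws have all moments and $\EE|\log|\mathfrak W_\infty||<\infty$. This membership only motivates the probes, because the estimators themselves are purely algebraic consequences of the empirical equations.
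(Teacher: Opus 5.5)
Your proposal is correct and follows the paper's proof essentially step for step. Both use trace duality for the linear probes and compute $\tr\{S\nabla M\nabla q(S)\}=\tr(SM)+\tr(S)\tr(M)$ for the quadratic probe. For the logarithmic probe, both use $\nabla\ell(S)=\log(S)$, reduce to diagonal $S$ by orthogonal equivariance, and apply the divided-difference (Daleckii--Krein) formula for $D\log$ to obtain $\mathcal{J}_S(M)$. The only cosmetic difference is that you contract the index sum directly at diagonal $S$, whereas the paper first computes the entries $(\nabla M\nabla\ell(S))_{i\ell}$ and then takes the trace against $S$.
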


\begin{remark}
The quadratic estimator $\smash{(\widehat{\alpha}_{\mathrm{quad}}, \widehat{\Sigma}_{\mathrm{quad}})}$ coincides with the classical method-of-moments estimator obtained by matching the first matrix moment $\EE[\mathfrak{W}^{(1)}] = \alpha\Sigma$ and the scalar second moment
\[
\EE[\tr\{(\mathfrak{W}^{(1)})^2\}] = \alpha(\alpha + 1)\tr(\Sigma^2) + \alpha \tr(\Sigma)^2 = \tr(M^2) + \frac{1}{\alpha} \, \big[\{\tr(M)\}^2 + \tr(M^2)\big].
\]
The logarithmic estimator $\smash{(\widehat{\alpha}_{\mathrm{log}}, \widehat{\Sigma}_{\mathrm{log}})}$ is included as a matrix analog of the logarithmic Stein's method-of-moments choice considered for the gamma distribution in \citet{EbnerFischerGauntPickerSwan2025}.
\end{remark}

\begin{remark}
The estimators $\smash{(\widehat{\alpha}_{\mathrm{quad}}, \widehat{\Sigma}_{\mathrm{quad}})}$ and $\smash{(\widehat{\alpha}_{\mathrm{log}}, \widehat{\Sigma}_{\mathrm{log}})}$ are unconstrained moment estimators. When their denominators are positive, the corresponding estimates of $\alpha$ are positive and the estimates of $\Sigma$ belong to $\mathcal{S}_{++}^d$, but $\widehat{\alpha}_{\mathrm{quad}}$ and $\widehat{\alpha}_{\mathrm{log}}$ need not be larger than $d-1$ for every finite sample. If an estimator taking values in the Wishart parameter space is required, one may impose the constraint $\alpha>d-1$ by truncation or by constrained moment estimation.
\end{remark}

\begin{remark}\label{rem:numerical.illustration}
A numerical comparison of $\smash{(\widehat{\alpha}_{\mathrm{log}}, \widehat{\Sigma}_{\mathrm{log}})}$, $\smash{(\widehat{\alpha}_{\mathrm{quad}}, \widehat{\Sigma}_{\mathrm{quad}})}$ and $\smash{(\widehat{\alpha}_{\mathrm{MLE}}, \widehat{\Sigma}_{\mathrm{MLE}})}$ is summarized in Table~\ref{tab:wishart-smom-mle-winner-counts}. A detailed description of the experiment, together with the full median and interquartile range values for the relative Frobenius error $\smash{\|\widehat{\Sigma}-\Sigma_0\|_F/\|\Sigma_0\|_F}$ of each estimator with respect to a target $\Sigma_0$, is relegated to Section~\ref{sec:supp.log.MOM.vs.quad.MOM.vs.MLE} of the \hyperref[supp]{Supplementary material}. The maximum likelihood estimator (MLE) attains the smallest median error most often, the logarithmic Stein's method-of-moments estimator is the second strongest competitor, and the quadratic, classical moment estimator is least often the best. The same qualitative ranking is observed for the interquartile ranges. Thus, while the MLE is the strongest competitor overall, the logarithmic Stein estimator often comes reasonably close and improves on the quadratic moment estimator. It also has the advantage of the closed-form expression in \eqref{eq:Wishart.unknown.alpha.estimator.log}, which makes it simpler to compute and more directly amenable to theoretical analysis than the MLE.
\end{remark}

\begingroup
\footnotesize
\renewcommand{\arraystretch}{1.0}
\setlength{\tabcolsep}{2.5pt}
\begin{longtable}{@{}lrrrrrrrr@{}}
\caption{Counts, over the 100 parameter configurations, of how often each estimator attains the smallest median and the smallest interquartile range in Table~\ref{tab:wishart-smom-mle-sigma-fro}, respectively. }\label{tab:wishart-smom-mle-winner-counts}\\
\hline
Estimator & \multicolumn{2}{c}{$n=10$} & \multicolumn{2}{c}{$n=100$} & \multicolumn{2}{c}{$n=1000$} & \multicolumn{2}{c}{$n=10000$} \\
 & Median & IQR & Median & IQR & Median & IQR & Median & IQR \\
\hline
\endfirsthead
\hline
Estimator & \multicolumn{2}{c}{$n=10$} & \multicolumn{2}{c}{$n=100$} & \multicolumn{2}{c}{$n=1000$} & \multicolumn{2}{c}{$n=10000$} \\
 & Median & IQR & Median & IQR & Median & IQR & Median & IQR \\
\hline
\endhead
\hline
\multicolumn{9}{r}{Continued on next page}\\
\hline
\endfoot
\hline
\endlastfoot
log-MOM & 15 & 10 & 14 & 19 & 13 & 18 & 16 & 11 \\
quad-MOM & 0 & 0 & 1 & 1 & 0 & 0 & 0 & 1 \\
MLE & 85 & 90 & 85 & 80 & 87 & 82 & 84 & 88 \\
\end{longtable}

\endgroup

Returning to the case where $\alpha$ is known, by choosing $M$ linear probes
\[
f_{U_1}(S) = \tr(U_1 S), \ldots, f_{U_M}(S) = \tr(U_M S), \qquad S\in \mathcal{S}_{++}^d,
\]
and feeding them into \eqref{eq:Wishart.scalar.identity}, we obtain a family of Stein-type moment equations that produce multiple unbiased estimators of linear functionals of $\Sigma$. When we restrict $\Sigma$ to a linear structured subspace, these equations can be projected by least squares onto that subspace, as detailed next.

\begin{proposition}[Projection onto a structured subspace]\label{prop:projected.Stein.Wishart}
Suppose that the true scale matrix $\Sigma\in \mathcal{S}_{++}^d$ admits the linear representation
\[
\Sigma = \Sigma(\bb{\beta}^{\star}) = \sum_{j=1}^p \beta_j^{\star} B_j
\]
for some $\bb{\beta}^{\star}\in \R^p$ and fixed symmetric templates $\smash{\{B_j\}_{j=1}^p \subseteq \mathcal{S}^d}$. Let $\{U_m\}_{m = 1}^{M} \subseteq \mathcal{S}^{d}$ be given, and for the corresponding linear probes $f_{U_m}(S) \leqdef \tr(U_m S)$, form the structured Stein moment equations
\begin{equation}\label{eq:structured.Wishart.moments}
\tr\{\Sigma(\bb{\beta}) U_m\} = y_{n,m}, \qquad y_{n,m} \leqdef \frac{1}{\alpha n} \sum_{k=1}^n \tr(\mathfrak{W}^{(k)} U_m), \qquad m\in \{1, \ldots,M\}.
\end{equation}
Let $C\in \R^{M\times p}$ be the design matrix with entries $C_{mj} \leqdef \tr(B_j U_m)$, and let $\bb{y}_n \leqdef (y_{n,1}, \ldots,y_{n,M})^{\top}$. If $C$ has full column rank, then the least-squares solution
\[
\widehat{\bb{\beta}}_n = \arg\min_{\bb{\beta}\in \R^p} \|C \bb{\beta} - \bb{y}_n\|_2^2 = (C^{\top} C)^{-1} C^{\top} \bb{y}_n
\]
is strongly consistent for the true coefficient vector $\bb{\beta}^{\star}$, as $n\to\infty$. In particular, the structured estimator $\widehat{\Sigma}_n \leqdef \Sigma(\widehat{\bb{\beta}}_n) = \sum_{j=1}^p \hat{\beta}_{n,j} B_j$ converges almost surely to $\Sigma$ and solves the projected Stein moment equations \eqref{eq:structured.Wishart.moments} in the least-squares sense.
\end{proposition}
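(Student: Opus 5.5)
The plan is to reduce everything to the strong law of large numbers combined with continuity of a fixed linear map. The first step is to identify each equation in \eqref{eq:structured.Wishart.moments} as the empirical Stein equation \eqref{eq:Wishart.scalar.identity} for the linear probe $f_{U_m}$. Since $\nabla f_{U_m}(S) = U_m$ is constant, the second-order term $4\,\tr\{S\nabla\Sigma\nabla f_{U_m}\}$ vanishes. The empirical identity therefore reads $2\,\overline{\tr\{(\alpha\Sigma - \mathfrak{W})U_m\}} = 0$, which is exactly $\tr\{\Sigma U_m\} = y_{n,m}$. This is the same computation used in Proposition~\ref{prop:Wishart.known.alpha.estimator}.

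Next, I would compute the population targets. By linearity, $\tr\{\Sigma(\bb{\beta})U_m\} = \sum_j \beta_j \tr(B_jU_m) = (C\bb{\beta})_m$, so the system is $C\bb{\beta} = \bb{y}_n$. Under the true parameter, either $\EE[\mathfrak{W}^{(1)}] = \alpha\Sigma$ (Remark~\ref{rem:Wishart.mgf}) or the forward direction of Corollary~\ref{cor:Stein.Wishart.generator.step.1} applied to $f_{U_m}$ gives $\EE[y_{n,m}] = \tr(\Sigma U_m) = (C\bb{\beta}^{\star})_m$. The probe $f_{U_m}$ is admissible in $C^2_{\mathcal{A}^{\mathcal{W}}}(\mathcal{S}_{++}^d)$ because Wishart laws have finite moments of all orders. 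Each summand $\tr(\mathfrak{W}^{(k)}U_m)$ is integrable with $|\tr(\mathfrak{W}U_m)|\le \|U_m\|_F\|\mathfrak{W}\|_F$. Hence the strong law of large numbers, applied componentwise over the finitely many $m$, gives $\bb{y}_n \to C\bb{\beta}^{\star}$ almost surely.

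For consistency, full column rank of $C$ makes $C^{\top}C$ invertible, so the least-squares minimiser is unique and equals $\widehat{\bb{\beta}}_n = (C^{\top}C)^{-1}C^{\top}\bb{y}_n$ by the normal equations. Since $\bb{y}\mapsto (C^{\top}C)^{-1}C^{\top}\bb{y}$ is a fixed continuous linear map, we get $\widehat{\bb{\beta}}_n \to (C^{\top}C)^{-1}C^{\top}C\bb{\beta}^{\star} = \bb{\beta}^{\star}$ almost surely. The map $\bb{\beta}\mapsto\sum_j\beta_jB_j$ is also linear and continuous, so $\widehat{\Sigma}_n\to\Sigma$ almost surely. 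Finally, $\widehat{\Sigma}_n$ solves the projected equations in the least-squares sense by construction.

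No step here is a genuine obstacle. The only point needing care is justifying integrability for the strong law of large numbers, which follows from the finite first moment of the Wishart distribution.
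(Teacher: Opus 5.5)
Your proposal is correct and follows essentially the same route as the paper. You obtain $\EE[y_{n,m}] = \tr(\Sigma U_m) = (C\bb{\beta}^{\star})_m$ from $\EE[\mathfrak{W}^{(1)}] = \alpha\Sigma$, justify integrability through $|\tr(\mathfrak{W}U_m)|\leq \|U_m\|_F\|\mathfrak{W}\|_F$ and the finite first moment, apply the strong law of large numbers componentwise, and conclude by continuity of the fixed linear maps $\bb{y}\mapsto (C^{\top}C)^{-1}C^{\top}\bb{y}$ and $\bb{\beta}\mapsto\Sigma(\bb{\beta})$. Your preliminary identification of \eqref{eq:structured.Wishart.moments} with the empirical Stein equations is harmless context rather than a needed step.
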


\begin{remark}
The estimator $\widehat{\Sigma}_n$ in Proposition~\ref{prop:projected.Stein.Wishart} is the unconstrained least-squares projection onto the linear span of the templates $\{B_j\}_{j=1}^p$. It need not belong to $\mathcal{S}_{++}^d$ for every finite sample unless positive definiteness is imposed separately. Since $\mathcal{S}_{++}^d$ is open and $\smash{\widehat{\Sigma}_n\to\Sigma\in\mathcal{S}_{++}^d}$ almost surely, it is nevertheless positive definite eventually almost surely.
\end{remark}

\begin{remark}\label{rem:numerical.illustration:prop:projected.Stein.Wishart}
A numerical comparison of the projected Stein's method-of-moments estimator $\smash{\widehat{\Sigma}_n}$ and the naive (unstructured) Stein's method-of-moments estimator $\smash{\widetilde{\Sigma}_n}$ is summarized in Table~\ref{tab:wishart-structured-smom-winner-counts}. A detailed description of the experiment, together with the full median and interquartile range values for the relative Frobenius error of $\widehat{\Sigma}$ and the corresponding error ratios, is relegated to Section~\ref{sec:supp.projected.vs.naive} of the \hyperref[supp]{Supplementary material}. For the randomly generated compound-symmetry configurations considered there, the projected Stein estimator yields the smaller median relative Frobenius error for every structured configuration and every sample size. The naive estimator more often yields the smaller interquartile range, but the median-error comparison clearly supports the gain from using the structural information in the Stein moment equations. The naive estimator is the natural unstructured benchmark. The unstructured MLE is not displayed separately, since $\alpha_0$ is known and therefore coincides with $\smash{\alpha_0^{-1}\widehat{M}} = \widetilde{\Sigma}_n$.
\end{remark}

\begingroup
\footnotesize
\renewcommand{\arraystretch}{1.0}
\setlength{\tabcolsep}{2.5pt}
\begin{longtable}{@{}lrrrrrrrr@{}}
\caption{Winner counts for the structured Wishart simulation. For each sample size, the table counts among the 100 structured configurations how often each estimator attains the smallest median error and the smallest interquartile range.}\label{tab:wishart-structured-smom-winner-counts}\\
\hline
Estimator & \multicolumn{2}{c}{$n=10$} & \multicolumn{2}{c}{$n=100$} & \multicolumn{2}{c}{$n=1000$} & \multicolumn{2}{c}{$n=10000$} \\
 & Median & IQR & Median & IQR & Median & IQR & Median & IQR \\
\hline
\endfirsthead
\hline
Estimator & \multicolumn{2}{c}{$n=10$} & \multicolumn{2}{c}{$n=100$} & \multicolumn{2}{c}{$n=1000$} & \multicolumn{2}{c}{$n=10000$} \\
 & Median & IQR & Median & IQR & Median & IQR & Median & IQR \\
\hline
\endhead
\hline
\multicolumn{9}{r}{Continued on next page}\\
\hline
\endfoot
\hline
\endlastfoot
proj-SMOM & 100 & 9 & 100 & 9 & 100 & 10 & 100 & 9 \\
naive-SMOM & 0 & 91 & 0 & 91 & 0 & 90 & 0 & 91 \\
\hline
\end{longtable}

\endgroup

\section{Preliminary lemmas}\label{sec:tech.lemmas}

This section shows how to differentiate the scale-normalized version of the Wishart transition semigroup ($\mathcal{Q}_t$ in \eqref{eq:Wishart.Q.operator.def}) with respect to the entries of the matrix $\Lambda = (\Lambda_{ij})$ appearing in the noncentrality parameter of the noncentral Wishart kernel $f_{\alpha, I_d, e^{-2t}\Lambda}^{\mathcal{W}}$. The derivatives of $(\mathcal{Q}_t h)(\Lambda)$ with respect to $\Lambda_{ij}$ can be ``pushed through'' the transition kernel and transferred onto $h$ using an appropriate differential operator; see Lemma~\ref{lem:Luk.Wishart.Lemma.2.5}. This lemma is key in proving the existence of the semigroup solution of the Wishart Stein equation in Theorem~\ref{thm:Stein.solutions.Wishart} and its regularity in Theorem~\ref{thm:smoothness.estimates.wishart}. Before we prove Lemma~\ref{lem:Luk.Wishart.Lemma.2.5}, two preliminary lemmas are needed; see Lemmas~\ref{lem:Laplace.polynomial.calculus} and \ref{lem:Laplace.derivative.Wishart} below.

Lemma~\ref{lem:Laplace.polynomial.calculus} provides an expression for Laplace transforms of products of partial derivatives of the noncentral Wishart density (recall that the Laplace transform $\mathcal{L}[g]$ of a sufficiently integrable function $g:\mathcal{S}_{++}^d\to \R$ is given by \eqref{eq:Laplace.transform}). Lemma~\ref{lem:Laplace.derivative.Wishart} shows how partial derivatives of the noncentral Wishart density with respect to the noncentrality parameter transfer to partial derivatives with respect to the state variable.

\begin{lemma}\label{lem:Laplace.polynomial.calculus}
Let $m\in \N$, $\alpha > d-1 + 2m$, and $\Theta\in \mathcal{S}_{+}^d$. Then, for all $T\in \mathcal{S}_{+}^d$ and all index pairs $(i_{\ell}, j_{\ell})\in [d]^2$,
\begin{equation}\label{eq:repeated.IBP}
\mathcal{L}\left[\left(\prod_{\ell=1}^m \nabla_{i_{\ell} j_{\ell}}\right) f_{\alpha, I_d, \Theta}^{\mathcal{W}}\right](T)
= \left(\prod_{\ell=1}^m T_{i_{\ell} j_{\ell}}\right)\mathcal{L}[f_{\alpha, I_d, \Theta}^{\mathcal{W}}](T).
\end{equation}
More generally, if $P(U)$ is a scalar polynomial of degree at most $m$ in the entries of a symmetric $d\times d$ matrix $U$, and $P(\nabla)$ denotes the differential operator obtained by replacing $U_{ij}$ by $\nabla_{ij}$, then
\begin{equation}\label{eq:poly.claim}
\mathcal{L}[P(\nabla) f_{\alpha, I_d, \Theta}^{\mathcal{W}}](T) = P(T) \, \mathcal{L}[f_{\alpha, I_d, \Theta}^{\mathcal{W}}](T).
\end{equation}
\end{lemma}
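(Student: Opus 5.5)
The plan is to prove \eqref{eq:repeated.IBP} by induction on $m$ through repeated integration by parts on the cone $\mathcal{S}_{++}^d$. \eqref{eq:poly.claim} then follows by linearity: $P(\nabla)$ is a finite linear combination of monomials $\prod_{\ell=1}^k \nabla_{i_\ell j_\ell}$ with $k\le m$, and the hypothesis $\alpha>d-1+2m$ implies $\alpha>d-1+2k$ for every $k\le m$. The constant term is trivial.

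The basic one-step identity is the following. Let $g$ be smooth on $\mathcal{S}_{++}^d$ with suitable decay. Since $\nabla_{ij}\etr(-TX)=-T_{ij}\etr(-TX)$ under the symmetric-gradient convention (the factor $\tfrac12(1+\delta_{ij})$ exactly compensates for the double appearance of off-diagonal entries in $\tr(TX)$), we get
\[
\int_{\mathcal{S}_{++}^d}\etr(-TX)\,\nabla_{ij}g(X)\,\rd X = T_{ij}\int_{\mathcal{S}_{++}^d}\etr(-TX)\,g(X)\,\rd X + \text{(boundary terms)}.
\]
Apply this with $g=\big(\prod_{\ell=2}^{m}\nabla_{i_\ell j_\ell}\big)f^{\mathcal W}_{\alpha,I_d,\Theta}$. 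Iterating peels off one factor $T_{i_\ell j_\ell}$ at a time and yields \eqref{eq:repeated.IBP}, provided every intermediate derivative is integrable against $\etr(-TX)$ and all boundary terms vanish. For $T\in\mathcal{S}_+^d$ that is only semidefinite, I would first treat $T\in\mathcal{S}_{++}^d$ and then pass to the limit. Alternatively, integrability at infinity comes for free from the factor $\etr(-X/2)$ in the density, as long as the derivatives grow at most polynomially times $\etr(-X/2)$ times Bessel growth. Here I use that ${}_0F_1(\alpha/2;\Theta X/4)$ grows at most like $\etr$ of order $\|\Theta\|^{1/2}\|X\|^{1/2}$, which is dominated by $\etr(-X/2)$.

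To control derivatives and boundary behaviour, I would write $f^{\mathcal W}_{\alpha,I_d,\Theta}(X)=c\,|X|^{\beta}\etr(-X/2)\,\Phi(X)$ with $\beta=\alpha/2-(d+1)/2$, where $\Phi(X)={}_0F_1(\alpha/2;\Theta X/4)$ is entire in the entries of $X$. By Leibniz, a $k$-fold symmetric derivative is a finite sum of terms. Each term is $|X|^{\beta}\etr(-X/2)$ times a product of entries of $X^{-1}$ (from $\nabla|X|^{\beta}=\beta|X|^{\beta}X^{-1}$ and $\nabla X^{-1}$ being quadratic in $X^{-1}$) times derivatives of $\Phi$. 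Since entries of $X^{-1}$ are $\mathrm{adj}(X)/|X|$, each derivative costs at most one power of $|X|$ near $\partial\mathcal{S}_+^d$. After $k\le m$ derivatives, the determinant exponent is therefore at least $\beta-k$, modulo polynomial factors in $X$ that are bounded near the boundary.

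The boundary terms in the $\ell$-th step involve the $(m-\ell)$-th derivative, and the integrand involves up to $m$ derivatives. Integrability near the boundary against Lebesgue measure then requires $\beta-m>-1$, i.e.\ $\alpha/2-(d+1)/2-m>-1$, which is exactly $\alpha>d-1+2m$. This matches the heuristic of Remark~\ref{rem:alpha.threshold.Stein.solutions.Wishart}.

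The main obstacle is rigorously killing the boundary terms on the cone. I would avoid a Stokes argument on $\partial\mathcal{S}_{++}^d$ and integrate the single coordinate $X_{ij}$ over the open interval where $X$ stays positive definite, with the other entries fixed. Along such a line $|X|$ is a polynomial in $X_{ij}$ (of degree at most $2$ in $X_{ij}$) that vanishes at the finite endpoints. The boundary term is the evaluation of $\etr(-TX)g(X)$ at those endpoints, and it vanishes when $g$ carries a factor $|X|^{\beta-(m-1)}$ with positive exponent, which holds since $\beta-(m-1)>0$ under $\alpha>d-1+2m$. At infinity the $\etr(-X/2)$ decay kills the term. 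Fubini is then justified by the integrability established above. The delicate part is handling the cofactor polynomials uniformly near the boundary. If this becomes messy, a fallback is to insert a cutoff $|X|^{\varepsilon}$ or restrict to $\{|X|>\delta\}$ and let $\delta\downarrow0$ using dominated convergence.
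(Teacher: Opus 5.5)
Your proof is correct, and its core estimates match the paper's. The shared ingredients are: the Leibniz expansion showing that an $r$-th derivative of the density is a sum of terms $p(X)\,|X|^{\beta-s}\etr(-X/2)\,\psi(X)$ with $s\le r$; the threshold $\beta-m>-1$ for integrability of the top-order derivative; and $\beta-(m-1)>0$ for the vanishing of lower-order derivatives on $\partial\mathcal{S}_{+}^d$.

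You differ from the paper in how the boundary terms are removed. The paper multiplies by the cutoff $\omega_{\varepsilon,R}(X)=\chi(|X|/\varepsilon)\zeta(\tr(X)/R)$ and moves all $m$ derivatives at once onto $\omega_{\varepsilon,R}\etr(-TX)$ by Euclidean integration by parts on a compact subset of the open cone. The error terms are the derivatives landing on the cutoff. They live on the shells $\{\varepsilon\le|X|\le2\varepsilon\}$ and $\{R\le\tr(X)\le 2R\}$ and produce factors $|X|^{-q}$ with $q\le m$. The paper kills them by dominated convergence with dominating function $(1+\tr(X))^M|X|^{\beta-q}\etr(-X/4)$.

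Your slice argument instead uses convexity of $\mathcal{S}_{++}^d$: each coordinate line meets the cone in an open interval whose finite endpoints lie on $\partial\mathcal{S}_+^d$. You then apply Fubini, which is legitimate because at step $\ell$ both the order-$(m-\ell+1)$ and order-$(m-\ell)$ derivatives are in $L^1$ against $\etr(-TX)$, followed by one-dimensional integration by parts. The endpoint values vanish simply because derivatives of order at most $m-1$ extend continuously by $0$ to $\partial\mathcal{S}_+^d$.

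This route is, if anything, more elementary, since it never requires differentiating a cutoff. Contrary to your worry, no uniform control of the cofactor polynomials near the boundary is needed. Pointwise endpoint limits suffice once global integrability is secured, so your fallback cutoff is unnecessary. The paper's route, in exchange, handles all $m$ steps in one shot and avoids discussing line sections of the cone.

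The one step you assert rather than prove is the growth bound for the Bessel factor $\Phi$ (the paper's $B_\Theta$) and, more importantly, for its derivatives. The Leibniz expansion produces derivatives of $\Phi$ up to order $m$, and a growth bound for $\Phi$ on real arguments alone does not control them. The paper closes this in three steps.
\begin{enumerate}
\item James' integral formula gives $|C_{\kappa}(Z)|\le C_{\kappa}(I_d)\|Z\|_2^k$ for complex symmetric $Z$.
\item Since $a=\alpha/2>(d+1)/2$, one has $(a)_{\kappa}\ge k!/d^k$. Combining with the first step gives $|B_\Theta(Z)|\le\exp\{d\sqrt{\|\Theta\|_2\|Z\|_2}\}$ on the complexification.
\item Cauchy's estimates on unit polydiscs then yield $|\overline{\nabla}B_\Theta(X)|\le C\exp\{c_\Theta\sqrt{1+\tr(X)}\}$.
\end{enumerate}
Since you already note that $\Phi$ is entire, this is the natural way to complete your argument, but it needs to be supplied.
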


\begin{proof}[Proof of Lemma~\ref{lem:Laplace.polynomial.calculus}]
Let $T\in \mathcal{S}_{+}^d$ be given. Throughout the proof, $\nabla \equiv \nabla_{\!X}$. For any $i,j\in [d]$, note that
\[
\nabla_{ij} \, \etr(-TX) = -T_{ij} \, \etr(-TX), \qquad T,X\in \mathcal{S}^d.
\]
By repeated integration by parts for which all boundary terms vanish, we have
\begin{equation}\label{eq:IBP}
\int_{\mathcal{S}_{++}^d} \etr(-TX) \left(\prod_{\ell=1}^m \nabla_{i_{\ell} j_{\ell}}\right) f_{\alpha, I_d, \Theta}^{\mathcal{W}}(X) \, \rd X
= \left(\prod_{\ell=1}^m T_{i_{\ell} j_{\ell}}\right) \int_{\mathcal{S}_{++}^d} \etr(-TX) \, f_{\alpha, I_d, \Theta}^{\mathcal{W}}(X) \, \rd X,
\end{equation}
which proves \eqref{eq:repeated.IBP}. Since $\alpha > d - 1 + 2m \geq d - 1 + 2r$ for any $r \leq m$, this identity holds for any product of $r$ derivatives. The claim \eqref{eq:poly.claim} follows by linearity and the fact that any polynomial of degree at most $m$ is a finite linear combination of monomials of degree at most $m$.

It remains to justify \eqref{eq:IBP} rigorously. First, the heuristic is the following. Set
\[
a \leqdef \frac{\alpha}{2}, \qquad \beta \leqdef \frac{\alpha-d-1}{2}, \qquad B_{\Theta}(X) \leqdef {}_0F_1\left(a;\frac{\Theta^{1/2}X\Theta^{1/2}}{4}\right).
\]
The density $f_{\alpha, I_d, \Theta}^{\mathcal{W}}$ can be written as
\begin{equation}\label{eq:expression.density.NCW}
f_{\alpha, I_d, \Theta}^{\mathcal{W}}(X) = c_{\alpha, \Theta} \, |X|^{\beta} \, \etr(-X/2) \, B_{\Theta}(X), \qquad X\in \mathcal{S}_{++}^d,
\end{equation}
for an appropriate constant $c_{\alpha, \Theta} > 0$. Each differentiation of $f_{\alpha, I_d, \Theta}^{\mathcal{W}}(X)$ can lower the power of $|X|$ by at most one, so since $\beta > m-1$, derivatives of order at most $m-1$ should vanish on $\partial\mathcal{S}_{+}^d$, while derivatives of order $m$ should still be locally integrable there. The only additional issue is the noncentral factor $B_{\Theta}(X)$: one must show that its derivatives do not grow too quickly at infinity. We prove below that every differential monomial $\overline{\nabla} = \prod_{\ell=1}^r \nabla_{i_{\ell} j_{\ell}}$ of order at most $m$ (i.e., $r \leq m$) in the entries of $\nabla$ satisfies, for appropriate constants $c_{\Theta},C_{\overline{\nabla}, \Theta} > 0$,
\begin{equation}\label{eq:exponential.growth.derivatives.Bessel}
|\overline{\nabla}B_{\Theta}(X)| \leq C_{\overline{\nabla}, \Theta} \exp\{c_{\Theta}\sqrt{1 + \tr(X)}\}, \qquad X\in \mathcal{S}_{+}^d,
\end{equation}
and this stretched-exponential growth is dominated by the factor $\etr(-X/2)$. It is enough to control $B_{\Theta}(Z)$ for complex symmetric matrices $Z$.

For a partition $\bb{\kappa} = (\kappa_1, \ldots, \kappa_d)\vdash k$ (meaning $\kappa_1 \geq \dots \geq \kappa_d \geq 0$ and $\kappa_1 + \dots + \kappa_d = k$), set $\kappa_{d + 1} \leqdef 0$. By James' integral formula for zonal polynomials \citep[see, e.g.,][Eq.~35.4.3]{Richards2010}, we have for real symmetric $Y$,
\[
C_{\bb{\kappa}}(Y) = C_{\bb{\kappa}}(I_d) \int_{O(d)} \prod_{j=1}^d |(HYH^{\top})_j|^{\kappa_j-\kappa_{j + 1}} \, \rd H,
\]
where $\rd H$ is the normalized Haar probability measure on $O(d)$, and $A_j$ denotes the $j\times j$ top-left corner of $A$. Both sides are polynomials in the entries of $Y$, so the identity extends to complex symmetric $Z$ by polynomial identity. Therefore, for any complex symmetric $Z$,
\[
|C_{\bb{\kappa}}(Z)|
\leq C_{\bb{\kappa}}(I_d) \prod_{j=1}^d \|Z\|_2^{ \, j(\kappa_j-\kappa_{j + 1})}
= C_{\bb{\kappa}}(I_d)\|Z\|_2^k.
\]

Next, since $\alpha > d-1 + 2m$ and $m\geq 1$, we have $a > (d + 1)/2$, hence
\[
(a)_{\bb{\kappa}} = \prod_{j=1}^d \left(a-\frac{j-1}{2}\right)_{\kappa_j} \geq \prod_{j=1}^d \kappa_j! = \frac{k!}{\binom{k}{\kappa_1, \ldots, \kappa_d}} \geq \frac{k!}{\sum_{x_1 + \ldots + x_d = k} \binom{k}{x_1, \ldots,x_d}} = \frac{k!}{d^k}.
\]
Combining the last two inequalities yields
\[
\left|\frac{1}{(a)_{\bb{\kappa}} \, k!} \, C_{\bb{\kappa}}\left(\frac{\Theta^{1/2}Z\Theta^{1/2}}{4}\right)\right|
\leq \frac{d^k C_{\bb{\kappa}}(I_d)}{(k!)^2} \left(\frac{\|\Theta\|_2 \, \|Z\|_2}{4}\right)^k
= \frac{d^k C_{\bb{\kappa}}(I_d)}{(k!)^2} \left(\frac{\sqrt{\|\Theta\|_2 \, \|Z\|_2}}{2}\right)^{2k}.
\]
Therefore, summing over $k\in \N_0$ and $\bb{\kappa}\vdash k$, and using $\sum_{\bb{\kappa}\vdash k} C_{\bb{\kappa}}(I_d) = \{\tr(I_d)\}^k = d^k$, we get
\begin{equation}\label{eq:Bessel.bound}
|B_{\Theta}(Z)|
\leq \sum_{k = 0}^{\infty} \frac{1}{(k!)^2}\left(\frac{d \sqrt{\|\Theta\|_2 \, \|Z\|_2}}{2}\right)^{2k}
\leq \exp\left\{d \sqrt{\|\Theta\|_2 \, \|Z\|_2}\right\}
\leq \exp\left\{c_{\Theta}\sqrt{\|Z\|_F}\right\},
\end{equation}
for some constant $c_{\Theta}\in (0, \infty)$ that depends only on $d$ and $\Theta$. In particular, the Bessel function $B_{\Theta}$ is entire on $\mathcal{S}^d \oplus \ii \mathcal{S}^d$ (the complexification of $\mathcal{S}^d$); cf.\ \citet[p.~486]{Herz1955}.

Let $N \leqdef d(d + 1)/2$, choose the standard orthonormal basis of $\mathcal{S}^d$ for the Frobenius inner product (namely, $\{\bb{e}_i \bb{e}_i^{\top}\}_{i=1}^d \cup \{(\bb{e}_i \bb{e}_j^{\top} + \bb{e}_j \bb{e}_i^{\top})/\sqrt{2}\}_{1 \leq i < j \leq d}$, where $\bb{e}_i$ is the $i$-th standard basis vector of $\R^d$), and identify $\mathcal{S}^d \oplus \ii \mathcal{S}^d$ with $\C^N$. Under this identification, $B_{\Theta}$ becomes an entire function on $\C^N$. Fix any multi-index $\bb{\eta}\in \N_0^N$ with $|\bb{\eta}|\leq m$. By Cauchy's integral formula on the unit polydisc \citep[see, e.g.,][Theorem~1.2.2]{Krantz2001},
\[
\frac{\partial^{|\bb{\eta}|}}{\partial w_1^{\eta_1} \cdots \partial w_N^{\eta_N}} B_{\Theta}(\bb{w}) = \frac{\eta_1!\dots\eta_N!}{(2\pi i)^N} \int_{|z_1-w_1| = 1} \cdots \int_{|z_N-w_N| = 1} \frac{B_{\Theta}(\bb{z})}{\prod_{\nu = 1}^N (z_{\nu}-w_{\nu})^{\eta_{\nu} + 1}} \, \rd z_1 \cdots \, \rd z_N,
\]
hence
\[
\left|\frac{\partial^{|\bb{\eta}|}}{\partial w_1^{\eta_1} \cdots \partial w_N^{\eta_N}} B_{\Theta}(\bb{w})\right| \leq \eta_1!\dots\eta_N! \sup_{|z_1-w_1|\leq 1, \ldots,|z_N-w_N|\leq 1} |B_{\Theta}(\bb{z})|.
\]
On this polydisc, $\|\bb{z}\|_2\leq \|\bb{w}\|_2 + \sqrt{N}$, so the previous bound and the subadditivity of the square root yield
\[
\left|\frac{\partial^{|\bb{\eta}|}}{\partial w_1^{\eta_1} \cdots \partial w_N^{\eta_N}} B_{\Theta}(\bb{w})\right| \leq C_{\bb{\eta}, \Theta} \exp\{c_{\Theta}\sqrt{1 + \|\bb{w}\|_2}\},
\]
with $C_{\bb{\eta}, \Theta} \leqdef \eta_1!\dots\eta_N! \exp\{c_{\Theta} N^{1/4}\}$. Since each entry of the symmetric gradient $\nabla$ is a constant multiple of a coordinate derivative in these variables, it follows, after changing the constant $C_{\bb{\eta}, \Theta}$, that every differential monomial $\overline{\nabla}$ of order at most $m$ in the entries of $\nabla$ satisfies
\begin{equation}\label{eq:bound.derivative.Phi}
|\overline{\nabla}B_{\Theta}(X)| \leq C_{\overline{\nabla}, \Theta}\exp\{c_{\Theta}\sqrt{1 + \|X\|_F}\} \leq C_{\overline{\nabla}, \Theta}\exp\{c_{\Theta}\sqrt{1 + \tr(X)}\}, \qquad X\in \mathcal{S}_{+}^d,
\end{equation}
for some other constant $C_{\overline{\nabla}, \Theta}\in (0, \infty)$, as claimed in \eqref{eq:exponential.growth.derivatives.Bessel}.

Next, we record the cone-integrability estimate that will be used repeatedly: for every $\gamma > -1$, $M\in [0, \infty)$ and $c\in (0, \infty)$,
\begin{equation}\label{eq:integrability}
\int_{\mathcal{S}_{++}^d} (1 + \tr(X))^M |X|^{\gamma} \, \etr(-cX) \, \rd X < \infty.
\end{equation}
This is an immediate consequence of the convergence of the multivariate gamma integral \eqref{eq:gamma.integral}.

Now let $\overline{\nabla}$ be any differential monomial of order $r\leq m$ in the entries of $\nabla$. Using Leibniz' rule together with $\nabla |X|^q = q \, \mathrm{adj}(X) \, |X|^{q-1}$, an induction on $r$ shows that
\[
\overline{\nabla}|X|^{\beta} = \sum_{s = 0}^r p_{\overline{\nabla},s}(X) \, |X|^{\beta-s},
\]
where $p_{\overline{\nabla},s}$ are scalar-valued polynomials. Therefore, every derivative $\overline{\nabla}f_{\alpha, I_d, \Theta}^{\mathcal{W}}(X)$ of order $r\leq m$ is a finite linear combination of terms of the form
\[
p(X) \, |X|^{\beta-s} \, \etr(-X/2) \, \psi(X), \qquad s\leq r,
\]
where $p$ is a scalar-valued polynomial and $\psi$ is a derivative of $B_{\Theta}$ of order at most $r$. Given that $X\in \mathcal{S}_{+}^d$ implies $|X_{ij}| \leq \sqrt{X_{ii}X_{jj}} \leq \tr(X)$ for all $i,j\in [d]$, every generic polynomial of degree~$M$,
\[
p(X) = \sum_{|\alpha| \leq M} c_{\alpha} \prod_{1 \leq i \leq j \leq d} X_{ij}^{\alpha_{ij}},
\]
satisfies
\[
|p(X)| \leq \bigg(\sum_{|\alpha| \leq M} |c_{\alpha}|\bigg) (1 + \tr(X))^M, \qquad X\in \mathcal{S}_{+}^d.
\]
Combining this with the bound \eqref{eq:bound.derivative.Phi} on the derivatives of $B_{\Theta}$ yields, for every $X\in \mathcal{S}_{++}^d$,
\begin{equation}\label{eq:bounds.derivative.noncentral.Wishart}
\begin{aligned}
|\overline{\nabla}f_{\alpha, I_d, \Theta}^{\mathcal{W}}(X)|
&\leq C_{\overline{\nabla}} (1 + \tr(X))^{M_{\overline{\nabla}}} \sum_{s = 0}^r |X|^{\beta-s} \exp\left\{-\frac{1}{2}\tr(X) + c_{\Theta}\sqrt{1 + \tr(X)}\right\} \\
&\leq C_{\overline{\nabla}}' (1 + \tr(X))^{M_{\overline{\nabla}}'} |X|^{\beta-r} \etr(-X/4),
\end{aligned}
\end{equation}
for appropriate constants $C_{\overline{\nabla}}, C_{\overline{\nabla}}' > 0$ and $M_{\overline{\nabla}}, M_{\overline{\nabla}}'\in \N$.

This estimate gives exactly the required boundary behavior. If $r\leq m-1$, then $\beta-r > 0$, so $\beta-s\geq \beta-r > 0$ for all $s\leq r$. Since the derivatives of $B_{\Theta}$ are continuous on all of $\mathcal{S}^d$, every derivative of order at most $m-1$ extends continuously to $\mathcal{S}_{+}^d$ and vanishes on $\partial\mathcal{S}_{+}^d$. If $r = m$, then $\beta-m > -1$, so derivatives of order $m$ are locally integrable near $\partial\mathcal{S}_{+}^d$ by the cone-integrability estimate \eqref{eq:integrability}. In particular, $\overline{\nabla}f_{\alpha, I_d, \Theta}^{\mathcal{W}}\in L^1(\mathcal{S}_{++}^d)$.

We now justify the repeated integration by parts \eqref{eq:IBP} by a localization argument. For the fixed index pairs $(i_{\ell},j_{\ell})\in [d]^2$ and any $A\subseteq [m]$, set
\[
\overline{\nabla}_{A} \leqdef \prod_{\ell\in A} \nabla_{i_{\ell}j_{\ell}},
\]
with the convention that $\overline{\nabla}_{\varnothing}$ is the identity operator. Choose $\chi, \zeta\in C^{\infty}((0, \infty))$ such that
\begin{alignat*}{2}
\chi(t) &= 0 \text{ for } t\in (0,1], \qquad \chi(t) &&= 1 \text{ for } t\in [2, \infty), \\
\zeta(t) &= 1 \text{ for } t\in (0,1], \qquad \, \zeta(t) &&= 0 \text{ for } t\in [2, \infty).
\end{alignat*}
For $\varepsilon\in (0,1)$ and $R > 1$, define
\[
\omega_{\varepsilon,R}(X) \leqdef \chi\left(\frac{|X|}{\varepsilon}\right)\zeta\left(\frac{\tr(X)}{R}\right), \qquad \phi_T(X) \leqdef \etr(-TX), \qquad X\in \mathcal{S}_{++}^d.
\]
Then $\omega_{\varepsilon,R}\in C^{\infty}(\mathcal{S}_{++}^d)$, $\omega_{\varepsilon,R}(X)\to 1$ pointwise on $\mathcal{S}_{++}^d$ as $\varepsilon\downarrow 0$ and $R\uparrow\infty$, and $\omega_{\varepsilon,R} \, \phi_T$ has compact support in $\mathcal{S}_{++}^d$. Indeed, on the support of $\omega_{\varepsilon,R}$, we have $|X| \geq \varepsilon$ and $\tr(X)\leq 2R$, so if $\lambda_1(X)\geq \cdots \geq \lambda_d(X) > 0$ are the eigenvalues of $X$, then
\[
[\lambda_d(X), \lambda_1(X)] = \left[\frac{|X|}{\prod_{i=1}^{d-1} \lambda_i(X)}, \lambda_1(X)\right] \subseteq \left[\frac{\varepsilon}{(2R)^{d-1}}, 2R\right] \subseteq (0, \infty).
\]

Since $\omega_{\varepsilon,R} \, \phi_T$ is compactly supported in the interior of $\mathcal{S}^d \cong \R^{d(d + 1)/2}$, ordinary Euclidean integration by parts gives
\begin{equation}\label{eq:IBP.localization}
\int_{\mathcal{S}_{++}^d} \omega_{\varepsilon,R}(X)\phi_T(X) \, \overline{\nabla}_{[m]} f_{\alpha, I_d, \Theta}^{\mathcal{W}}(X) \, \rd X
= (-1)^m \int_{\mathcal{S}_{++}^d} f_{\alpha, I_d, \Theta}^{\mathcal{W}}(X) \, \overline{\nabla}_{[m]}\big(\omega_{\varepsilon,R} \, \phi_T\big)(X) \, \rd X.
\end{equation}
Expanding $\overline{\nabla}_{[m]}(\omega_{\varepsilon,R} \, \phi_T)$ by Leibniz' rule, we have
\[
\overline{\nabla}_{[m]}(\omega_{\varepsilon,R} \, \phi_T)
= \omega_{\varepsilon,R}\overline{\nabla}_{[m]}\phi_T + \sum_{\varnothing\neq A\subseteq [m]} \big(\overline{\nabla}_{A} \, \omega_{\varepsilon,R}\big)\big(\overline{\nabla}_{A^c}\phi_T\big),
\]
where $A^c = [m]\setminus A$. Since
\[
\overline{\nabla}_{[m]}\phi_T(X) = (-1)^m \left(\prod_{\ell=1}^m T_{i_{\ell}j_{\ell}}\right)\phi_T(X), \qquad X\in \mathcal{S}_{++}^d,
\]
we obtain
\[
\int_{\mathcal{S}_{++}^d} \omega_{\varepsilon,R}(X) \, \phi_T(X) \, \overline{\nabla}_{[m]} f_{\alpha, I_d, \Theta}^{\mathcal{W}}(X) \, \rd X
= \left(\prod_{\ell=1}^m T_{i_{\ell}j_{\ell}}\right)\int_{\mathcal{S}_{++}^d} \omega_{\varepsilon,R}(X) \, \phi_T(X) \, f_{\alpha, I_d, \Theta}^{\mathcal{W}}(X) \, \rd X + E_{\varepsilon,R},
\]
where the error term $E_{\varepsilon,R}$ satisfies
\[
|E_{\varepsilon,R}| \leq C_T \sum_{\varnothing\neq A\subseteq [m]} \int_{\mathcal{S}_{++}^d} f_{\alpha, I_d, \Theta}^{\mathcal{W}}(X) \, |\overline{\nabla}_{A} \, \omega_{\varepsilon,R}(X)| \, \rd X,
\]
for a suitable constant $C_T\in (0, \infty)$.

Each term in $\overline{\nabla}_{A} \, \omega_{\varepsilon,R}$ contains either a derivative of $\chi(|X|/\varepsilon)$, hence is supported on $\{\varepsilon\leq |X|\leq 2\varepsilon\}$, or a derivative of $\zeta(\tr(X)/R)$, hence is supported on $\{R\leq \tr(X)\leq 2R\}$, or both. Since derivatives of $|X|$ are polynomials and derivatives of $\tr(X)$ are constants, there exist constants $C_{A}, M_{A} > 0$ such that
\[
|\overline{\nabla}_{A} \, \omega_{\varepsilon,R}(X)|
\leq C_{A}(1 + \tr(X))^{M_{A}} \left[\sum_{q = 1}^{|A|} |X|^{-q} \ind_{\{\varepsilon\leq |X|\leq 2\varepsilon\}} + \ind_{\{R\leq \tr(X)\leq 2R\}}\right].
\]
Here we used that every derivative landing on $\chi(|X|/\varepsilon)$ produces a factor $\varepsilon^{-1}$, and on the support of such a derivative we have $\varepsilon\leq |X|\leq 2\varepsilon$, so $\varepsilon^{-q}\leq 2^q |X|^{-q}$. Also, from the expression of the noncentral Wishart density in \eqref{eq:expression.density.NCW}, and the bound \eqref{eq:Bessel.bound} on the matrix Bessel function,
\[
f_{\alpha, I_d, \Theta}^{\mathcal{W}}(X) \leq C_{\Theta}(1 + \tr(X))^{M_{\Theta}} |X|^{\beta} \etr(-X/4), \qquad X\in \mathcal{S}_{++}^d,
\]
for some constants $C_{\Theta}\in (0, \infty)$ and $M_{\Theta}\in \N$. Thus, the boundary-shell contribution to $E_{\varepsilon,R}$ is dominated by a constant multiple of
\[
(1 + \tr(X))^M |X|^{\beta-q} \etr(-X/4), \qquad 1\leq q\leq m,
\]
for some constant $M\in \N$, which is integrable because $\beta-m > -1$. The outer-shell contribution to $E_{\varepsilon,R}$ is dominated by
\[
(1 + \tr(X))^M |X|^{\beta} \etr(-X/4),
\]
which is also integrable. Since the corresponding shell indicators converge pointwise to $0$ as $\varepsilon\downarrow 0$ and $R\uparrow\infty$, dominated convergence and \eqref{eq:integrability} yield
\[
E_{\varepsilon,R}\to 0.
\]

Moreover, $\overline{\nabla}_{[m]} f_{\alpha, I_d, \Theta}^{\mathcal{W}},f_{\alpha, I_d, \Theta}^{\mathcal{W}}\in L^1(\mathcal{S}_{++}^d)$ by \eqref{eq:integrability} and \eqref{eq:bounds.derivative.noncentral.Wishart}, so dominated convergence gives
\[
\begin{aligned}
\int_{\mathcal{S}_{++}^d} \omega_{\varepsilon,R}(X) \, \phi_T(X) \, \overline{\nabla}_{[m]} f_{\alpha, I_d, \Theta}^{\mathcal{W}}(X) \, \rd X
&\to \int_{\mathcal{S}_{++}^d} \phi_T(X) \, \overline{\nabla}_{[m]} f_{\alpha, I_d, \Theta}^{\mathcal{W}}(X) \, \rd X, \\
\int_{\mathcal{S}_{++}^d} \omega_{\varepsilon,R}(X) \, \phi_T(X) \, f_{\alpha, I_d, \Theta}^{\mathcal{W}}(X) \, \rd X
&\to \int_{\mathcal{S}_{++}^d} \phi_T(X) \, f_{\alpha, I_d, \Theta}^{\mathcal{W}}(X) \, \rd X.
\end{aligned}
\]
Letting $\varepsilon\downarrow 0$ and $R\uparrow\infty$ in \eqref{eq:IBP.localization} completes the proof of \eqref{eq:IBP} and the lemma.
\end{proof}

\begin{lemma}\label{lem:Laplace.derivative.Wishart}
Let $\alpha > 3d-3$ be given, and let $\mathrm{adj}(\cdot)$ denote the adjugate matrix operator. For any $k\in \N$, $(i_1, j_1), \ldots, (i_k, j_k)\in [d]^2$, and $\Theta,X\in \mathcal{S}_{++}^d$, we have
\[
\left(\prod_{\ell=1}^k \nabla_{\Theta, i_{\ell} j_{\ell}}\right) f_{\alpha, I_d, \Theta}^{\mathcal{W}}(X)
= (-1)^k \left(\prod_{\ell=1}^k \Big\{\nabla_{\!X} \, \mathrm{adj}(I_d + 2\nabla_{\!X})\Big\}_{i_{\ell} j_{\ell}}\right) f_{\alpha + 2k, I_d, \Theta}^{\mathcal{W}}(X),
\]
where $\nabla_{\Theta}$ acts on the noncentrality parameter $\Theta$ and $\nabla_{\!X}$ acts on the state variable $X$.
\end{lemma}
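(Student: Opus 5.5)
We prove the identity through Laplace transforms, using injectivity of the Laplace transform on $L^1(\mathcal{S}_{++}^d)$ functions (with exponential moments). By Remark~\ref{rem:Wishart.mgf} with $\Sigma = I_d$,
\[
\mathcal{L}[f_{\alpha, I_d, \Theta}^{\mathcal{W}}](T) = |I_d + 2T|^{-\alpha/2}\,\etr\{-T(I_d + 2T)^{-1}\Theta\}, \qquad T\in \mathcal{S}_{+}^d.
\]
Since $T$ and $(I_d + 2T)^{-1}$ commute, $T(I_d+2T)^{-1}$ is symmetric. With the symmetric-gradient convention, $\nabla_{\Theta,ij}\tr(A\Theta) = A_{ij}$ for symmetric $A$. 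Iterating gives
\[
\Big(\prod_{\ell=1}^k \nabla_{\Theta,i_\ell j_\ell}\Big)\mathcal{L}[f_{\alpha, I_d, \Theta}^{\mathcal{W}}](T) = (-1)^k \prod_{\ell=1}^k \{T(I_d+2T)^{-1}\}_{i_\ell j_\ell}\,\mathcal{L}[f_{\alpha, I_d, \Theta}^{\mathcal{W}}](T).
\]
Now write $\{T(I_d+2T)^{-1}\}_{ij} = \{T\,\mathrm{adj}(I_d+2T)\}_{ij}/|I_d+2T|$. The factor $|I_d+2T|^{-k}$ combines with $|I_d+2T|^{-\alpha/2}$ to give $|I_d+2T|^{-(\alpha+2k)/2}$. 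The exponential factor is unchanged, so the right-hand side equals
\[
(-1)^k P_k(T)\,\mathcal{L}[f^{\mathcal{W}}_{\alpha+2k, I_d,\Theta}](T), \qquad P_k(T) \leqdef \prod_{\ell=1}^k\{T\,\mathrm{adj}(I_d+2T)\}_{i_\ell j_\ell}.
\]
Here $P_k$ is a scalar polynomial of degree at most $kd$ in the entries of $T$.

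The second step is to apply Lemma~\ref{lem:Laplace.polynomial.calculus} to $f^{\mathcal{W}}_{\alpha+2k,I_d,\Theta}$ with polynomial $P_k$ and $m = kd$. This identifies the right-hand side as $(-1)^k\mathcal{L}[P_k(\nabla_X) f^{\mathcal{W}}_{\alpha+2k,I_d,\Theta}](T)$. Since the entries of $\nabla_X$ commute as constant-coefficient operators, $P_k(\nabla_X)$ coincides with the operator product in the statement. The hypothesis of that lemma is $\alpha+2k > d-1+2kd$, i.e.\ $\alpha > d-1+2k(d-1)$. For $k=1$ this is exactly $\alpha>3d-3$, but for larger $k$ it is stronger.

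I would therefore not apply the lemma once with $m = kd$. Instead I would proceed by induction on $k$, applying the case $k=1$ repeatedly. At each step the derivative in $\Theta$ is applied to $f_{\alpha+2(k-1),I_d,\Theta}$, whose shape exceeds $3d-3$. To move the additional $\Theta$-derivative past the already-present $X$-operator, I need $\nabla_\Theta$ to commute with $\prod\{\nabla_X\mathrm{adj}(I_d+2\nabla_X)\}$. This holds by joint smoothness of the density in $(\Theta,X)$: the density is an entire function of $\Theta$ through ${}_0F_1$, and the bounds of type \eqref{eq:bounds.derivative.noncentral.Wishart} hold locally uniformly in $\Theta$. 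The base case $k=1$ uses Lemma~\ref{lem:Laplace.polynomial.calculus} with $m=d$ and shape $\alpha+2>3d-1$, which is exactly the required $\alpha+2>d-1+2d$.

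The remaining points, which I expect to be the main obstacle, are the justifications in the base case.

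\textbf{Differentiation under the integral.} One must show that $\nabla_\Theta$ can be moved inside the Laplace integral, i.e.\ that $\mathcal{L}[\nabla_{\Theta,ij} f^{\mathcal{W}}_{\alpha,I_d,\Theta}] = \nabla_{\Theta,ij}\mathcal{L}[f^{\mathcal{W}}_{\alpha,I_d,\Theta}]$. I would get this by dominated convergence, using a Cauchy-estimate bound on the $\Theta$-derivatives of ${}_0F_1(a;\Theta^{1/2}X\Theta^{1/2}/4)$. The argument mirrors \eqref{eq:Bessel.bound} and gives a bound $C\exp\{c\sqrt{1+\tr X}\}$, uniform for $\Theta$ in compact subsets of $\mathcal{S}_{++}^d$.

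\textbf{Uniqueness of the transform.} Both sides of the claimed identity are continuous $L^1$ functions of $X$ with transforms finite for all $T\in\mathcal{S}_+^d$. Equality of the transforms on the open set $\mathcal{S}_{++}^d$ then gives equality a.e., by uniqueness of the multivariate Laplace transform (real-analytic in $T$). Continuity of both sides upgrades this to equality for every $X$.
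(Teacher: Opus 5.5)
Your proposal is correct and follows essentially the same route as the paper: you compute the $\Theta$-derivative at the level of Laplace transforms, rewrite $T(I_d+2T)^{-1}$ as $T\,\mathrm{adj}(I_d+2T)/|I_d+2T|$ to shift the shape to $\alpha+2$, invoke Lemma~\ref{lem:Laplace.polynomial.calculus} with $m=d$, obtain the $k=1$ identity from uniqueness of Laplace transforms and continuity, and then iterate using the commutation of the $\Theta$- and $X$-operators. You also correctly observe that a single application with $m=kd$ would require the stronger condition $\alpha>d-1+2k(d-1)$; this is why the paper, like you, iterates the $k=1$ case, with the shape parameter increasing by $2$ at each step.
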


\begin{proof}[Proof of Lemma~\ref{lem:Laplace.derivative.Wishart}]
By Remark~\ref{rem:Wishart.mgf} with $\Sigma = I_d$,
\begin{equation}\label{eq:Laplace}
\mathcal{L}[f_{\alpha, I_d, \Theta}^{\mathcal{W}}](T)
= \frac{\etr\{-T(I_d + 2T)^{-1}\Theta\}}{|I_d + 2T|^{\alpha/2}}, \qquad T\in \mathcal{S}_{+}^d.
\end{equation}
The interchange of $\nabla_{\Theta, ij}$ with the Laplace integral is justified locally uniformly in $\Theta\in \mathcal{S}_{++}^d$ by the same Bessel bounds used in the proof of Lemma~\ref{lem:Laplace.polynomial.calculus}; for suitable constants $C,M > 0$,
\[
\big|\nabla_{\Theta, ij} f_{\alpha, I_d, \Theta}^{\mathcal{W}}(X)\big|
\leq C(1 + \tr(X))^M |X|^{(\alpha-d-1)/2}\etr(-X/4),
\]
which is integrable under the present assumption on $\alpha$. Differentiating \eqref{eq:Laplace} under the integral sign with respect to the $(i,j)$ component of $\Theta$ gives
\[
\begin{aligned}
\mathcal{L}[\nabla_{\Theta, ij} f_{\alpha, I_d, \Theta}^{\mathcal{W}}](T)
&= \nabla_{\Theta, ij} \, \mathcal{L}[f_{\alpha, I_d, \Theta}^{\mathcal{W}}](T) \\
&= -\big(T(I_d + 2T)^{-1}\big)_{ij} \, \mathcal{L}[f_{\alpha, I_d, \Theta}^{\mathcal{W}}](T), \qquad i,j\in [d].
\end{aligned}
\]
We can write
\[
T(I_d + 2T)^{-1} = \frac{T \, \mathrm{adj}(I_d + 2T)}{|I_d + 2T|}.
\]
Moreover, by Remark~\ref{rem:Wishart.mgf} again,
\[
|I_d + 2T|^{-1} \, \mathcal{L}[f_{\alpha, I_d, \Theta}^{\mathcal{W}}](T) = \mathcal{L}[f_{\alpha + 2, I_d, \Theta}^{\mathcal{W}}](T).
\]
Hence, for each $(i,j)\in \smash{[d]^2}$,
\[
\mathcal{L}[\nabla_{\Theta, ij} f_{\alpha, I_d, \Theta}^{\mathcal{W}}](T)
= -\big(T \, \mathrm{adj}(I_d + 2T)\big)_{ij} \, \mathcal{L}[f_{\alpha + 2, I_d, \Theta}^{\mathcal{W}}](T).
\]
Every entry of the matrix $U \, \mathrm{adj}(I_d + 2U)$ is a polynomial of degree at most $d$ in the entries of~$U$. Since $\alpha > 3d-3$, we have $\alpha + 2 > d-1 + 2d$, so Lemma~\ref{lem:Laplace.polynomial.calculus} applies to $\smash{f_{\alpha + 2, I_d, \Theta}^{\mathcal{W}}}$ with $m = d$:
\[
\mathcal{L}\left[-\big\{\nabla_{\!X} \, \mathrm{adj}(I_d + 2\nabla_{\!X})\big\}_{ij} \, f_{\alpha + 2, I_d, \Theta}^{\mathcal{W}}\right](T)
= - \big(T \, \mathrm{adj}(I_d + 2T)\big)_{ij} \, \mathcal{L}[f_{\alpha + 2, I_d, \Theta}^{\mathcal{W}}](T).
\]
Thus the two sides in the next display have the same Laplace transform for every $T\in \mathcal{S}_{+}^d$. By uniqueness of Laplace transforms on $\mathcal{S}_{+}^d$, they are equal for a.e.\ $X\in \mathcal{S}_{++}^d$; since both sides are continuous on $\mathcal{S}_{++}^d$, the equality holds for every $X\in \mathcal{S}_{++}^d$:
\begin{equation}\label{eq:Wishart.parameter.derivative.identity.k.1}
\nabla_{\Theta, ij} \, f_{\alpha, I_d, \Theta}^{\mathcal{W}}(X)
= -\big\{\nabla_{\!X} \, \mathrm{adj}(I_d + 2\nabla_{\!X})\big\}_{ij} \, f_{\alpha + 2, I_d, \Theta}^{\mathcal{W}}(X), \qquad X\in \mathcal{S}_{++}^d.
\end{equation}
The claim follows by iterating \eqref{eq:Wishart.parameter.derivative.identity.k.1} $k$ times, using that the families of operators $\nabla_{\Theta,ij}$ and $\{\nabla_{\!X} \, \mathrm{adj}(I_d + 2\nabla_{\!X})\}_{ij}$ commute among themselves and with each other. Since the shape parameter $\alpha$ increases by $2$ at each step, the baseline assumption $\alpha > 3d-3$ guarantees that the same argument applies throughout the iteration.
\end{proof}

Lemma~\ref{lem:Luk.Wishart.Lemma.2.5} is a scale-normalized Wishart analog of Lemma~2.5 of \citet{Luk1994PhD}.

\begin{lemma}\label{lem:Luk.Wishart.Lemma.2.5}
Let $\alpha > 3d-3$, $t\geq 0$, and $k\in \N$. Suppose that $h\in C^{kd}(\mathcal{S}_{++}^d)$, and that for every differential monomial $\overline{\nabla}$ of order at most $kd$ in the entries of $\nabla_{\!X}$, including the order-zero monomial, there exist constants $C_{\overline{\nabla}}, N_{\overline{\nabla}}\geq 0$ such that
\begin{equation}\label{eq:derivatives.h.assumption}
|\overline{\nabla} h(X)| \leq C_{\overline{\nabla}} (1 + \|X\|_F^{N_{\overline{\nabla}}}), \qquad X\in \mathcal{S}_{++}^d.
\end{equation}
Then, for every $(i_1, j_1), \ldots, (i_k, j_k)\in [d]^2$ and $\Lambda\in \mathcal{S}_{++}^d$,
\begin{equation}\label{eq:claim}
\left(\prod_{\ell=1}^k \nabla_{\Lambda, i_{\ell} j_{\ell}}\right) (\mathcal{Q}_t h)(\Lambda)
= e^{-2kt} \, \EE\left[\left(\prod_{\ell=1}^k \mathcal{D}_{i_{\ell} j_{\ell}}\right) h(\mathfrak{Y}_{k,t})\right],
\end{equation}
where $\mathcal{D} = \nabla_{\!X} \, \mathrm{adj}(I_d - 2\nabla_{\!X})$, $\mathfrak{Y}_{k,t}\sim \mathcal{W}_d(\alpha + 2k, I_d, e^{-2t}\Lambda)$ and is defined in \eqref{eq:Wishart.Q.operator.def}.
\end{lemma}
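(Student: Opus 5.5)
The plan is to differentiate under the integral sign in \eqref{eq:Wishart.Q.operator.def}, apply Lemma~\ref{lem:Laplace.derivative.Wishart} to the kernel, and then integrate by parts on the cone, moving the $X$-differential operator onto $h$.

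\textbf{Step 1 (chain rule).} Write $\Theta = e^{-2t}\Lambda$. Since $\Lambda\mapsto\Theta$ is linear, $\nabla_{\Lambda,ij} = e^{-2t}\nabla_{\Theta,ij}$, and this produces the factor $e^{-2kt}$. Next I justify differentiating $k$ times under the integral. The bound used in the proof of Lemma~\ref{lem:Laplace.derivative.Wishart} gives
\[
|\nabla_{\Theta}$-derivatives of $f^{\mathcal{W}}_{\alpha,I_d,\Theta}(X)| \leq C(1+\tr X)^M|X|^{(\alpha-d-1)/2}\etr(-X/4),
\]
locally uniformly in $\Theta$. Combined with the polynomial growth \eqref{eq:derivatives.h.assumption} of $h$ and the cone integrability \eqref{eq:integrability}, dominated convergence allows $k$ iterated differentiations. (For $\Theta$ in a compact subset of $\mathcal{S}_{++}^d$ these estimates follow from the Cauchy-formula argument of Lemma~\ref{lem:Laplace.polynomial.calculus} applied jointly in $\Theta$.) By Lemma~\ref{lem:Laplace.derivative.Wishart} we then get
\[
\Big(\prod_{\ell}\nabla_{\Lambda,i_\ell j_\ell}\Big)(\mathcal{Q}_t h)(\Lambda) = (-e^{-2t})^k\int_{\mathcal{S}_{++}^d} h(X)\,\Big(\prod_\ell\{\nabla_{\!X}\mathrm{adj}(I_d+2\nabla_{\!X})\}_{i_\ell j_\ell}\Big)f^{\mathcal{W}}_{\alpha+2k,I_d,\Theta}(X)\,\rd X .
\]

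\textbf{Step 2 (integration by parts).} Expand the product operator into constant-coefficient monomials $\overline{\nabla}$ of order $r\leq kd$. Each monomial of order $r$ arises from $r-k$ factors $2\nabla$ taken from the adjugate and $k$ outer factors $\nabla$, so moving it onto $h$ produces the sign $(-1)^r$. Since $(-1)^r(2)^{r-k}\cdot(-1)^k = (-2)^{r-k}$ (up to the same combinatorics), the operator $(-1)^k\nabla\,\mathrm{adj}(I_d+2\nabla)$ becomes $\nabla\,\mathrm{adj}(I_d-2\nabla)=\mathcal{D}$ under $\nabla\mapsto-\nabla$, because $\mathrm{adj}$ of $I_d+2\nabla$ under $\nabla\to-\nabla$ is $\mathrm{adj}(I_d-2\nabla)$ and the outer $\nabla$'s give $(-1)^k$. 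The result is
\[
e^{-2kt}\int h\,\prod_\ell\mathcal{D}_{i_\ell j_\ell}\ \text{(transposed)}\, f = e^{-2kt}\int\Big(\prod_\ell\mathcal{D}_{i_\ell j_\ell}h\Big)f^{\mathcal{W}}_{\alpha+2k,I_d,\Theta}\,\rd X,
\]
which equals the right-hand side of \eqref{eq:claim}.

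\textbf{Step 3 (justifying the boundary terms), the main obstacle.} The integration by parts must be justified exactly as in the localization of Lemma~\ref{lem:Laplace.polynomial.calculus}, with the cutoff $\omega_{\varepsilon,R}$ and with $h$ in place of $\phi_T$. The required inputs are the following.
\begin{itemize}
\item By \eqref{eq:bounds.derivative.noncentral.Wishart} with shape $\alpha+2k$, derivatives of order $r$ of $f^{\mathcal{W}}_{\alpha+2k,I_d,\Theta}$ are bounded by $(1+\tr X)^{M}|X|^{\beta_k-r}\etr(-X/4)$, where $\beta_k=(\alpha+2k-d-1)/2$.
\item For $r\leq kd$ we need $\beta_k-r>-1$ for every monomial.
\item Derivatives of $h$ grow only polynomially, so the outer-shell terms vanish.
\end{itemize}

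The delicate point is the exponent count in the first two items. The condition $\beta_k>kd-1$ amounts to $\alpha>2kd-2k+d-1$, and $\alpha>3d-3$ alone does not give this for $k\geq2$. I would therefore avoid a single integration by parts of order $kd$ and instead \emph{iterate} the case $k=1$. For $k=1$ the requirement is $\alpha+2>3d-1$, which is exactly the hypothesis. One can then peel one operator at a time: define $g_1=\mathcal{D}_{i_1j_1}h$, use the $k=1$ identity for shape $\alpha+2(k-1)$ (which still exceeds $3d-3$), and induct on $k$. This needs $g_\ell$ to inherit the hypotheses, namely $C^{(k-\ell)d}$ regularity with polynomially bounded derivatives, which follows directly from the assumptions on $h$.

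Concretely, I would prove the case $k=1$ for an arbitrary shape $\gamma>3d-3$, and then obtain \eqref{eq:claim} by induction. At the inductive step, $\prod_{\ell\geq2}\nabla_\Lambda$ acts on $e^{-2t}\mathcal{Q}^{(\alpha+2)}_t(\mathcal{D}_{i_1j_1}h)$; differentiating under the integral there is justified as in Step 1.
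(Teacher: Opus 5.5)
Your final plan is the paper's proof. You prove the one-step identity $\nabla_{\Lambda,ij}\mathcal{Q}_t^{(\gamma)}g = e^{-2t}\mathcal{Q}_t^{(\gamma+2)}(\mathcal{D}_{ij}g)$ for every shape $\gamma>3d-3$, using differentiation under the integral, Lemma~\ref{lem:Laplace.derivative.Wishart} with $k=1$, and an order-$d$ integration by parts against $f^{\mathcal{W}}_{\gamma+2,I_d,\Theta}$; you then induct with $g_r=\prod_{\ell\le r}\mathcal{D}_{i_\ell j_\ell}h$ and shape $\alpha+2r$. You were also right to drop the single order-$kd$ integration by parts sketched in Steps 1--2, which would require $\alpha>(2k+1)d-2k-1$ rather than $\alpha>3d-3$.
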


\begin{proof}[Proof of Lemma~\ref{lem:Luk.Wishart.Lemma.2.5}]
For a temporary shape parameter $\gamma > 3d-3$, define
\[
\mathcal{Q}_t^{(\gamma)} g(\Lambda) \leqdef \int_{\mathcal{S}_{++}^d} g(X) f_{\gamma,I_d,e^{-2t}\Lambda}^{\mathcal{W}}(X) \, \rd X, \qquad \Lambda\in \mathcal{S}_{++}^d,
\]
so that $\mathcal{Q}_t = \mathcal{Q}_t^{(\alpha)}$. We first prove the one-step identity
\begin{equation}\label{eq:Luk.Wishart.one.step.identity}
\nabla_{\Lambda,ij}(\mathcal{Q}_t^{(\gamma)}g)(\Lambda) = e^{-2t} \, \mathcal{Q}_t^{(\gamma + 2)}(\mathcal{D}_{ij} \, g)(\Lambda),
\end{equation}
whenever $g\in C^d(\mathcal{S}_{++}^d)$ and all differential monomials of order at most $d$ in the entries of $\nabla_{\!X}$, including the order-zero monomial, applied to $g$ have polynomial growth.

Fix $\Lambda\in \mathcal{S}_{++}^d$ and choose a compact neighborhood $K\subseteq \mathcal{S}_{++}^d$ of $e^{-2t}\Lambda$. By Lemma~\ref{lem:Laplace.derivative.Wishart} with $k = 1$, for each $\Theta\in K$ the quantity $\nabla_{\Theta,ij} f_{\gamma,I_d, \Theta}^{\mathcal{W}}$ can be written as a constant-coefficient $X$-differential operator of order at most $d$ applied to $f_{\gamma + 2,I_d, \Theta}^{\mathcal{W}}$. The estimates established in the proof of Lemma~\ref{lem:Laplace.polynomial.calculus} are uniform for $\Theta\in K$, so after multiplication by $g(X)$ the integrands below are dominated by a single $L^1(\mathcal{S}_{++}^d)$ function of $X$. Hence differentiation under the integral sign is justified by dominated convergence.

By the chain rule and differentiating under the integral sign, we obtain
\[
\nabla_{\Lambda,ij}(\mathcal{Q}_t^{(\gamma)}g)(\Lambda)
= e^{-2t} \int_{\mathcal{S}_{++}^d} g(X) \left[\nabla_{\Theta,ij} f_{\gamma,I_d, \Theta}^{\mathcal{W}}(X)\right]_{\Theta = e^{-2t}\Lambda} \, \rd X.
\]
Applying Lemma~\ref{lem:Laplace.derivative.Wishart}, which is permissible since $\gamma > 3d-3$, gives
\[
\nabla_{\Lambda,ij}(\mathcal{Q}_t^{(\gamma)}g)(\Lambda)
= - e^{-2t} \int_{\mathcal{S}_{++}^d} g(X) \mathcal{A}_{ij} f_{\gamma + 2,I_d,e^{-2t}\Lambda}^{\mathcal{W}}(X) \, \rd X,
\]
where $\mathcal{A}_{ij} \leqdef \{\nabla_{\!X}\mathrm{adj}(I_d + 2\nabla_{\!X})\}_{ij}$.

The operator $\mathcal{A}_{ij}$ is a constant-coefficient differential operator of order at most $d$. Given that $\gamma > 3d-3$, we have $(\gamma + 2-d-1)/2 > d-1$. Exactly as in the proof of Lemma~\ref{lem:Laplace.polynomial.calculus}, this implies that derivatives of $\smash{f_{\gamma + 2,I_d,e^{-2t}\Lambda}^{\mathcal{W}}}$ of order at most $d-1$ vanish on $\partial\mathcal{S}_{+}^d$, while derivatives of order at most $d$ are locally integrable there. Together with the polynomial growth assumption on the derivatives of $g$ and the bounds in \eqref{eq:bounds.derivative.noncentral.Wishart} for the derivatives of $f_{\gamma + 2,I_d,e^{-2t}\Lambda}^{\mathcal{W}}$, this justifies the integration by parts below.

Since the entries of $\nabla_{\!X}$ are constant-coefficient commuting differential operators, the formal adjoint of $\mathcal{A}_{ij}$ with respect to Lebesgue measure is obtained by replacing $\nabla_{\!X}$ with $-\nabla_{\!X}$, namely
\[
\mathcal{A}_{ij}^*
= \{(-\nabla_{\!X})\mathrm{adj}(I_d - 2\nabla_{\!X})\}_{ij}
= - \mathcal{D}_{ij}.
\]
Therefore,
\[
\begin{aligned}
\nabla_{\Lambda,ij}(\mathcal{Q}_t^{(\gamma)}g)(\Lambda)
&= - e^{-2t} \int_{\mathcal{S}_{++}^d} \{\mathcal{A}_{ij}^* \, g(X)\} f_{\gamma + 2,I_d,e^{-2t}\Lambda}^{\mathcal{W}}(X) \, \rd X \\
&= e^{-2t} \int_{\mathcal{S}_{++}^d} \{\mathcal{D}_{ij} \, g(X)\} f_{\gamma + 2,I_d,e^{-2t}\Lambda}^{\mathcal{W}}(X) \, \rd X,
\end{aligned}
\]
which proves \eqref{eq:Luk.Wishart.one.step.identity}.

We now iterate \eqref{eq:Luk.Wishart.one.step.identity}. Set
\[
g_0 \leqdef h, \qquad g_r \leqdef \left(\prod_{\ell=1}^r \mathcal{D}_{i_{\ell} j_{\ell}}\right)h, \qquad \gamma_r \leqdef \alpha + 2r, \qquad r\in \{0, \ldots,k\}.
\]
Since each $\mathcal{D}_{ij}$ is a constant-coefficient differential operator of order at most $d$, the function $g_r$ belongs to $C^{(k-r)d}(\mathcal{S}_{++}^d)$, and all its differential monomials of order at most $(k-r)d$, including the order-zero monomial, have polynomial growth. Moreover, $\gamma_r > 3d-3$ for every $r\in \{0, \ldots,k\}$. Thus the one-step identity \eqref{eq:Luk.Wishart.one.step.identity} can be applied successively to the pairs $(\gamma_r,g_r)$ for $r\in \{0, \ldots,k-1\}$.

We prove by induction on $r$ that
\begin{equation}\label{eq:proof.induction}
\left(\prod_{\ell=1}^r \nabla_{\Lambda,i_{\ell} j_{\ell}}\right)(\mathcal{Q}_t^{(\alpha)}h)(\Lambda)
= e^{-2rt} \, \mathcal{Q}_t^{(\alpha + 2r)}g_r(\Lambda), \qquad r\in \{0, \ldots,k\}.
\end{equation}
The claim is trivial for $r = 0$. If it holds for some $r\in \{0, \ldots,k-1\}$, then applying \eqref{eq:Luk.Wishart.one.step.identity} with $\gamma = \alpha + 2r$ and $g = g_r$ gives
\[
\begin{aligned}
\left(\prod_{\ell=1}^{r + 1} \nabla_{\Lambda,i_{\ell} j_{\ell}}\right)(\mathcal{Q}_t^{(\alpha)}h)(\Lambda)
&= e^{-2rt} \, \nabla_{\Lambda,i_{r + 1}j_{r + 1}}\mathcal{Q}_t^{(\alpha + 2r)}g_r(\Lambda) \\[-3mm]
&= e^{-2(r + 1)t} \, \mathcal{Q}_t^{(\alpha + 2r + 2)}(\mathcal{D}_{i_{r + 1}j_{r + 1}}g_r)(\Lambda) \\
&= e^{-2(r + 1)t} \, \mathcal{Q}_t^{(\alpha + 2(r + 1))}g_{r + 1}(\Lambda),
\end{aligned}
\]
where we used the fact that constant-coefficient differential operators commute. This completes the induction and proves \eqref{eq:proof.induction}.

Taking $r = k$ yields
\[
\left(\prod_{\ell=1}^k \nabla_{\Lambda,i_{\ell} j_{\ell}}\right)(\mathcal{Q}_t h)(\Lambda)
= e^{-2kt} \int_{\mathcal{S}_{++}^d} \left\{\left(\prod_{\ell=1}^k \mathcal{D}_{i_{\ell} j_{\ell}}\right) h(X)\right\} f_{\alpha + 2k,I_d,e^{-2t}\Lambda}^{\mathcal{W}}(X) \, \rd X,
\]
which is the claimed expression \eqref{eq:claim}.
\end{proof}

\begin{remark}\label{rem:eq.claim.expectation.exists}
The expectation on the right-hand side of \eqref{eq:claim} is finite under the weaker condition $\alpha > d-1$. Indeed, set
\[
G(X) \leqdef \left(\prod_{\ell=1}^k \mathcal{D}_{i_{\ell} j_{\ell}}\right)h(X).
\]
Each entry of $\mathcal{D}$ is a constant-coefficient differential operator of order at most $d$, hence $G$ is a finite linear combination of differential monomials of order at most $kd$ in the entries of $\nabla_{\!X}$. By \eqref{eq:derivatives.h.assumption}, there exist constants $C,N < \infty$ such that
\[
|G(X)| \leq C(1 + \|X\|_F^N), \qquad X\in \mathcal{S}_{++}^d.
\]
Let $\beta \leqdef \alpha + 2k$ and $\Theta \leqdef e^{-2t}\Lambda$. If $\alpha > d-1$, then $\beta > d-1$, so $\mathfrak{Y}_{k,t}\sim \mathcal{W}_d(\beta,I_d,\Theta)$ is well defined. The moment generating form of the noncentral Wishart transform, obtained from Remark~\ref{rem:Wishart.mgf} on its natural domain, gives, for every $0 < s < 1/2$,
\[
\EE[\exp\{s \, \tr(\mathfrak{Y}_{k,t})\}]
= (1 - 2s)^{-\beta d/2}\exp\left\{\frac{s}{1 - 2s}\tr(\Theta)\right\} < \infty.
\]
Since $\mathfrak{Y}_{k,t}\in \mathcal{S}_{+}^d$ almost surely, $\|\mathfrak{Y}_{k,t}\|_F \leq \tr(\mathfrak{Y}_{k,t})$, and therefore $\EE[\|\mathfrak{Y}_{k,t}\|_F^N] < \infty$. Consequently, $\EE[|G(\mathfrak{Y}_{k,t})|] < \infty$. Thus the condition $\alpha > 3d-3$ in Lemma~\ref{lem:Luk.Wishart.Lemma.2.5} is not an integrability condition for the expectation in \eqref{eq:claim}; it is used in the present proof of the transfer identity.
\end{remark}

\section{Proofs of the main results}\label{sec:proofs.main.results}

\subsection{Proof of Proposition~\ref{prop:generator}}

For any $f\in C^2(\mathcal{S}_{++}^d)$, the extended generator of $(\mathfrak{W}_t)_{t\geq 0}$ arises from It\^o's formula as
\[
\mathcal{A}^{\mathcal{W}}f(S)
= \underbrace{2 \, \tr\left\{(\alpha \Sigma - S) \nabla f(S)\right\}}_{\text{drift term}} + \underbrace{\frac{1}{2} \sum_{i,j,k, \ell = 1}^d \frac{( \, \rd \mathfrak{W}_t^{(\mathrm{diff})})_{ij} \bullet ( \, \rd \mathfrak{W}_t^{(\mathrm{diff})})_{k\ell}}{\rd t} \nabla_{ij} \nabla_{k\ell} f(S)}_{\text{diffusion term}},
\]
where $\bullet$ denotes the It\^o product, and
\[
 \, \rd \mathfrak{W}_t^{(\mathrm{diff})}
 \leqdef \sqrt{2} \, ( \, \rd \mathfrak{X}_t + \, \rd \mathfrak{X}_t^{\top}), \qquad \, \rd \mathfrak{X}_t
 \leqdef S^{1/2} \, \rd \mathfrak{B}_t \Sigma^{1/2}.
\]
Since $( \, \rd \mathfrak{X}_t)_{ij} = \sum_{p, q = 1}^d (S^{1/2})_{ip} ( \, \rd \mathfrak{B}_t)_{pq} (\Sigma^{1/2})_{qj}$, the identity $( \, \rd \mathfrak{B}_t)_{pq} \bullet ( \, \rd \mathfrak{B}_t)_{p'q'} = \delta_{pp'} \delta_{qq'} \, \rd t$ yields
\[
\begin{aligned}
( \, \rd \mathfrak{X}_t)_{ij} \bullet ( \, \rd \mathfrak{X}_t)_{k\ell}
&= \sum_{p, q, p', q' = 1}^d (S^{1/2})_{ip} (S^{1/2})_{kp'} (\Sigma^{1/2})_{qj} (\Sigma^{1/2})_{q'\ell} \, \{( \, \rd \mathfrak{B}_t)_{pq} \bullet ( \, \rd \mathfrak{B}_t)_{p'q'}\} \\
&= \sum_{p, q = 1}^d (S^{1/2})_{ip} (S^{1/2})_{kp} (\Sigma^{1/2})_{qj} (\Sigma^{1/2})_{q\ell} \, \rd t \\
&= S_{ik} \Sigma_{j\ell} \, \rd t.
\end{aligned}
\]
It follows that the quadratic covariation between the $(i,j)$ and $(k, \ell)$ entries of $\smash{\rd \mathfrak{W}_t^{(\mathrm{diff})}}$ is
\[
\begin{aligned}
( \, \rd \mathfrak{W}_t^{(\mathrm{diff})})_{ij} \bullet ( \, \rd \mathfrak{W}_t^{(\mathrm{diff})})_{k\ell}
&= 2 \, \{( \, \rd \mathfrak{X}_t)_{ij} + ( \, \rd \mathfrak{X}_t)_{ji}\} \bullet \{( \, \rd \mathfrak{X}_t)_{k\ell} + ( \, \rd \mathfrak{X}_t)_{\ell k}\} \\
&= 2 \, \{S_{ik} \Sigma_{j\ell} + S_{i\ell} \Sigma_{jk} + S_{jk} \Sigma_{i\ell} + S_{j\ell} \Sigma_{ik}\} \, \rd t.
\end{aligned}
\]
Using the symmetry of $\nabla$, the diffusion part of $\mathcal{A}^{\mathcal{W}}f(S)$ becomes
\[
\begin{aligned}
&\sum_{i,j,k, \ell = 1}^d \{S_{ik} \Sigma_{j\ell} + S_{i\ell} \Sigma_{jk} + S_{jk} \Sigma_{i\ell} + S_{j\ell} \Sigma_{ik}\} \nabla_{ij} \nabla_{k\ell} f(S) \\
&\qquad = \left\{\sum_{i, k = 1}^d S_{ik} \sum_{j, \ell = 1}^d \nabla_{ij} \Sigma_{j\ell} \nabla_{\ell k} + \sum_{i, \ell = 1}^d S_{i\ell} \sum_{j, k = 1}^d \nabla_{ij} \Sigma_{jk} \nabla_{k\ell} \right. \\
&\hspace{25mm}\left. + \sum_{j, k = 1}^d S_{jk} \sum_{i, \ell = 1}^d \nabla_{ji} \Sigma_{i\ell} \nabla_{\ell k} + \sum_{j, \ell = 1}^d S_{j\ell} \sum_{i, k = 1}^d \nabla_{ji} \Sigma_{ik} \nabla_{k\ell}\right\} f(S) \\
&\qquad = \left\{\sum_{i, k = 1}^d S_{ik} (\nabla \Sigma \nabla)_{ik} + \sum_{i, \ell = 1}^d S_{i\ell} (\nabla \Sigma \nabla)_{i\ell} + \sum_{j, k = 1}^d S_{jk} (\nabla \Sigma \nabla)_{jk} + \sum_{j, \ell = 1}^d S_{j\ell} (\nabla \Sigma \nabla)_{j\ell}\right\} f(S) \\
&\qquad = 4 \, \tr\{S \nabla \Sigma \nabla f(S)\}.
\end{aligned}
\]
Consequently, the extended generator of $(\mathfrak{W}_t)_{t\geq 0}$ is
\[
\mathcal{A}^{\mathcal{W}} f(S) = 2 \, \tr\{(\alpha \Sigma - S) \nabla f(S)\} + 4 \, \tr\{S \nabla \Sigma \nabla f(S)\},
\]
as claimed.

\subsection{Proof of Proposition~\ref{prop:extension.Luk.1994.Lemma.2.4}}
\setcounter{pstep}{0}

Fix $T,W\in \mathcal{S}_{+}^d$. The proof is divided into four steps. In Step~\ref{step:1}, we identify the $\mathcal{S}_{+}^d$-valued affine extension of the Wishart process as an affine diffusion and write down the corresponding Riccati equations. In Step~\ref{step:2}, we solve these equations explicitly and identify the conditional transition law. In Step~\ref{step:3}, we pass to the limiting law and obtain the semigroup representation. In Step~\ref{step:4}, we prove the invariance of $\mathcal{W}_d(\alpha, \Sigma)$ by comparing Laplace transforms.

\step{Affine transform formula and Riccati equations}\label{step:1}
In the notation of \citet[Theorems~2.4 and~2.6]{CuchieroFilipovicMayerhoferTeichmann2011}, with the diffusion matrix parameter denoted here by $a$ to avoid confusion with the shape parameter $\alpha$, take
\[
a = 2\Sigma, \qquad b = 2\alpha\Sigma, \qquad B(S) = -2S, \qquad c = 0, \qquad \gamma = 0_{d\times d}, \qquad m(\cdot) = \mu(\cdot) = 0.
\]
Given that $\alpha > d-1$, we have $b - (d - 1)a = 2(\alpha - d + 1)\Sigma\in \mathcal{S}_{++}^d$, so the constant drift admissibility condition in their Definition~2.3 is satisfied. The linear drift $B(S) = -2S$ is of the form $HS + SH^{\top}$ with $H = -I_d$, and the remaining admissibility conditions in their Definition~2.3 are immediate. Thus Theorem~2.4 of \citet{CuchieroFilipovicMayerhoferTeichmann2011} yields an affine process on $\mathcal{S}_{+}^d$ with the affine transform formula below. Since $c = 0$, $\gamma = 0_{d\times d}$ and $m(\cdot) = \mu(\cdot) = 0$, the process is conservative. Moreover, by Theorem~2.6 of \citet{CuchieroFilipovicMayerhoferTeichmann2011}, it is an affine diffusion whose semimartingale representation is exactly \eqref{eq:Wishart.process}. Its affine transform formula gives
\begin{equation}\label{eq:affine.transform.Wishart}
\EE\big[\etr(-T\mathfrak{W}_t)\mid \mathfrak{W}_0 = W\big] = \exp\{-\varphi_t(T)-\tr(\Psi_t(T)W)\},
\end{equation}
where $\varphi_t(T)$ and $\Psi_t(T)$ solve the generalized Riccati equations
\begin{equation}\label{eq:Riccati.Wishart.semigroup}
\frac{\rd}{\rd t}\varphi_t(T) = 2\alpha \, \tr(\Sigma\Psi_t(T)), \qquad \varphi_0(T) = 0,
\end{equation}
and
\begin{equation}\label{eq:Riccati.Wishart.semigroup.Psi}
\frac{\rd}{\rd t}\Psi_t(T) = -2\Psi_t(T) - 4\Psi_t(T)\Sigma\Psi_t(T), \qquad \Psi_0(T) = T.
\end{equation}

\step{Solution of the Riccati equations and transition law}\label{step:2}
Recall that $\Sigma_t = (1 - e^{-2t})\Sigma$, and set
\[
R_t \leqdef I_d + 2T\Sigma_t, \qquad \varphi_t(T) \leqdef \frac{\alpha}{2}\log(|R_t|), \qquad \Psi_t(T) \leqdef e^{-2t}R_t^{-1}T.
\]
Note that $|R_t| = |I_d + 2T^{1/2}\Sigma_t T^{1/2}| > 0$, so $\varphi_t(T)$ is well defined. First,
\[
\Psi_t(T)
= e^{-2t}(I_d + 2 T \Sigma_t)^{-1}T
= e^{-2t}T^{1/2}(I_d + 2T^{1/2} \Sigma_t T^{1/2})^{-1}T^{1/2}\in \mathcal{S}_{+}^d.
\]
Since $\frac{\rd}{\rd t}R_t = 4e^{-2t}T\Sigma$, direct differentiation gives
\[
\frac{\rd}{\rd t}\varphi_t(T)
= \frac{\alpha}{2}\tr\left(R_t^{-1}\frac{\rd}{\rd t}R_t\right)
= 2\alpha e^{-2t}\tr(R_t^{-1}T\Sigma)
= 2\alpha \, \tr(\Sigma\Psi_t(T)),
\]
and
\[
\frac{\rd}{\rd t}\Psi_t(T)
= -2e^{-2t}R_t^{-1}T - e^{-2t}R_t^{-1}\left(\frac{\rd}{\rd t}R_t\right)R_t^{-1}T
= -2\Psi_t(T) - 4\Psi_t(T)\Sigma\Psi_t(T).
\]
Also, we have $R_0 = I_d$, $\varphi_0(T) = 0$ and $\Psi_0(T) = T$. Therefore, the displayed functions solve \eqref{eq:Riccati.Wishart.semigroup} and \eqref{eq:Riccati.Wishart.semigroup.Psi}. Substituting them into \eqref{eq:affine.transform.Wishart} yields
\begin{equation}\label{eq:conditional.Laplace.Wishart}
\EE\big[\etr(-T\mathfrak{W}_t)\mid \mathfrak{W}_0 = W\big]
= \frac{\etr\{-e^{-2t}(I_d + 2T\Sigma_t)^{-1}TW\}}{|I_d + 2T\Sigma_t|^{\alpha/2}}.
\end{equation}
Let $\Theta_t \leqdef e^{-2t}\Sigma_t^{-1}W$. Since $\Theta_t\Sigma_t^{-1} = e^{-2t}\Sigma_t^{-1}W\Sigma_t^{-1}\in \mathcal{S}_{+}^d$ and $\alpha > d-1$, the noncentral Wishart law $\mathcal{W}_d(\alpha, \Sigma_t, \Theta_t)$ is well defined. Its Laplace transform is
\[
\frac{\etr\{-T\Sigma_t(I_d + 2T\Sigma_t)^{-1}\Theta_t\}}{|I_d + 2T\Sigma_t|^{\alpha/2}}, \qquad T\in \mathcal{S}_{+}^d.
\]
Since $T\Sigma_t$ commutes with $(I_d + 2T\Sigma_t)^{-1}$, we have
\[
T\Sigma_t(I_d + 2T\Sigma_t)^{-1}\Theta_t
= e^{-2t}(I_d + 2T\Sigma_t)^{-1}TW.
\]
Thus the Laplace transform of $\mathcal{W}_d(\alpha, \Sigma_t, \Theta_t)$ agrees with \eqref{eq:conditional.Laplace.Wishart}. By uniqueness of Laplace transforms on $\mathcal{S}_{+}^d$,
\[
\mathfrak{W}_t \mid \{\mathfrak{W}_0 = W\} \sim \mathcal{W}_d(\alpha, \Sigma_t, e^{-2t}\Sigma_t^{-1}W), \qquad t > 0,
\]
which is \eqref{eq:W.t.conditional}. Since $\alpha > d-1$ and $\Sigma_t\in \mathcal{S}_{++}^d$, the density formula \eqref{eq:noncentral.Wishart.density} also gives
\[
P_t(W, \partial\mathcal{S}_{+}^d) = 0, \qquad t > 0, \qquad W\in \mathcal{S}_{+}^d.
\]

\step{Limiting law and semigroup representation}\label{step:3}
Letting $t\to\infty$ in \eqref{eq:conditional.Laplace.Wishart}, and using $\Sigma_t\to\Sigma$ and $e^{-2t}(I_d + 2T\Sigma_t)^{-1}T\to 0_{d\times d}$, we obtain
\[
\lim_{t\to\infty}\EE\big[\etr(-T\mathfrak{W}_t)\mid \mathfrak{W}_0 = W\big]
= |I_d + 2T\Sigma|^{-\alpha/2}, \qquad T\in \mathcal{S}_{+}^d.
\]
The right-hand side is the Laplace transform of $\mathcal{W}_d(\alpha, \Sigma)$. Hence, by L\'evy's continuity theorem for Laplace transforms on $\mathcal{S}_{+}^d$,
\[
\mathfrak{W}_t \mid \{\mathfrak{W}_0 = W\} \stackrel{\mathrm{law}}{\longrightarrow} \mathfrak{W}_{\infty}\sim \mathcal{W}_d(\alpha, \Sigma), \qquad t\to\infty,
\]
which proves \eqref{eq:W.infty}. Then \eqref{eq:rep} follows immediately because $\Sigma_t^{1/2} \mathfrak{S}_t \Sigma_t^{1/2}\sim \mathcal{W}_d(\alpha, \Sigma_t, e^{-2t}\Sigma_t^{-1}W)$ by a rescaling of the Laplace transform; see, e.g., Lemma~2.1 of \citet{GenestMacKayOuimet2026}.

\step{Invariance}\label{step:4}
To prove \eqref{eq:invariance}, it suffices to compare Laplace transforms. For any $T\in \mathcal{S}_{+}^d$, by \eqref{eq:conditional.Laplace.Wishart},
\[
\begin{aligned}
\int_{\mathcal{S}_{+}^d} \etr(-TY) \, (\xi \mathcal{P}_t^{\mathcal{W}})(\rd Y)
&= \int_{\mathcal{S}_{+}^d} \EE\big[\etr(-T\mathfrak{W}_t)\mid \mathfrak{W}_0 = W\big] \, \xi(\rd W) \\
&= \frac{1}{|I_d + 2T\Sigma_t|^{\alpha/2}} \int_{\mathcal{S}_{+}^d} \etr\{-U_t(T)W\} \, \xi(\rd W),
\end{aligned}
\]
where $U_t(T) \leqdef e^{-2t}(I_d + 2T\Sigma_t)^{-1}T = \Psi_t(T)\in \mathcal{S}_{+}^d$. Applying the Wishart Laplace transform to $\xi$ gives
\[
\int_{\mathcal{S}_{+}^d} \etr\{-U_t(T)W\} \, \xi(\rd W)
= \frac{1}{|I_d + 2U_t(T)\Sigma|^{\alpha/2}}.
\]
Since $\Sigma_t = (1-e^{-2t})\Sigma$, the matrices $T\Sigma_t$ and $T\Sigma$ commute. Therefore,
\[
\begin{aligned}
|I_d + 2U_t(T)\Sigma|
&= \left|I_d + 2e^{-2t}(I_d + 2T\Sigma_t)^{-1}T\Sigma\right| \\[-2mm]
&= \left|(I_d + 2T\Sigma_t + 2e^{-2t}T\Sigma)(I_d + 2T\Sigma_t)^{-1}\right|
= \frac{|I_d + 2T\Sigma|}{|I_d + 2T\Sigma_t|}.
\end{aligned}
\]
Substituting this into the previous display yields
\[
\int_{\mathcal{S}_{+}^d} \etr(-TY) \, (\xi \mathcal{P}_t^{\mathcal{W}})(\rd Y)
= \frac{1}{|I_d + 2T\Sigma_t|^{\alpha/2}}\left(\frac{|I_d + 2T\Sigma_t|}{|I_d + 2T\Sigma|}\right)^{\alpha/2}
= |I_d + 2T\Sigma|^{-\alpha/2}.
\]
This is the Laplace transform of $\xi$. By uniqueness of Laplace transforms on $\mathcal{S}_{+}^d$, we get $\xi \mathcal{P}_t^{\mathcal{W}} = \xi$, which is \eqref{eq:invariance}. This concludes the proof.

\subsection{Proof of Corollary~\ref{cor:Stein.Wishart.generator.step.1}}

Let $(\mathfrak{W}_t^{\infty})_{t\geq 0}$ be a stationary Wishart process with invariant distribution $\mathcal{W}_d(\alpha, \Sigma)$, and write $\mathfrak{W}_{\infty}\sim \mathcal{W}_d(\alpha, \Sigma)$ for its one-time marginal distribution. For any given $f\in C_{\mathcal{A}^{\mathcal{W}}}^2(\mathcal{S}_{++}^d)$, define the local martingale $\smash{M_t^f \leqdef f(\mathfrak{W}_t^{\infty}) - f(\mathfrak{W}_0^{\infty}) - \int_0^t \mathcal{A}^{\mathcal{W}}f(\mathfrak{W}_s^{\infty}) \, \rd s, ~t\geq 0}$. The integrability assumptions in \eqref{eq:C.2.A.W} and stationarity imply that $\smash{(M_t^f)_{t\geq 0}}$ is in fact a martingale. Hence, $\EE[M_t^f] = 0$. By stationarity, $\mathfrak{W}_s^{\infty}\sim \mathfrak{W}_{\infty}$ for every $s\geq 0$, and by the invariance of the Wishart law in Proposition~\ref{prop:extension.Luk.1994.Lemma.2.4}, we have $\EE[f(\mathfrak{W}_t^{\infty})] = \EE[f(\mathfrak{W}_0^{\infty})] = \EE[f(\mathfrak{W}_{\infty})]$ and $\EE[\mathcal{A}^{\mathcal{W}}f(\mathfrak{W}_s^{\infty})] = \EE[\mathcal{A}^{\mathcal{W}}f(\mathfrak{W}_{\infty})]$. Therefore, $\EE[\mathcal{A}^{\mathcal{W}}f(\mathfrak{W}_{\infty})] = 0$, which proves the forward implication.

We now prove the converse implication. Assume that the Stein identities $\EE[\mathcal{A}^{\mathcal{W}} f(\mathfrak{W})] = 0$ hold for the $\mathcal{S}_{++}^d$-valued random matrix $\mathfrak{W}\sim \mathcal{W}_d(\alpha, \Sigma)$ and all $\smash{f\in C_{\mathcal{A}^{\mathcal{W}}}^2(\mathcal{S}_{++}^d)}$. Fix $T\in \mathcal{S}_{++}^d$, and set $f_T(S) \leqdef \etr(-TS), ~S\in \mathcal{S}_{++}^d$, and
\[
L(T) \leqdef \EE[\etr(-T\mathfrak{W})].
\]
We have $f_T\in C_{\mathcal{A}^{\mathcal{W}}}^2(\mathcal{S}_{++}^d)$ because of the exponential decay. Also, we have $\nabla f_T(S) = -T f_T(S)$ and $\nabla \Sigma \nabla f_T(S) = T\Sigma T f_T(S)$. Hence, by Proposition~\ref{prop:generator} and the assumption $\EE[\mathcal{A}^{\mathcal{W}}f_T(\mathfrak{W})] = 0$,
\begin{equation}\label{eq:Stein.key}
\EE[\tr\{\mathfrak{W}T\}\etr(-T\mathfrak{W})] + 2 \, \EE[\tr\{\mathfrak{W}T\Sigma T\}\etr(-T\mathfrak{W})] = \alpha \, \tr(\Sigma T)L(T).
\end{equation}
For $H\in \mathcal{S}^d$, dominated convergence gives $D L(T)[H] = -\EE[\tr\{H\mathfrak{W}\}\etr(-T\mathfrak{W})]$. Consequently, by \eqref{eq:Stein.key},
\begin{equation}\label{eq:D.L}
D L(T)[-(T + 2T\Sigma T)] = \alpha \, \tr(\Sigma T)L(T), \qquad T\in \mathcal{S}_{++}^d.
\end{equation}
Now define
\[
T_t \leqdef \left(e^tT^{-1} + 2(e^t - 1)\Sigma\right)^{-1}, \qquad t\geq 0.
\]
Then $T_t\in \mathcal{S}_{++}^d$, $T_0 = T$, and $T_t\to 0_{d\times d}$ as $t\to\infty$. Since $T_t^{-1} = e^tT^{-1} + 2(e^t - 1)\Sigma$, differentiating $T_tT_t^{-1} = I_d$ gives $\smash{(\tfrac{\rd}{\rd t}T_t) T_t^{-1} + T_t (\tfrac{\rd}{\rd t}T_t^{-1}) = 0_{d\times d}}$, and thus
\[
\frac{\rd}{\rd t}T_t = - T_t \left(\frac{\rd}{\rd t}T_t^{-1}\right) T_t = -T_t(T_t^{-1} + 2\Sigma)T_t = - (T_t + 2T_t\Sigma T_t).
\]
It follows from \eqref{eq:D.L} that
\[
\frac{\rd}{\rd t}\log L(T_t) = \frac{D L(T_t)[\frac{\rd}{\rd t}T_t]}{L(T_t)} = \alpha \, \tr(\Sigma T_t).
\]
By bounded convergence, $L(T_t)\to 1$ as $t\to\infty$. Therefore,
\[
\log L(T) = -\alpha \int_0^{\infty} \tr(\Sigma T_t) \, \rd t.
\]
Using the change of variable $u = 1 - e^{-t}$ ($\rd t = e^t \rd u$), we obtain
\[
\begin{aligned}
\int_0^{\infty} \tr(\Sigma T_t) \, \rd t
&= \int_0^1 \tr\{\Sigma(T^{-1} + 2u\Sigma)^{-1}\} \, \rd u
= \frac{1}{2}\int_0^1 \tr\{(T^{-1} + 2u\Sigma)^{-1}(2\Sigma)\} \, \rd u \\
&= \frac{1}{2}\left[\log|T^{-1} + 2u\Sigma|\right]_{u = 0}^{u = 1}
= \frac{1}{2}\log\left(\frac{|T^{-1} + 2\Sigma|}{|T^{-1}|}\right)
= \frac{1}{2}\log|I_d + 2T\Sigma|,
\end{aligned}
\]
where the third equality follows from Jacobi's formula, $\smash{\tfrac{\rd}{\rd u} \log|A(u)| = \tr\{A(u)^{-1} A'(u)\}}$, with $A(u) = T^{-1} + 2u\Sigma$. Thus
\[
L(T) = |I_d + 2T\Sigma|^{-\alpha/2}, \qquad T\in \mathcal{S}_{++}^d.
\]
By bounded convergence, this extends to every $T\in \mathcal{S}_{+}^d$. By Remark~\ref{rem:Wishart.mgf} and the uniqueness of Laplace transforms on $\mathcal{S}_{+}^d$, we get $\smash{\mathfrak{W} \stackrel{\mathrm{law}}{=} \mathfrak{W}_{\infty}\sim \mathcal{W}_d(\alpha, \Sigma)}$. This concludes the proof.

\subsection{Proof of Lemma~\ref{lem:contraction}}

Fix $t > 0$ and $W\in \mathcal{S}_{++}^d$. Note that $\alpha > 3d-3$ by assumption, so $\alpha > d-1$. Therefore, by Proposition~\ref{prop:extension.Luk.1994.Lemma.2.4}, we have
\[
(\mathcal{P}_t^{\mathcal{W}} h)(W) = \EE\big[h(\Sigma_t^{1/2}\mathfrak{S}_t\Sigma_t^{1/2})\big] = (\mathcal{Q}_t h_t)(\Lambda_t),
\]
where the operator $\mathcal{Q}_t$ is defined in \eqref{eq:Wishart.Q.operator.def}, $\mathfrak{S}_t\sim \mathcal{W}_d(\alpha, I_d, e^{-2t}\Lambda_t)$, and $\Lambda_t = \Sigma_t^{-1/2}W\Sigma_t^{-1/2}$. Also, if $\mathfrak{V}\sim \mathcal{W}_d(\alpha, I_d)$, then $(\mathcal{Q}_t h_t)(0) = \EE[h(\Sigma_t^{1/2}\mathfrak{V} \Sigma_t^{1/2})]$ and $\EE[h(\mathfrak{W}_{\infty})] = \EE[h(\Sigma^{1/2} \mathfrak{V} \Sigma^{1/2})]$. Hence,
\begin{equation}\label{eq:semigroup.decomposition.Wishart}
\begin{aligned}
\big|(\mathcal{P}_t^{\mathcal{W}} h)(W)-\EE[h(\mathfrak{W}_{\infty})]\big|
&\leq \big|(\mathcal{Q}_t h_t)(\Lambda_t)-(\mathcal{Q}_t h_t)(0)\big| \\
&\qquad + \big|\EE[h(\Sigma_t^{1/2}\mathfrak{V} \Sigma_t^{1/2})]-\EE[h(\Sigma^{1/2} \mathfrak{V} \Sigma^{1/2})]\big|.
\end{aligned}
\end{equation}

We begin by bounding the first term on the right-hand side of \eqref{eq:semigroup.decomposition.Wishart}. The map $X\mapsto \Sigma_t^{1/2}X\Sigma_t^{1/2}$ is linear and maps $\mathcal{S}_{++}^d$ into $\mathcal{S}_{++}^d$, so $h_t\in C^d(\mathcal{S}_{++}^d)$. By the chain rule, each differential monomial of order at most $d$ in the entries of $\nabla_{\!X}$ applied to $h_t$ is a finite linear combination of differential monomials of the same order applied to $h$ at $\Sigma_t^{1/2}X\Sigma_t^{1/2}$, with coefficients depending only on $\Sigma_t$. Since $\|\Sigma_t^{1/2}X\Sigma_t^{1/2}\|_F \leq \|\Sigma_t\|_2\|X\|_F$, the assumption \eqref{eq:contraction.asump.h.t} implies that $h_t$ satisfies the regularity and growth assumptions required in Lemma~\ref{lem:Luk.Wishart.Lemma.2.5}. Let $\Lambda\in \mathcal{S}_{++}^d$ and $U\in \mathcal{S}^d$. By the standard identification between Fr\'echet derivatives and directional derivatives,
\[
D(\mathcal{Q}_t h_t)(\Lambda)[U]
= \langle U, \nabla_{\Lambda}\rangle_F (\mathcal{Q}_t h_t)(\Lambda)
= \sum_{i,j = 1}^d U_{ij} \, \nabla_{\Lambda, ij}(\mathcal{Q}_t h_t)(\Lambda).
\]
By applying Lemma~\ref{lem:Luk.Wishart.Lemma.2.5} with $k = 1$, we obtain
\[
D(\mathcal{Q}_t h_t)(\Lambda)[U] = e^{-2t} \, \EE\big[\langle U, \mathcal{D}\rangle_F \, h_t(\mathfrak{Y}_{1,t})\big],
\]
where $\mathfrak{Y}_{1,t}\sim \mathcal{W}_d(\alpha + 2, I_d, e^{-2t}\Lambda)$. Therefore, from the definition of $M_t^{\mathcal{D}}(h)$ in \eqref{eq:def.M.t.D.h}, it follows that
\[
|D(\mathcal{Q}_t h_t)(\Lambda)[U]| \leq e^{-2t} \, M_t^{\mathcal{D}}(h) \, \|U\|_F.
\]
Now apply the fundamental theorem of calculus along the line segment $s\mapsto s\Lambda_t$, $0\leq s\leq 1$, to obtain
\[
(\mathcal{Q}_t h_t)(\Lambda_t)-(\mathcal{Q}_t h_t)(0) = \int_0^1 D(\mathcal{Q}_t h_t)(s\Lambda_t)[\Lambda_t] \, \rd s,
\]
and hence
\begin{equation}\label{eq:first.term.bound.Wishart}
\begin{aligned}
\big|(\mathcal{Q}_t h_t)(\Lambda_t)-(\mathcal{Q}_t h_t)(0)\big|
&\leq e^{-2t} \, M_t^{\mathcal{D}}(h) \, \|\Lambda_t\|_F \\
&= \frac{e^{-2t}}{1 - e^{-2t}} \, M_t^{\mathcal{D}}(h) \, \|\Sigma^{-1/2}W\Sigma^{-1/2}\|_F.
\end{aligned}
\end{equation}

We now bound the second term on the right-hand side of \eqref{eq:semigroup.decomposition.Wishart}. Given that $\Sigma_t^{1/2}\mathfrak{V} \Sigma_t^{1/2} = (1 - e^{-2t}) \, \Sigma^{1/2} \mathfrak{V} \Sigma^{1/2}$, we have, by the Lipschitz property of $h$,
\[
\begin{aligned}
\big|\EE[h(\Sigma_t^{1/2}\mathfrak{V} \Sigma_t^{1/2})]-\EE[h(\Sigma^{1/2} \mathfrak{V} \Sigma^{1/2})]\big|
&\leq [h]_1 \, \EE\big[\|\Sigma_t^{1/2}\mathfrak{V} \Sigma_t^{1/2}-\Sigma^{1/2} \mathfrak{V} \Sigma^{1/2}\|_F\big] \\
&= e^{-2t} [h]_1 \, \EE\big[\|\Sigma^{1/2} \mathfrak{V} \Sigma^{1/2}\|_F\big] \\
&\leq e^{-2t} [h]_1 \, \|\Sigma^{1/2}\|_2^2 \, \EE[\|\mathfrak{V}\|_F] \\
&= e^{-2t} [h]_1 \, \|\Sigma\|_2 \, \EE[\|\mathfrak{V}\|_F],
\end{aligned}
\]
where $[h]_1$ denotes the minimum Lipschitz constant of $h$ with respect to the Frobenius norm. Combining this bound with \eqref{eq:semigroup.decomposition.Wishart} and \eqref{eq:first.term.bound.Wishart} proves \eqref{eq:Wishart.Holder.contraction}. This concludes the proof.

\subsection{Proof of Theorem~\ref{thm:Stein.solutions.Wishart}}

We adapt the standard Markov semigroup argument; the key input is the semigroup decay bound in Lemma~\ref{lem:contraction}.

Let $\widehat{h}(W) \leqdef \EE[h(\mathfrak{W}_{\infty})]-h(W)$ for $W\in \mathcal{S}_{++}^d$. Then we have $\|\widehat{h}\|_{\infty} \leq 2\|h\|_{\infty}$ and $[\widehat{h}]_1 = [h]_1$. Also, for $t > 0$, $\smash{\widehat{h}_t(X) \leqdef \widehat{h}(\Sigma_t^{1/2}X\Sigma_t^{1/2})
= \EE[h(\mathfrak{W}_{\infty})] - h_t(X)}$, and therefore, since $\mathcal{D}$ annihilates constants,
\[
M_{1, \Sigma}^{\mathcal{D}}(\widehat{h}) = M_{1, \Sigma}^{\mathcal{D}}(h).
\]

We first verify that the assumptions of Lemma~\ref{lem:contraction} are available uniformly for $t\geq 1$. Since $h\in C_b^d(\mathcal{S}_{++}^d)$, the regularity and growth assumption \eqref{eq:contraction.asump.h.t} holds with $N_{\overline{\nabla}} = 0$. Fix a differential monomial $\overline{\nabla}$ of order $m\leq d$ in the entries of $\nabla_{\!X}$. By the chain rule, $\overline{\nabla}h_t(X)$ is a finite linear combination of differential monomials of order $m$ applied to $h$ at $\smash{\Sigma_t^{1/2}X\Sigma_t^{1/2}}$, with coefficients depending only on $\Sigma_t$. Since $\|\Sigma_t^{1/2}\|_2^2 = \|\Sigma_t\|_2 = (1-e^{-2t})\|\Sigma\|_2\leq \|\Sigma\|_2$ for $t\geq 1$, and since $h\in C_b^d(\mathcal{S}_{++}^d)$, there exists a finite constant $C_{\overline{\nabla}, \Sigma}$ such that
\[
|\overline{\nabla}h_t(X)|\leq C_{\overline{\nabla}, \Sigma}, \qquad X\in \mathcal{S}_{++}^d, \qquad t\geq 1.
\]
Moreover, each entry of $\mathcal{D} = \nabla_{\!X}\mathrm{adj}(I_d-2\nabla_{\!X})$ is a finite linear combination, with constants depending only on $d$, of differential monomials of orders at most $d$ in the entries of $\nabla_{\!X}$. Hence
\[
M_{1, \Sigma}^{\mathcal{D}}(h) < \infty.
\]
The same conclusions hold with $h$ replaced by $\widehat{h}$.

The function defined in \eqref{eq:fh.def.Wishart.beta} can thus be rewritten as
\[
f_h(W) = \int_0^{\infty}(\mathcal{P}^{\mathcal{W}}_t\widehat{h})(W) \, \rd t.
\]

Fix $W\in \mathcal{S}_{++}^d$. For $t\in (0, 1]$, we simply have
\[
\big|(\mathcal{P}^{\mathcal{W}}_t\widehat{h})(W)\big|
\leq \|\widehat{h}\|_{\infty}
\leq 2\|h\|_{\infty}.
\]
For $t\geq 1$, Lemma~\ref{lem:contraction} applied to $\widehat{h}$ yields
\[
\begin{aligned}
\big|(\mathcal{P}^{\mathcal{W}}_t\widehat{h})(W)\big|
&= \big|\EE[h(\mathfrak{W}_{\infty})]-(\mathcal{P}^{\mathcal{W}}_t h)(W)\big| \\
&\leq e^{-2t} \left\{\frac{M_{1, \Sigma}^{\mathcal{D}}(h)}{1 - e^{-2}} \, \|\Sigma^{-1/2}W\Sigma^{-1/2}\|_F + [h]_1 \, \|\Sigma\|_2 \, \EE[\|\mathfrak{V}\|_F]\right\},
\end{aligned}
\]
where $\mathfrak{V}\sim \mathcal{W}_d(\alpha, I_d)$. Hence,
\[
\begin{aligned}
|f_h(W)|
&\leq \int_0^1 \big|(\mathcal{P}^{\mathcal{W}}_t\widehat{h})(W)\big| \, \rd t + \int_1^{\infty} \big|(\mathcal{P}^{\mathcal{W}}_t\widehat{h})(W)\big| \, \rd t \\
&\leq 2\|h\|_{\infty} + \left(\int_1^{\infty} e^{-2t} \, \rd t\right) \left\{\frac{M_{1, \Sigma}^{\mathcal{D}}(h)}{1 - e^{-2}} \, \|\Sigma^{-1/2}W\Sigma^{-1/2}\|_F + [h]_1 \, \|\Sigma\|_2 \, \EE[\|\mathfrak{V}\|_F]\right\} \\
&\leq 2\|h\|_{\infty} + \frac{1}{2} \left\{\frac{M_{1, \Sigma}^{\mathcal{D}}(h)}{1 - e^{-2}} \, \|\Sigma^{-1/2}W\Sigma^{-1/2}\|_F + [h]_1 \, \|\Sigma\|_2 \, \EE[\|\mathfrak{V}\|_F]\right\},
\end{aligned}
\]
which proves \eqref{eq:fh.bound.Wishart.beta}. In particular, $f_h$ is well defined pointwise.

We now verify that $f_h$ belongs to the domain of $\mathcal{A}^{\mathcal{W}}$ and satisfies the Wishart Stein equation \eqref{eq:Stein.equation.Wishart}. Fix $s > 0$ and $W\in \mathcal{S}_{++}^d$. By the Markov property, the semigroup property, and Fubini's theorem, we have
\begin{equation}\label{eq:semigroup.Fubini}
(\mathcal{P}_s^{\mathcal{W}} f_h)(W)
= \EE\left[\int_0^{\infty}(\mathcal{P}_t^{\mathcal{W}}\widehat{h})(\mathfrak{W}_s) \, \rd t \mid \mathfrak{W}_0 = W\right]
= \int_0^{\infty}(\mathcal{P}_{t + s}^{\mathcal{W}}\widehat{h})(W) \, \rd t,
\end{equation}
provided that
\[
\int_0^{\infty}\EE\big[|(\mathcal{P}_t^{\mathcal{W}}\widehat{h})(\mathfrak{W}_s)| \mid \mathfrak{W}_0 = W\big] \, \rd t < \infty.
\]
This integrability is immediate on $(0, 1]$ from the bound $|(\mathcal{P}_t^{\mathcal{W}}\widehat{h})(\mathfrak{W}_s)|\leq 2\|h\|_{\infty}$. On $[1, \infty)$, since $\mathfrak{W}_s\in \mathcal{S}_{++}^d$ almost surely by Proposition~\ref{prop:extension.Luk.1994.Lemma.2.4}, Lemma~\ref{lem:contraction} gives
\[
\begin{aligned}
&\EE\big[|(\mathcal{P}_t^{\mathcal{W}}\widehat{h})(\mathfrak{W}_s)| \mid \mathfrak{W}_0 = W\big] \\
&\qquad\leq e^{-2t} \left\{\frac{M_{1, \Sigma}^{\mathcal{D}}(h)}{1 - e^{-2}} \, \EE\big[\|\Sigma^{-1/2}\mathfrak{W}_s\Sigma^{-1/2}\|_F \mid \mathfrak{W}_0 = W\big]
 + [h]_1 \, \|\Sigma\|_2 \, \EE[\|\mathfrak{V}\|_F]\right\}.
\end{aligned}
\]
It therefore remains to check that, for $\mathfrak{W}_s\mid\{\mathfrak{W}_0 = W\}\sim \mathcal{W}_d(\alpha, \Sigma_s, e^{-2s} \Sigma_s^{-1}W)$, we have
\[
\EE\big[\|\Sigma^{-1/2}\mathfrak{W}_s\Sigma^{-1/2}\|_F \mid \mathfrak{W}_0 = W\big] < \infty.
\]
Since $\Sigma^{-1/2}\mathfrak{W}_s\Sigma^{-1/2}\in \mathcal{S}_{+}^d$, we have
$\|\Sigma^{-1/2}\mathfrak{W}_s\Sigma^{-1/2}\|_F\leq
\tr(\Sigma^{-1/2}\mathfrak{W}_s\Sigma^{-1/2})$, and thus
\[
\begin{aligned}
\EE\big[\|\Sigma^{-1/2}\mathfrak{W}_s\Sigma^{-1/2}\|_F \mid \mathfrak{W}_0 = W\big]
&\leq \tr\left(\Sigma^{-1/2} \, \EE[\mathfrak{W}_s\mid \mathfrak{W}_0 = W] \, \Sigma^{-1/2}\right) \\
&= \alpha \, \tr(\Sigma^{-1/2} \Sigma_s\Sigma^{-1/2}) + e^{-2s}\tr(\Sigma^{-1/2}W\Sigma^{-1/2}) \\
&= \alpha d(1 - e^{-2s}) + e^{-2s}\tr(\Sigma^{-1}W) < \infty.
\end{aligned}
\]
Therefore the application of Fubini's theorem is justified in \eqref{eq:semigroup.Fubini}.

Changing variables $u \leqdef t + s$ gives
\[
(\mathcal{P}_s^{\mathcal{W}} f_h)(W)
= \int_s^{\infty}(\mathcal{P}_u^{\mathcal{W}}\widehat{h})(W) \, \rd u
= f_h(W)-\int_0^s(\mathcal{P}_u^{\mathcal{W}}\widehat{h})(W) \, \rd u.
\]
Therefore,
\begin{equation}\label{eq:continuity.Wishart}
\frac{(\mathcal{P}_s^{\mathcal{W}} f_h)(W) - f_h(W)}{s}
= -\frac{1}{s}\int_0^s(\mathcal{P}_u^{\mathcal{W}}\widehat{h})(W) \, \rd u.
\end{equation}
Since $\widehat{h}$ is bounded and continuous, and since $\mathfrak{W}_u\to W$ almost surely as $u\downarrow 0$
by continuity of the Wishart process paths, dominated convergence yields
\[
\lim_{u\downarrow 0}(\mathcal{P}_u^{\mathcal{W}}\widehat{h})(W) = \widehat{h}(W).
\]
Letting $s\downarrow 0$ in \eqref{eq:continuity.Wishart} therefore gives
\[
\mathcal{A}^{\mathcal{W}} f_h(W)
= \lim_{s\downarrow 0}-\frac{1}{s}\int_0^s(\mathcal{P}_u^{\mathcal{W}}\widehat{h})(W) \, \rd u
= -\widehat{h}(W)
= h(W)-\EE[h(\mathfrak{W}_{\infty})],
\]
which is precisely \eqref{eq:Stein.equation.Wishart}. This completes the proof.

\subsection{Proof of Theorem~\ref{thm:smoothness.estimates.wishart}}

For $t > 0$, write $\Lambda_t(W) = \Sigma_t^{-1/2} W \Sigma_t^{-1/2}$ and $h_t(X) = h(\Sigma_t^{1/2} X \Sigma_t^{1/2})$. By Proposition~\ref{prop:extension.Luk.1994.Lemma.2.4},
\[
(\mathcal{P}_t^{\mathcal{W}} h)(W) = (\mathcal{Q}_t h_t)(\Lambda_t(W)), \qquad W\in \mathcal{S}_{++}^d.
\]

We first check that Lemma~\ref{lem:Luk.Wishart.Lemma.2.5} can be applied with $k = m$ to the family $(h_t)_{t > 0}$. Fix a differential monomial $\overline{\nabla}$ of order $q\leq md$ in the entries of $\nabla_{\!X}$. By the chain rule, $\overline{\nabla}h_t(X)$ is a finite linear combination of differential monomials of order $q$ applied to $h$ at $\smash{\Sigma_t^{1/2}X\Sigma_t^{1/2}}$, with coefficients depending only on $\Sigma_t$. Since $\|\Sigma_t\|_2 = (1-e^{-2t})\|\Sigma\|_2\leq \|\Sigma\|_2$ for $t > 0$, and since $h\in C_b^{md}(\mathcal{S}_{++}^d)$, there exists a finite constant $C_{\overline{\nabla}, \Sigma}$ such that
\[
|\overline{\nabla}h_t(X)|\leq C_{\overline{\nabla}, \Sigma}, \qquad X\in \mathcal{S}_{++}^d, \qquad t > 0.
\]
Thus the growth condition \eqref{eq:derivatives.h.assumption} holds for $h_t$ with $N_{\overline{\nabla}} = 0$. Since $\alpha > 3d-3$, Lemma~\ref{lem:Luk.Wishart.Lemma.2.5} is applicable with $k = m$.

We begin with a formula for the directional derivatives of $(\mathcal{P}_t^{\mathcal{W}} h)(W)$. Let $U_1, \ldots,U_m\in \mathcal{S}^d$, and let $\mathfrak{Y}_{m,t,W}\sim \mathcal{W}_d(\alpha + 2m, I_d, e^{-2t}\Lambda_t(W))$. Since $\Lambda_t$ is linear,
\[
D^m(\mathcal{P}_t^{\mathcal{W}} h)(W)[U_1, \ldots,U_m]
= D^m(\mathcal{Q}_t h_t)(\Lambda_t(W))[\Sigma_t^{-1/2} U_1 \Sigma_t^{-1/2}, \ldots, \Sigma_t^{-1/2} U_m \Sigma_t^{-1/2}].
\]
By the directional form of Lemma~\ref{lem:Luk.Wishart.Lemma.2.5},
\[
D^m(\mathcal{Q}_t h_t)(\Lambda)[V_1, \ldots,V_m]
= e^{-2mt} \, \EE\left[\left(\prod_{\ell=1}^m \langle V_{\ell}, \mathcal{D}\rangle_F\right) h_t(\mathfrak{Y}_{m,t}^{\Lambda})\right],
\]
where $\mathfrak{Y}_{m,t}^{\Lambda}\sim \mathcal{W}_d(\alpha + 2m, I_d, e^{-2t}\Lambda)$ and $\mathcal{D} = \nabla_{\!X} \, \mathrm{adj}(I_d-2\nabla_{\!X})$.

Fix $U\in \mathcal{S}^d$ and set $Y = \Sigma_t^{1/2} X \Sigma_t^{1/2}$. Since $\Sigma_t^{1/2}$ is constant and symmetric, the chain rule gives $\smash{\nabla_{\!X} = \Sigma_t^{1/2} \nabla_{\!Y} \Sigma_t^{1/2}}$ when both sides act on $\smash{h_t(X) = h(\Sigma_t^{1/2} X \Sigma_t^{1/2})}$. Hence
\[
\mathcal{D} h_t(X) = (\Sigma_t^{1/2}\nabla_{\!Y} \Sigma_t^{1/2})\mathrm{adj}(I_d - 2 \Sigma_t^{1/2}\nabla_{\!Y} \Sigma_t^{1/2}) \, h(Y).
\]
Note that
\[
\begin{aligned}
\mathrm{adj}(I_d - 2 \Sigma_t^{1/2}\nabla_{\!Y} \Sigma_t^{1/2})
&= \mathrm{adj}(\Sigma_t^{-1/2}(I_d - 2\Sigma_t\nabla_{\!Y})\Sigma_t^{1/2}) \\
&= \mathrm{adj}(\Sigma_t^{1/2}) \mathrm{adj}(I_d - 2\Sigma_t\nabla_{\!Y}) \mathrm{adj}(\Sigma_t^{-1/2}) \\
&= \Sigma_t^{-1/2} \mathrm{adj}(I_d - 2\Sigma_t\nabla_{\!Y}) \Sigma_t^{1/2},
\end{aligned}
\]
where the scalar determinant factors cancel in the last equality. Consequently,
\[
\mathcal{D} h_t(X)
= \Sigma_t^{1/2} \Big\{\nabla_{\!Y} \mathrm{adj}(I_d - 2\Sigma_t\nabla_{\!Y}) h(Y)\Big\} \Sigma_t^{1/2}
= \Sigma_t^{1/2} (\mathcal{D}_{t, \Sigma} h)(Y) \Sigma_t^{1/2}.
\]
Taking Frobenius inner products,
\[
\langle \Sigma_t^{-1/2} U \Sigma_t^{-1/2}, \mathcal{D}\rangle_F h_t(X)
= \langle \Sigma_t^{-1/2} U \Sigma_t^{-1/2}, \mathcal{D} h_t(X)\rangle_F
= \langle U,(\mathcal{D}_{t, \Sigma} h)(Y)\rangle_F
= \langle U, \mathcal{D}_{t, \Sigma}\rangle_F h(Y).
\]
Since all involved operators have constant coefficients and commute, iteration yields
\[
\left(\prod_{\ell=1}^m \langle \Sigma_t^{-1/2} U_{\ell} \Sigma_t^{-1/2}, \mathcal{D}\rangle_F\right) h_t(X)
= \left(\prod_{\ell=1}^m \langle U_{\ell}, \mathcal{D}_{t, \Sigma}\rangle_F\right) h(\Sigma_t^{1/2} X \Sigma_t^{1/2}).
\]
If we now define $\mathfrak{Z}_{m,t,W} \leqdef \Sigma_t^{1/2}\mathfrak{Y}_{m,t,W} \Sigma_t^{1/2}$, then
\begin{equation}\label{eq:general.Wishart.directional.derivative}
D^m(\mathcal{P}_t^{\mathcal{W}} h)(W)[U_1, \ldots,U_m]
= e^{-2mt} \, \EE\left[\left(\prod_{\ell=1}^m \langle U_{\ell}, \mathcal{D}_{t, \Sigma}\rangle_F\right) h(\mathfrak{Z}_{m,t,W})\right].
\end{equation}

We now prove \eqref{eq:Wishart.Stein.bound.1}. Fix index pairs $(i_1,j_1), \ldots,(i_m,j_m)\in [d]^2$. For $1\leq i,j\leq d$, define
\[
H^{(ij)} \leqdef \frac{1}{2}(\bb{e}_i \bb{e}_j^{\top} + \bb{e}_j \bb{e}_i^{\top})\in \mathcal{S}^d.
\]
The matrix $\mathcal{D}_{t, \Sigma}$ is symmetric as a matrix of commuting differential operators. Indeed, with $A = \nabla_{\!W}$ and $B = 2\Sigma_t$, the identity $A\mathrm{adj}(I_d-BA) = \mathrm{adj}(I_d-AB)A$ gives
\[
\mathcal{D}_{t, \Sigma}
= \nabla_{\!W}\mathrm{adj}(I_d-2\Sigma_t\nabla_{\!W})
= \mathrm{adj}(I_d-2\nabla_{\!W}\Sigma_t)\nabla_{\!W},
\]
and taking transposes gives $\mathcal{D}_{t, \Sigma}^{\top} = \mathcal{D}_{t, \Sigma}$. By the symmetric-gradient convention,
\[
\langle H^{(ij)}, \nabla_{\!W}\rangle_F = \nabla_{\!W,ij} \qquad \text{and} \qquad \langle H^{(ij)}, \mathcal{D}_{t, \Sigma}\rangle_F = (\mathcal{D}_{t, \Sigma})_{ij}.
\]
Applying \eqref{eq:general.Wishart.directional.derivative} with $U_{\ell} = H^{(i_{\ell} j_{\ell})}$, we obtain
\[
\left(\prod_{\ell=1}^m \nabla_{\!W,i_{\ell} j_{\ell}}\right)(\mathcal{P}_t^{\mathcal{W}} h)(W)
= e^{-2mt} \, \EE\left[\left(\prod_{\ell=1}^m (\mathcal{D}_{t, \Sigma})_{i_{\ell} j_{\ell}}\right) h(\mathfrak{Z}_{m,t,W})\right].
\]
Hence
\[
\left|\left(\prod_{\ell=1}^m \nabla_{\!W,i_{\ell} j_{\ell}}\right)(\mathcal{P}_t^{\mathcal{W}} h)(W)\right|
\leq e^{-2mt} \, \sup_{s > 0} \left\|\left(\prod_{\ell=1}^m (\mathcal{D}_{s, \Sigma})_{i_{\ell} j_{\ell}}\right) h\right\|_{\infty}.
\]
Since $\EE[h(\mathfrak{W}_{\infty})]$ is constant in $W$, its derivatives vanish. The right-hand side is integrable in $t$, so dominated convergence allows differentiation under the integral sign in \eqref{eq:fh.def.Wishart.beta}, giving
\[
\left(\prod_{\ell=1}^m \nabla_{\!W,i_{\ell} j_{\ell}}\right) f_h(W)
= -\int_0^{\infty} \left(\prod_{\ell=1}^m \nabla_{\!W,i_{\ell} j_{\ell}}\right) (\mathcal{P}_t^{\mathcal{W}} h)(W) \, \rd t.
\]
Therefore,
\[
\begin{aligned}
\left|\left(\prod_{\ell=1}^m \nabla_{\!W,i_{\ell} j_{\ell}}\right) f_h(W)\right|
&\leq \int_0^{\infty} e^{-2mt} \, \rd t \, \sup_{s > 0} \left\|\left(\prod_{\ell=1}^m (\mathcal{D}_{s, \Sigma})_{i_{\ell} j_{\ell}}\right) h\right\|_{\infty} \\
&= \frac{1}{2m} \sup_{s > 0} \left\|\left(\prod_{\ell=1}^m (\mathcal{D}_{s, \Sigma})_{i_{\ell} j_{\ell}}\right) h\right\|_{\infty},
\end{aligned}
\]
which is exactly \eqref{eq:Wishart.Stein.bound.1}.

Next, we prove \eqref{eq:Wishart.Stein.bound.4}. Assume that $\mathcal{M}_m^{\mathcal{D}, \Sigma}(h) < \infty$. For unit vectors $U_1, \ldots,U_m\in \mathcal{S}^d$, \eqref{eq:general.Wishart.directional.derivative} gives
\[
\big|D^m(\mathcal{P}_t^{\mathcal{W}} h)(W)[U_1, \ldots,U_m]\big|
\leq e^{-2mt} \, \mathcal{M}_m^{\mathcal{D}, \Sigma}(h).
\]
Again, the right-hand side is integrable in $t$, so dominated convergence yields
\[
D^m f_h(W)[U_1, \ldots,U_m]
= -\int_0^{\infty} D^m(\mathcal{P}_t^{\mathcal{W}} h)(W)[U_1, \ldots,U_m] \, \rd t.
\]
Hence, whenever $\|U_1\|_F = \cdots = \|U_m\|_F = 1$,
\[
\big|D^m f_h(W)[U_1, \ldots,U_m]\big|
\leq \int_0^{\infty} e^{-2mt} \, \rd t \, \mathcal{M}_m^{\mathcal{D}, \Sigma}(h)
= \frac{1}{2m} \, \mathcal{M}_m^{\mathcal{D}, \Sigma}(h).
\]
Taking the suprema over $W\in \mathcal{S}_{++}^d$ and over $\|U_1\|_F = \cdots = \|U_m\|_F = 1$ yields \eqref{eq:Wishart.Stein.bound.4}. This completes the proof.

\section{Proofs of the application results}\label{sec:proofs.application.results}

\subsection{Proof of Lemma~\ref{lem:matrix.normal.Wishart.operator.connection}}\label{sec:proof.matrix.normal.Wishart.operator.connection}

The set $\mathcal{R}_{\nu,d}$ is open because it is the inverse image of $\mathcal{S}_{++}^d$ under the continuous map $X\mapsto X^{\top}X$. Its complement has Lebesgue measure zero because it is contained in the zero set of any fixed $d\times d$ minor, viewed as a nonzero polynomial on $\R^{\nu\times d}$. Let $X\in \mathcal{R}_{\nu,d}$, and set $S \leqdef X^{\top}X$. By the chain rule, $\nabla_{\!X} g_f(X) = 2 X \nabla f(S)$, so the drift part of \eqref{eq:MN.OU.generator.mean.zero.application} is
\[
-\tr\{X^{\top}\nabla_{\!X}g_f(X)\} = -2 \, \tr\{X^{\top}X\nabla f(S)\} = -2 \, \tr\{S\nabla f(S)\}.
\]

It remains to compute the diffusion part of \eqref{eq:MN.OU.generator.mean.zero.application}. In coordinates, $\nabla_{\!X} g_f(X) = 2 X \nabla f(S)$ gives
\[
\nabla_{\!X,ar}g_f(X) = 2\sum_{u = 1}^d X_{au}\nabla_{ur}f(S).
\]
Differentiating once more with respect to $X_{as}$ yields
\[
\nabla_{\!X,as}\nabla_{\!X,ar}g_f(X) = 2\nabla_{sr}f(S) + 2\sum_{u = 1}^d X_{au}\nabla_{\!X,as}\nabla_{ur}f(S).
\]
Since $S_{pq} = \sum_{b = 1}^{\nu} X_{bp}X_{bq}$, we have $\nabla_{\!X,as}S_{pq} = \delta_{ps}X_{aq} + \delta_{qs}X_{ap}$. Using the symmetric-gradient convention in the $S$ variable, it follows that
\[
\nabla_{\!X,as}\nabla_{ur}f(S) = \sum_{p,q = 1}^d(\delta_{ps}X_{aq} + \delta_{qs}X_{ap})\nabla_{pq}\nabla_{ur}f(S) = 2\sum_{v = 1}^d X_{av}\nabla_{sv}\nabla_{ur}f(S).
\]
Consequently,
\[
\nabla_{\!X,as}\nabla_{\!X,ar}g_f(X) = 2\nabla_{sr}f(S) + 4\sum_{u,v = 1}^d X_{au}X_{av}\nabla_{sv}\nabla_{ur}f(S).
\]
Substituting this identity into the second term of \eqref{eq:MN.OU.generator.mean.zero.application}, we obtain
\[
\begin{aligned}
\tr\{\Sigma\nabla_{\!X}^{\top}\nabla_{\!X}g_f(X)\}
&= \sum_{a = 1}^{\nu}\sum_{r,s = 1}^d \Sigma_{rs}\nabla_{\!X,ar}\nabla_{\!X,as}g_f(X) \\
&= 2 \nu \, \tr\{\Sigma\nabla f(S)\} + 4 \sum_{u,v = 1}^d S_{uv} \sum_{r,s = 1}^d \Sigma_{rs}\nabla_{sv}\nabla_{ur}f(S) \\
&= 2 \nu \, \tr\{\Sigma\nabla f(S)\} + 4 \, \tr\{S\nabla\Sigma\nabla f(S)\}.
\end{aligned}
\]
Combining the drift and diffusion terms gives, for $X\in \mathcal{R}_{\nu,d}$,
\[
\mathcal{A}^{\mathrm{OU}}g_f(X) = 2 \, \tr\{(\nu\Sigma - X^{\top}X)\nabla f(X^{\top}X)\} + 4 \, \tr\{X^{\top}X \nabla \Sigma \nabla f(X^{\top}X)\},
\]
which is \eqref{eq:MN.Wishart.pullback.identity}.

\subsection{Proof of Proposition~\ref{prop:Wishart.MANOVA}}

Let $\mathfrak{N}\sim \mathcal{N}_{\nu\times d}(0_{\nu\times d},I_{\nu}\otimes\Sigma)$ and note that $\mathfrak{W} \stackrel{\mathrm{law}}{ = } \mathfrak{N}^{\top}\mathfrak{N}\sim \mathcal{W}_d(\nu, \Sigma)$. Let $f_h$ denote the solution of the Wishart Stein equation associated with $h$. Throughout the proof, we write $\mathcal{I} \leqdef [\nu]\times [d]$ and, for $i\in[n]$, $A,B\in \mathcal{I}$ and $\theta\in[0,1]$, set $Z_{i,A}\leqdef(\mathfrak{Z}_i)_A$, $Z_A\leqdef Z_{1,A}$, $\sigma_{A,B}\leqdef \EE[Z_AZ_B]$, and
\[
\mathfrak{X}_{-i}^{(n)} \leqdef \mathfrak{X}^{(n)}-\frac{1}{\sqrt{n}}\mathfrak{Z}_i, \qquad \mathfrak{X}_{i, \theta}^{(n)} \leqdef \mathfrak{X}_{-i}^{(n)}+\frac{\theta}{\sqrt{n}}\mathfrak{Z}_i.
\]
For $\varepsilon>0$, set
\[
g_{h, \varepsilon}(X) \leqdef f_h(X^{\top}X+\varepsilon I_d), \qquad S_{\varepsilon}(X) \leqdef X^{\top}X+\varepsilon I_d, \qquad X\in \R^{\nu\times d}.
\]
Since $S_{\varepsilon}(\mathfrak{X}^{(n)})\in \mathcal{S}_{++}^d$ almost surely, the Wishart Stein equation and \eqref{eq:MN.Wishart.regularized.identity} give
\[
\begin{aligned}
\EE[h(S_{\varepsilon}(\mathfrak{X}^{(n)}))] - \EE[h(\mathfrak{W})]
&= \EE\left[\left(\mathcal{A}^{\mathcal{W}}_{\nu, \Sigma}f_h\right)(S_{\varepsilon}(\mathfrak{X}^{(n)}))\right] \\
&= \EE[\mathcal{A}^{\mathrm{OU}}g_{h, \varepsilon}(\mathfrak{X}^{(n)})] - 2\varepsilon \, \EE[\tr\{\nabla f_h(S_{\varepsilon}(\mathfrak{X}^{(n)}))\}] \\
&\qquad + 4\varepsilon \, \EE[\tr\{I_d \nabla \Sigma \nabla f_h(S_{\varepsilon}(\mathfrak{X}^{(n)}))\}].
\end{aligned}
\]
Moreover, by the assumption $\smash{\sum_{m=1}^5 \mathcal{M}_m^{\mathcal{D}, \Sigma}(h) < \infty}$ and \eqref{eq:Wishart.Stein.bound.4} in Theorem~\ref{thm:smoothness.estimates.wishart}, applied with $m\in\{1,2\}$, we have $\mathcal{M}_1(f_h)<\infty$ and $\mathcal{M}_2(f_h)<\infty$. Therefore, there is a finite constant $C_h$, independent of $n$ and $\varepsilon\in(0,1]$, such that
\begin{equation}\label{eq:Wishart.MANOVA.regularized.comparison}
\left|\EE[h(S_{\varepsilon}(\mathfrak{X}^{(n)}))] - \EE[h(\mathfrak{W})] - \EE[\mathcal{A}^{\mathrm{OU}}g_{h, \varepsilon}(\mathfrak{X}^{(n)})]\right| \leq C_h\varepsilon.
\end{equation}

Fix $\varepsilon\in(0,1]$. Using the coordinate form of the matrix normal Ornstein--Uhlenbeck operator, we have
\begin{equation}\label{eq:Wishart.MANOVA.normal.identity}
\EE[\mathcal{A}^{\mathrm{OU}}g_{h, \varepsilon}(\mathfrak{X}^{(n)})]
= \EE\left[\sum_{A,B\in \mathcal{I}}\sigma_{A,B}\partial_A\partial_B g_{h, \varepsilon}(\mathfrak{X}^{(n)})\right] - \EE\left[\sum_{A\in \mathcal{I}}(\mathfrak{X}^{(n)})_A\partial_A g_{h, \varepsilon}(\mathfrak{X}^{(n)})\right].
\end{equation}

We next record derivative bounds for $g_{h, \varepsilon}$ that are uniform in $\varepsilon$. We have
\[
D(X^{\top}X+\varepsilon I_d)[U] = X^{\top} U + U^{\top} X, \qquad D^2(X^{\top}X+\varepsilon I_d)[U,V] = U^{\top} V + V^{\top} U,
\]
and $D^k(X^{\top}X+\varepsilon I_d) = 0$ for $k\geq 3$. Hence, if $\|U_1\|_F = \dots = \|U_k\|_F = 1$, then
\[
\|D(X^{\top}X+\varepsilon I_d)[U_j]\|_F \leq 2 \|X\|_F, \qquad \|D^2(X^{\top}X+\varepsilon I_d)[U_i,U_j]\|_F \leq 2.
\]
By Fa\`a di Bruno's formula, $D^kg_{h, \varepsilon}(X)$ is a sum over partitions of $\{1, \ldots,k\}$ into blocks of size one and two. If $\ell$ is the number of two-blocks, then the number of such partitions is $k!/(2^{\ell}\ell!(k-2\ell)!)$, and the corresponding term is bounded by $2^{k-\ell}\|X\|_F^{k-2\ell}\mathcal{M}_{k-\ell}(f_h)$. Consequently, for
\begin{equation}\label{eq:def.G.k.h}
\mathcal{G}_{k,h}(x) \leqdef \sum_{\ell=0}^{\lfloor k/2\rfloor} \frac{2^{k-2\ell}k!}{\ell!(k-2\ell)!} x^{k-2\ell} \mathcal{M}_{k-\ell}(f_h),
\end{equation}
we have
\begin{equation}\label{eq:gh.derivative.general.bound}
\sup_{\substack{U_1, \ldots,U_k\in \R^{\nu\times d}\\ \|U_1\|_F = \cdots = \|U_k\|_F = 1}} |D^kg_{h, \varepsilon}(X)[U_1, \ldots,U_k]| \leq \mathcal{G}_{k,h}(\|X\|_F).
\end{equation}

We now expand the two terms in \eqref{eq:Wishart.MANOVA.normal.identity}. Since $\mathfrak{X}_{-i}^{(n)}$ and $\mathfrak{Z}_i$ are independent, $\EE[Z_{i,A}]=0$, and $\EE[Z_{i,A}Z_{i,B}]=\sigma_{A,B}$, a Taylor expansion of $\partial_A g_{h, \varepsilon}$ around $\smash{\mathfrak{X}_{-i}^{(n)}}$ gives
\[
\begin{aligned}
\sum_{A\in \mathcal{I}}\EE[(\mathfrak{X}^{(n)})_A\partial_A g_{h, \varepsilon}(\mathfrak{X}^{(n)})]
&= \frac{1}{\sqrt{n}}\sum_{i=1}^n\sum_{A\in \mathcal{I}}\EE\left[Z_{i,A}\partial_A g_{h, \varepsilon}\left(\mathfrak{X}_{-i}^{(n)} + \frac{1}{\sqrt{n}}\mathfrak{Z}_i\right)\right] \\
&= \frac{1}{n}\sum_{i=1}^n\sum_{A,B\in \mathcal{I}}\sigma_{A,B}\EE[\partial_A\partial_B g_{h, \varepsilon}(\mathfrak{X}_{-i}^{(n)})] \\
&\qquad + \frac{1}{2 n^{3/2}} \sum_{i=1}^n \sum_{A,B,C\in \mathcal{I}}\EE[Z_{A}Z_{B}Z_{C}] \EE[\partial_A\partial_B\partial_C \, g_{h, \varepsilon}(\mathfrak{X}_{-i}^{(n)})] + R_{1, \varepsilon},
\end{aligned}
\]
where, by \eqref{eq:gh.derivative.general.bound},
\[
|R_{1, \varepsilon}| \leq \frac{1}{6n}\sum_{A,B,C,D\in \mathcal{I}}
\EE\left[|Z_{A}Z_{B}Z_{C}Z_{D}| \, \mathcal{G}_{4,h}(\|\mathfrak{X}_{1,\theta_1}^{(n)}\|_F)\right],
\]
for some $\theta_1\in[0,1]$. Similarly, a Taylor expansion of $\partial_A\partial_B g_{h, \varepsilon}$ around $\mathfrak{X}_{-i}^{(n)}$ gives
\[
\sum_{A,B\in \mathcal{I}}\sigma_{A,B}\EE[\partial_A\partial_B g_{h, \varepsilon}(\mathfrak{X}^{(n)})]
= \frac{1}{n}\sum_{i=1}^n\sum_{A,B\in \mathcal{I}}\sigma_{A,B}\EE[\partial_A\partial_B g_{h, \varepsilon}(\mathfrak{X}_{-i}^{(n)})] + R_{2, \varepsilon},
\]
where, by \eqref{eq:gh.derivative.general.bound},
\[
|R_{2, \varepsilon}| \leq \frac{1}{2n}\sum_{A,B,C,D\in \mathcal{I}}|\sigma_{A,B}|\EE\left[|Z_{C}Z_{D}| \, \mathcal{G}_{4,h}(\|\mathfrak{X}_{1,\theta_2}^{(n)}\|_F)\right],
\]
for some $\theta_2\in[0,1]$. Also, a Taylor expansion of $\partial_A\partial_B\partial_C \, g_{h, \varepsilon}$ around $\mathfrak{X}^{(n)}$ gives
\[
\begin{aligned}
&\frac{1}{2 n^{3/2}} \sum_{i=1}^n \sum_{A,B,C\in \mathcal{I}}\EE[Z_{A}Z_{B}Z_{C}] \EE[\partial_A\partial_B\partial_C \, g_{h, \varepsilon}(\mathfrak{X}_{-i}^{(n)})] \\
&\qquad= \frac{1}{2\sqrt{n}} \sum_{A,B,C\in \mathcal{I}}\EE[Z_{A}Z_{B}Z_{C}] \EE[\partial_A\partial_B\partial_C \, g_{h, \varepsilon}(\mathfrak{X}^{(n)})] + R_{3, \varepsilon},
\end{aligned}
\]
where, by \eqref{eq:gh.derivative.general.bound} and the assumption that $\mathfrak{Z}_1,\ldots,\mathfrak{Z}_n$ are identically distributed,
\[
|R_{3, \varepsilon}| \leq \frac{1}{2n}\sum_{A,B,C,D\in \mathcal{I}}|\EE[Z_{A}Z_{B}Z_{C}]|\EE\left[|Z_{D}| \, \mathcal{G}_{4,h}(\|\mathfrak{X}_{1,\theta_3}^{(n)}\|_F)\right],
\]
for some $\theta_3\in[0,1]$. Combining the preceding displays yields
\begin{equation}\label{eq:Wishart.MANOVA.after.first.expansion}
\begin{aligned}
|\EE[\mathcal{A}^{\mathrm{OU}}g_{h, \varepsilon}(\mathfrak{X}^{(n)})]|
&\leq |R_{1, \varepsilon}| + |R_{2, \varepsilon}| + |R_{3, \varepsilon}| \\
&\qquad+ \frac{1}{2\sqrt{n}} \sum_{A,B,C\in \mathcal{I}} |\EE[Z_{A}Z_{B}Z_{C}]| |\EE[\partial_A\partial_B\partial_C \, g_{h, \varepsilon}(\mathfrak{X}^{(n)})]|.
\end{aligned}
\end{equation}

It remains to bound the third-derivative term using symmetry considerations. Fix $A,B,C\in \mathcal{I}$. The function $g_{h, \varepsilon}$ is even because $g_{h, \varepsilon}(-Y)=g_{h, \varepsilon}(Y)$. Hence $\partial_A\partial_B\partial_C \, g_{h, \varepsilon}$ is odd. Since $\mathfrak{N}$ is centered matrix normal,
\[
\EE[\partial_A\partial_B\partial_C \, g_{h, \varepsilon}(\mathfrak{N})] = 0.
\]
By \eqref{eq:gh.derivative.general.bound}, the function $Y\mapsto \partial_A\partial_B\partial_C \, g_{h, \varepsilon}(Y)$ has polynomial growth. We use Theorem~3.4 of \citet{GauntOuimetRichards2026} through smooth compactly supported truncations: the theorem is first applied to the truncated functions and then the truncation is removed by dominated convergence, using \eqref{eq:gh.derivative.general.bound} and the finiteness of Gaussian moments. This yields a solution $\Psi_{A,B,C}^{(\varepsilon)}$ of the matrix normal Stein equation
\[
\sum_{D,E\in \mathcal{I}}\sigma_{D,E}\partial_D\partial_E\Psi_{A,B,C}^{(\varepsilon)}(Y) - \sum_{D\in \mathcal{I}}Y_D\partial_D\Psi_{A,B,C}^{(\varepsilon)}(Y) = \partial_A\partial_B\partial_C \, g_{h, \varepsilon}(Y), \qquad Y\in \R^{\nu\times d}.
\]
Equivalently, $\Psi_{A,B,C}^{(\varepsilon)}$ is the semigroup solution
\[
\Psi_{A,B,C}^{(\varepsilon)}(Y) = -\int_0^{\infty}\EE\left[\partial_A\partial_B\partial_C \, g_{h, \varepsilon}\left(e^{-t}Y + \sqrt{1-e^{-2t}}\, \mathfrak{N}'\right)\right]\rd t,
\]
where $\mathfrak{N}'$ is an independent copy of $\mathfrak{N}$. The same domination justifies the differentiations under the integral sign below.

Arguing as we did to derive the bound \eqref{eq:Wishart.MANOVA.after.first.expansion}, but expanding only up to third-order partial derivatives, we obtain the following:
\begin{equation}\label{eq:R4.R5.intermediate}
\left|\EE[\partial_A\partial_B\partial_C g_{h, \varepsilon}(\mathfrak{X}^{(n)})]\right| \leq |R_{4, \varepsilon}^{A,B,C}| + |R_{5, \varepsilon}^{A,B,C}|,
\end{equation}
where
\begin{align}
|R_{4, \varepsilon}^{A,B,C}| \leq \frac{1}{2\sqrt{n}}\sum_{D,E,F\in \mathcal{I}}\EE\left[|Z_{D}Z_{E}Z_{F}| \, |\partial_D\partial_E\partial_F\Psi_{A,B,C}^{(\varepsilon)}(\mathfrak{X}_{1, \theta_4}^{(n)})|\right], \label{eq:R4.intermediate} \\
|R_{5, \varepsilon}^{A,B,C}| \leq \frac{1}{\sqrt{n}}\sum_{D,E,F\in \mathcal{I}}|\sigma_{D,E}|\EE\left[|Z_{F}| \, |\partial_D\partial_E\partial_F\Psi_{A,B,C}^{(\varepsilon)}(\mathfrak{X}_{1, \theta_5}^{(n)})|\right], \label{eq:R5.intermediate}
\end{align}
for some $\theta_4,\theta_5\in[0,1]$.

Differentiating twice under the integral sign and then using Gaussian integration by parts to take the third derivative (using a similar calculation to the one used in the proof of inequality (3.14) of \cite{GauntOuimetRichards2026}) gives
\[
\begin{aligned}
&\partial_D\partial_E\partial_F\Psi_{A,B,C}^{(\varepsilon)}(Y) \\
&\qquad= -\int_0^{\infty} \frac{e^{-3t}}{\sqrt{1-e^{-2t}}} \sum_{Q\in \mathcal{I}} (I_{\nu} \otimes \Sigma^{-1})_{F,Q} \EE[(\mathfrak{N}')_Q \partial_A\partial_B\partial_C\partial_D\partial_E \, g_{h, \varepsilon}(e^{-t}Y + \sqrt{1-e^{-2t}}\, \mathfrak{N}')]\rd t.
\end{aligned}
\]
Note that $\int_0^{\infty} e^{-3t} (1-e^{-2t})^{-1/2} \, \rd t = \pi/4$. Also, since $e^{-t}\leq 1$ and $\sqrt{1-e^{-2t}} \leq 1$, we have
\[
\|e^{-t}Y + \sqrt{1-e^{-2t}}\, \mathfrak{N}'\|_F^p \leq (\|Y\|_F+\|\mathfrak{N}'\|_F)^p \leq (1+\|Y\|_F)^p (1+\|\mathfrak{N}'\|_F)^p, \qquad p \geq 0.
\]
Consequently, we have the bound
\[
|\partial_D\partial_E\partial_F\Psi_{A,B,C}^{(\varepsilon)}(Y)| \leq M \, \mathcal{G}_{5,h}(1+\|Y\|_F),
\]
where $\|\Sigma^{-1}\|_{\infty} = \|(I_{\nu} \otimes \Sigma^{-1})\|_{\infty} \equiv \max_{F\in \mathcal{I}} \sum_{Q\in \mathcal{I}} |(I_{\nu} \otimes \Sigma^{-1})_{F,Q}|$, and
\[
M \leqdef \frac{\pi}{4} \|\Sigma^{-1}\|_{\infty} \max_{\substack{p\in\{1,3,5\} \\ Q\in \mathcal{I}}} \EE\left[|(\mathfrak{N}')_Q|(1+\|\mathfrak{N}'\|_F)^p\right].
\]
It follows from \eqref{eq:R4.intermediate} and \eqref{eq:R5.intermediate} that
\[
\begin{aligned}
|R_{4, \varepsilon}^{A,B,C}| + |R_{5, \varepsilon}^{A,B,C}|
&\leq \frac{M}{\sqrt{n}}\sum_{D,E,F\in \mathcal{I}} \left\{\frac{1}{2} \EE\left[|Z_{D}Z_{E}Z_{F}| \, \mathcal{G}_{5,h}(1 + \|\mathfrak{X}_{1,\theta_4}^{(n)}\|_F) \right]\right. \\
&\hspace{40mm}\left.+ |\sigma_{D,E}| \EE\left[|Z_F| \, \mathcal{G}_{5,h}(1 + \|\mathfrak{X}_{1,\theta_5}^{(n)}\|_F)\right]\right\}.
\end{aligned}
\]

We shall use the following explicit upper bound on $M$. Let $\mathfrak{G}\in \R^{\nu\times d}$ have iid standard normal entries. Since $\smash{\mathfrak{N}' \stackrel{\mathrm{law}}{=} \mathfrak{G}\Sigma^{1/2}}$, we have $\|\mathfrak{N}'\|_F \leq \|\Sigma^{1/2}\|_2\|\mathfrak{G}\|_F = \|\Sigma\|_2^{1/2}\|\mathfrak{G}\|_F$. Also, for every $Q\in \mathcal{I}$ and every $p\in\{1,3,5\}$,
\[
|(\mathfrak{N}')_Q|(1+\|\mathfrak{N}'\|_F)^p \leq \|\mathfrak{N}'\|_F(1+\|\mathfrak{N}'\|_F)^p \leq (1 + \|\mathfrak{N}'\|_F)^6 \leq 2^5 \{1 + \|\mathfrak{N}'\|_F^6\}.
\]
Since $\|\mathfrak{G}\|_F^2\sim \chi^2_{\nu d}$, it follows that $\EE[\|\mathfrak{G}\|_F^6] = \nu d(\nu d+2)(\nu d+4)$, and thus
\[
M \leq 8\pi \|\Sigma^{-1}\|_{\infty} \left\{1 + \|\Sigma\|_2^3 \nu d(\nu d+2)(\nu d+4)\right\} \equiv K.
\]

Combining \eqref{eq:Wishart.MANOVA.after.first.expansion} and \eqref{eq:R4.R5.intermediate}, we have the following bound:
\[
|\EE[\mathcal{A}^{\mathrm{OU}}g_{h, \varepsilon}(\mathfrak{X}^{(n)})]|
\leq |R_{1, \varepsilon}| + |R_{2, \varepsilon}| + |R_{3, \varepsilon}| + \frac{1}{2\sqrt{n}} \sum_{A,B,C\in \mathcal{I}} |\EE[Z_{A}Z_{B}Z_{C}]| (|R_{4, \varepsilon}^{A,B,C}| + |R_{5, \varepsilon}^{A,B,C}|).
\]
We now find bounds on $\smash{|R_{1, \varepsilon}|}$, $\smash{|R_{2, \varepsilon}|}$, $\smash{|R_{3, \varepsilon}|}$, $\smash{|R_{4, \varepsilon}^{A,B,C}|}$, and $\smash{|R_{5, \varepsilon}^{A,B,C}|}$ that do not involve $\theta_1,\ldots,\theta_5$ or moments of $\mathfrak{X}_{-1}^{(n)}$. For $q\in\{0, \ldots,8\}$, set
\[
\mu_q \leqdef \EE[\|\mathfrak{Z}_1\|_F^q], \qquad
\Lambda_q \leqdef
\begin{cases}
1, & q=0, \\
\mu_2^{1/2}, & q=1, \\
2^q(q-1)^{q/2}\mu_q, & 2\leq q\leq8.
\end{cases}
\]
We claim that, for $0\leq q\leq8$,
\begin{equation}\label{eq:leave.one.out.moment.only.Z.explicit}
\EE[\|\mathfrak{X}_{-1}^{(n)}\|_F^q]\leq \Lambda_q.
\end{equation}
The case $q=0$ is immediate. For $q=1$, by the Cauchy-Schwarz inequality and the independence and centering of $\mathfrak{Z}_2, \ldots, \mathfrak{Z}_n$,
\[
\EE[\|\mathfrak{X}_{-1}^{(n)}\|_F] \leq \left(\EE[\|\mathfrak{X}_{-1}^{(n)}\|_F^2]\right)^{1/2} = \left(\frac{1}{n}\sum_{i=2}^n \EE[\|\mathfrak{Z}_i\|_F^2]\right)^{1/2} = \left(\frac{n-1}{n}\mu_2\right)^{1/2} \leq \mu_2^{1/2}.
\]
Now let $2\leq q\leq8$. By symmetrization and Khintchine's inequality, for independent Rademacher random variables $\varepsilon_2, \ldots, \varepsilon_n$, independent of $\mathfrak{Z}_2, \ldots, \mathfrak{Z}_n$, we have
\[
\begin{aligned}
\EE[\|\mathfrak{X}_{-1}^{(n)}\|_F^q]
&= \frac{1}{n^{q/2}}\EE\left[\left\|\sum_{i=2}^n \mathfrak{Z}_i\right\|_F^q\right]
\leq \frac{2^q}{n^{q/2}}\EE\left[\left\|\sum_{i=2}^n \varepsilon_i\mathfrak{Z}_i\right\|_F^q\right] \\
&\leq \frac{2^q(q-1)^{q/2}}{n^{q/2}}\EE\left[\left(\sum_{i=2}^n \|\mathfrak{Z}_i\|_F^2\right)^{q/2}\right]
\leq 2^q(q-1)^{q/2}\mu_q,
\end{aligned}
\]
where we applied Jensen's inequality to obtain the last bound. This proves \eqref{eq:leave.one.out.moment.only.Z.explicit}.

For $r\in \N$ and $q\in \N_0$ with $r+q\leq8$, define
\[
\mathcal{L}_{q,r} \leqdef
\begin{cases}
\mu_r, & q=0, \\
2^{q-1}\{\Lambda_q\mu_r+\mu_{r+q}\}, & q\geq 1,
\end{cases} \qquad
\mathcal{L}_{q,r}^{+} \leqdef
\begin{cases}
\mu_r, & q=0, \\
2^{q-1}\left\{2^{q-1}(1+\Lambda_q)\mu_r+\mu_{r+q}\right\}, & q\geq 1.
\end{cases}
\]
Indeed, since $\mathfrak{X}_{1, \theta}^{(n)}=\mathfrak{X}_{-1}^{(n)}+\theta n^{-1/2}\mathfrak{Z}_1$, the independence of $\mathfrak{X}_{-1}^{(n)}$ and $\mathfrak{Z}_1$ gives, for $q\geq1$ and $A_1, \ldots,A_r\in \mathcal{I}$,
\[
\begin{aligned}
\EE\left[\left(\prod_{\ell=1}^r |Z_{A_{\ell}}|\right)\|\mathfrak{X}_{1, \theta}^{(n)}\|_F^q\right]
&\leq 2^{q-1}\EE\left[\left(\prod_{\ell=1}^r |Z_{A_{\ell}}|\right)\left\{\|\mathfrak{X}_{-1}^{(n)}\|_F^q+\frac{\|\mathfrak{Z}_1\|_F^q}{n^{q/2}}\right\}\right] \\
&\leq 2^{q-1}\left\{\Lambda_q\EE\left[\prod_{\ell=1}^r |Z_{A_{\ell}}|\right]+\EE\left[\left(\prod_{\ell=1}^r |Z_{A_{\ell}}|\right)\|\mathfrak{Z}_1\|_F^q\right]\right\} \\
&\leq 2^{q-1}\{\Lambda_q\mu_r+\mu_{r+q}\}.
\end{aligned}
\]
For $q=0$, we simply use $\EE[\prod_{\ell=1}^r |Z_{A_{\ell}}|]\leq \mu_r$. Thus, for $\theta\in [0,1]$,
\begin{equation}\label{eq:theta.free.L.bound.Z.only.explicit}
\EE\left[\left(\prod_{\ell=1}^r |Z_{A_{\ell}}|\right)\|\mathfrak{X}_{1, \theta}^{(n)}\|_F^q\right]\leq \mathcal{L}_{q,r}, \qquad r+q\leq8.
\end{equation}
Similarly, since $1+\|\mathfrak{X}_{1, \theta}^{(n)}\|_F \leq 1+\|\mathfrak{X}_{-1}^{(n)}\|_F+\|\mathfrak{Z}_1\|_F/\sqrt{n}$, we have, for $q\geq1$,
\[
\begin{aligned}
\EE\left[\left(\prod_{\ell=1}^r |Z_{A_{\ell}}|\right)(1+\|\mathfrak{X}_{1, \theta}^{(n)}\|_F)^q\right]
&\leq 2^{q-1}\EE\left[\left(\prod_{\ell=1}^r |Z_{A_{\ell}}|\right)\left\{(1+\|\mathfrak{X}_{-1}^{(n)}\|_F)^q+\frac{\|\mathfrak{Z}_1\|_F^q}{n^{q/2}}\right\}\right] \\
&\leq 2^{q-1}\left\{2^{q-1}(1+\Lambda_q)\EE\left[\prod_{\ell=1}^r |Z_{A_{\ell}}|\right]+\EE\left[\left(\prod_{\ell=1}^r |Z_{A_{\ell}}|\right)\|\mathfrak{Z}_1\|_F^q\right]\right\} \\
&\leq 2^{q-1}\left\{2^{q-1}(1+\Lambda_q)\mu_r+\mu_{r+q}\right\}.
\end{aligned}
\]
Consequently, for $\theta\in [0,1]$,
\begin{equation}\label{eq:theta.free.L.plus.bound.Z.only.explicit}
\EE\left[\left(\prod_{\ell=1}^r |Z_{A_{\ell}}|\right)(1+\|\mathfrak{X}_{1, \theta}^{(n)}\|_F)^q\right]\leq \mathcal{L}_{q,r}^{+}, \qquad q\geq0, \qquad r+q\leq8.
\end{equation}

For $r\in\{1,2,3,4\}$, define
\[
\mathcal{U}_{4,h}^{[r]} \leqdef 12\mathcal{M}_2(f_h)\mathcal{L}_{0,r}+48\mathcal{M}_3(f_h)\mathcal{L}_{2,r}+16\mathcal{M}_4(f_h)\mathcal{L}_{4,r},
\]
whenever the right-hand side is defined, and, for $r\in\{1,3\}$, define
\[
\mathcal{U}_{5,h}^{[r]} \leqdef 120\mathcal{M}_3(f_h)\mathcal{L}_{1,r}^{+}+160\mathcal{M}_4(f_h)\mathcal{L}_{3,r}^{+}+32\mathcal{M}_5(f_h)\mathcal{L}_{5,r}^{+}.
\]
By \eqref{eq:def.G.k.h},
\[
\begin{aligned}
\mathcal{G}_{4,h}(x) &= 12\mathcal{M}_2(f_h)+48x^2\mathcal{M}_3(f_h)+16x^4\mathcal{M}_4(f_h), \\
\mathcal{G}_{5,h}(x) &= 120x\mathcal{M}_3(f_h)+160x^3\mathcal{M}_4(f_h)+32x^5\mathcal{M}_5(f_h).
\end{aligned}
\]
Therefore, the bounds \eqref{eq:theta.free.L.bound.Z.only.explicit} and \eqref{eq:theta.free.L.plus.bound.Z.only.explicit} imply that, for $\theta\in [0,1]$,
\begin{equation}\label{eq:theta.free.G4.bound.Z.only.explicit}
\EE\left[\left(\prod_{\ell=1}^r |Z_{A_{\ell}}|\right)\mathcal{G}_{4,h}(\|\mathfrak{X}_{1, \theta}^{(n)}\|_F)\right]\leq \mathcal{U}_{4,h}^{[r]},
\end{equation}
and
\begin{equation}\label{eq:theta.free.G5.bound.Z.only.explicit}
\EE\left[\left(\prod_{\ell=1}^r |Z_{A_{\ell}}|\right)\mathcal{G}_{5,h}(1+\|\mathfrak{X}_{1, \theta}^{(n)}\|_F)\right]\leq \mathcal{U}_{5,h}^{[r]}.
\end{equation}

Set $\omega_8 \leqdef 1\vee \max_{A\in \mathcal{I}}\EE[|Z_A|^8]$. We have
\[
\mu_8
= \EE[\|\mathfrak{Z}_1\|_F^8]
= \EE\left[\left(\sum_{A\in \mathcal{I}}Z_A^2\right)^4\right]
\leq (\nu d)^3\sum_{A\in \mathcal{I}}\EE[|Z_A|^8]
\leq (\nu d)^4\omega_8.
\]
Hence, for $0\leq q\leq 8$, we find that $\mu_q\leq \mu_8^{q/8}\leq (\nu d)^{q/2}\omega_8$. More generally, if $\rho\in\{0,1,2\}$ and $\rho+q_1+\cdots+q_s\leq8$, then
\begin{equation}\label{eq:mu.products.omega8.bound}
\mu_2^{\rho/2}\prod_{j=1}^s\mu_{q_j}\leq (\nu d)^{(\rho+q_1+\cdots+q_s)/2}\omega_8.
\end{equation}
Using the definitions of $\Lambda_q$, $\mathcal{L}_{q,r}$ and $\mathcal{L}_{q,r}^{+}$, the bound \eqref{eq:mu.products.omega8.bound} yields
\[
\mathcal{L}_{0,r}\leq (\nu d)^{r/2}\omega_8, \qquad \mathcal{L}_{2,r}\leq 10(\nu d)^{(r+2)/2}\omega_8, \qquad \mathcal{L}_{4,r}\leq 1160(\nu d)^{(r+4)/2}\omega_8,
\]
\[
\mathcal{L}_{1,r}^{+}\leq 3(\nu d)^{(r+1)/2}\omega_8, \qquad \mathcal{L}_{3,r}^{+}\leq (20+256\sqrt{2})(\nu d)^{(r+3)/2}\omega_8, \qquad \mathcal{L}_{5,r}^{+}\leq 262416(\nu d)^{(r+5)/2}\omega_8.
\]
Similarly,
\[
\mu_2\mathcal{L}_{0,2}\leq (\nu d)^2\omega_8, \qquad \mu_2\mathcal{L}_{2,2}\leq 10(\nu d)^3\omega_8, \qquad \mu_2\mathcal{L}_{4,2}\leq 1160(\nu d)^4\omega_8,
\]
\[
\mu_2\mathcal{L}_{1,1}^{+}\leq 3(\nu d)^2\omega_8, \qquad \mu_2\mathcal{L}_{3,1}^{+}\leq (20+256\sqrt{2})(\nu d)^3\omega_8, \qquad \mu_2\mathcal{L}_{5,1}^{+}\leq 262416(\nu d)^4\omega_8.
\]
Consequently,
\[
\mathcal{U}_{4,h}^{[4]}\leq \omega_8\left\{12(\nu d)^2\mathcal{M}_2(f_h)+480(\nu d)^3\mathcal{M}_3(f_h)+18560(\nu d)^4\mathcal{M}_4(f_h)\right\},
\]
\[
\mu_2\mathcal{U}_{4,h}^{[2]}\leq \omega_8\left\{12(\nu d)^2\mathcal{M}_2(f_h)+480(\nu d)^3\mathcal{M}_3(f_h)+18560(\nu d)^4\mathcal{M}_4(f_h)\right\},
\]
\[
\mathcal{U}_{4,h}^{[1]}\leq \omega_8\left\{12(\nu d)^{1/2}\mathcal{M}_2(f_h)+480(\nu d)^{3/2}\mathcal{M}_3(f_h)+18560(\nu d)^{5/2}\mathcal{M}_4(f_h)\right\},
\]
and
\[
\mathcal{U}_{5,h}^{[3]}\leq \omega_8\left\{360(\nu d)^2\mathcal{M}_3(f_h)+(3200+40960\sqrt{2})(\nu d)^3\mathcal{M}_4(f_h)+8397312(\nu d)^4\mathcal{M}_5(f_h)\right\},
\]
\[
\mu_2 \, \mathcal{U}_{5,h}^{[1]}\leq \omega_8\left\{360(\nu d)^2\mathcal{M}_3(f_h)+(3200+40960\sqrt{2})(\nu d)^3\mathcal{M}_4(f_h)+8397312(\nu d)^4\mathcal{M}_5(f_h)\right\}.
\]
Moreover, $|\sigma_{A,B}|=|\EE[Z_AZ_B]|\leq \EE[|Z_AZ_B|]\leq \mu_2$, so that
\[
\sum_{A,B\in \mathcal{I}}|\sigma_{A,B}| \leq (\nu d)^2\mu_2.
\]
Using \eqref{eq:theta.free.G4.bound.Z.only.explicit} in the bounds for $R_{1, \varepsilon},R_{2, \varepsilon},R_{3, \varepsilon}$, and using \eqref{eq:theta.free.G5.bound.Z.only.explicit} in the bound for $R_{4, \varepsilon}^{A,B,C}+R_{5, \varepsilon}^{A,B,C}$, gives
\[
|R_{1, \varepsilon}|+|R_{2, \varepsilon}|+|R_{3, \varepsilon}| \leq \frac{1}{n}\left\{\frac{(\nu d)^4}{6}\mathcal{U}_{4,h}^{[4]}+\frac{(\nu d)^4\mu_2}{2}\mathcal{U}_{4,h}^{[2]}+\frac{\nu d}{2}\left(\sum_{A,B,C\in \mathcal{I}}|\EE[Z_AZ_BZ_C]|\right)\mathcal{U}_{4,h}^{[1]}\right\},
\]
and, for every fixed $A,B,C\in \mathcal{I}$,
\[
|R_{4, \varepsilon}^{A,B,C}|+|R_{5, \varepsilon}^{A,B,C}| \leq \frac{K}{\sqrt{n}}\left\{\frac{(\nu d)^3}{2}\mathcal{U}_{5,h}^{[3]}+(\nu d)^3\mu_2\mathcal{U}_{5,h}^{[1]}\right\}.
\]
It follows from \eqref{eq:Wishart.MANOVA.after.first.expansion} and \eqref{eq:R4.R5.intermediate} that
\[
\begin{aligned}
|\EE[\mathcal{A}^{\mathrm{OU}}g_{h, \varepsilon}(\mathfrak{X}^{(n)})]|
&\leq \frac{1}{n}\left\{\frac{(\nu d)^4}{6}\mathcal{U}_{4,h}^{[4]}+\frac{(\nu d)^4\mu_2}{2}\mathcal{U}_{4,h}^{[2]}+\frac{\nu d}{2}\left(\sum_{A,B,C\in \mathcal{I}}|\EE[Z_AZ_BZ_C]|\right)\mathcal{U}_{4,h}^{[1]}\right\} \\
&\qquad + \frac{K}{2n}\left(\sum_{A,B,C\in \mathcal{I}}|\EE[Z_AZ_BZ_C]|\right)\left\{\frac{(\nu d)^3}{2}\mathcal{U}_{5,h}^{[3]}+(\nu d)^3\mu_2\mathcal{U}_{5,h}^{[1]}\right\}.
\end{aligned}
\]
Since $\nu d\geq1$ and $K\leq1+K$, the preceding display yields
\[
|\EE[\mathcal{A}^{\mathrm{OU}}g_{h, \varepsilon}(\mathfrak{X}^{(n)})]| \leq \frac{(\nu d)^8(1+K) \omega_8}{n} \left\{\beta_2\mathcal{M}_2(f_h)+\beta_3\mathcal{M}_3(f_h)+\beta_4\mathcal{M}_4(f_h)+\beta_5\mathcal{M}_5(f_h)\right\},
\]
where
\[
\beta_j \leqdef u_j+v_j\sum_{A,B,C\in \mathcal{I}}|\EE[Z_AZ_BZ_C]|, \qquad j\in\{2,3,4,5\},
\]
with
\[
(u_2, u_3, u_4, u_5) \leqdef \left(8, 320, \frac{37120}{3}, 0\right), \qquad (v_2, v_3, v_4, v_5) \leqdef \left(6, 510, 11680+30720\sqrt{2}, 6297984\right).
\]
Finally, we can use \eqref{eq:Wishart.Stein.bound.4} in Theorem~\ref{thm:smoothness.estimates.wishart} to obtain $\smash{\mathcal{M}_m(f_h) \leq \frac{1}{2m} \, \mathcal{M}_m^{\mathcal{D}, \Sigma}(h)}$ for $m\in \{2,3,4,5\}$. Thus, uniformly in $\varepsilon\in(0,1]$,
\[
\begin{aligned}
&|\EE[\mathcal{A}^{\mathrm{OU}}g_{h, \varepsilon}(\mathfrak{X}^{(n)})]| \\
&\qquad\leq \frac{(\nu d)^8(1+K) \omega_8}{n} \left\{\alpha_2\mathcal{M}_2^{\mathcal{D}, \Sigma}(h)+\alpha_3\mathcal{M}_3^{\mathcal{D}, \Sigma}(h)+\alpha_4\mathcal{M}_4^{\mathcal{D}, \Sigma}(h)+\alpha_5\mathcal{M}_5^{\mathcal{D}, \Sigma}(h)\right\},
\end{aligned}
\]
where in obtaining $\alpha_2,\ldots,\alpha_5$ the values of $r_2,\ldots,r_5$ and $s_2,\ldots s_5$ are obtained by rounding numbers up to the nearest integer. Since $h$ is identified with its bounded continuous extension to $\mathcal{S}_{+}^d$ and $S_{\varepsilon}(\mathfrak{X}^{(n)})=\mathfrak{W}_{\nu}^{(n)}+\varepsilon I_d\to \smash{\mathfrak{W}_{\nu}^{(n)}}$ almost surely, dominated convergence gives
\[
\lim_{\varepsilon\downarrow0}\EE[h(S_{\varepsilon}(\mathfrak{X}^{(n)}))] = \EE[h(\mathfrak{W}_{\nu}^{(n)})].
\]
Letting $\varepsilon\downarrow0$ in \eqref{eq:Wishart.MANOVA.regularized.comparison} and using the preceding uniform bound gives the conclusion.

\subsection{Proof of Proposition~\ref{prop:Wishart.Satterthwaite}}

Let $\alpha_{\bullet} \leqdef \sum_{j=1}^N \alpha_j$. By assumption, the $\alpha_j$ are positive integers, so we may take independent random vectors $\smash{\{\bb{X}_{j,r}:j\in [N], \, r\in [\alpha_j]\}}$ such that $\bb{X}_{j,r}\sim \mathcal{N}_d(\bb{0}_d, \Sigma_j)$. By the standard Gaussian representation of the Wishart distribution, we may write $\mathfrak{G}_j = \sum_{r = 1}^{\alpha_j}\bb{X}_{j,r}\bb{X}_{j,r}^{\top}$ for $j\in [N]$, and therefore
\begin{equation}\label{eq:T.decomp.Satterthwaite.heterogeneous}
\mathfrak{T} = \sum_{j=1}^N \sum_{r = 1}^{\alpha_j}\bb{X}_{j,r}\bb{X}_{j,r}^{\top}.
\end{equation}
Since $\alpha_{\bullet}\geq d + 1$, the Gaussian vectors appearing in \eqref{eq:T.decomp.Satterthwaite.heterogeneous} span $\R^d$ almost surely, and hence $\mathfrak{T}\in \mathcal{S}_{++}^d$ almost surely. Let $f_h$ denote the solution of the Stein equation associated with the target law $\smash{\mathfrak{W}_{\nu}\sim \mathcal{W}_d(\nu, \widetilde{\Sigma}_{\nu})}$. Since $\nu > 3d-3$, Theorem~\ref{thm:smoothness.estimates.wishart} applies to this target distribution. For each $(j,r)$, define the leave-one-vector-out matrix
\[
\mathfrak{T}_{j,r} \leqdef \mathfrak{T}-\bb{X}_{j,r}\bb{X}_{j,r}^{\top}.
\]
We assumed $\alpha_{\bullet}\geq d + 1$, so the matrix $\mathfrak{T}_{j,r}$ is a sum of at least $d$ independent full-covariance Gaussian rank-one matrices. These Gaussian vectors span $\R^d$ almost surely, so $\mathfrak{T}_{j,r}\in \mathcal{S}_{++}^d$ almost surely. Consequently, for every $\bb{x}\in \R^d$, the matrix $\mathfrak{T}_{j,r} + \bb{x}\bb{x}^{\top}$ belongs to $\mathcal{S}_{++}^d$ almost surely, and all derivatives of $f_h$ appearing below are evaluated inside their domain.

By the Stein equation \eqref{eq:Stein.equation.Wishart} and the expression of the Wishart extended generator in \eqref{eq:Wishart.process.generator}, with $(\alpha, \Sigma)$ replaced by $\smash{(\nu, \widetilde{\Sigma}_{\nu})}$, we have
\begin{equation}\label{eq:Stein.identity.Satterthwaite.heterogeneous}
\begin{aligned}
\EE[h(\mathfrak{T})]-\EE[h(\mathfrak{W}_{\nu})]
&= 2 \, \EE[\tr\{(\nu\widetilde{\Sigma}_{\nu}-\mathfrak{T})\nabla f_h(\mathfrak{T})\}] + 4 \, \EE[\tr\{\mathfrak{T}\nabla\widetilde{\Sigma}_{\nu}\nabla f_h(\mathfrak{T})\}] \\
&= 2 \, \EE[\tr\{(\overline{\Sigma}-\mathfrak{T})\nabla f_h(\mathfrak{T})\}] + 4 \, \EE[\tr\{\mathfrak{T}\nabla\widetilde{\Sigma}_{\nu}\nabla f_h(\mathfrak{T})\}],
\end{aligned}
\end{equation}
since $\nu\widetilde{\Sigma}_{\nu} = \overline{\Sigma}$. Fix $(j,r)$, define
\[
F_{j,r}(\bb{x}) \leqdef \nabla f_h(\mathfrak{T}_{j,r} + \bb{x}\bb{x}^{\top})\bb{x}, \qquad \bb{x}\in \R^d,
\]
and introduce the $\Sigma_j$-divergence
\[
\mathrm{div}_{\Sigma_j}F_{j,r}(\bb{x}) \leqdef \tr\{\Sigma_j D F_{j,r}(\bb{x})\} = \sum_{a,b = 1}^d(\Sigma_j)_{ab} \, \bb{e}_a^{\top}D F_{j,r}(\bb{x})[\bb{e}_b].
\]
Also, for $\bb{x}\in \R^d$ and $a\in [d]$, let
\[
H_{\bb{x}}^{(a)} \leqdef \bb{x}\bb{e}_a^{\top} + \bb{e}_a\bb{x}^{\top}\in \mathcal{S}^d.
\]
Writing $S_{\bb{x}} \leqdef \mathfrak{T}_{j,r} + \bb{x}\bb{x}^{\top}$, the chain rule gives
\[
D F_{j,r}(\bb{x})[\bb{e}_b] = D(\nabla f_h)(S_{\bb{x}})[H_{\bb{x}}^{(b)}]\bb{x} + \nabla f_h(S_{\bb{x}})\bb{e}_b.
\]
Therefore,
\begin{align}
\mathrm{div}_{\Sigma_j}F_{j,r}(\bb{x})
&= \tr\{\Sigma_j\nabla f_h(S_{\bb{x}})\} + \sum_{a,b = 1}^d(\Sigma_j)_{ab} \, \bb{e}_a^{\top}D(\nabla f_h)(S_{\bb{x}})[H_{\bb{x}}^{(b)}]\bb{x} \notag \\
&= \tr\{\Sigma_j\nabla f_h(S_{\bb{x}})\} + \frac{1}{2}\sum_{a,b = 1}^d(\Sigma_j)_{ab} \, D^2 f_h(S_{\bb{x}})[H_{\bb{x}}^{(b)},H_{\bb{x}}^{(a)}] \notag \\
&= \tr\{\Sigma_j\nabla f_h(S_{\bb{x}})\} + \frac{1}{2}\sum_{a,b = 1}^d(\Sigma_j)_{ab}\sum_{i, \ell = 1}^d x_i x_{\ell} D^2 f_h(S_{\bb{x}})[\bb{e}_i\bb{e}_b^{\top}+\bb{e}_b\bb{e}_i^{\top}, \bb{e}_{\ell}\bb{e}_a^{\top}+\bb{e}_a\bb{e}_{\ell}^{\top}] \notag \\
&= \tr\{\Sigma_j\nabla f_h(S_{\bb{x}})\} + 2\sum_{i, \ell = 1}^d x_i x_{\ell}\sum_{a,b = 1}^d \nabla_{ib}(\Sigma_j)_{ba}\nabla_{a\ell} f_h(S_{\bb{x}}) \notag \\
&= \tr\{\Sigma_j\nabla f_h(S_{\bb{x}})\} + 2 \, \bb{x}^{\top}\nabla\Sigma_j\nabla f_h(S_{\bb{x}})\bb{x}. \label{eq:div.display.Satterthwaite.heterogeneous}
\end{align}
Moreover, by the assumption $\smash{\mathcal{M}_1^{\mathcal{D}, \widetilde{\Sigma}_{\nu}}(h) + \mathcal{M}_2^{\mathcal{D}, \widetilde{\Sigma}_{\nu}}(h) < \infty}$ and Theorem~\ref{thm:smoothness.estimates.wishart}~$(ii)$, applied with $m = 1,2$ and target scale matrix $\smash{\widetilde{\Sigma}_{\nu}}$, $f_h$ has bounded first and second derivatives. Thus, conditionally on $\mathfrak{T}_{j,r}$, the map $F_{j,r}$ is $C^1$ with at most linear growth and its derivative has at most quadratic growth. Gaussian integration by parts below is therefore justified. Since $\bb{X}_{j,r}$ is centered Gaussian with covariance matrix $\Sigma_j$ and is independent of $\mathfrak{T}_{j,r}$, Gaussian integration by parts yields
\[
\EE[\bb{X}_{j,r}^{\top}F_{j,r}(\bb{X}_{j,r})] = \EE[\mathrm{div}_{\Sigma_j}F_{j,r}(\bb{X}_{j,r})].
\]
Using \eqref{eq:div.display.Satterthwaite.heterogeneous} and the identity $\mathfrak{T} = \mathfrak{T}_{j,r} + \bb{X}_{j,r}\bb{X}_{j,r}^{\top}$, we deduce
\begin{equation}\label{eq:ibp.Satterthwaite.heterogeneous}
\EE[\bb{X}_{j,r}^{\top}\nabla f_h(\mathfrak{T})\bb{X}_{j,r}] = \EE[\tr\{\Sigma_j\nabla f_h(\mathfrak{T})\}] + 2 \, \EE[\bb{X}_{j,r}^{\top}\nabla\Sigma_j\nabla f_h(\mathfrak{T})\bb{X}_{j,r}].
\end{equation}
Summing \eqref{eq:ibp.Satterthwaite.heterogeneous} over $j\in [N]$ and $r\in [\alpha_j]$, we get from \eqref{eq:T.decomp.Satterthwaite.heterogeneous} and the definition $\overline{\Sigma} = \sum_{j=1}^N \alpha_j\Sigma_j$ that
\[
\begin{aligned}
\EE[\tr\{\mathfrak{T}\nabla f_h(\mathfrak{T})\}]
&= \sum_{j=1}^N \sum_{r = 1}^{\alpha_j}\EE[\bb{X}_{j,r}^{\top}\nabla f_h(\mathfrak{T})\bb{X}_{j,r}] \\
&= \EE[\tr\{\overline{\Sigma}\nabla f_h(\mathfrak{T})\}] + 2\sum_{j=1}^N \sum_{r = 1}^{\alpha_j}\EE[\bb{X}_{j,r}^{\top}\nabla\Sigma_j\nabla f_h(\mathfrak{T})\bb{X}_{j,r}].
\end{aligned}
\]
Substituting this identity into \eqref{eq:Stein.identity.Satterthwaite.heterogeneous} and using again \eqref{eq:T.decomp.Satterthwaite.heterogeneous}, we find
\begin{align}
\big|\EE[h(\mathfrak{T})]-\EE[h(\mathfrak{W}_{\nu})]\big|
&= 4 \, \Bigg|\sum_{j=1}^N \sum_{r = 1}^{\alpha_j}\EE[\bb{X}_{j,r}^{\top}\nabla\widetilde{\Sigma}_{\nu}\nabla f_h(\mathfrak{T})\bb{X}_{j,r}] - \sum_{j=1}^N \sum_{r = 1}^{\alpha_j}\EE[\bb{X}_{j,r}^{\top}\nabla\Sigma_j\nabla f_h(\mathfrak{T})\bb{X}_{j,r}]\Bigg| \notag \\
&\leq 4\sum_{j=1}^N \sum_{r = 1}^{\alpha_j}\EE\big[|\bb{X}_{j,r}^{\top}\nabla(\widetilde{\Sigma}_{\nu}-\Sigma_j)\nabla f_h(\mathfrak{T})\bb{X}_{j,r}|\big], \label{eq:before.bound.Satterthwaite.heterogeneous}
\end{align}
where in obtaining the inequality we applied the triangle inequality. It remains to bound the quadratic form in \eqref{eq:before.bound.Satterthwaite.heterogeneous}. Let $B\in \mathcal{S}^d$. From the calculation in \eqref{eq:div.display.Satterthwaite.heterogeneous}, with $\Sigma_j$ replaced by $B$, and the definition of $\mathcal{M}_2(f_h)$ in \eqref{eq:M.m}, we have
\[
\big|\bb{x}^{\top}\nabla B\nabla f_h(S_{\bb{x}})\bb{x}\big| = \frac{1}{4} \left|\sum_{a,b = 1}^dB_{ab} \, D^2 f_h(S_{\bb{x}})[H_{\bb{x}}^{(b)},H_{\bb{x}}^{(a)}]\right| \leq \frac{\mathcal{M}_2(f_h)}{4}\sum_{a,b = 1}^d|B_{ab}| \, \|H_{\bb{x}}^{(b)}\|_F \, \|H_{\bb{x}}^{(a)}\|_F.
\]
By the Cauchy-Schwarz inequality,
\[
\sum_{a,b = 1}^d|B_{ab}| \, \|H_{\bb{x}}^{(b)}\|_F \, \|H_{\bb{x}}^{(a)}\|_F \leq \|B\|_F\left(\sum_{a,b = 1}^d\|H_{\bb{x}}^{(b)}\|_F^2\|H_{\bb{x}}^{(a)}\|_F^2\right)^{1/2} = \|B\|_F\sum_{a = 1}^d\|H_{\bb{x}}^{(a)}\|_F^2.
\]
Now, $\|H_{\bb{x}}^{(a)}\|_F^2 = \|\bb{x}\bb{e}_a^{\top} + \bb{e}_a\bb{x}^{\top}\|_F^2 = 2\|\bb{x}\|_2^2 + 2x_a^2$, so that $\sum_{a = 1}^d\|H_{\bb{x}}^{(a)}\|_F^2 = 2(d + 1)\|\bb{x}\|_2^2$. Therefore,
\begin{equation}\label{eq:quadratic.form.bound.Satterthwaite.heterogeneous}
\big|\bb{x}^{\top}\nabla B\nabla f_h(S_{\bb{x}})\bb{x}\big| \leq \frac{d + 1}{2} \, \mathcal{M}_2(f_h) \, \|B\|_F \, \|\bb{x}\|_2^2.
\end{equation}
Applying \eqref{eq:quadratic.form.bound.Satterthwaite.heterogeneous} with $B = \widetilde{\Sigma}_{\nu}-\Sigma_j$ and $\bb{x} = \bb{X}_{j,r}$, and taking expectations, yields
\[
\EE\big[|\bb{X}_{j,r}^{\top}\nabla(\widetilde{\Sigma}_{\nu}-\Sigma_j)\nabla f_h(\mathfrak{T})\bb{X}_{j,r}|\big] \leq \frac{d + 1}{2} \, \|\widetilde{\Sigma}_{\nu}-\Sigma_j\|_F \, \mathcal{M}_2(f_h) \, \tr(\Sigma_j).
\]
Combining this with \eqref{eq:before.bound.Satterthwaite.heterogeneous}, we obtain
\[
\big|\EE[h(\mathfrak{T})]-\EE[h(\mathfrak{W}_{\nu})]\big| \leq 2(d + 1)\mathcal{M}_2(f_h)\sum_{j=1}^N \alpha_j\tr(\Sigma_j)\|\Sigma_j-\widetilde{\Sigma}_{\nu}\|_F.
\]
Finally, by Theorem~\ref{thm:smoothness.estimates.wishart}~$(ii)$ with $m = 2$ and target scale matrix $\widetilde{\Sigma}_{\nu}$, $\smash{\mathcal{M}_2(f_h) \leq \frac{1}{4}\mathcal{M}_2^{\mathcal{D}, \widetilde{\Sigma}_{\nu}}(h)}$. Substituting this into the previous display gives \eqref{eq:heterogeneous.Satterthwaite.bound.nu}. This completes the proof.

\subsection{Proof of Proposition~\ref{prop:Wishart.deBruijn}}

Fix $g\in C_b^2(\mathcal{S}_{++}^d)$ satisfying $m_g \leq g(X)\leq M_g$ for all $X\in \mathcal{S}_{++}^d$. For $t > 0$, let $\Sigma_t \leqdef (1 - e^{-2t})\Sigma$ and $\smash{p_t(W, Y) \leqdef f_{\alpha, \Sigma_t, e^{-2t}\Sigma_t^{-1}W}^{\mathcal{W}}(Y)}$, so that, by the distributional relation \eqref{eq:W.t.conditional} and Eq.~\eqref{semiformula} of Proposition~\ref{prop:extension.Luk.1994.Lemma.2.4},
\[
(\mathcal{P}_t^{\mathcal{W}} h)(W) = \int_{\mathcal{S}_{++}^d} h(Y) \, p_t(W, Y) \, \rd Y.
\]
Also set
\[
f_{\xi}(X) \leqdef \frac{\xi(\rd X)}{\rd X} = f_{\alpha, \Sigma, 0_{d\times d}}^{\mathcal{W}}(X), \qquad X\in \mathcal{S}_{++}^d.
\]
By \eqref{eq:noncentral.Wishart.density}, for every $W, Y\in \mathcal{S}_{++}^d$,
\[
\begin{aligned}
p_t(W, Y) f_{\xi}(W)
&= \frac{|W|^{\alpha/2 - (d + 1)/2}|Y|^{\alpha/2 - (d + 1)/2}}{|2\Sigma|^{\alpha/2}|2\Sigma_t|^{\alpha/2}\Gamma_d(\alpha/2)^2} \\
&\qquad \times \etr\left\{-\frac{1}{2}\Sigma_t^{-1}Y - \frac{e^{-2t}}{2}\Sigma_t^{-1}W - \frac{1}{2}\Sigma^{-1}W\right\} {}_0F_1\left(\frac{\alpha}{2}; \frac{e^{-2t}}{4}\Sigma_t^{-1}W\Sigma_t^{-1}Y\right).
\end{aligned}
\]
Since $\Sigma_t^{-1} = (1 - e^{-2t})^{-1}\Sigma^{-1}$, we have $(e^{-2t}/2)\Sigma_t^{-1} + (1/2) \Sigma^{-1} = (1/2) \Sigma_t^{-1}$. Moreover, we have ${}_0F_1(\nu; AB) = {}_0F_1(\nu; BA)$ for square matrices $A$ and $B$ of the same size. Hence
\[
p_t(W, Y)f_{\xi}(W) = p_t(Y, W)f_{\xi}(Y), \qquad W, Y\in \mathcal{S}_{++}^d.
\]
It follows that the Wishart semigroup is symmetric in $L^2(\xi)$. Indeed, for bounded measurable functions $\varphi, \psi:\mathcal{S}_{++}^d\to \R$,
\[
\begin{aligned}
\int_{\mathcal{S}_{++}^d} (\mathcal{P}_t^{\mathcal{W}}\varphi)(W) \psi(W) \, \xi(\rd W)
&= \int_{\mathcal{S}_{++}^d}\int_{\mathcal{S}_{++}^d} \psi(W)\varphi(Y) \, p_t(W, Y) \, \rd Y \, \xi(\rd W) \\
&= \int_{\mathcal{S}_{++}^d}\int_{\mathcal{S}_{++}^d} \psi(W)\varphi(Y) \, p_t(Y, W) \, \rd W \, \xi(\rd Y) \\
&= \int_{\mathcal{S}_{++}^d} \varphi(Y)(\mathcal{P}_t^{\mathcal{W}}\psi)(Y) \, \xi(\rd Y).
\end{aligned}
\]
Therefore, since $\mu_t = \mu \mathcal{P}_t^{\mathcal{W}}$ and $\mu(\rd W) = g(W) \, \xi(\rd W)$,
\[
\int_{\mathcal{S}_{++}^d} \varphi(Y) \, \mu_t(\rd Y)
= \int_{\mathcal{S}_{++}^d} (\mathcal{P}_t^{\mathcal{W}}\varphi)(W) \, g(W) \, \xi(\rd W)
= \int_{\mathcal{S}_{++}^d} \varphi(Y)(\mathcal{P}_t^{\mathcal{W}}g)(Y) \, \xi(\rd Y).
\]
This proves the first claim of the proposition, namely
\begin{equation}\label{eq:Wishart.g.t}
g_t = \mathcal{P}_t^{\mathcal{W}} g.
\end{equation}
For $h\in C_b^2(\mathcal{S}_{++}^d)$, the Kolmogorov backward equation for the Wishart semigroup gives
\begin{equation}\label{eq:Kolmogorov.backward}
\partial_t \mathcal{P}_t^{\mathcal{W}} h = \mathcal{A}^{\mathcal{W}}\mathcal{P}_t^{\mathcal{W}} h, \qquad t > 0.
\end{equation}
Applying \eqref{eq:Kolmogorov.backward} with $h = g$ and using \eqref{eq:Wishart.g.t}, we obtain
\begin{equation}\label{eq:Wishart.forward.g.t}
\partial_t g_t = \mathcal{A}^{\mathcal{W}}g_t.
\end{equation}

Next, let $\varphi, \psi\in C_b^2(\mathcal{S}_{++}^d)$. By Proposition~\ref{prop:extension.Luk.1994.Lemma.2.4}, the measure $\xi$ is invariant for $(\mathcal{P}_t^{\mathcal{W}})_{t\geq 0}$, and therefore
\[
\int_{\mathcal{S}_{++}^d} (\mathcal{P}_t^{\mathcal{W}}\varphi)(X) \, \xi(\rd X) = \int_{\mathcal{S}_{++}^d} \varphi(X) \, \xi(\rd X), \qquad t\geq 0.
\]
Using this invariance and Dynkin's formula gives $\smash{\int_{\mathcal{S}_{++}^d} \mathcal{A}^{\mathcal{W}}\varphi(X) \, \xi(\rd X) = 0}$. Combining this with \eqref{eq:Wishart.product.rule} and the symmetry of $\mathcal{P}_t^{\mathcal{W}}$, hence of $\mathcal{A}^{\mathcal{W}}$, we obtain
\[
\begin{aligned}
0
&= \int_{\mathcal{S}_{++}^d} \mathcal{A}^{\mathcal{W}}(\varphi\psi)(X) \, \xi(\rd X) \\
&= \int_{\mathcal{S}_{++}^d} \varphi(X)\mathcal{A}^{\mathcal{W}}\psi(X) \, \xi(\rd X) + \int_{\mathcal{S}_{++}^d} \psi(X)\mathcal{A}^{\mathcal{W}}\varphi(X) \, \xi(\rd X) \\
&\qquad + 8 \int_{\mathcal{S}_{++}^d} \tr\{X \nabla \varphi(X)\Sigma \nabla \psi(X)\} \, \xi(\rd X) \\
&= 2 \int_{\mathcal{S}_{++}^d} \psi(X)\mathcal{A}^{\mathcal{W}}\varphi(X) \, \xi(\rd X) + 8 \int_{\mathcal{S}_{++}^d} \tr\{X \nabla \varphi(X)\Sigma \nabla \psi(X)\} \, \xi(\rd X).
\end{aligned}
\]
Thus,
\begin{equation}\label{eq:Wishart.integration.by.parts}
\int_{\mathcal{S}_{++}^d} \psi(X)\mathcal{A}^{\mathcal{W}}\varphi(X) \, \xi(\rd X) = -4 \int_{\mathcal{S}_{++}^d} \tr\{X \nabla \varphi(X)\Sigma \nabla \psi(X)\} \, \xi(\rd X).
\end{equation}

The boundedness assumption now gives the required domination. Since $\mathcal{P}_t^{\mathcal{W}}$ is Markov, \eqref{eq:Wishart.g.t} implies
\[
m_g \leq g_t(X)\leq M_g, \qquad X\in \mathcal{S}_{++}^d, \quad t\geq 0.
\]
In particular, $\log(g_t)$ is bounded uniformly in $X$ and $t$. Moreover, by the regularity of the Wishart semigroup on $C_b^2(\mathcal{S}_{++}^d)$, for every compact interval $I\subset (0, \infty)$, the functions $g_t$, $\nabla g_t$ and the second-order derivatives of $g_t$ are bounded uniformly over $t\in I$. Hence, from the generator formula \eqref{eq:Wishart.process.generator}, there exists a constant $C_I\in (0, \infty)$ such that
\begin{equation}\label{eq:dom}
|\mathcal{A}^{\mathcal{W}}g_t(X)| \leq C_I(1 + \tr(X)), \qquad X\in \mathcal{S}_{++}^d, \quad t\in I.
\end{equation}
Since $\xi = \mathcal{W}_d(\alpha, \Sigma)$ has finite first moment, the right-hand side of \eqref{eq:dom} is integrable with respect to $\xi$. Thus dominated convergence justifies the entropy differentiation below. The lower bound on $g_t$ also implies that $\log(g_t)\in C_b^2(\mathcal{S}_{++}^d)$ whenever $g_t\in C_b^2(\mathcal{S}_{++}^d)$, so the integration by parts identity \eqref{eq:Wishart.integration.by.parts} applies with $\varphi = g_t$ and $\psi = \log(g_t)$.

We can now differentiate the relative entropy. Using \eqref{eq:Wishart.forward.g.t} and the domination above (this is the role of Assumption~\hyperref[ass:Wishart.deBruijn.differentiation]{\textup{(A)}} in Remark~\ref{rem:Wishart.deBruijn.assumptions}), we have
\[
\begin{aligned}
\frac{\rd}{\rd t} \mathrm{Ent}(\mu_t \, \| \, \xi)
&= \frac{\rd}{\rd t} \int_{\mathcal{S}_{++}^d} g_t(X)\log(g_t(X)) \, \xi(\rd X) \\
&= \int_{\mathcal{S}_{++}^d} (1 + \log(g_t(X))) \, \partial_t g_t(X) \, \xi(\rd X) \\
&= \int_{\mathcal{S}_{++}^d} (1 + \log(g_t(X))) \, \mathcal{A}^{\mathcal{W}}g_t(X) \, \xi(\rd X).
\end{aligned}
\]
Since $\int_{\mathcal{S}_{++}^d} \mathcal{A}^{\mathcal{W}}g_t(X) \, \xi(\rd X) = 0$ (this is the first part of Assumption~\hyperref[ass:Wishart.deBruijn.integration.by.parts]{\textup{(B)}} in Remark~\ref{rem:Wishart.deBruijn.assumptions}), this reduces to
\[
\frac{\rd}{\rd t}\mathrm{Ent}(\mu_t \, \| \, \xi) = \int_{\mathcal{S}_{++}^d} \log(g_t(X)) \, \mathcal{A}^{\mathcal{W}}g_t(X) \, \xi(\rd X).
\]
Applying \eqref{eq:Wishart.integration.by.parts} with $\varphi = g_t$ and $\psi = \log(g_t)$ (this is the second part of Assumption~\hyperref[ass:Wishart.deBruijn.integration.by.parts]{\textup{(B)}} in Remark~\ref{rem:Wishart.deBruijn.assumptions}) gives
\[
\begin{aligned}
\frac{\rd}{\rd t}\mathrm{Ent}(\mu_t \, \| \, \xi)
&= -4 \int_{\mathcal{S}_{++}^d} \tr\{X \nabla g_t(X)\Sigma \nabla \log(g_t)(X)\} \, \xi(\rd X) \\
&= -4 \int_{\mathcal{S}_{++}^d} \tr\{X \nabla \log(g_t)(X)\Sigma \nabla \log(g_t)(X)\}g_t(X) \, \xi(\rd X) \\
&= - J_{\mathcal{W}, \Sigma}(\mu_t \mid \xi),
\end{aligned}
\]
which proves \eqref{eq:Wishart.deBruijn}. Then, the identity \eqref{eq:Wishart.deBruijn.tau} follows from the chain rule and the change of variable $\tau = e^{-2t}$. Since $J_{\mathcal{W}, \Sigma}(\mu_t \mid \xi)\geq 0$, the monotonicity of $t\mapsto \mathrm{Ent}(\mu_t \, \| \, \xi)$ is immediate.

\subsection{Proof of Proposition~\ref{prop:Wishart.integrated.deBruijn}}

We work under the same assumptions as in Proposition~\ref{prop:Wishart.deBruijn}, so fix $g\in C_b^2(\mathcal{S}_{++}^d)$ satisfying $m_g \leq g(X)\leq M_g$ for all $X\in \mathcal{S}_{++}^d$. By \eqref{eq:Wishart.g.t}, for every $X\in \mathcal{S}_{++}^d$,
\[
g_t(X) = (\mathcal{P}_t^{\mathcal{W}}g)(X) = \EE[g(\mathfrak{W}_t)\mid \mathfrak{W}_0 = X].
\]
Stochastic continuity gives $\mathfrak{W}_t \mid \{\mathfrak{W}_0 = X\}\to X$ in probability as $t\downarrow0$. Since $g$ is bounded and continuous, $g_t(X)\to g(X)$ as $t\downarrow 0$. Moreover, $0 < g_t(X) \leq \|g\|_{\infty}$ for $X\in \mathcal{S}_{++}^d$ and $t\geq 0$. Therefore, if $\smash{C_g \leqdef \sup_{0 \leq u \leq \|g\|_{\infty}} |u\log(u)| < \infty}$, with the convention $0\log(0)=0$, then
\[
|g_t(X)\log(g_t(X))| \leq C_g, \qquad X\in \mathcal{S}_{++}^d, ~~t\geq 0.
\]
Since $\xi$ is a probability measure, dominated convergence yields
\begin{equation}\label{eq:Wishart.entropy.initial.limit}
\mathrm{Ent}(\mu_t \, \| \, \xi) \to \mathrm{Ent}(\mu \, \| \, \xi), \qquad t\downarrow0.
\end{equation}
Next, for fixed $X,Y\in \mathcal{S}_{++}^d$, the explicit transition density satisfies $p_t(X,Y)\to f_{\xi}(Y)$ as $t\to\infty$, by the convergence in distribution result in Eq.~\eqref{eq:W.infty} of Proposition~\ref{prop:extension.Luk.1994.Lemma.2.4}. Since $p_t(X,\cdot\,)$ and $f_{\xi}$ are probability densities, we have $\smash{\int_{\mathcal{S}_{++}^d}|p_t(X,Y)-f_{\xi}(Y)| \, \rd Y \to 0}$ as $t\to \infty$, by Scheff\'e's theorem \citep[p.~435--436]{Scheffe1947}. Hence, as $t\to \infty$,
\[
\begin{aligned}
\left|g_t(X)-\int_{\mathcal{S}_{++}^d}g(Y) \, \xi(\rd Y)\right|
&= \left|\int_{\mathcal{S}_{++}^d}g(Y)\{p_t(X,Y)-f_{\xi}(Y)\} \, \rd Y\right| \\
&\leq \|g\|_{\infty}\int_{\mathcal{S}_{++}^d}|p_t(X,Y)-f_{\xi}(Y)| \, \rd Y \to 0.
\end{aligned}
\]
Since $\int_{\mathcal{S}_{++}^d}g(Y) \, \xi(\rd Y)=1$, we have $g_t(X)\to1$ as $t\to\infty$. Using the same domination by $C_g$, we obtain
\begin{equation}\label{eq:Wishart.entropy.infinity.limit}
\mathrm{Ent}(\mu_t \, \| \, \xi) = \int_{\mathcal{S}_{++}^d} g_t(X)\log(g_t(X)) \, \xi(\rd X) \to 0, \qquad t\to \infty.
\end{equation}
Integrating \eqref{eq:Wishart.deBruijn} (under the assumptions of Proposition~\ref{prop:Wishart.deBruijn}) from $\varepsilon$ to $T$, with $0<\varepsilon<T$, gives
\[
\mathrm{Ent}(\mu_{\varepsilon} \, \| \, \xi) - \mathrm{Ent}(\mu_T \, \| \, \xi) = \int_{\varepsilon}^T J_{\mathcal{W}, \Sigma}(\mu_t \mid \xi) \, \rd t.
\]
Letting $\varepsilon\downarrow0$ and $T\to \infty$, and using \eqref{eq:Wishart.entropy.initial.limit}, \eqref{eq:Wishart.entropy.infinity.limit} and monotone convergence for the nonnegative integrand, proves the first identity in \eqref{eq:Wishart.integrated.deBruijn}. The second identity in \eqref{eq:Wishart.integrated.deBruijn} follows from the change of variable $\tau = e^{-2t}$.

To prove \eqref{eq:Wishart.entropy.jump}, note that the Lebesgue density of $\xi$ is
\[
f_{\xi}(X) = \frac{|X|^{\alpha/2 - (d + 1)/2}\etr(-\Sigma^{-1}X/2)}{|2\Sigma|^{\alpha/2}\Gamma_d(\alpha/2)}, \qquad X\in \mathcal{S}_{++}^d.
\]
Using the definition of differential entropy, we obtain
\[
\begin{aligned}
\mathrm{Ent}(\mu \, \| \, \xi)&= \int_{\mathcal{S}_{++}^d} f_{\mathfrak{X}}(X)\log\left(\frac{f_{\mathfrak{X}}(X)}{f_{\xi}(X)}\right) \, \rd X\\
&= - H(\mathfrak{X}) - \EE[\log(f_{\xi}(\mathfrak{X}))] \\
&= - H(\mathfrak{X}) + \frac{1}{2}\tr\{\Sigma^{-1}\EE[\mathfrak{X}]\} - \left(\frac{\alpha - d - 1}{2}\right)\EE[\log(|\mathfrak{X}|)] + \frac{\alpha}{2}\log(|2\Sigma|) + \log(\Gamma_d(\alpha/2)) \\
&= H(\mathfrak{W}_{\infty}) - H(\mathfrak{X}) + \frac{1}{2}\tr\{\Sigma^{-1}(\EE[\mathfrak{X}] - \EE[\mathfrak{W}_{\infty}])\} \\
&\qquad - \left(\frac{\alpha - d - 1}{2}\right)\big(\EE[\log(|\mathfrak{X}|)] - \EE[\log(|\mathfrak{W}_{\infty}|)]\big).
\end{aligned}
\]
Under the stated moment constraints, the last two terms vanish, so $\mathrm{Ent}(\mu \, \| \, \xi) = H(\mathfrak{W}_{\infty}) - H(\mathfrak{X})$.
Combining this with \eqref{eq:Wishart.integrated.deBruijn} yields \eqref{eq:Wishart.entropy.jump}. This completes the proof.

\subsection{Proof of Proposition~\ref{prop:Wishart.LSI}}

The proof of our logarithmic Sobolev inequality for the Wishart measure uses Bartlett's decomposition of the Wishart law, tensorization of the one-dimensional Gaussian and gamma logarithmic Sobolev inequalities, and a change-of-variables computation identifying the resulting Dirichlet form with the Wishart Fisher information.

It is enough to prove the inequality first for smooth densities $g$ which are bounded above and bounded away from zero. Indeed, if $J_{\mathcal{W}, \Sigma}(\mu \mid \xi) < \infty$, standard truncation and mollification on $\mathcal{S}_{++}^d$ give smooth densities $g_n$ bounded above and bounded away from zero such that $g_n\to g$ in $L^1(\xi)$ and
\[
\limsup_{n\to\infty} J_{\mathcal{W}, \Sigma}(g_n\xi \mid \xi) \leq J_{\mathcal{W}, \Sigma}(\mu \mid \xi).
\]
Applying the smooth inequality to $g_n\xi$ and using lower semicontinuity of relative entropy and of the closed Dirichlet form gives the stated inequality. We therefore assume throughout the proof below that $g$ is smooth, bounded above and bounded away from zero.

Let $\gamma_i$ denote the gamma law with shape parameter $(\alpha - i + 1)/2$ and rate parameter $1/2$, for $i\in [d]$, and let $\phi$ denote the standard Gaussian law on $\R$. Define the product measure
\[
\beta_{\alpha} \leqdef \left(\bigotimes_{i=1}^d \gamma_i\right) \otimes \left(\bigotimes_{1\leq j < i \leq d} \phi\right).
\]
Since $\alpha\in [d, \infty)$, each gamma shape parameter $(\alpha - i + 1)/2$ is at least $1/2$, so Proposition~2 of \citet{ArrasSwan2017} applies to every diagonal coordinate.

For $(\bb{y}, \bb{z}) \in (0, \infty)^d \times \R^{d(d-1)/2}$, let $L(\bb{y}, \bb{z})$ be the lower triangular matrix defined by
\[
L_{ii}(\bb{y}, \bb{z}) = \sqrt{y_i}, \qquad L_{ij}(\bb{y}, \bb{z}) = z_{ij}, \qquad 1\leq j < i \leq d,
\]
and set
\[
W(\bb{y}, \bb{z}) \leqdef \Sigma^{1/2} L(\bb{y}, \bb{z}) L(\bb{y}, \bb{z})^{\top} \Sigma^{1/2}, \qquad G(\bb{y}, \bb{z}) \leqdef g(W(\bb{y}, \bb{z})).
\]
By Bartlett's decomposition of the Wishart law (see, e.g., \citet[Theorem~3.2.14]{Muirhead1982} or \citet[Theorem~3.3.4]{GuptaNagar2000}), if $(\bb{Y}, \bb{Z})\sim \beta_{\alpha}$, then $W(\bb{Y}, \bb{Z})\sim \xi$. Hence
\[
\int_{(0, \infty)^d \times \R^{d(d-1)/2}} G(\bb{y}, \bb{z}) \, \beta_{\alpha}(\rd \bb{y}, \, \rd \bb{z}) = \int_{\mathcal{S}_{++}^d} g(X) \, \xi(\rd X) = \int_{\mathcal{S}_{++}^d} \mu(\rd X) = 1
\]
and
\[
\begin{aligned}
\mathrm{Ent}(\mu \, \| \, \xi)
&= \int_{\mathcal{S}_{++}^d} g(X)\log(g(X)) \, \xi(\rd X) \\
&= \int_{(0, \infty)^d \times \R^{d(d-1)/2}} G(\bb{y}, \bb{z})\log(G(\bb{y}, \bb{z})) \, \beta_{\alpha}(\rd \bb{y}, \, \rd \bb{z}).
\end{aligned}
\]

We next rewrite Proposition~2 of \citet{ArrasSwan2017} in the functional form needed below. Fix $i\in [d]$, and let $f_{\gamma_i}$ denote the Lebesgue density of $\gamma_i$. If $\nu$ is a probability measure on $(0, \infty)$ with Lebesgue density $f_{\nu}$, set
\[
q(y) \leqdef \frac{f_{\nu}(y)}{f_{\gamma_i}(y)}, \qquad y\in (0, \infty).
\]
Then $\nu(\rd y) = q(y) \, \gamma_i(\rd y)$ and $\int_0^{\infty} q(y) \, \gamma_i(\rd y) = 1$. The standardized gamma Fisher information appearing in \citet{ArrasSwan2017} can be written as
\[
\int_0^{\infty} y \, \frac{(q'(y))^2}{q(y)} \, \gamma_i(\rd y),
\]
because
\[
\partial_y\log(f_{\nu}(y))-\partial_y\log(f_{\gamma_i}(y)) = \partial_y\log(q(y)) = \frac{q'(y)}{q(y)}.
\]
Since $\gamma_i$ has rate parameter $1/2$, Proposition~2 of \citet{ArrasSwan2017} gives
\[
\int_0^{\infty} q(y)\log(q(y)) \, \gamma_i(\rd y) \leq 2 \int_0^{\infty} y \, \frac{(q'(y))^2}{q(y)} \, \gamma_i(\rd y).
\]
Now let $u:(0, \infty)\to (0, \infty)$ be smooth and set
\[
m_u \leqdef \int_0^{\infty} u(y) \, \gamma_i(\rd y), \qquad q_u(y) \leqdef \frac{u(y)}{m_u}.
\]
Applying the previous inequality to $q_u$ gives
\[
\int_0^{\infty} q_u(y)\log(q_u(y)) \, \gamma_i(\rd y) \leq 2 \int_0^{\infty} y \, \frac{(q_u'(y))^2}{q_u(y)} \, \gamma_i(\rd y).
\]
Moreover,
\[
\int_0^{\infty} q_u(y)\log(q_u(y)) \, \gamma_i(\rd y) = \frac{1}{m_u}\int_0^{\infty} u(y)\log(u(y)) \, \gamma_i(\rd y) - \log(m_u),
\]
and
\[
\int_0^{\infty} y \, \frac{(q_u'(y))^2}{q_u(y)} \, \gamma_i(\rd y) = \frac{1}{m_u}\int_0^{\infty} y \, \frac{(u'(y))^2}{u(y)} \, \gamma_i(\rd y).
\]
Multiplying by $m_u$ yields, for each $i\in [d]$,
\[
\int_0^{\infty} u(y)\log(u(y)) \, \gamma_i(\rd y) - \left(\int_0^{\infty} u(y) \, \gamma_i(\rd y)\right) \log \left(\int_0^{\infty} u(y) \, \gamma_i(\rd y)\right) \leq 2 \int_0^{\infty} y \, \frac{(u'(y))^2}{u(y)} \, \gamma_i(\rd y).
\]
Likewise, for every smooth $v:\R\to (0, \infty)$, the classical Gaussian logarithmic Sobolev inequality \citep[see, e.g.,][Section~5.3, p.~125]{BoucheronLugosiMassart2013} gives
\[
\int_{\R} v(z)\log(v(z)) \, \phi(\rd z) - \left(\int_{\R} v(z) \, \phi(\rd z)\right) \log \left(\int_{\R} v(z) \, \phi(\rd z)\right) \leq \frac{1}{2} \int_{\R} \frac{(v'(z))^2}{v(z)} \, \phi(\rd z).
\]
Tensorizing these one-dimensional inequalities over the independent Bartlett coordinates yields
\begin{equation}\label{eq:Wishart.tensorized.LSI}
\begin{aligned}
\mathrm{Ent}(\mu \, \| \, \xi)
&\leq 2 \sum_{i=1}^d \int_{(0, \infty)^d \times \R^{d(d-1)/2}} y_i \, \frac{(\partial_{y_i} G(\bb{y}, \bb{z}))^2}{G(\bb{y}, \bb{z})} \, \beta_{\alpha}(\rd \bb{y}, \, \rd \bb{z}) \\
&\qquad + \frac{1}{2} \sum_{1\leq j < i \leq d} \int_{(0, \infty)^d \times \R^{d(d-1)/2}} \frac{(\partial_{z_{ij}} G(\bb{y}, \bb{z}))^2}{G(\bb{y}, \bb{z})} \, \beta_{\alpha}(\rd \bb{y}, \, \rd \bb{z}).
\end{aligned}
\end{equation}

Fix $(\bb{y}, \bb{z})$ and abbreviate $L \leqdef L(\bb{y}, \bb{z})$ and $W \leqdef W(\bb{y}, \bb{z})$. Also write
\[
B(\bb{y}, \bb{z}) \leqdef \Sigma^{1/2} \nabla g(W(\bb{y}, \bb{z})) \Sigma^{1/2} L(\bb{y}, \bb{z}).
\]
For $E_{ij} \leqdef \bb{e}_i \bb{e}_j^{\top}$ and $1\leq j < i \leq d$, we have
\[
\begin{aligned}
\partial_{z_{ij}} W(\bb{y}, \bb{z})
&= \Sigma^{1/2}(E_{ij}L^{\top} + L E_{ji})\Sigma^{1/2}, \\
\partial_{y_i} W(\bb{y}, \bb{z})
&= \frac{1}{2\sqrt{y_i}} \, \Sigma^{1/2}(E_{ii}L^{\top} + L E_{ii})\Sigma^{1/2}.
\end{aligned}
\]
Since $\nabla g(W)$ is symmetric, the chain rule gives
\[
\begin{aligned}
\partial_{z_{ij}} G(\bb{y}, \bb{z})
&= \tr\{\nabla g(W)\partial_{z_{ij}} W(\bb{y}, \bb{z})\} = 2 B_{ij}(\bb{y}, \bb{z}), \\
\partial_{y_i} G(\bb{y}, \bb{z})
&= \tr\{\nabla g(W)\partial_{y_i} W(\bb{y}, \bb{z})\} = \frac{1}{\sqrt{y_i}} B_{ii}(\bb{y}, \bb{z}).
\end{aligned}
\]
Substituting these identities into \eqref{eq:Wishart.tensorized.LSI}, we obtain
\[
\begin{aligned}
\mathrm{Ent}(\mu \, \| \, \xi)
&\leq 2 \int_{(0, \infty)^d \times \R^{d(d-1)/2}} \frac{\sum_{i=1}^d B_{ii}(\bb{y}, \bb{z})^2 + \sum_{1\leq j < i \leq d} B_{ij}(\bb{y}, \bb{z})^2}{g(W(\bb{y}, \bb{z}))} \, \beta_{\alpha}(\rd \bb{y}, \, \rd \bb{z}) \\
&\leq 2 \int_{(0, \infty)^d \times \R^{d(d-1)/2}} \frac{\|B(\bb{y}, \bb{z})\|_F^2}{g(W(\bb{y}, \bb{z}))} \, \beta_{\alpha}(\rd \bb{y}, \, \rd \bb{z}).
\end{aligned}
\]
Moreover,
\[
\begin{aligned}
\|B(\bb{y}, \bb{z})\|_F^2
&= \tr\{L^{\top} \Sigma^{1/2} \nabla g(W) \Sigma \nabla g(W) \Sigma^{1/2} L\} \\
&= \tr\{\Sigma^{1/2} LL^{\top} \Sigma^{1/2} \nabla g(W) \Sigma \nabla g(W)\} \\
&= \tr\{W \nabla g(W) \Sigma \nabla g(W)\}.
\end{aligned}
\]
Therefore,
\[
\begin{aligned}
\mathrm{Ent}(\mu \, \| \, \xi)
&\leq 2 \int_{(0, \infty)^d \times \R^{d(d-1)/2}} \frac{\tr\{W(\bb{y}, \bb{z}) \nabla g(W(\bb{y}, \bb{z})) \Sigma \nabla g(W(\bb{y}, \bb{z}))\}}{g(W(\bb{y}, \bb{z}))} \, \beta_{\alpha}(\rd \bb{y}, \, \rd \bb{z}) \\
&= 2 \int_{\mathcal{S}_{++}^d} \frac{\tr\{X \nabla g(X) \Sigma \nabla g(X)\}}{g(X)} \, \xi(\rd X) = \frac{1}{2} J_{\mathcal{W}, \Sigma}(\mu \mid \xi),
\end{aligned}
\]
which is exactly \eqref{eq:Wishart.LSI}. This completes the proof.

\subsection{Proof of Proposition~\ref{prop:Wishart.known.alpha.estimator}}

We have $\nabla f_U(S) = U$ and $\tr\{S \nabla \widehat{\Sigma} \nabla f_U(S)\} = 0$ for $S\in \mathcal{S}_{++}^d$. Therefore, substituting \eqref{eq:Wishart.linear.probe} into \eqref{eq:Wishart.scalar.identity} yields
\[
0 = 2 \, \tr\left\{\left(\alpha \widehat{\Sigma} - \frac{1}{n} \sum_{k=1}^n \mathfrak{W}^{(k)}\right) U\right\}.
\]
Since $U$ ranges over $\mathcal{S}^d$, trace duality implies $\widehat{\Sigma} = \frac{1}{\alpha n} \sum_{k=1}^n \mathfrak{W}^{(k)}$, as claimed.

\subsection{Proof of Proposition~\ref{prop:Wishart.unknown.alpha.estimator}}

Similarly to the proof of Proposition~\ref{prop:Wishart.known.alpha.estimator}, applying the linear probes in \eqref{eq:Wishart.linear.probe} to \eqref{eq:Wishart.scalar.identity.unknown.alpha} and using trace duality gives $\smash{\widehat{M} = \frac{1}{n} \sum_{\ell=1}^n \mathfrak{W}^{(\ell)}}$. We first consider the quadratic probe $q(S) = \tr(S^2)$ from \eqref{eq:Wishart.quad.log.probe}. We have
\[
\nabla q(S) = 2S, \qquad \nabla_{k\ell} \, q(S) = 2S_{k\ell}, \qquad \nabla_{ij}\nabla_{k\ell} \, q(S) = \delta_{ik}\delta_{j\ell} + \delta_{i\ell}\delta_{jk}.
\]
Therefore,
\[
(\nabla \widehat{M} \nabla q(S))_{i\ell}
= \sum_{j,k = 1}^d \widehat{M}_{jk} \, \nabla_{ij}\nabla_{k\ell} \, q(S)
= \widehat{M}_{\ell i} + \delta_{i\ell} \, \tr(\widehat{M})
= \widehat{M}_{i\ell} + \delta_{i\ell} \, \tr(\widehat{M}),
\]
where we used the symmetry of $\widehat{M}$. Hence,
\[
\tr\{S \nabla \widehat{M} \nabla q(S)\} = \tr(S \widehat{M}) + \tr(S) \, \tr(\widehat{M}), \qquad S\in \mathcal{S}_{++}^d.
\]
Substituting $q$ into \eqref{eq:Wishart.scalar.identity.unknown.alpha}, and using the identities $\overline{\tr(\mathfrak{W} \widehat{M})} = \tr(\widehat{M}^2)$ and $\overline{\tr(\mathfrak{W})} = \tr(\widehat{M})$, we obtain
\[
\begin{aligned}
0
= \overline{\mathcal{A}^{\mathcal{W}}_{\widehat{M}, \widehat{\alpha}_{\mathrm{quad}}} q(\mathfrak{W})}
&= 4 \, \overline{\tr(\widehat{M} \mathfrak{W})} - 4 \, \overline{\tr(\mathfrak{W}^{ \, 2})} + \frac{4}{\widehat{\alpha}_{\mathrm{quad}}} \, \overline{\tr(\mathfrak{W} \widehat{M}) + \tr(\mathfrak{W}) \, \tr(\widehat{M})} \\
&= 4 \, \tr(\widehat{M}^2) - 4 \, \overline{\tr(\mathfrak{W}^{ \, 2})} + \frac{4}{\widehat{\alpha}_{\mathrm{quad}}} \, \big[\tr(\widehat{M}^2) + \{\tr(\widehat{M})\}^2\big].
\end{aligned}
\]
Solving for $\widehat{\alpha}_{\mathrm{quad}}$ and using $M = \alpha\Sigma$ yields the estimators $\widehat{\alpha}_{\mathrm{quad}}$ and $\widehat{\Sigma}_{\mathrm{quad}}$ defined in \eqref{eq:Wishart.unknown.alpha.estimator.quad}.

Next, we consider the logarithmic probe $\ell(S) = \tr\{S \log(S) - S\}$ from \eqref{eq:Wishart.quad.log.probe}. Let $u(x) = x \log(x) - x$, so that $\ell(S) = \tr\{u(S)\}$ and $u'(x) = \log(x)$. By the differential calculus of spectral functions \citep[Theorem~5.3.1]{Bhatia2007}, for every $E\in \mathcal{S}^d$,
\[
D\ell(S)[E] = \left.\frac{\rd }{\rd t}\tr\{u(S + tE)\}\right|_{t = 0} = \tr\{u'(S)E\} = \tr\{\log(S)E\}.
\]
On the other hand, since the gradient is defined with respect to the trace inner product on $\mathcal{S}^d$, $D\ell(S)[E] = \tr\{\nabla \ell(S) E\}$ for $E\in \mathcal{S}^d$. Therefore, $\tr\{(\nabla \ell(S)-\log(S))E\} = 0$ for all $E\in \mathcal{S}^d$. By trace duality, we deduce
\[
\nabla \ell(S) = \log(S), \qquad S\in \mathcal{S}_{++}^d.
\]
It remains to compute the diffusion part of the Wishart Stein operator. It is enough to work in a basis in which $S$ is diagonal because all the objects involved are orthogonally equivariant. Let $S = \mathrm{diag}(\lambda_1, \ldots, \lambda_d)$. At such an $S$, the differential of the matrix logarithm gives
\[
\nabla_{ij}(\log(S))_{a\ell} = \frac{1}{2}\log^{[1]}(\lambda_a, \lambda_{\ell})(\delta_{ai}\delta_{\ell j} + \delta_{aj}\delta_{\ell i}).
\]
Therefore, for any $M\in \mathcal{S}^d$,
\[
\begin{aligned}
(\nabla M \nabla \ell(S))_{i\ell}
&= \sum_{j,a = 1}^d M_{ja} \, \nabla_{ij}\nabla_{a\ell} \, \ell(S)
= \sum_{j,a = 1}^d M_{ja} \, \nabla_{ij}(\log(S))_{a\ell} \\
&= \frac{1}{2}M_{\ell i}\log^{[1]}(\lambda_i, \lambda_{\ell}) + \frac{1}{2}\delta_{i\ell}\sum_{j=1}^d M_{jj}\log^{[1]}(\lambda_j, \lambda_i).
\end{aligned}
\]
Taking the trace after multiplication by $S$ yields, in the basis in which $S = \mathrm{diag}(\lambda_1, \ldots, \lambda_d)$,
\[
\tr\{S \nabla M \nabla \ell(S)\} = \frac{1}{2}\tr(M) + \frac{1}{2}\sum_{i,j = 1}^d \lambda_i M_{jj}\log^{[1]}(\lambda_j, \lambda_i).
\]
Returning to a general $S = H\Lambda H^{\top}$, the same diagonal computation applies with $M$ replaced by its representation $M^{(S)} = H^{\top}MH$ in the eigenbasis of $S$. Hence
\[
\tr\{S \nabla M \nabla \ell(S)\} = \frac{1}{2}\tr(M) + \frac{1}{2}\sum_{i,j = 1}^d \lambda_i M^{(S)}_{jj}\log^{[1]}(\lambda_j, \lambda_i) = \mathcal{J}_S(M).
\]
The expression is independent of the chosen spectral decomposition of $S$.

Now set
\[
D_{n, \mathrm{log}} \leqdef \overline{\tr(\mathfrak{W}\log(\mathfrak{W}))} - \tr\{\widehat{M} \, \overline{\log(\mathfrak{W})}\}, \qquad J_{n, \mathrm{log}} \leqdef \overline{\mathcal{J}_{\mathfrak{W}}(\widehat{M})}.
\]
Substituting $\ell$ into \eqref{eq:Wishart.scalar.identity.unknown.alpha}, and using the notation above, we obtain
\[
0
= \overline{\mathcal{A}^{\mathcal{W}}_{\widehat{M}, \widehat{\alpha}_{\mathrm{log}}} \ell(\mathfrak{W})}
= 2 \, \overline{\tr\{(\widehat{M} - \mathfrak{W})\log(\mathfrak{W})\}} + \frac{4}{\widehat{\alpha}_{\mathrm{log}}} \, \overline{\mathcal{J}_{\mathfrak{W}}(\widehat{M})}
= -2D_{n, \mathrm{log}} + \frac{4}{\widehat{\alpha}_{\mathrm{log}}} J_{n, \mathrm{log}}.
\]
Solving for $\widehat{\alpha}_{\mathrm{log}}$ and using $M = \alpha\Sigma$ yields the estimators $\widehat{\alpha}_{\mathrm{log}}$ and $\widehat{\Sigma}_{\mathrm{log}}$ defined in \eqref{eq:Wishart.unknown.alpha.estimator.log}.

\subsection{Proof of Proposition~\ref{prop:projected.Stein.Wishart}}

Define the population moment vector $\bb{\theta}\in \R^M$ by $\bb{\theta} \leqdef (\theta_1, \ldots, \theta_M)^{\top}$, with $\theta_m \leqdef \EE[y_{n,m}]$ for all $m\in \{1, \ldots,M\}$. By the calculation above with the linear probe $f_{U_m}$, we have
\[
\EE[\tr(\mathfrak{W}^{(1)} U_m)] = \alpha \, \tr(\Sigma U_m),
\]
and therefore, using the assumption $\Sigma = \Sigma(\bb{\beta}^{\star}) = \sum_{j=1}^p \beta_j^{\star} B_j$,
\[
\theta_m = \frac{1}{\alpha} \EE[\tr(\mathfrak{W}^{(1)} U_m)] = \tr(\Sigma U_m) = \sum_{j=1}^p \beta_j^{\star} \, \tr(B_j U_m) = (C \bb{\beta}^{\star})_m.
\]
Thus, at the population level, we have the linear system $C \bb{\beta}^{\star} = \bb{\theta}$.

Now consider the sample vector $\bb{y}_n$. For each $m\in \{1, \ldots,M\}$,
\[
y_{n,m} = \frac{1}{n} \sum_{k=1}^n Z_{k,m}, \qquad Z_{k,m} \leqdef \frac{1}{\alpha} \tr(\mathfrak{W}^{(k)} U_m).
\]
Since $|Z_{k,m}| \leq \frac{1}{\alpha} \|U_m\|_F \, \|\mathfrak{W}^{(k)}\|_F \leq \frac{1}{\alpha} \|U_m\|_F \, \tr(\mathfrak{W}^{(k)})$ and $\EE[\tr(\mathfrak{W}^{(1)})] = \alpha \, \tr(\Sigma) < \infty$, the random variables $Z_{k,m}$ are integrable. Therefore, by the strong law of large numbers, $\bb{y}_n \to \bb{\theta}$ almost surely (a.s.). Hence, together with $C \bb{\beta}^{\star} = \bb{\theta}$, we deduce
\[
\widehat{\bb{\beta}}_n = (C^{\top} C)^{-1} C^{\top} \bb{y}_n \to (C^{\top} C)^{-1} C^{\top} \bb{\theta} = \bb{\beta}^{\star}, \qquad \text{a.s.}
\]
Since $\bb{\beta}\mapsto \Sigma(\bb{\beta})$ is linear, it follows that
\[
\widehat{\Sigma}_n = \Sigma(\widehat{\bb{\beta}}_n) \to \Sigma(\bb{\beta}^{\star}) = \Sigma, \qquad \text{a.s.}
\]
Finally, $\widehat{\bb{\beta}}_n$ minimizes $\|C \bb{\beta} - \bb{y}_n\|_2^2$, so $\widehat{\Sigma}_n$ solves the projected Stein moment equations \eqref{eq:structured.Wishart.moments} in the least-squares sense. This concludes the proof.

\section*{Reproducibility}\label{sec:reproducibility}
\addcontentsline{toc}{section}{Reproducibility}

The \textsf{R} code that generated the tables, the figures and the simulation study results is available online in the GitHub repository of \citet{BaillyOuimet2026github}.

\section*{Funding}
\addcontentsline{toc}{section}{Funding}

Robert Gaunt is funded by EPSRC grant EP/Y008650/1. Fr\'ed\'eric Ouimet is supported by the Natural Sciences and Engineering Research Council of Canada (NSERC) through Discovery Grant RGPIN-2026-04471 and Discovery Launch Supplement DGECR-2026-00449. Gabriel Bailly and Rainer von Sachs gratefully acknowledge support from the Fonds sp\'eciaux de recherche (FSR) of UCLouvain.

\section*{References}
\addcontentsline{toc}{section}{References}

\setlength{\bibsep}{0pt plus 0ex}

\bibliographystyle{plainnat}
\bibliography{bib_clean}

@article {survey23,
    AUTHOR = {Anastasiou, A. and Barp, A. and Briol, F.-X. and Ebner, B. and Gaunt, R. E. and Ghaderinezhad, F. and Gorham, J. and Gretton, A. and Ley, C. and Liu, Q. and Mackey, L. and Oates, C. J. and Reinert, G. and Swan, Y.},
     TITLE = {{Stein}'s method meets computational statistics: a review of some recent developments},
   JOURNAL = {Statist. Sci.},
  FJOURNAL = {Statistical Science},
    VOLUME = {38},
    NUMBER = {1},
      YEAR = {2023},
     PAGES = {120--139},
  MRNUMBER = {4534646},
       DOI = {10.1214/22-STS863},
}

@book {ah19book,
    AUTHOR = {Arras, B. and Houdr{\'e}, C.},
     TITLE = {On {Stein}'s method for infinitely divisible laws with finite first moment},
    SERIES = {SpringerBriefs in Probability and Mathematical Statistics},
 PUBLISHER = {Springer, Cham},
      YEAR = {2019},
     PAGES = {xi+104},
      ISBN = {978-3-030-15016-7; 978-3-030-15017-4},
  MRNUMBER = {3931309},
       DOI = {10.1007/978-3-030-15017-4},
}

@article {ah19,
    AUTHOR = {Arras, B. and Houdr{\'e}, C.},
     TITLE = {On {Stein}'s method for multivariate self-decomposable laws},
   JOURNAL = {Electron. J. Probab.},
  FJOURNAL = {Electronic Journal of Probability},
    VOLUME = {24},
      YEAR = {2019},
     PAGES = {Paper No. 128, 63},
  MRNUMBER = {4029431},
       DOI = {10.1214/19-EJP378},
}

@article {ArrasSwan2017,
    AUTHOR = {Arras, B. and Swan, Y.},
     TITLE = {A stroll along the gamma},
   JOURNAL = {Stochastic Process. Appl.},
  FJOURNAL = {Stochastic Processes and their Applications},
    VOLUME = {127},
    NUMBER = {11},
      YEAR = {2017},
     PAGES = {3661--3688},
  MRNUMBER = {3707241},
       DOI = {10.1016/j.spa.2017.03.012},
}

@incollection {dlmf16,
    AUTHOR = {Askey, R. A. and Olde Daalhuis, A. B.},
     TITLE = {Generalized hypergeometric functions and {M}eijer {$G$}-function},
 BOOKTITLE = {{NIST} Handbook of Mathematical Functions},
    EDITOR = {Olver, Frank W. J. and Lozier, Daniel W. and Boisvert, Ronald F. and Clark, Charles W.},
 PUBLISHER = {Cambridge University Press},
   ADDRESS = {New York},
      YEAR = {2010},
     PAGES = {403--418},
   CHAPTER = {16},
  MRNUMBER = {2655356},
       URL = {https://dlmf.nist.gov/16},
      NOTE = {Online {DLMF} version: Release 1.2.6 of 2026-03-15},
}

@misc{BaillyOuimet2026github,
	AUTHOR = {Bailly, G. and Ouimet, F.},
	YEAR = {2026},
	TITLE = {Stein{W}ishart},
	NOTE = {The GitHub repository is publicly available online at \href{https://github.com/FredericOuimetMcGill/SteinWishart}{https://github.com/FredericOuimetMcGill/SteinWishart}}
}

@article {MR1035659,
    AUTHOR = {Barbour, A. D.},
     TITLE = {{Stein}'s method for diffusion approximations},
   JOURNAL = {Probab. Theory Related Fields},
  FJOURNAL = {Probability Theory and Related Fields},
    VOLUME = {84},
      YEAR = {1990},
    NUMBER = {3},
     PAGES = {297--322},
  MRNUMBER = {1035659},
MRREVIEWER = {Holst, L.},
       DOI = {10.1007/BF01197887},
}

@book {Bhatia2007,
    AUTHOR = {Bhatia, R.},
     TITLE = {Positive {D}efinite {M}atrices},
    SERIES = {Princeton Series in Applied Mathematics},
 PUBLISHER = {Princeton University Press, Princeton, NJ},
      YEAR = {2007},
     PAGES = {x+254},
      ISBN = {978-0-691-12918-1; 0-691-12918-5},
  MRNUMBER = {2284176},
MRREVIEWER = {Smith, R. L.},
}

@book {BoucheronLugosiMassart2013,
    AUTHOR = {Boucheron, S. and Lugosi, G. and Massart, P.},
     TITLE = {Concentration {I}nequalities: {A} {N}onasymptotic {T}heory of {I}ndependence},
 PUBLISHER = {Oxford University Press, Oxford},
      YEAR = {2013},
     PAGES = {x+481},
      ISBN = {978-0-19-953525-5},
  MRNUMBER = {3185193},
MRREVIEWER = {Ravi, S.},
       DOI = {10.1093/acprof:oso/9780199535255.001.0001},
}

@article {Bru1991,
    AUTHOR = {Bru, M.-F.},
     TITLE = {{W}ishart processes},
   JOURNAL = {J. Theoret. Probab.},
  FJOURNAL = {Journal of Theoretical Probability},
    VOLUME = {4},
    NUMBER = {4},
      YEAR = {1991},
     PAGES = {725--751},
  MRNUMBER = {1132135},
       DOI = {10.1007/BF01259552},
}

@article {cfr11,
    AUTHOR = {Chatterjee, S. and Fulman, J. and R{\"o}llin, A.},
     TITLE = {Exponential approximation by {Stein}'s method and spectral graph theory},
   JOURNAL = {ALEA, Lat. Am. J. Probab. Math. Stat.},
  FJOURNAL = {ALEA. Latin American Journal of Probability and Mathematical Statistics},
    VOLUME = {8},
      YEAR = {2011},
     PAGES = {197--223},
  MRNUMBER = {2802856},
       URL = {https://alea.impa.br/articles/v8/08-11.pdf},
}

@article {c75,
    AUTHOR = {Chen, L. H. Y.},
     TITLE = {Poisson approximation for dependent trials},
   JOURNAL = {Ann. Probab.},
  FJOURNAL = {The Annals of Probability},
    VOLUME = {3},
    NUMBER = {3},
      YEAR = {1975},
     PAGES = {534--545},
  MRNUMBER = {0428387},
       DOI = {10.1214/aop/1176996359},
}

@book {MR2732624,
    AUTHOR = {Chen, L. H. Y. and Goldstein, L. and Shao, Q.-M.},
     TITLE = {Normal {A}pproximation by {S}tein's {M}ethod},
    SERIES = {Probability and its Applications (New York)},
 PUBLISHER = {Springer, Heidelberg},
      YEAR = {2011},
     PAGES = {xii+405},
      ISBN = {978-3-642-15006-7},
  MRNUMBER = {2732624},
MRREVIEWER = {R\'{e}veillac, A.},
       DOI = {10.1007/978-3-642-15007-4},
}

@article {stable24,
    AUTHOR = {Chen, P. and Nourdin, I. and Xu, L. and Yang, X.},
     TITLE = {Multivariate stable approximation by {Stein}'s method},
   JOURNAL = {J. Theoret. Probab.},
  FJOURNAL = {Journal of Theoretical Probability},
    VOLUME = {37},
    NUMBER = {1},
      YEAR = {2024},
     PAGES = {446--488},
  MRNUMBER = {4716322},
       DOI = {10.1007/s10959-023-01244-x},
}

@article {CuchieroFilipovicMayerhoferTeichmann2011,
    AUTHOR = {Cuchiero, C. and Filipovi\'{c}, D. and Mayerhofer, E. and Teichmann, J.},
     TITLE = {Affine processes on positive semidefinite matrices},
   JOURNAL = {Ann. Appl. Probab.},
  FJOURNAL = {The Annals of Applied Probability},
    VOLUME = {21},
    NUMBER = {2},
      YEAR = {2011},
     PAGES = {397--463},
  MRNUMBER = {2807963},
MRREVIEWER = {Fahrner, I.},
       DOI = {10.1214/10-AAP710},
}

@article {tudor,
    AUTHOR = {Dhoyer, R. and Tudor, C. A.},
     TITLE = {Limit behavior in high-dimensional regime for the {Wishart} tensors in {Wiener} chaos},
   JOURNAL = {J. Theoret. Probab.},
  FJOURNAL = {Journal of Theoretical Probability},
    VOLUME = {37},
    NUMBER = {2},
      YEAR = {2024},
     PAGES = {1445--1468},
  MRNUMBER = {4751298},
       DOI = {10.1007/s10959-024-01328-2},
}

@article {dz91,
    AUTHOR = {Diaconis, P. and Zabell, S.},
     TITLE = {Closed form summation for classical distributions: Variations on a theme of de {Moivre}},
   JOURNAL = {Statist. Sci.},
  FJOURNAL = {Statistical Science},
    VOLUME = {6},
    NUMBER = {3},
      YEAR = {1991},
     PAGES = {284--302},
  MRNUMBER = {1144242},
       DOI = {10.1214/ss/1177011699},
}

@article {EbnerFischerGauntPickerSwan2025,
    AUTHOR = {Ebner, B. and Fischer, A. and Gaunt, R. E. and Picker, B. and Swan, Y.},
     TITLE = {{Stein}'s method of moments},
   JOURNAL = {Scand. J. Stat.},
  FJOURNAL = {Scandinavian Journal of Statistics. Theory and Applications},
    VOLUME = {52},
    NUMBER = {4},
      YEAR = {2025},
     PAGES = {1594--1624},
  MRNUMBER = {4986908},
       DOI = {10.1111/sjos.70003},
}

@article {ev15,
    AUTHOR = {Eden, R. and V{\'{\i}}quez, J.},
     TITLE = {{Nourdin--Peccati} analysis on {Wiener} and {Wiener--Poisson} space for general distributions},
   JOURNAL = {Stochastic Process. Appl.},
  FJOURNAL = {Stochastic Processes and their Applications},
    VOLUME = {125},
    NUMBER = {1},
      YEAR = {2015},
     PAGES = {182--216},
  MRNUMBER = {3274696},
       DOI = {10.1016/j.spa.2014.09.001},
}

@phdthesis {Gaunt2013PhD,
    AUTHOR = {Gaunt, R. E.},
     TITLE = {Rates of convergence of {Variance-Gamma} approximations via {Stein}'s method},
    SCHOOL = {University of Oxford},
      YEAR = {2013},
      TYPE = {PhD thesis},
     PAGES = {240 pp.},
      NOTE = {The Queen's College},
}

@article {g14,
    AUTHOR = {Gaunt, R. E.},
     TITLE = {{Variance-Gamma} approximation via {Stein}'s method},
   JOURNAL = {Electron. J. Probab.},
  FJOURNAL = {Electronic Journal of Probability},
    VOLUME = {19},
    NUMBER = {38},
      YEAR = {2014},
     PAGES = {33 pp.},
  MRNUMBER = {3194737},
       DOI = {10.1214/EJP.v19-3020},
}

@article {GauntOuimetRichards2026,
    AUTHOR = {Gaunt, R. E. and Ouimet, F. and Richards, D.},
     TITLE = {{Stein}'s method for the matrix normal distribution},
   JOURNAL = {ArXiv preprint},
  FJOURNAL = {ArXiv preprint},
      YEAR = {2026},
       DOI = {10.48550/arXiv.2601.11422},
}

@article {gpr17,
    AUTHOR = {Gaunt, R. E. and Pickett, A. M. and Reinert, G.},
     TITLE = {{Chi-square} approximation by {Stein}'s method with application to {Pearson}'s statistic},
   JOURNAL = {Ann. Appl. Probab.},
  FJOURNAL = {The Annals of Applied Probability},
    VOLUME = {27},
    NUMBER = {2},
      YEAR = {2017},
     PAGES = {720--756},
  MRNUMBER = {3655852},
       DOI = {10.1214/16-AAP1213},
}

@article {gr23,
    AUTHOR = {Gaunt, R. E. and Reinert, G.},
     TITLE = {Bounds for the chi-square approximation of {F}riedman's
              statistic by {S}tein's method},
   JOURNAL = {Bernoulli},
  FJOURNAL = {Bernoulli},
    VOLUME = {29},
    NUMBER = {3},
      YEAR = {2023},
     PAGES = {2008--2034},
       DOI = {10.3150/22-BEJ1530},
}

@article {GenestMacKayOuimet2026,
    AUTHOR = {Genest, C. and MacKay, A. and Ouimet, F.},
     TITLE = {On noncentral {W}ishart mixtures of noncentral {W}isharts and
              their use for testing random effects in factorial design
              models},
   JOURNAL = {J. Math. Anal. Appl.},
  FJOURNAL = {Journal of Mathematical Analysis and Applications},
    VOLUME = {554},
    NUMBER = {1},
      YEAR = {2026},
     PAGES = {Paper No. 129897, 11 pp.},
  MRNUMBER = {4936518},
       DOI = {10.1016/j.jmaa.2025.129897},
}

@article {g91,
    AUTHOR = {G{\"o}tze, F.},
     TITLE = {On the rate of convergence in the multivariate {CLT}},
   JOURNAL = {Ann. Probab.},
  FJOURNAL = {Annals of Probability},
    VOLUME = {19},
    NUMBER = {2},
      YEAR = {1991},
     PAGES = {724--739},
  MRNUMBER = {1106283},
       DOI = {10.1214/aop/1176990448},
}

@book {GuptaNagar2000,
    AUTHOR = {Gupta, A. K. and Nagar, D. K.},
     TITLE = {Matrix {V}ariate {D}istributions},
    SERIES = {Monographs and Surveys in Pure and Applied Mathematics},
    VOLUME = {104},
   EDITION = {First},
 PUBLISHER = {Chapman \& Hall/CRC},
   ADDRESS = {Boca Raton, FL},
      YEAR = {2000},
     PAGES = {384},
      ISBN = {1-58488-046-5},
  MRNUMBER = {1738933},
       DOI = {10.1201/9780203749289},
}

@article {Haff1979WishartIdentity,
    AUTHOR = {Haff, L. R.},
     TITLE = {An identity for the {W}ishart distribution with applications},
   JOURNAL = {J. Multivariate Anal.},
  FJOURNAL = {Journal of Multivariate Analysis},
    VOLUME = {9},
    NUMBER = {4},
      YEAR = {1979},
     PAGES = {531--544},
  MRNUMBER = {556910},
       DOI = {10.1016/0047-259X(79)90056-3},
}

@article {Herz1955,
    AUTHOR = {Herz, C. S.},
     TITLE = {{B}essel functions of matrix argument},
   JOURNAL = {Ann. of Math. (2)},
  FJOURNAL = {Annals of Mathematics. Second Series},
    VOLUME = {61},
    NUMBER = {3},
      YEAR = {1955},
     PAGES = {474--523},
  MRNUMBER = {69960},
       DOI = {10.2307/1969810},
}

@article {Khatri1966,
    AUTHOR = {Khatri, C. G.},
     TITLE = {On certain distribution problems based on positive definite
              quadratic functions in normal vectors},
   JOURNAL = {Ann. Math. Statist.},
  FJOURNAL = {The Annals of Mathematical Statistics},
    VOLUME = {37},
    NUMBER = {2},
      YEAR = {1966},
     PAGES = {468--479},
  MRNUMBER = {190965},
       DOI = {10.1214/aoms/1177699530},
}

@article {Khatri1971,
    AUTHOR = {Khatri, C. G.},
     TITLE = {Series representations of distributions of quadratic form in
              the normal vectors and generalised variance},
   JOURNAL = {J. Multivariate Anal.},
  FJOURNAL = {Journal of Multivariate Analysis},
    VOLUME = {1},
    NUMBER = {2},
      YEAR = {1971},
     PAGES = {199--214},
  MRNUMBER = {303637},
       DOI = {10.1016/0047-259X(71)90011-X},
}

@article {Khatri1989,
    AUTHOR = {Khatri, C. G.},
     TITLE = {Multivariate generalization of {$t'$}-statistic based on the mean square successive difference},
   JOURNAL = {Commun. Statist. Theory Methods},
  FJOURNAL = {Communications in Statistics: Theory and Methods},
    VOLUME = {18},
    NUMBER = {5},
      YEAR = {1989},
     PAGES = {1983--1992},
       DOI = {10.1080/03610928908830015},
}

@book {Krantz2001,
    AUTHOR = {Krantz, S. G.},
     TITLE = {Function {T}heory of {S}everal {C}omplex {V}ariables},
   EDITION = {Second},
      NOTE = {Reprint of the 1992 edition},
 PUBLISHER = {AMS Chelsea Publishing, Providence, RI},
      YEAR = {2001},
     PAGES = {xvi+564},
      ISBN = {0-8218-2724-3},
  MRNUMBER = {1846625},
       DOI = {10.1090/chel/340},
}

@article {lnp15,
    AUTHOR = {Ledoux, M. and Nourdin, I. and Peccati, G.},
     TITLE = {{Stein}'s method, logarithmic {Sobolev} and transport inequalities},
   JOURNAL = {Geom. Funct. Anal.},
  FJOURNAL = {Geometric and Functional Analysis. GAFA},
    VOLUME = {25},
    NUMBER = {1},
      YEAR = {2015},
     PAGES = {256--306},
  MRNUMBER = {3320893},
       DOI = {10.1007/s00039-015-0312-0},
}

@phdthesis {Luk1994PhD,
    AUTHOR = {Luk, H. M.},
     TITLE = {{Stein}'s Method for the {G}amma Distribution and Related {S}tatistical Applications},
    SCHOOL = {University of Southern California},
      YEAR = {1994},
  MRNUMBER = {2693204},
}

@article {mrrs23,
    AUTHOR = {Mijoule, G. and Rai\v{c}, M. and Reinert, G. and Swan, Y.},
     TITLE = {{Stein}'s density method for multivariate continuous distributions},
   JOURNAL = {Electron. J. Probab.},
  FJOURNAL = {Electronic Journal of Probability},
    VOLUME = {28},
      YEAR = {2023},
     PAGES = {Paper No. 59, 40},
  MRNUMBER = {4583066},
       DOI = {10.1214/22-EJP883},
}

@article {m22,
    AUTHOR = {Mikulincer, D.},
     TITLE = {A {CLT} in {Stein}'s distance for generalized {Wishart} matrices and higher-order tensors},
   JOURNAL = {Int. Math. Res. Not. IMRN},
  FJOURNAL = {International Mathematics Research Notices. IMRN},
    VOLUME = {2022},
    NUMBER = {10},
      YEAR = {2022},
     PAGES = {7839--7872},
  MRNUMBER = {4418721},
       DOI = {10.1093/imrn/rnaa336},
}

@article {Mortarino2005,
    AUTHOR = {Mortarino, C.},
     TITLE = {A decomposition for a stochastic matrix with an application to {MANOVA}},
   JOURNAL = {J. Multivariate Anal.},
  FJOURNAL = {Journal of Multivariate Analysis},
    VOLUME = {92},
    NUMBER = {1},
      YEAR = {2005},
     PAGES = {134--144},
       DOI = {10.1016/S0047-259X(03)00135-0},
}

@book {Muirhead1982,
    AUTHOR = {Muirhead, R. J.},
     TITLE = {Aspects of {M}ultivariate {S}tatistical {T}heory},
    SERIES = {Wiley Series in Probability and Mathematical Statistics},
 PUBLISHER = {John Wiley \& Sons, Inc.},
   ADDRESS = {New York},
      YEAR = {1982},
     PAGES = {xix+673},
      ISBN = {0-471-09442-0},
  MRNUMBER = {652932},
       DOI = {10.1002/9780470316559},
}

@article {NaikRao2001,
    AUTHOR = {Naik, D. N. and Rao, S. S.},
     TITLE = {Analysis of multivariate repeated measures data with a {K}ronecker product structured covariance matrix},
   JOURNAL = {J. Appl. Stat.},
  FJOURNAL = {Journal of Applied Statistics},
    VOLUME = {28},
    NUMBER = {1},
      YEAR = {2001},
     PAGES = {91--105},
  MRNUMBER = {1834425},
       DOI = {10.1080/02664760120011626},
}

@book {np12,
    AUTHOR = {Nourdin, I. and Peccati, G.},
     TITLE = {{Normal Approximations with Malliavin Calculus: From Stein's Method to Universality}},
    SERIES = {Cambridge Tracts in Mathematics},
    VOLUME = {192},
 PUBLISHER = {Cambridge University Press},
   ADDRESS = {Cambridge},
      YEAR = {2012},
     PAGES = {xiv+239},
      ISBN = {978-1-107-01777-1},
  MRNUMBER = {2962301},
       DOI = {10.1017/CBO9781139084659},
}

@article {nz22,
    AUTHOR = {Nourdin, I. and Zheng, G.},
     TITLE = {Asymptotic behavior of large {G}aussian correlated {W}ishart matrices},
   JOURNAL = {J. Theoret. Probab.},
  FJOURNAL = {Journal of Theoretical Probability},
    VOLUME = {35},
    NUMBER = {4},
      YEAR = {2022},
     PAGES = {2239--2268},
  MRNUMBER = {4509068},
       DOI = {10.1007/s10959-021-01133-1},
}

@article {pr11,
    AUTHOR = {Pek{\"o}z, E. A. and R{\"o}llin, A.},
     TITLE = {New rates for exponential approximation and the theorems of {R{\'e}nyi} and {Yaglom}},
   JOURNAL = {Ann. Probab.},
  FJOURNAL = {The Annals of Probability},
    VOLUME = {39},
    NUMBER = {2},
      YEAR = {2011},
     PAGES = {587--608},
  MRNUMBER = {2789507},
       DOI = {10.1214/10-AOP559},
}

@article {Pfaffel2012,
    AUTHOR = {Pfaffel, O.},
     TITLE = {{Wishart} processes},
   JOURNAL = {ArXiv preprint},
  FJOURNAL = {ArXiv preprint},
      YEAR = {2012},
       DOI = {10.48550/arXiv.1201.3256},
}

@article {raic04,
    AUTHOR = {Rai\v{c}, M.},
     TITLE = {A multivariate {CLT} for decomposable random vectors with finite second moments},
   JOURNAL = {J. Theoret. Probab.},
  FJOURNAL = {Journal of Theoretical Probability},
    VOLUME = {17},
    NUMBER = {3},
      YEAR = {2004},
     PAGES = {573--603},
  MRNUMBER = {2091552},
       DOI = {10.1023/B:JOTP.0000040290.44087.68},
}

@incollection {Richards2010,
    AUTHOR = {Richards, D. St. P.},
     TITLE = {Functions of matrix argument},
 BOOKTITLE = {{NIST} Handbook of Mathematical Functions},
    EDITOR = {Olver, F. W. J. and Lozier, D. W. and Boisvert, R. F. and Clark, C. W.},
   CHAPTER = {35},
     PAGES = {767--774},
 PUBLISHER = {Cambridge University Press},
   ADDRESS = {Cambridge},
      YEAR = {2010},
      ISBN = {978-0-521-14063-8},
  MRNUMBER = {2723248},
       URL = {https://dlmf.nist.gov/35},
}

@article {ross11,
    AUTHOR = {Ross, N.},
     TITLE = {Fundamentals of {S}tein's method},
   JOURNAL = {Probab. Surv.},
  FJOURNAL = {Probability Surveys},
    VOLUME = {8},
      YEAR = {2011},
     PAGES = {210--293},
  MRNUMBER = {2861132},
       DOI = {10.1214/11-PS182},
}

@article {Satterthwaite1941,
    AUTHOR = {Satterthwaite, F. E.},
     TITLE = {Synthesis of variance},
   JOURNAL = {Psychometrika},
  FJOURNAL = {Psychometrika},
    VOLUME = {6},
    NUMBER = {5},
      YEAR = {1941},
     PAGES = {309--316},
       DOI = {10.1007/BF02288586},
}

@article {Satterthwaite1946,
    AUTHOR = {Satterthwaite, F. E.},
     TITLE = {An approximate distribution of estimates of variance components},
   JOURNAL = {Biometrics Bull.},
  FJOURNAL = {Biometrics Bulletin},
    VOLUME = {2},
    NUMBER = {6},
      YEAR = {1946},
     PAGES = {110--114},
       DOI = {10.2307/3002019},
}

@article {SingullKoski2012,
    AUTHOR = {Singull, M. and Koski, T.},
     TITLE = {On the distribution of matrix quadratic forms},
   JOURNAL = {Comm. Statist. Theory Methods},
  FJOURNAL = {Communications in Statistics. Theory and Methods},
    VOLUME = {41},
    NUMBER = {18},
      YEAR = {2012},
     PAGES = {3403--3415},
       DOI = {10.1080/03610926.2011.563009},
}

@incollection {s72,
    AUTHOR = {Stein, C.},
     TITLE = {A bound for the error in the normal approximation to the distribution of a sum of dependent random variables},
 BOOKTITLE = {Proceedings of the Sixth Berkeley Symposium on Mathematical Statistics and Probability, Vol. {II}: Probability Theory},
     PAGES = {583--602},
 PUBLISHER = {University of California Press},
   ADDRESS = {Berkeley, CA},
      YEAR = {1972},
  MRNUMBER = {0402873},
}

@book {steinmonograph,
    AUTHOR = {Stein, C.},
     TITLE = {Approximate Computation of Expectations},
   FSERIES = {Institute of Mathematical Statistics Lecture Notes---Monograph Series},
    SERIES = {IMS Lecture Notes Monogr. Ser.},
    VOLUME = {7},
 PUBLISHER = {Institute of Mathematical Statistics},
   ADDRESS = {Hayward, CA},
      YEAR = {1986},
     PAGES = {iv+164},
      ISBN = {0-940600-08-0},
  MRNUMBER = {0882007},
       DOI = {10.1214/lnms/1215466568},
}

@article {Tan1980,
    AUTHOR = {Tan, W. Y.},
     TITLE = {On probability distributions from mixtures of multivariate densities},
   JOURNAL = {South African Statist. J.},
  FJOURNAL = {South African Statistical Journal},
    VOLUME = {14},
    NUMBER = {1},
      YEAR = {1980},
     PAGES = {47--59},
	   URL = {https://hdl.handle.net/10520/AJA0038271X_604}
}

@article {TanGupta1983,
    AUTHOR = {Tan, W. Y. and Gupta, R. P.},
     TITLE = {On approximating a linear combination of central {W}ishart matrices with positive coefficients},
   JOURNAL = {Comm. Statist. Theory Methods},
  FJOURNAL = {Communications in Statistics. Theory and Methods},
    VOLUME = {12},
    NUMBER = {22},
      YEAR = {1983},
     PAGES = {2589--2600},
       DOI = {10.1080/03610928308828625},
}

@article {x19,
    AUTHOR = {Xu, L.},
     TITLE = {Approximation of stable law in {W}asserstein-1 distance by {S}tein's method},
   JOURNAL = {Ann. Appl. Probab.},
  FJOURNAL = {Annals of Applied Probability},
    VOLUME = {29},
    NUMBER = {1},
      YEAR = {2019},
     PAGES = {458--504},
  MRNUMBER = {3910009},
       DOI = {10.1214/18-AAP1424},
}

@book {BakryGentilLedoux2014,
    AUTHOR = {Bakry, D. and Gentil, I. and Ledoux, M.},
     TITLE = {Analysis and {G}eometry of {M}arkov {D}iffusion {O}perators},
    SERIES = {Grundlehren der mathematischen Wissenschaften},
    VOLUME = {348},
 PUBLISHER = {Springer, Cham},
      YEAR = {2014},
     PAGES = {xx+552},
      ISBN = {978-3-319-00226-2; 978-3-319-00227-9},
  MRNUMBER = {3155209},
       DOI = {10.1007/978-3-319-00227-9},
}

@article {Gross1975,
    AUTHOR = {Gross, L.},
     TITLE = {Logarithmic {S}obolev inequalities},
   JOURNAL = {Amer. J. Math.},
  FJOURNAL = {American Journal of Mathematics},
    VOLUME = {97},
    NUMBER = {4},
      YEAR = {1975},
     PAGES = {1061--1083},
  MRNUMBER = {0420249},
       DOI = {10.2307/2373688},
}

@incollection {BakryEmery1985,
    AUTHOR = {Bakry, D. and {\'E}mery, M.},
     TITLE = {Diffusions hypercontractives},
 BOOKTITLE = {S{\'e}minaire de Probabilit{\'e}s, XIX, 1983/84},
    SERIES = {Lecture Notes in Math.},
    VOLUME = {1123},
 PUBLISHER = {Springer-Verlag},
   ADDRESS = {Berlin},
      YEAR = {1985},
     PAGES = {177--206},
  MRNUMBER = {0889476},
       DOI = {10.1007/BFb0075847},
       URL = {https://www.numdam.org/item/SPS_1985__19__177_0/},
}

@incollection {bakry96,
    AUTHOR = {Bakry, D.},
     TITLE = {Remarques sur les semigroupes de {Jacobi}},
 BOOKTITLE = {Hommage \`a P. A. Meyer et J. Neveu},
    SERIES = {Ast{\'e}risque},
    NUMBER = {236},
 PUBLISHER = {Soci{\'e}t{\'e} math{\'e}matique de France},
   ADDRESS = {Paris},
      YEAR = {1996},
     PAGES = {23--39},
  MRNUMBER = {1417973},
       URL = {https://www.numdam.org/item/AST_1996__236__23_0/},
}

@article {Scheffe1947,
    AUTHOR = {Scheff{\'e}, H.},
     TITLE = {A useful convergence theorem for probability distributions},
   JOURNAL = {Ann. Math. Statist.},
  FJOURNAL = {The Annals of Mathematical Statistics},
    VOLUME = {18},
    NUMBER = {3},
      YEAR = {1947},
     PAGES = {434--438},
  MRNUMBER = {0021585},
       DOI = {10.1214/aoms/1177730390},
}

\newpage

\section*{Supplementary material}\label{supp}
\addcontentsline{toc}{section}{Supplementary material}

\setcounter{section}{9} 

\setcounter{subsection}{0}
\renewcommand{\thesubsection}{S.\arabic{subsection}}
\renewcommand{\thesubsubsection}{\thesubsection.\arabic{subsubsection}}

\setcounter{theorem}{0}
\renewcommand{\thetheorem}{S.\arabic{theorem}}

\setcounter{remark}{0}
\renewcommand{\theremark}{S.\arabic{remark}}

\setcounter{table}{0}
\renewcommand{\thetable}{S.\arabic{table}}

\setcounter{figure}{0}
\renewcommand{\thefigure}{S.\arabic{figure}}

\global\hfuzz = \maxdimen
\global\vfuzz = \maxdimen
\global\hbadness = 10000
\global\vbadness = 10000
\newdimen\hfuzz
\newdimen\vfuzz
\newcount\hbadness
\newcount\vbadness

\subsection{Additional details for the multivariate Satterthwaite approximation application}\label{sec:supp.Satterthwaite-details}

This section details the feasible choices of degrees of freedom for the quantitative multivariate Satterthwaite approximation presented in Section~\ref{sec:Wishart.approximation.Satterthwaite}, and explains how the accompanying numerical comparisons are computed. The first part of the section derives the five effective degrees-of-freedom criteria used in the comparisons. The second part gives an exact density representation for a sum of independent Wishart random matrices with different scale parameters. This representation is needed only for the numerical approximation of total variation distances in the two-dimensional experiment. The final part describes that numerical approximation and then reports the comparison of the proposed choices of $\nu$.

Recall that
\[
\mathfrak{T} = \sum_{j = 1}^N \mathfrak{G}_j, \qquad \mathfrak{G}_j \sim\mathcal{W}_d(\alpha_j, \Sigma_j), \qquad \overline{\Sigma} = \EE[\mathfrak{T}] = \sum_{j = 1}^N \alpha_j\Sigma_j,
\]
and that, for a proposed Wishart approximation $\mathfrak{W}_{\nu}\sim\mathcal{W}_d(\nu, \widetilde{\Sigma}_{\nu})$, the first moment matching condition fixes
\[
\widetilde{\Sigma}_{\nu} = \nu^{-1}\overline{\Sigma}.
\]
Thus the only remaining scalar parameter is $\nu$, which plays the role of an effective number of degrees of freedom. As shown in \eqref{eq:moment-matching-2}, exact matching of the second moment would require
\begin{equation}\label{eq:moment-matching-2.supp}
\frac{2}{\nu}\mathcal{V}(\overline{\Sigma}) = 2\sum_{j = 1}^N \alpha_j\mathcal{V}(\Sigma_j),
\end{equation}
which is generally an overdetermined matrix equation for the single scalar $\nu$. The first four criteria in the next section are scalar reductions of this second moment matching condition, while the fifth is obtained by minimizing the geometric part of the Stein bound.

\subsubsection{Effective degrees of freedom criteria}\label{sec:supp.edf-methods}

\citet{TanGupta1983} proposed to match the \emph{generalized} variances, that is, the determinants of the covariance matrices in~\eqref{eq:moment-matching-2.supp}. This gives
\[
\nu_{\mathrm{TG}} \leqdef \left(\frac{|\mathcal{V}(\overline{\Sigma})|}{|\sum_{j = 1}^N \alpha_j\mathcal{V}(\Sigma_j)|}\right)^{2/(d(d+1))}.
\]
Alternatively, \citet{Khatri1989} proposed to match the \emph{total} variances, that is, the traces of the covariance matrices in~\eqref{eq:moment-matching-2.supp}. This gives
\[
\nu_{\mathrm{K}} \leqdef \frac{\tr\{\mathcal{V}(\overline{\Sigma})\}}{\sum_{j = 1}^N \alpha_j\tr\{\mathcal{V}(\Sigma_j)\}}.
\]
We also consider a least-squares choice of approximate degrees of freedom, namely
\[
\nu_{\mathrm{LS}} \in \arg\min_{\nu > 0}\left\|2\sum_{j = 1}^N \alpha_j\mathcal{V}(\Sigma_j)-\frac{2}{\nu}\mathcal{V}(\overline{\Sigma})\right\|_F^2.
\]
Equivalently, writing $\beta = \nu^{-1}$, the inverse degrees of freedom $\nu_{\mathrm{LS}}^{-1}$ is the optimal coefficient for the least-squares projection of $\smash{\sum_{j=1}^N \alpha_j\mathcal{V}(\Sigma_j)}$ onto the ray spanned by $\mathcal{V}(\overline{\Sigma})$, viz.\ $\beta\mapsto \beta \mathcal{V}(\overline{\Sigma})$. Since $\smash{\langle \sum_{j = 1}^N \alpha_j\mathcal{V}(\Sigma_j), \mathcal{V}(\overline{\Sigma})\rangle_F > 0}$, the minimizer is unique and equals
\[
\nu_{\mathrm{LS}} = \frac{\|\mathcal{V}(\overline{\Sigma})\|_F^2}{\langle \sum_{j = 1}^N \alpha_j\mathcal{V}(\Sigma_j), \mathcal{V}(\overline{\Sigma})\rangle_F}.
\]
Let $p \leqdef d(d+1)/2$ and define the upper absolute sum $\triangle(W) \leqdef \sum_{1\leq a\leq b\leq p}|W_{ab}|$ for $W\in \mathcal{S}^p$. Matching this scalar summary of the two covariance matrices in~\eqref{eq:moment-matching-2.supp} gives
\[
\nu_{\mathrm{US}} \leqdef \frac{\triangle\{\mathcal{V}(\overline{\Sigma})\}}{\triangle \{\sum_{j = 1}^N \alpha_j\mathcal{V}(\Sigma_j)\}}.
\]
Finally, one may choose the degrees of freedom by minimizing the geometric part of the upper bound in Proposition~\ref{prop:Wishart.Satterthwaite}. Since the target shape parameter in the Wishart Stein bound must satisfy $\nu > 3d-3$, we fix $\nu_0 > 3d-3$ and define
\[
\nu_{\mathrm{UB}} \in \arg\min_{\nu \geq \nu_0} G(\nu), \qquad G(\nu) \leqdef \sum_{j = 1}^N \alpha_j\tr(\Sigma_j)\|\Sigma_j-\nu^{-1}\overline{\Sigma}\|_F.
\]
This objective $G(\nu)$ is convex in $\nu^{-1}$, since each summand is the Frobenius norm of an affine function of $\nu^{-1}$. The minimizer is unique whenever at least one $\Sigma_j$ is not proportional to $\overline{\Sigma}$; otherwise, the set of minimizers may be an interval, and any minimizer may be selected.

\begin{proposition}[Order of the effective degrees of freedom]\label{prop:effective.degrees.freedom}
Let $d\in \N$ be fixed and let $N = N_n$ depend on some positive integer parameter $n$ (e.g., a sample size). Assume that $\alpha_1, \ldots, \alpha_N > 0$ and that there exist constants $0 < c < C < \infty$, independent of $n$ and $j$, such that
\[
c \, n \leq \sum_{j = 1}^N \alpha_j \leq C n, \qquad \frac{c}{n}I_d \preceq \Sigma_j \preceq \frac{C}{n}I_d, \qquad j\in [N],
\]
where $\preceq$ denotes the Loewner order. Let $\nu_0 > 3d-3$ be fixed, and let $\nu_{\mathrm{UB}}$ be any minimizer of $G$ over $[\nu_0, \infty)$. Then, as $n\to \infty$,
\[
\nu_{\mathrm{TG}}\asymp n, \qquad \nu_{\mathrm{K}}\asymp n, \qquad \nu_{\mathrm{LS}}\asymp n, \qquad \nu_{\mathrm{US}}\asymp n, \qquad \nu_{\mathrm{UB}}\asymp n.
\]
In particular, each of the above choices satisfies $\nu = \OO(n)$.
\end{proposition}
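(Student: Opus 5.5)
The plan is to reduce everything to two-sided bounds on $\mathcal{V}(W) = B_d^{\top}(W\otimes W)B_d$ under Loewner sandwiching. Write $A_n \leqdef \sum_{j}\alpha_j \asymp n$. Then $\overline{\Sigma} = \sum_j \alpha_j\Sigma_j$ satisfies $c^2 I_d \preceq (cA_n/n)I_d \preceq \overline{\Sigma} \preceq (CA_n/n) I_d \preceq C^2 I_d$, so $\overline{\Sigma}$ has eigenvalues of order one. Next I use the fact that $W\mapsto W\otimes W$ is Loewner monotone on $\mathcal{S}_+^d$: if $0\preceq P\preceq Q$, then $Q\otimes Q - P\otimes P = (Q-P)\otimes Q + P\otimes (Q-P)\succeq 0$. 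Congruence by $B_d$ preserves order, and $B_d^{\top}B_d$ is a fixed positive definite $p\times p$ matrix with $p = d(d+1)/2$. These give
\[
a^2\, B_d^{\top}B_d \preceq \mathcal{V}(W) \preceq b^2\, B_d^{\top}B_d \quad\text{whenever}\quad aI_d\preceq W\preceq bI_d.
\]
Applying this gives $\mathcal{V}(\overline{\Sigma}) \asymp B_d^{\top}B_d$ with constants independent of $n$, and $\mathcal{V}(\Sigma_j)\asymp n^{-2}B_d^{\top}B_d$. Summing, $V_\Sigma \leqdef \sum_j\alpha_j\mathcal{V}(\Sigma_j)$ satisfies $(c^3/n) B_d^{\top}B_d \preceq V_\Sigma \preceq (C^3/n) B_d^{\top}B_d$.

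For the four moment-based criteria, I compare each scalar functional of two Loewner-comparable positive definite $p\times p$ matrices. Determinant, trace, Frobenius inner product and Frobenius norm are all monotone on the positive semidefinite cone. For $\nu_{\mathrm{TG}}$, the ratio of determinants is $\asymp n^{p}$, and raising to the power $1/p$ gives $\asymp n$. For $\nu_{\mathrm{K}}$, the trace ratio is immediately $\asymp n$. For $\nu_{\mathrm{LS}}$, I use that $\langle X,Y\rangle_F = \tr(XY)$ is monotone in $Y$ for $X\succeq 0$. This gives $\langle V_\Sigma,\mathcal{V}(\overline{\Sigma})\rangle_F \asymp n^{-1}$ and $\|\mathcal{V}(\overline{\Sigma})\|_F^2\asymp 1$. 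For $\nu_{\mathrm{US}}$, monotonicity fails for $\triangle$, so I use norm equivalence: for $X\succeq 0$, $\tr(X)\le \triangle(X)\le p\,\|X\|_{\max}\le p\,\tr(X)$. This makes $\triangle$ comparable to the trace, and the ratio is again $\asymp n$.

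The main obstacle is $\nu_{\mathrm{UB}}$, since it is defined through a minimization rather than a closed formula. Set $\beta = \nu^{-1}\in(0,\nu_0^{-1}]$ and $g(\beta) = \sum_j\alpha_j\tr(\Sigma_j)\|\Sigma_j-\beta\overline{\Sigma}\|_F$. For the \emph{upper bound} $\nu_{\mathrm{UB}}\lesssim n$, i.e.\ $\beta^\ast\gtrsim 1/n$, the triangle inequality gives
\[
\|\Sigma_j-\beta\overline{\Sigma}\|_F\ge \|\Sigma_j\|_F-\beta\|\overline{\Sigma}\|_F \ge \sqrt d\, c/n - \beta \sqrt d\, C^2.
\]
With the weights $w_j = \alpha_j\tr(\Sigma_j)\asymp \alpha_j/n$, this yields $g(\beta)\ge \sum_j w_j\,\sqrt d\,(c/n - C^2\beta)$. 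Comparing with the explicit competitor $\beta_0 = \kappa/n$, I use the relative bound $\|\Sigma_j-\beta_0\overline{\Sigma}\|_F\le \|\Sigma_j\|_F + \beta_0\|\overline{\Sigma}\|_F$, which is only $O(1/n)$. That alone does not separate the two, so I argue via convexity instead. Since $g$ is convex in $\beta$, it suffices to show that $g'(\beta^{+})<0$ for $\beta\le \epsilon/n$ with $\epsilon$ small. The derivative is $-\sum_j w_j\langle \Sigma_j-\beta\overline{\Sigma},\overline{\Sigma}\rangle_F/\|\Sigma_j-\beta\overline{\Sigma}\|_F$. Here $\langle\Sigma_j,\overline{\Sigma}\rangle_F\ge d\,c\cdot c^2/n$ and $\|\Sigma_j-\beta\overline{\Sigma}\|_F\lesssim 1/n$, so each term is bounded below by a positive constant when $\epsilon$ is small. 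Hence the minimizer satisfies $\beta^\ast\ge \epsilon/n$, provided $\epsilon/n<\nu_0^{-1}$.

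For the \emph{lower bound} $\nu_{\mathrm{UB}}\gtrsim n$, i.e.\ $\beta^\ast\lesssim 1/n$, I run the symmetric argument. For $\beta\ge K/n$ with $K$ large, $\langle \Sigma_j-\beta\overline{\Sigma},\overline{\Sigma}\rangle_F\le \sqrt d\,C\cdot\sqrt d\,C^2/n - \beta\,d\,c^4<0$, so every term of $g'$ has the opposite sign, $g$ is increasing there, and the minimizer lies below $K/n$. Together with the boundary constraint $\beta\le\nu_0^{-1}$, which is harmless for large $n$, this gives $\nu_{\mathrm{UB}}\asymp n$ for any minimizer, convex ties included. The conclusion $\nu=\OO(n)$ then follows for all five choices.
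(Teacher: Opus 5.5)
Your proposal is correct and follows essentially the same route as the paper. Both arguments Loewner-sandwich $\overline{\Sigma}$, $\mathcal{V}(\overline{\Sigma})$ and $\sum_j\alpha_j\mathcal{V}(\Sigma_j)$ (your Kronecker-monotonicity argument is a cleaner version of the paper's Rayleigh-quotient remark, and your trace comparison for $\triangle$ fills in a step the paper only asserts), and both handle $\nu_{\mathrm{UB}}$ through the sign of $\beta\|\overline{\Sigma}\|_F^2-\langle\Sigma_j,\overline{\Sigma}\rangle_F$ near the two ends of the range, which the paper does in the rescaled variable $t=n\beta$. One harmless slip: $\triangle(X)\le p\,\|X\|_{\max}$ should read $\triangle(X)\le \frac{p(p+1)}{2}\|X\|_{\max}$; your endpoint bound $\triangle(X)\le p\,\tr(X)$ still holds, since $\triangle(X)\le\frac{p+1}{2}\tr(X)$ for $X\succeq 0$ follows from $|X_{ab}|\le (X_{aa}+X_{bb})/2$.
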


\begin{remark}\label{rem:satterthwaite-asymptotics}
Recall the Stein bound from Proposition~\ref{prop:Wishart.Satterthwaite}:
\begin{equation}\label{eq:supp.Stein.bound.Sat}
\big|\EE[h(\mathfrak{T})]-\EE[h(\mathfrak{W}_{\nu})]\big| \leq \frac{d + 1}{2}\mathcal{M}_2^{\mathcal{D}, \widetilde{\Sigma}_{\nu}}(h)\sum_{j=1}^N \alpha_j\tr(\Sigma_j)\|\Sigma_j-\widetilde{\Sigma}_{\nu}\|_F.
\end{equation}
Assume that $d$ is fixed and that $\|\Sigma_j\|_F = \OO(n^{-1})$ uniformly in $j$, as $n\to\infty$. Since $\Sigma_j$ is positive definite, this also implies $\tr(\Sigma_j) = \OO(n^{-1})$ uniformly in $j$. If $\smash{\sum_{j=1}^N\alpha_j = \OO(n)}$ and $\nu\asymp n$, then $\smash{\|\overline{\Sigma}\|_F = \OO(1)}$ and $\smash{\|\widetilde{\Sigma}_{\nu}\|_F = \OO(n^{-1})}$. Hence,
\[
\|\Sigma_j-\widetilde{\Sigma}_{\nu}\|_F = \OO(n^{-1})
\]
uniformly in $j$. Therefore, the geometric factor (i.e., the sum) in the upper bound~\eqref{eq:supp.Stein.bound.Sat} satisfies
\[
\sum_{j=1}^N\alpha_j\tr(\Sigma_j)\|\Sigma_j-\widetilde{\Sigma}_{\nu}\|_F = \OO\left(n^{-2}\sum_{j=1}^N\alpha_j\right) = \OO(n^{-1}).
\]
Consequently, if $\smash{\mathcal{M}_2^{\mathcal{D}, \widetilde{\Sigma}_{\nu}}(h) = \OO(1)}$ along the sequence, or if the test functions are normalized so that $\smash{\mathcal{M}_2^{\mathcal{D}, \widetilde{\Sigma}_{\nu}}(h) = 1}$, then the full upper bound is of order $\OO(n^{-1})$. This applies, in particular, when $N$ is fixed and $\alpha_j = \OO(n)$ for all $j$, and when $N\asymp n$ and $\alpha_j = \OO(1)$ uniformly in $j$.
\end{remark}

\begin{proof}[Proof of Proposition~\ref{prop:effective.degrees.freedom}]
Define
\[
C_{\Sigma} \leqdef \sum_{j = 1}^N \alpha_j\mathcal{V}(\Sigma_j), \qquad V_{\overline{\Sigma}} \leqdef \mathcal{V}(\overline{\Sigma}),
\]
and let $p = d(d+1)/2$. The assumptions imply
\[
c^2 I_d \preceq \overline{\Sigma} \preceq C^2 I_d,
\]
and hence $\overline{\Sigma}\asymp I_d$ under the Loewner order. Since $W\mapsto\mathcal{V}(W)$ is homogeneous of degree two and is uniformly positive definite on compact subsets of $\mathcal{S}_{++}^d$ in $\vecp$-coordinates (indeed, if the spectrum of $W$ lies in a compact subinterval of $(0,\infty)$, then so does that of $\mathcal{V}(W) = B_d^{\top}(W\otimes W)B_d$ by the Rayleigh quotient, since the eigenvalues of $W\otimes W$ are pairwise products of those of $W$ and $B_d$ has full column rank), we also have, under the Loewner order,
\[
V_{\overline{\Sigma}}\asymp I_p.
\]
Moreover, since $\Sigma_j\asymp n^{-1}I_d$ uniformly in $j$, we have $\mathcal{V}(\Sigma_j)\asymp n^{-2}I_p$ uniformly in $j$. Therefore
\[
C_{\Sigma} = \sum_{j = 1}^N\alpha_j\mathcal{V}(\Sigma_j)\asymp n^{-1}I_p.
\]
For the TG criterion $\nu_{\mathrm{TG}} = (|V_{\overline{\Sigma}}|/|C_{\Sigma}|)^{1/p}$, $|V_{\overline{\Sigma}}|\asymp 1$ and $|C_{\Sigma}|\asymp n^{-p}$, so we get $\nu_{\mathrm{TG}}\asymp n$. For Khatri's criterion, $\tr(V_{\overline{\Sigma}})\asymp 1$ and $\tr(C_{\Sigma})\asymp n^{-1}$, so $\nu_{\mathrm{K}}\asymp n$. For the least-squares criterion, $\|V_{\overline{\Sigma}}\|_F^2\asymp 1$ and $\langle C_{\Sigma},V_{\overline{\Sigma}}\rangle_F\asymp n^{-1}$, so $\nu_{\mathrm{LS}}\asymp n$. Finally, $\triangle(V_{\overline{\Sigma}})\asymp 1$ and $\triangle(C_{\Sigma})\asymp n^{-1}$, so $\nu_{\mathrm{US}}\asymp n$.

It remains to consider $\nu_{\mathrm{UB}}$. Let
\[
A_j \leqdef n\Sigma_j, \qquad \beta \leqdef \nu^{-1}, \qquad t \leqdef n\beta.
\]
Then $A_j\asymp I_d$ uniformly in $j$, and minimizing $G(\nu)$ over $\nu\geq\nu_0$ is equivalent, up to the irrelevant multiplicative factor $n^{-2}$, to minimizing
\[
H_n(t) \leqdef \sum_{j = 1}^N \alpha_j\tr(A_j)\|A_j-t\overline{\Sigma}\|_F, \qquad 0 < t\leq \frac{n}{\nu_0}.
\]
The matrices $A_j$ and $\overline{\Sigma}$ are uniformly bounded and uniformly positive definite. Hence there exist constants $a,b>0$, independent of $n$ and $j$, such that
\[
a I_d\preceq A_j\preceq b I_d, \qquad a I_d\preceq \overline{\Sigma}\preceq b I_d.
\]
Since $A_j$ and $\overline{\Sigma}$ are uniformly positive definite, $\langle A_j, \overline{\Sigma}\rangle_F = \smash{\tr(\overline{\Sigma}^{1/2}A_j\overline{\Sigma}^{1/2})}$ is bounded away from zero uniformly in $j$, while $\|\overline{\Sigma}\|_F^2$ is uniformly bounded. Hence, choosing $t_0>0$ sufficiently small, uniformly in $j$ and for all $0\leq t\leq 2t_0$,
\[
t\|\overline{\Sigma}\|_F^2 - \langle A_j, \overline{\Sigma}\rangle_F < 0.
\]
Moreover, $A_j-t\overline{\Sigma}\neq 0$ on this interval. Thus, for all $0\leq t\leq 2 t_0$ and all $j\in [N]$,
\[
\frac{\rd}{\rd t}\|A_j - t\overline{\Sigma}\|_F
 = \frac{t\|\overline{\Sigma}\|_F^2-\langle A_j, \overline{\Sigma}\rangle_F}{\|A_j - t\overline{\Sigma}\|_F} < 0.
\]
Since the weights $\alpha_j\tr(A_j)$ are positive, $H_n$ is strictly decreasing on $[0,2t_0]$. Hence no minimizer belongs to $(0,t_0]$.

Similarly, choose $T<\infty$ sufficiently large. Then, uniformly in $j$ and for all $t\geq T$,
\[
t \|\overline{\Sigma}\|_F^2 - \langle A_j, \overline{\Sigma}\rangle_F > 0,
\]
and $A_j-t\overline{\Sigma}\neq 0$. Therefore $H_n$ is strictly increasing on $[T, \infty)$, and no minimizer belongs to $(T, \infty)$. For all sufficiently large $n$, the interval $(0,n/\nu_0]$ contains $[t_0,T]$. Hence every minimizer $t_n^{\star}$ of $H_n$ satisfies
\[
t_0 \leq t_n^{\star} \leq T.
\]
Since $\nu_{\mathrm{UB}} = n / t_n^{\star}$, it follows that
\[
\frac{n}{T} \leq \nu_{\mathrm{UB}} \leq \frac{n}{t_0},
\]
and therefore $\nu_{\mathrm{UB}}\asymp n$.
\end{proof}

\subsubsection{Density of a sum of independent Wisharts with different shape and scale parameters}\label{sec:supp.explicit-density-satterthwaite}

The next two sections provide the density and integration tools used for the total variation comparison in Section~\ref{sec:supp.numerical-comparison-satterthwaite}. Unlike the Stein bound, the total variation distance requires evaluating the density of $\mathfrak{T}$ itself. When the scale matrices $\Sigma_j$ are not all equal, the sum $\mathfrak{T}$ is not Wishart in general, and its density is not given by a single Wishart density. The following formula rewrites the convolution of the Wishart densities in matrix Dirichlet coordinates. It expresses the density $f_{\mathfrak{T}}$ of $\mathfrak{T}$ as a reference Wishart density multiplied by a correction factor $\Psi(X)$, and this is the formula inserted into the numerical total variation estimators in Section~\ref{sec:supp.grid-based-tv-approx}.

\begin{proposition}\label{prop:exact-density-Nterms}
Let $N\geq 2$, and let $\alpha_j > d-1$ and $\Sigma_j\in \mathcal{S}_{++}^d$ for all $j\in [N]$. Let $\mathfrak{G}_1, \ldots, \mathfrak{G}_N$ be independent random matrices such that $\mathfrak{G}_j\sim\mathcal{W}_d(\alpha_j, \Sigma_j)$ for all $j\in [N]$. Then $\smash{\mathfrak{T} = \sum_{j = 1}^N \mathfrak{G}_j}$ has density
\[
f_{\mathfrak{T}}(X) = \frac{|\Sigma_N|^{\alpha_{\bullet}/2}}{\prod_{j = 1}^N |\Sigma_j|^{\alpha_j/2}} \, f_{\alpha_{\bullet}, \Sigma_N}(X)\Psi(X), \qquad X\in \mathcal{S}_{++}^d,
\]
where $\alpha_{\bullet} \leqdef \sum_{j = 1}^N\alpha_j$ and
\[
\Psi(X) \leqdef \EE\left[\etr\left\{-\frac{1}{2}\sum_{j = 1}^{N-1} X^{1/2}(\Sigma_j^{-1}-\Sigma_N^{-1})X^{1/2}\mathfrak{U}_j\right\}\right],
\]
with $(\mathfrak{U}_1, \ldots, \mathfrak{U}_{N-1})\sim \mathrm{MatrixDirichlet}_{d\times d}(\alpha_1, \ldots, \alpha_N)$ \citep[Definition~2.6.1]{GuptaNagar2000}.
\end{proposition}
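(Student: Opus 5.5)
The plan is to write the joint density of $(\mathfrak{G}_1,\ldots,\mathfrak{G}_N)$ and change variables to the total $X = \sum_j G_j$ together with matrix Dirichlet coordinates. The first step is to take the product of the central Wishart densities from \eqref{eq:noncentral.Wishart.density} with $\Theta = 0_{d\times d}$:
\[
\prod_{j=1}^N \frac{|G_j|^{\alpha_j/2-(d+1)/2}\etr(-\Sigma_j^{-1}G_j/2)}{|2\Sigma_j|^{\alpha_j/2}\Gamma_d(\alpha_j/2)} .
\]
The key algebraic move is to split off the last scale. Write $\Sigma_j^{-1} = \Sigma_N^{-1} + (\Sigma_j^{-1}-\Sigma_N^{-1})$ for $j<N$ and use $G_N = X - \sum_{j<N} G_j$. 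The exponential then becomes
\[
\etr(-\Sigma_N^{-1}X/2)\,\etr\Bigl\{-\tfrac12\sum_{j<N}(\Sigma_j^{-1}-\Sigma_N^{-1})G_j\Bigr\}.
\]

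The second step is the substitution $G_j = X^{1/2}U_jX^{1/2}$ for $j\in[N-1]$, with $X$ fixed, so that $G_N = X^{1/2}(I_d-\sum_{j<N}U_j)X^{1/2}$. I would invoke the standard Jacobian $\rd G_j = |X|^{(d+1)/2}\rd U_j$ for the congruence $U\mapsto X^{1/2}UX^{1/2}$ on $\mathcal{S}^d$ \citep{Muirhead1982,GuptaNagar2000}. The Jacobian of $(G_1,\ldots,G_{N-1},G_N)\mapsto(X,U_1,\ldots,U_{N-1})$ is then $|X|^{(N-1)(d+1)/2}$. Each determinant factorizes as $|G_j| = |X|\,|U_j|$, with $U_N \leqdef I_d-\sum_{j<N}U_j$. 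Collecting the powers of $|X|$ gives
\[
\sum_j\bigl(\alpha_j/2-(d+1)/2\bigr) + (N-1)(d+1)/2 = \alpha_\bullet/2-(d+1)/2,
\]
which is exactly the exponent in the Wishart density $f_{\alpha_\bullet,\Sigma_N}(X)$. The trace term becomes $\tr\{(\Sigma_j^{-1}-\Sigma_N^{-1})X^{1/2}U_jX^{1/2}\} = \tr\{X^{1/2}(\Sigma_j^{-1}-\Sigma_N^{-1})X^{1/2}U_j\}$ by cyclicity.

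The third step is to integrate out the $U$'s. The remaining factor in the $U$'s is $\prod_{j=1}^N |U_j|^{\alpha_j/2-(d+1)/2}$ on the region where all $U_j\in\mathcal{S}_{++}^d$. By \citet[Definition~2.6.1]{GuptaNagar2000}, this is the matrix Dirichlet kernel, with normalizing constant
\[
\frac{\prod_j\Gamma_d(\alpha_j/2)}{\Gamma_d(\alpha_\bullet/2)}.
\]
Integrating over the $U$'s therefore produces this constant times the expectation defining $\Psi(X)$. The constants then reduce as follows:
\[
\frac{\prod_j\Gamma_d(\alpha_j/2)/\Gamma_d(\alpha_\bullet/2)}{\prod_j|2\Sigma_j|^{\alpha_j/2}\Gamma_d(\alpha_j/2)} = \frac{|2\Sigma_N|^{\alpha_\bullet/2}}{\prod_j|2\Sigma_j|^{\alpha_j/2}}\cdot\frac{1}{|2\Sigma_N|^{\alpha_\bullet/2}\Gamma_d(\alpha_\bullet/2)} .
\]
The powers of $2^{d}$ cancel because $\sum_j\alpha_j = \alpha_\bullet$. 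This gives the prefactor $|\Sigma_N|^{\alpha_\bullet/2}/\prod_j|\Sigma_j|^{\alpha_j/2}$ times $f_{\alpha_\bullet,\Sigma_N}(X)\Psi(X)$.

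The step needing the most care is the Jacobian and bijectivity of the change of variables. The map $(G_1,\ldots,G_N)\mapsto(X,U_1,\ldots,U_{N-1})$ should be decomposed as a unimodular triangular linear map $(G_1,\ldots,G_N)\mapsto(X,G_1,\ldots,G_{N-1})$, followed by the fibrewise congruence by $X^{1/2}$. One must also check that it maps the product cone $(\mathcal{S}_{++}^d)^N$ bijectively onto $\mathcal{S}_{++}^d$ times the open Dirichlet simplex $\{U_j\succ 0,\ I_d-\sum_{j<N} U_j\succ 0\}$. Since $\alpha_j>d-1$, all the integrals are finite, so Tonelli's theorem justifies writing the marginal density of $X$ as the fibre integral.
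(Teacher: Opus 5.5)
Your proposal is correct and follows essentially the same route as the paper's proof. Both split $\Sigma_j^{-1} = \Sigma_N^{-1} + (\Sigma_j^{-1}-\Sigma_N^{-1})$, substitute $G_j = X^{1/2}U_jX^{1/2}$ with Jacobian $|X|^{(N-1)(d+1)/2}$, recognize the matrix Dirichlet kernel with normalizing constant $\prod_j\Gamma_d(\alpha_j/2)/\Gamma_d(\alpha_\bullet/2)$, and cancel the constants. The only cosmetic difference is that the paper starts directly from the $(N-1)$-fold convolution integral for $f_{\mathfrak{T}}(X)$, whereas you obtain it as the fibre integral of the joint change of variables $(G_1,\ldots,G_N)\mapsto(X,U_1,\ldots,U_{N-1})$. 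Your version has the side benefit of making the bijectivity and Tonelli justifications explicit.
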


\begin{proof}[Proof of Proposition~\ref{prop:exact-density-Nterms}]
Let $f_j$ denote the density of $\mathcal{W}_d(\alpha_j, \Sigma_j)$. For $X\in \mathcal{S}_{++}^d$,
\[
f_{\mathfrak{T}}(X) = \int_{\substack{G_1 \succ 0, \ldots,G_{N-1} \succ 0 \\ X-\sum_{j = 1}^{N-1}G_j \succ 0}} \left\{\prod_{j = 1}^{N-1}f_j(G_j)\right\}f_N\left(X-\sum_{j = 1}^{N-1}G_j\right) \, \rd G_1\cdots\rd G_{N-1}.
\]
Substituting the Wishart densities gives
\[
\begin{aligned}
f_{\mathfrak{T}}(X)
& = C \int_{\substack{G_1 \succ 0, \ldots,G_{N-1} \succ 0 \\ X-\sum_{j = 1}^{N-1}G_j \succ 0}} \left\{\prod_{j = 1}^{N-1}|G_j|^{(\alpha_j-d-1)/2}\right\}\left|X-\sum_{j = 1}^{N-1}G_j\right|^{(\alpha_N-d-1)/2} \\
&\qquad \times \etr\left(-\frac{1}{2}\left\{\sum_{j = 1}^{N-1}\Sigma_j^{-1}G_j+\Sigma_N^{-1}\left(X-\sum_{j = 1}^{N-1}G_j\right)\right\}\right) \, \rd G_1\cdots\rd G_{N-1},
\end{aligned}
\]
where
\[
C \leqdef \left(2^{\alpha_{\bullet} d/2}\prod_{j = 1}^N\{|\Sigma_j|^{\alpha_j/2}\Gamma_d(\alpha_j/2)\}\right)^{-1}.
\]
Make the change of variables $G_j = X^{1/2}U_jX^{1/2}$ for $j = 1, \ldots,N-1$. The integration domain becomes
\[
\triangle_{d,N-1} \leqdef \left\{(U_1, \ldots,U_{N-1}): U_j \succ 0, \, j = 1, \ldots,N-1, \, I_d-\sum_{j = 1}^{N-1}U_j \succ 0\right\},
\]
and on calculating the Jacobian, we obtain $\rd G_1\cdots\rd G_{N-1} = |X|^{(N-1)(d+1)/2}\rd U_1\cdots\rd U_{N-1}$. Hence, using the cyclic symmetry of the trace,
\[
\begin{aligned}
f_{\mathfrak{T}}(X)
& = C \, |X|^{(\alpha_{\bullet}-d-1)/2}\etr\left(-\frac{1}{2}\Sigma_N^{-1}X\right) \\
&\hspace{10mm} \times \int_{\triangle_{d,N-1}} \left\{\prod_{j = 1}^{N-1}|U_j|^{(\alpha_j-d-1)/2}\right\}\left|I_d-\sum_{j = 1}^{N-1}U_j\right|^{(\alpha_N-d-1)/2} \\
&\hspace{30mm} \times \etr\left\{-\frac{1}{2}\sum_{j = 1}^{N-1} X^{1/2}(\Sigma_j^{-1}-\Sigma_N^{-1})X^{1/2}U_j\right\} \, \rd U_1\cdots\rd U_{N-1}.
\end{aligned}
\]
The density of $(\mathfrak{U}_1, \ldots, \mathfrak{U}_{N-1})\sim \mathrm{MatrixDirichlet}_{d\times d}(\alpha_1, \ldots, \alpha_N)$ is
\[
f_{(\mathfrak{U}_1, \ldots, \mathfrak{U}_{N-1})}(U_1, \ldots,U_{N-1}) = \frac{\Gamma_d(\alpha_{\bullet}/2)}{\prod_{j = 1}^N\Gamma_d(\alpha_j/2)} \left\{\prod_{j = 1}^{N-1}|U_j|^{(\alpha_j-d-1)/2}\right\}\left|I_d-\sum_{j = 1}^{N-1}U_j\right|^{(\alpha_N-d-1)/2}.
\]
It follows that
\[
f_{\mathfrak{T}}(X) = \left(2^{\alpha_{\bullet} d/2}\left\{\prod_{j = 1}^N|\Sigma_j|^{\alpha_j/2}\right\}\Gamma_d(\alpha_{\bullet}/2)\right)^{-1}|X|^{(\alpha_{\bullet}-d-1)/2}\etr\left(-\frac{1}{2}\Sigma_N^{-1}X\right)\Psi(X).
\]
Since
\[
f_{\alpha_{\bullet}, \Sigma_N}(X) = \left(2^{\alpha_{\bullet} d/2}|\Sigma_N|^{\alpha_{\bullet}/2}\Gamma_d(\alpha_{\bullet}/2)\right)^{-1}|X|^{(\alpha_{\bullet}-d-1)/2}\etr\left(-\frac{1}{2}\Sigma_N^{-1}X\right),
\]
the claimed expression follows.
\end{proof}

\subsubsection{Numerical approximation of the total variation distance}\label{sec:supp.grid-based-tv-approx}

We now describe the numerical approximation of the total variation distance used to compare the feasible choices of $\nu$ in Section~\ref{sec:supp.numerical-comparison-satterthwaite}. The purpose of this section is computational: Proposition~\ref{prop:exact-density-Nterms} reduces the evaluation of $f_{\mathfrak{T}}(X)$ to the evaluation of the scalar factor $\Psi(X)$, and the total variation estimators below repeatedly use this pointwise density evaluation. In the implementation used for the $d = 2$, $N = 3$ experiment reported in Section~\ref{sec:supp.numerical-comparison-satterthwaite}, $\Psi(X)$ is evaluated deterministically by applying Weyl's integration formula to the matrix Dirichlet integral, rather than by simulating from the matrix Dirichlet distribution. This avoids introducing an additional inner Monte Carlo error at every value of $X$.

Let $d = 2$ and set
\[
\beta_j \leqdef \frac{\alpha_j-3}{2}, \qquad j = 1,2,3, \qquad \beta_{23} \leqdef \beta_2+\beta_3+\frac{3}{2}.
\]
For fixed $X\in \mathcal{S}_{++}^2$, define
\[
A_j(X) \leqdef X^{1/2}(\Sigma_j^{-1}-\Sigma_3^{-1})X^{1/2}, \qquad j = 1,2.
\]
In the matrix Dirichlet integral, we first use the change of variables
\[
U_2 = (I_2-U_1)^{1/2}V(I_2-U_1)^{1/2}, \qquad 0 \prec V \prec I_2.
\]
The Jacobian contributes the factor $|I_2-U_1|^{3/2}$, so that
\[
\begin{aligned}
\Psi(X)
& = c_{\alpha} \int_{0 \prec U_1 \prec I_2}\int_{0 \prec V \prec I_2} |U_1|^{\beta_1}|I_2-U_1|^{\beta_{23}}|V|^{\beta_2}|I_2-V|^{\beta_3} \\
&\qquad \times \etr\left\{-\frac{1}{2}A_1(X)U_1-\frac{1}{2}A_2(X)(I_2-U_1)^{1/2}V(I_2-U_1)^{1/2}\right\} \, \rd V\rd U_1,
\end{aligned}
\]
where
\[
c_{\alpha} \leqdef \frac{\Gamma_2(\alpha_{\bullet}/2)}{\Gamma_2(\alpha_1/2)\Gamma_2(\alpha_2/2)\Gamma_2(\alpha_3/2)}.
\]
For a symmetric $2\times 2$ matrix $M$ with $0 \prec M \prec I_2$, we write the ordered eigendecomposition
\[
M = R_{\theta}
\begin{pmatrix}
\lambda_1 & 0 \\
0 & \lambda_2
\end{pmatrix}
R_{\theta}^{\top}, \qquad
0 < \lambda_2 < \lambda_1 < 1, \qquad 0 \leq \theta < \pi,
\]
where
\[
R_{\theta} \leqdef
\begin{pmatrix}
\cos\theta & -\sin\theta \\
\sin\theta & \cos\theta
\end{pmatrix}.
\]
Weyl's formula gives
\[
\rd M = (\lambda_1-\lambda_2) \, \rd\lambda_1\rd\lambda_2\rd\theta.
\]
We apply this formula once to $U_1$ and once to $V$. For $U_1$, write $\lambda_1 = r$ and $\lambda_2 = rq$, with $r,q\in(0,1)$. For $V$, write $\mu_1 = s$ and $\mu_2 = st$, with $s,t\in(0,1)$. Define
\[
U(r,q, \theta) \leqdef R_{\theta}
\begin{pmatrix}
r & 0 \\
0 & rq
\end{pmatrix}
R_{\theta}^{\top}, \qquad
V(s,t, \phi) \leqdef R_{\phi}
\begin{pmatrix}
s & 0 \\
0 & st
\end{pmatrix}
R_{\phi}^{\top},
\]
and
\[
B(r,q, \theta) \leqdef \{I_2-U(r,q, \theta)\}^{1/2}.
\]
Then the integral becomes
\[
\begin{aligned}
\Psi(X)
&= c_{\alpha}\int_{[0,1]^4}\int_{[0, \pi]^2} \etr\left\{-\frac{1}{2}A_1(X)U(r,q, \theta)-\frac{1}{2}A_2(X)B(r,q, \theta)V(s,t, \phi)B(r,q, \theta)\right\} \\
&\quad \times r^{2\beta_1+2}q^{\beta_1}(1-q)(1-r)^{\beta_{23}}(1-rq)^{\beta_{23}}s^{2\beta_2+2}t^{\beta_2}(1-t)(1-s)^{\beta_3}(1-st)^{\beta_3} \, \rd\phi\rd\theta\rd t\rd s\rd q\rd r.
\end{aligned}
\]
This is the formula implemented numerically for the density evaluations in the two-dimensional total variation calculation. The variables $r,q,s,t$ are integrated using Gauss-Jacobi quadrature rules adapted to the endpoint weights, and the angular variables $\theta, \phi$ are integrated using Gauss-Legendre quadrature on $[0, \pi]$. The cross factors $(1-rq)^{\beta_{23}}$ and $(1-st)^{\beta_3}$, together with the exponential factor, are evaluated at the quadrature nodes. The implementation evaluates the sum on the log scale when needed, and checks the accuracy of the quadrature by setting the exponential factor equal to one, in which case the integral should be close to one after multiplication by $c_{\alpha}$.

The resulting approximation of $\Psi(X)$ is inserted into the density formula from Proposition~\ref{prop:exact-density-Nterms} with $N = 3$. Thus, in the numerical calculations, $f_{\mathfrak{T}}(X)$ is evaluated as
\[
\widehat{f}_{\mathfrak{T}}(X) = \frac{|\Sigma_3|^{\alpha_{\bullet}/2}}{\prod_{j = 1}^3 |\Sigma_j|^{\alpha_j/2}} \, f_{\alpha_{\bullet}, \Sigma_3}(X)\widehat{\Psi}(X),
\]
where $\widehat{\Psi}(X)$ denotes the Weyl quadrature approximation defined by the preceding integral.

We compare the law of $\mathfrak{T}$ with that of $\mathfrak{W}_{\nu}\sim\mathcal{W}_2(\nu, \widetilde{\Sigma}_{\nu})$, where $\widetilde{\Sigma}_{\nu} = \nu^{-1}\overline{\Sigma}$ and $\nu$ is chosen by one of the five effective degrees-of-freedom criteria. The total variation distance is
\[
d_{\mathrm{TV}}(\mathfrak{T}, \mathfrak{W}_{\nu}) = \frac{1}{2}\int_{\mathcal{S}_{++}^2}|f_{\mathfrak{T}}(X)-f_{\nu, \widetilde{\Sigma}_{\nu}}(X)| \, \rd X,
\]
which we can approximate using a Bartlett unit-cube estimator, motivated by the identity
\[
d_{\mathrm{TV}}(\mathfrak{T}, \mathfrak{W}_{\nu}) = \frac{1}{2}\EE_{\mathfrak{W}_{\nu}}\left[\left|\frac{f_{\mathfrak{T}}(\mathfrak{W}_{\nu})}{f_{\nu, \widetilde{\Sigma}_{\nu}}(\mathfrak{W}_{\nu})}-1\right|\right] = \frac{1}{2} \EE_{\mathfrak{W}_{\nu}}\Bigl[\bigl|\exp\{\ell_{\nu}(\mathfrak{W}_{\nu})\}-1\bigr|\Bigr],
\]
where
\[
\ell_{\nu}(X) = \log f_{\mathfrak{T}}(X)-\log f_{\nu, \widetilde{\Sigma}_{\nu}}(X)
\]
is the log-likelihood ratio. The expectation with respect to $\mathfrak{W}_{\nu}$ is then mapped to an integral over the unit cube by means of the Bartlett representation of Wishart random matrices \citep[see e.g.][Section~3.2.4]{Muirhead1982}. Let $\smash{L_{\nu}L_{\nu}^{\top} = \widetilde{\Sigma}_{\nu}}$ and $u = (u_1,u_2,u_3)\in(0,1)^3$. Define
\[
C_{\nu}(u) \leqdef
\begin{pmatrix}
\sqrt{F_{\chi^2_{\nu}}^{-1}(u_1)} & 0 \\
\Phi^{-1}(u_3) & \sqrt{F_{\chi^2_{\nu-1}}^{-1}(u_2)}
\end{pmatrix},
\]
where $\smash{F_{\chi^2_{\nu}}^{-1}}$ is the quantile function of the chi-square distribution with $\nu$ degrees of freedom and $\Phi^{-1}$ is the standard normal quantile function. We then set
\[
X_{\nu}(u) \leqdef L_{\nu}C_{\nu}(u)C_{\nu}(u)^{\top}L_{\nu}^{\top}.
\]
If $\bb{\upsilon} = (\upsilon_1, \upsilon_2, \upsilon_3)$ is a random vector uniformly distributed on $(0,1)^3$, then
\[
F_{\chi^2_{\nu}}^{-1}(\upsilon_1) \sim \chi^2_{\nu}, \qquad F_{\chi^2_{\nu-1}}^{-1}(\upsilon_2) \sim \chi^2_{\nu-1}, \qquad \Phi^{-1}(\upsilon_3)\sim \mathcal{N}(0,1).
\]
Hence $C_{\nu}(\bb{\upsilon})$ is precisely the lower-triangular Bartlett factor for a two-dimensional Wishart random matrix with $\nu$ degrees of freedom. Consequently,
\[
X_{\nu}(\bb{\upsilon}) = L_{\nu} C_{\nu}(\bb{\upsilon})C_{\nu}(\bb{\upsilon})^{\top}L_{\nu}^{\top} \sim \mathcal{W}_2(\nu, \widetilde{\Sigma}_{\nu}).
\]
Therefore, for any integrable $h$ on $\mathcal{S}^2_{++}$,
\begin{equation}\label{eq:bartlett-integration-formula}
\EE_{\mathfrak{W}_{\nu}}[h(\mathfrak{W}_{\nu})]
 = \int_{\mathcal{S}^2_{++}} h(X) \, f_{\nu, \widetilde{\Sigma}_{\nu}}(X) \, \rd X =
\int_{(0,1)^3} h(X_{\nu}(\bb{\upsilon})) \, \rd \bb{\upsilon}.
\end{equation}
Indeed, if $Y = L_{\nu}^{-1}XL_{\nu}^{-\top}$, then the inverse Bartlett coordinates are obtained from the Cholesky factor of $Y$:
\[
u_1 = F_{\chi^2_{\nu}}(Y_{11}), \qquad
u_2 = F_{\chi^2_{\nu-1}}\left(Y_{22}-\frac{Y_{12}^2}{Y_{11}}\right), \qquad
u_3 = \Phi\left(\frac{Y_{12}}{\sqrt{Y_{11}}}\right),
\]
and~\eqref{eq:bartlett-integration-formula} follows from observing that the Jacobian of this transformation is precisely the Wishart density. Thus, we can rewrite the total variation distance as
\[
d_{\mathrm{TV}}(\mathfrak{T}, \mathfrak{W}_{\nu}) = \frac{1}{2} \int_{(0,1)^3} \bigl|\exp\{\ell_{\nu}(X_{\nu}(\bb{\upsilon}))\}-1\bigr| \, \rd \bb{\upsilon}.
\]

In practice, we will use the Weyl quadrature approximation of the log-likelihood ratio,
\[
\begin{aligned}
\widehat{\ell}_{\nu}(X)
&\leqdef \log \widehat{f}_{\mathfrak{T}}(X)-\log f_{\nu, \widetilde{\Sigma}_{\nu}}(X) \\
& = \log\widehat{\Psi}(X)+(\nu-\alpha_{\bullet})\log 2+\log\Gamma_2(\nu/2)-\log\Gamma_2(\alpha_{\bullet}/2)+\frac{\nu}{2}\log|\widetilde{\Sigma}_{\nu}| \\
&\qquad -\frac{1}{2}\sum_{j = 1}^3\alpha_j\log|\Sigma_j|+\frac{\alpha_{\bullet}-\nu}{2}\log|X|-\frac{1}{2}\tr\{(\Sigma_3^{-1}-\widetilde{\Sigma}_{\nu}^{-1})X\}.
\end{aligned}
\]
Thus, for Sobol points $u_1, \ldots,u_M\in(0,1)^3$, the Bartlett unit-cube approximation to the total variation distance is
\begin{equation}\label{eq:tv-bartlett-approx}
\widehat{\mathrm{TV}}(\nu) \leqdef \frac{1}{2M}\sum_{m = 1}^M \left|\exp\{\widehat{\ell}_{\nu}(X_{\nu}(u_m))\} - 1\right|.
\end{equation}
In the actual numerical implementation, the unit-cube coordinates are clipped by a very small numerical tolerance before applying the quantile functions. The computations over the points $u_m$ are independent and are therefore parallelized by splitting the point set into batches.

\subsubsection{Numerical comparisons}\label{sec:supp.numerical-comparison-satterthwaite}

The purpose of the numerical study is to determine whether the geometric part of the Stein bound gives a reliable ordering of the scalar Satterthwaite reductions from the point of view of total variation distance. We therefore do not use the numerical experiment only to compare the values of the different effective degrees of freedom: rather, for each configuration of the summands, we compare two orderings on the set
\[
\mathcal{M} = \{\mathrm{TG}, \mathrm{K}, \mathrm{LS}, \mathrm{US}\}.
\]
The first ordering is induced by the estimated total variation distance, while the second ordering is induced by the geometric part of the bound. The upper-bound choice $\nu_{\mathrm{UB}}$ is not included in this ranking comparison: although it is useful as a diagnostic reference, it is obtained by directly minimizing the geometric term and is not a scalar reduction of the covariance-matching problem in the same sense as $\mathrm{TG}$, $\mathrm{K}$, $\mathrm{LS}$ and $\mathrm{US}$. This distinction is also consistent with the one-dimensional situation, where the minimizer of the geometric upper-bound criterion may have a worse total variation performance than the classical Satterthwaite degree of freedom.

\paragraph{Comparisons setup}
Throughout the experiment, we consider independent random matrices $\mathfrak{G}_j\sim \mathcal{W}_2(\alpha_j, \Sigma_j)$, $j = 1,2,3$, and their sum $\mathfrak{T} = \mathfrak{G}_1+\mathfrak{G}_2+\mathfrak{G}_3$. For each $k\in\mathcal{M}$, we write $\nu_k$ for the corresponding scalar reduction and
\[
\mathfrak{W}_k \sim \mathcal{W}_2(\nu_k, \overline{\Sigma}/\nu_k), \qquad \overline{\Sigma} = \sum_{j = 1}^3\alpha_j\Sigma_j.
\]
The geometric quantity associated with $k$ is denoted by
\[
B(\nu_k) = \sum_{j = 1}^3\alpha_j\tr(\Sigma_j) \left\|\Sigma_j-\nu_k^{-1}\overline{\Sigma}\right\|_F,
\]
whereas the empirical total variation estimate is denoted by $\widehat{\mathrm{TV}}(\nu_k)$, as defined in~\eqref{eq:tv-bartlett-approx}. All reported computations use the Bartlett-QMC estimator with $2500$ Sobol points, and $8$ quadrature nodes per coordinate for the Weyl quadrature. Throughout, we will denote
\[
k_{\mathrm{TV}}\in\arg\min_{k\in\mathcal{M}} \widehat{\mathrm{TV}}(\nu_k), \qquad
k_B\in \arg\min_{k\in\mathcal{M}} B(\nu_k).
\]

\paragraph{Description of the scenarios}
The configurations are organized into three ranking regimes. The first is the diagonal common-eigenspace regime. Here $\Sigma_i = \Lambda_i$, where
\[
\Lambda_i =
\begin{pmatrix}
\lambda_{i,1} & 0\\
0 & \lambda_{i,2}
\end{pmatrix}, \qquad
\lambda_{i,1}, \lambda_{i,2}>0.
\]
The ratios $\lambda_{i,1}/\lambda_{i,2}$ are not all equal, so the scale matrices are not merely proportional. This regime is the cleanest non-trivial test case, since the matrices commute and the discrepancy is driven by eigenvalue heterogeneity rather than by competing eigenspaces. The eighteen base configurations are listed in Table~\ref{tab:satt-diagonal-scenarios}. This diagonal framework will be presented alongside the second regime: that is, the common-rotation regime where $\Sigma_i = R_{\theta}\Lambda_iR_{\theta}^{\top}$ with the same rotation matrix $R_{\theta}$ for all $i$. This is still a common-eigenspace regime, but it is no longer diagonal in the canonical coordinates unless $R_{\theta} = I_2$. Each of the eighteen base configurations is combined with the following five common rotation settings:
\[
\theta \in \left\{0^{\circ}, 15^{\circ}, 30^{\circ}, 45^{\circ}, 60^{\circ}\right\}.
\]
The third regime is the heterogeneous-rotation regime, where $\Sigma_i = R_{\theta_i}\Lambda_iR_{\theta_i}^{\top}$ and the rotations $R_{\theta_i}$ are not all equal. This is the genuine non-commuting stress test. The rotation profiles used here are angle vectors $(\theta_1, \theta_2, \theta_3)$. Each of the eighteen base configurations is combined with the following five heterogeneous rotation profiles:
\[
(0^{\circ},10^{\circ},-10^{\circ}), \qquad (0^{\circ},20^{\circ},-20^{\circ}), \qquad (-30^{\circ},0^{\circ},30^{\circ}), \qquad (0^{\circ},35^{\circ},70^{\circ}), \qquad (-45^{\circ},15^{\circ},55^{\circ}).
\]

\begin{table}[H]
\centering
\renewcommand{\arraystretch}{1.0} 
\small
\begin{tabular}{lc c c c c}
\toprule
Profile & Scenario & $(\alpha_1, \alpha_2, \alpha_3)$ & $\Lambda_1$ & $\Lambda_2$ & $\Lambda_3$ \\
\midrule
mild crossing & 01 & $(22,22,22)$ & $(1.300,0.900)$ & $(1.050,1.050)$ & $(0.850,1.300)$ \\
&02 & $(14,22,30)$ & $(1.300,0.900)$ & $(1.050,1.050)$ & $(0.850,1.300)$ \\
&03 & $(30,22,14)$ & $(1.300,0.900)$ & $(1.050,1.050)$ & $(0.850,1.300)$ \\[0.3cm]
same-axis anisotropy & 04 & $(22,22,22)$ & $(1.500,0.850)$ & $(1.300,0.950)$ & $(1.100,1.050)$ \\
&05 & $(14,22,30)$ & $(1.500,0.850)$ & $(1.300,0.950)$ & $(1.100,1.050)$ \\
&06 & $(30,22,14)$ & $(1.500,0.850)$ & $(1.300,0.950)$ & $(1.100,1.050)$ \\[0.3cm]
opposite-axis anisotropy & 07 & $(22,22,22)$ & $(1.600,0.750)$ & $(1.050,1.050)$ & $(0.750,1.600)$ \\
&08 & $(14,22,30)$ & $(1.600,0.750)$ & $(1.050,1.050)$ & $(0.750,1.600)$ \\
&09 & $(30,22,14)$ & $(1.600,0.750)$ & $(1.050,1.050)$ & $(0.750,1.600)$ \\[0.3cm]
one eccentric summand & 10 & $(22,22,22)$ & $(1.750,0.700)$ & $(0.950,1.200)$ & $(0.950,1.200)$ \\
&11 & $(14,22,30)$ & $(1.750,0.700)$ & $(0.950,1.200)$ & $(0.950,1.200)$ \\
&12 & $(30,22,14)$ & $(1.750,0.700)$ & $(0.950,1.200)$ & $(0.950,1.200)$ \\[0.3cm]
approx. equal determinants & 13 & $(22,22,22)$ & $(1.600,0.625)$ & $(1.250,0.800)$ & $(0.900,1.111)$ \\
&14 & $(14,22,30)$ & $(1.600,0.625)$ & $(1.250,0.800)$ & $(0.900,1.111)$ \\
&15 & $(30,22,14)$ & $(1.600,0.625)$ & $(1.250,0.800)$ & $(0.900,1.111)$ \\[0.3cm]
mostly scalar scale contrast & 16 & $(22,22,22)$ & $(1.450,1.250)$ & $(0.950,1.050)$ & $(0.750,0.950)$ \\
&17 & $(14,22,30)$ & $(1.450,1.250)$ & $(0.950,1.050)$ & $(0.750,0.950)$ \\
&18 & $(30,22,14)$ & $(1.450,1.250)$ & $(0.950,1.050)$ & $(0.750,0.950)$ \\
\bottomrule
\end{tabular}
\caption{The eighteen base scenarios used in the diagonal common-eigenspace regime. The pair in column $\Lambda_i$ denotes $(\lambda_{i,1}, \lambda_{i,2})$.}
\label{tab:satt-diagonal-scenarios}
\normalsize
\end{table}

\paragraph{Agreement and rank correlation}
We first report, for each regime, the agreement proportion: that is, the proportion of scenarios where $k_{\mathrm{TV}} = k_{B}$. In the diagonal regime, 94\% of the scenarios agree on the best choice, and the criteria are practically indistinguishable when agreement does not hold. In the common-rotation regime, this proportion decreases to 58\%; in the heterogeneous-rotation regime, it is 70\%. We also report Kendall's rank correlation $\tau$ for each scenario, and display the numerical values in Figure~\ref{fig:kendall-tau}.

\begin{figure}[!ht]
\centering
\includegraphics[width = 0.49\linewidth, trim={0.5cm 1cm 0.5cm 0.5cm}, clip]{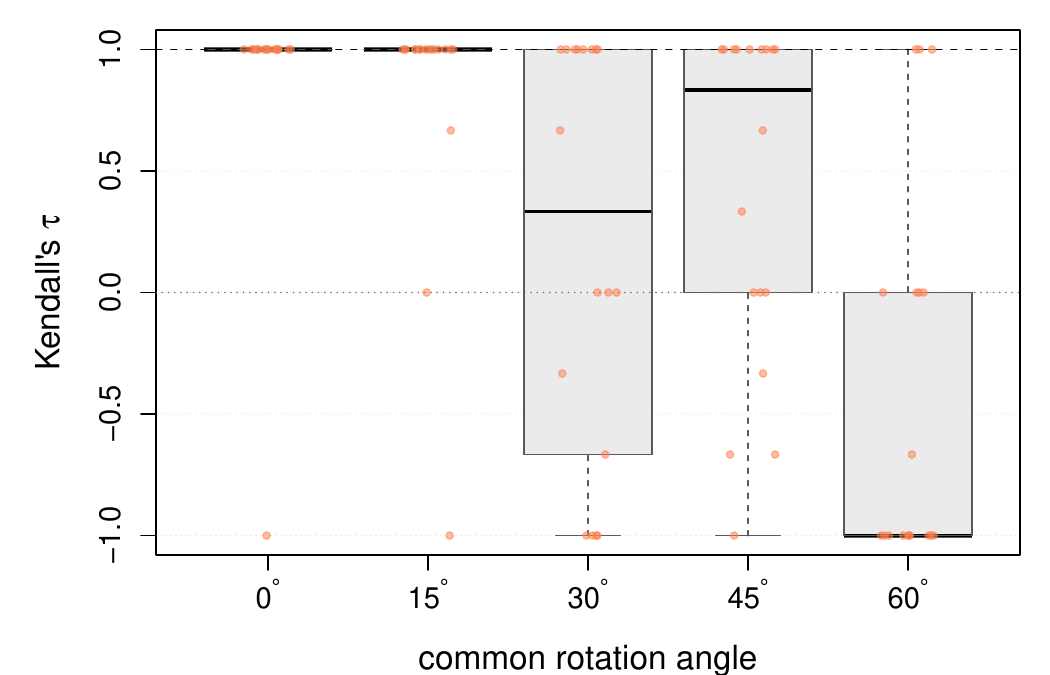}
\includegraphics[width = 0.49\linewidth, trim={0.5cm 1cm 0.5cm 0.5cm}, clip]{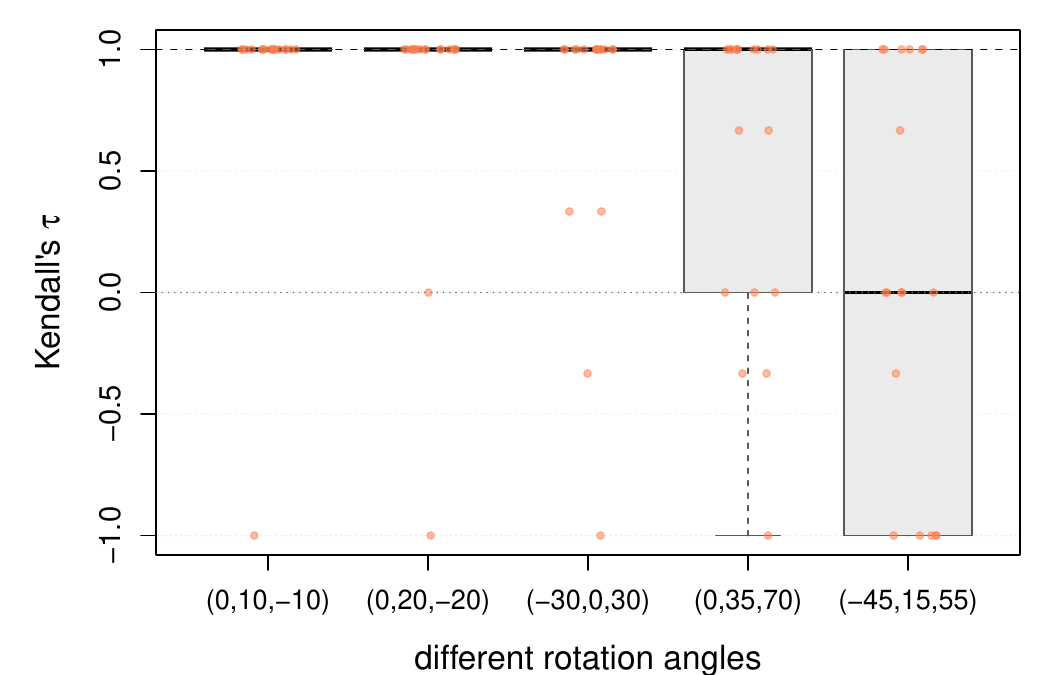}
\caption{Dispersion of Kendall's rank correlation, in the considered regimes.}
\label{fig:kendall-tau}
\end{figure}

We observe that the ordering of a majority of scenarios, namely 66\%, is perfectly described by the geometric criterion: in these configurations, the rank correlation is exactly 1. These configurations cover every scenario where $k_{\mathrm{TV}} = k_{B}$. On the other hand, a substantial number of exact minimizers change after a common rotation, as indicated by the left panel of Figure~\ref{fig:kendall-tau} covering the second regime. This behavior is consistent with the coordinate sensitivity of some scalar reductions, especially those involving entrywise or unscaled half-vectorized summaries, that is, the LS and US criteria. Finally, it appears that the third regime is less sensitive to this phenomenon.

\paragraph{Regret analysis}
We also report the relative total variation loss of choosing the geometric choice $k_B$ over $k_{\mathrm{TV}}$,
\[
\mathrm{Loss}(k_B)
\leqdef
\frac{\widehat{\mathrm{TV}}(\nu_{k_B})}{\widehat{\mathrm{TV}}(\nu_{k_{\mathrm{TV}}})} - 1.
\]
This quantity is equal to 0 when the geometric criterion agrees with the minimizer of the total variation. Values close to 0 indicate that, even if the exact minimizer differs, the discrepancy is practically negligible in total variation. The numerical values are displayed in Figure~\ref{fig:rel-TV}; most losses are below 5\%, although a few outlying configurations have substantially larger losses.

\begin{figure}[!ht]
\centering
\includegraphics[width = 0.49\linewidth, trim={0.5cm 1cm 0.5cm 0.5cm}, clip]{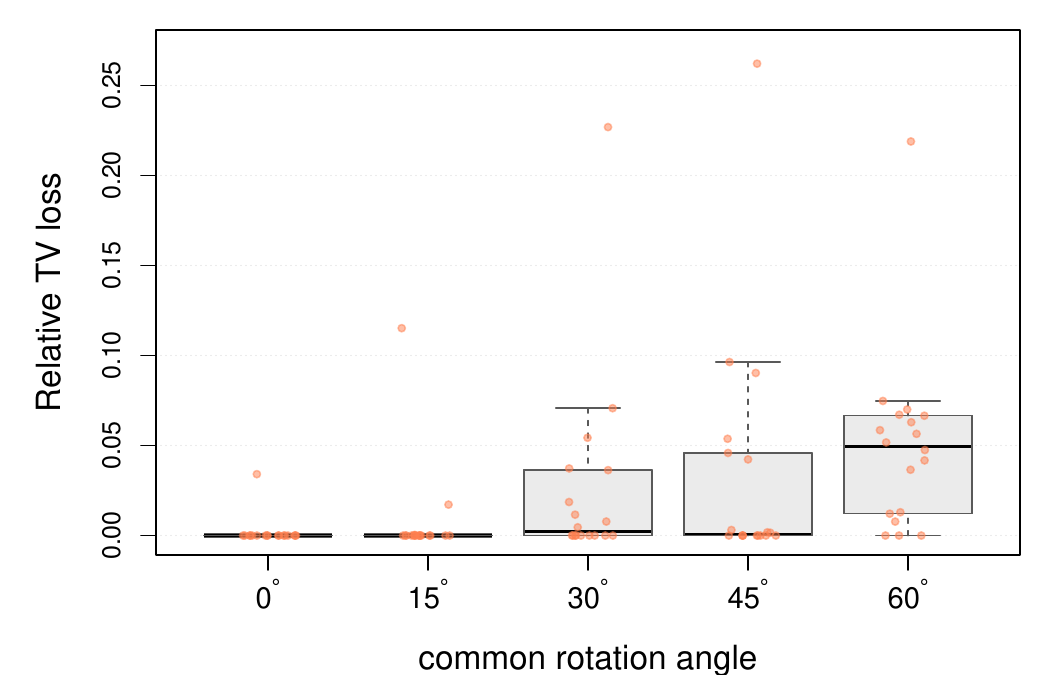}
\includegraphics[width = 0.49\linewidth, trim={0.5cm 1cm 0.5cm 0.5cm}, clip]{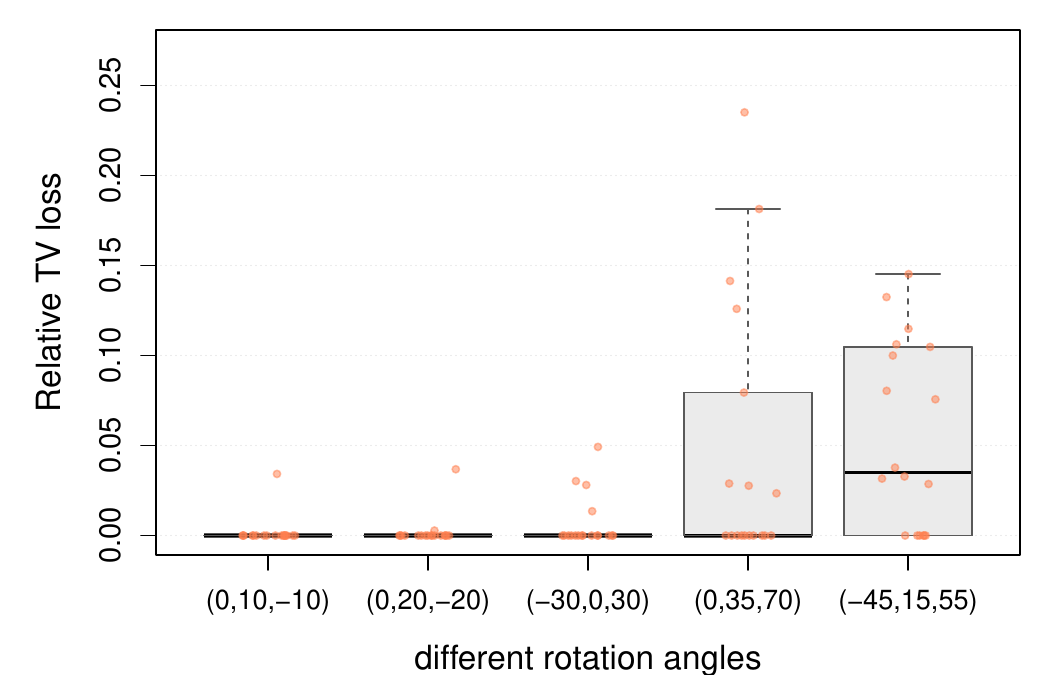}
\caption{Regret of choosing the minimizer of the geometric criterion, in the considered regimes.}
\label{fig:rel-TV}
\end{figure}

\smallskip
\subsection{Additional details for Stein's method-of-moments application}

\subsubsection{Numerical comparison of the log-MOM, quad-MOM, and MLE estimators}\label{sec:supp.log.MOM.vs.quad.MOM.vs.MLE}

This section details the numerical experiment mentioned in Remark~\ref{rem:numerical.illustration} and reported in Tables~\ref{tab:wishart-smom-mle-sigma-fro} and~\ref{tab:wishart-smom-mle-winner-counts}. In this experiment, the values of $\alpha_0$ and $\Sigma_0$ were selected randomly before the Monte Carlo replications. More precisely, ten values of $\alpha_0$ were generated independently from the uniform distribution on $[3,30]$. Ten positive definite matrices $\Sigma_0$ were generated independently as follows. With $d = 3$, eigenvalues were drawn according to
\[
\lambda_1, \lambda_2, \lambda_3 \stackrel{\mathrm{iid}}{\sim} \operatorname{LogUnif}(0.35,3.50),
\]
and an orthogonal matrix $Q$ was generated by applying the QR construction to a matrix with independent standard normal entries. We then set
\[
\Sigma_0 = Q \, \mathrm{diag}(\lambda_1, \ldots, \lambda_d) Q^{\top}.
\]
The generated values of $\alpha_0$ were crossed with the generated values of $\Sigma_0$, giving one hundred configurations in total. For each configuration, we simulated $500$ independent samples from $\mathcal{W}_3(\alpha_0, \Sigma_0)$, with sample sizes $n\in \{10,10^2,10^3,10^4\}$. Within each replication, the samples were nested across $n$. The corresponding values of $\alpha_0$ and $\Sigma_0$ are displayed in the first column of Table~\ref{tab:wishart-smom-mle-sigma-fro}, rounded to one decimal place for $\alpha_0$ and two decimal places for the entries of $\Sigma_0$.

For each replication, sample size, and parameter configuration, we computed three pairs of estimators: $\smash{(\widehat{\alpha}_{\mathrm{log}}, \widehat{\Sigma}_{\mathrm{log}})}$, $\smash{(\widehat{\alpha}_{\mathrm{quad}}, \widehat{\Sigma}_{\mathrm{quad}})}$, and $\smash{(\widehat{\alpha}_{\mathrm{MLE}}, \widehat{\Sigma}_{\mathrm{MLE}})}$. The first two are Stein's method-of-moments estimators in Proposition~\ref{prop:Wishart.unknown.alpha.estimator}, while the last one is the maximum likelihood estimator (MLE). More precisely, writing
\[
\widehat{M} = \frac{1}{n} \sum_{k=1}^n \mathfrak{W}^{(k)},
\]
the profile likelihood maximizer in the scale parameter, for fixed $\alpha$, is $\widehat{\Sigma}(\alpha) = \widehat{M}/\alpha$. The MLE of the shape parameter is therefore obtained by solving the scalar equation
\[
\overline{\log|\mathfrak{W}|} - \log|\widehat{M}| + d\log(\alpha) - d\log(2) - \psi_d(\alpha/2) = 0, \qquad \alpha > d-1,
\]
where
\[
\psi_d(a) \leqdef \sum_{j=1}^d \psi\left(a + \frac{1-j}{2}\right).
\]
The scale matrix is then set to $\widehat{\Sigma}_{\mathrm{MLE}} = \widehat{M}/\widehat{\alpha}_{\mathrm{MLE}}$.

The denominators in \eqref{eq:Wishart.unknown.alpha.estimator.quad} and \eqref{eq:Wishart.unknown.alpha.estimator.log} were checked against numerical zero. The log-MOM and quad-MOM estimates were marked invalid if the relevant denominator was numerically zero, if the estimated shape parameter did not satisfy $\widehat{\alpha}>d-1$, or if the scale estimate failed the numerical positive-definiteness check. The MLE was marked invalid if the profile likelihood equation could not be solved reliably or if the resulting estimate failed the same parameter-space checks. Table~\ref{tab:wishart-smom-mle-sigma-fro} reports, for each configuration and sample size, the median and interquartile range over the valid Monte Carlo replications of the relative Frobenius error $\|\widehat{\Sigma}-\Sigma_0\|_F/\|\Sigma_0\|_F$. Within each configuration and sample size, the smallest median error and the smallest interquartile range among the three estimators are shown in bold. Table~\ref{tab:wishart-smom-mle-winner-counts} gives the corresponding counts, by sample size, of how often each estimator attains the smallest median error or the smallest interquartile range. The MLE is included as an efficiency benchmark. The logarithmic and quadratic Stein's method-of-moments estimators are simpler to compute, since they are given explicitly by \eqref{eq:Wishart.unknown.alpha.estimator.quad} and \eqref{eq:Wishart.unknown.alpha.estimator.log}, and do not require solving the likelihood equation. This experiment is intended to check whether the improvement of the logarithmic Stein's method-of-moments estimator over the quadratic, classical moment estimator persists across randomly generated parameter configurations.

\bigskip
\begingroup
\footnotesize
\renewcommand{\arraystretch}{0.95}
\setlength{\tabcolsep}{3pt}


\endgroup

\subsubsection{Numerical comparison of the projected and naive Stein MOM estimators}\label{sec:supp.projected.vs.naive}

This section details the numerical experiment mentioned in Remark~\ref{rem:numerical.illustration:prop:projected.Stein.Wishart}. The results are reported in Tables~\ref{tab:wishart-structured-smom-sigma-fro}, \ref{tab:wishart-structured-smom-winner-counts}, and~\ref{tab:wishart-structured-smom-ratios}. In this experiment, the values of $\alpha_0$ and $\Sigma_0$ were selected randomly before the Monte Carlo replications, with $\Sigma_0$ constrained to belong to the two-dimensional compound-symmetry subspace
\[
\Sigma_0 = \Sigma(\bb{\beta}^{\star}) = \beta_1^{\star}B_1 + \beta_2^{\star}B_2,
\]
where $B_1 = I_{10}$ and $B_2 = \bb{1}_{10}\bb{1}_{10}^{\top}-I_{10}$. Ten values of $\alpha_0$ were generated independently from the uniform distribution on $[12,80]$. Ten structured matrices $\Sigma_0$ were generated independently by drawing the compound-symmetry eigenvalues
\[
\lambda_{\perp}\sim \operatorname{LogUnif}(0.50,3.00), \qquad \lambda_{\parallel}\sim \operatorname{LogUnif}(0.50,15.00),
\]
and then setting, with $d = 10$,
\[
\beta_2^{\star} = \frac{\lambda_{\parallel}-\lambda_{\perp}}{d}, \qquad \beta_1^{\star} = \lambda_{\perp} + \beta_2^{\star}.
\]
The generated values of $\alpha_0$ were crossed with the generated values of $\bb{\beta}^{\star}$, giving one hundred structured configurations in total. For each structured configuration, we simulated $500$ independent samples from $\mathcal{W}_{10}(\alpha_0, \Sigma_0)$, with sample sizes $n\in \{10,10^2,10^3,10^4\}$. Within each replication, the samples were nested across $n$. The corresponding values of $\alpha_0$ and $\bb{\beta}^{\star}$ are displayed in the first columns of Tables~\ref{tab:wishart-structured-smom-sigma-fro} and~\ref{tab:wishart-structured-smom-ratios}, rounded to one decimal place for $\alpha_0$ and two decimal places for the coordinates of $\bb{\beta}^{\star}$.

The probe matrices $\{U_m\}_{m = 1}^{M}$ were chosen as an orthonormal basis of $\mathcal{S}^{10}$ for the Frobenius inner product, and the templates were the matrices $B_1$ and $B_2$ above. For each replication, sample size, and structured configuration, we computed the projected Stein's method-of-moments estimator $\widehat{\Sigma}_n$ from Proposition~\ref{prop:projected.Stein.Wishart} and the naive (unstructured) Stein's method-of-moments estimator
\[
\widetilde{\Sigma}_n \leqdef \frac{1}{\alpha_0 n} \sum_{k=1}^n \mathfrak{W}^{(k)}.
\]
Writing
\[
\widehat{M} = \frac{1}{n} \sum_{k=1}^n \mathfrak{W}^{(k)},
\]
the unstructured MLE is $\widehat{M}/\alpha_0$, because $\alpha_0$ is known. It is therefore identical to $\widetilde{\Sigma}_n$ and is not included as a separate competitor. The reported projected and naive scale estimates were marked invalid if the corresponding scale estimate failed the numerical positive-definiteness check. No shape parameter is estimated in this experiment, since $\alpha_0$ is known.

Table~\ref{tab:wishart-structured-smom-sigma-fro} reports, for each structured configuration and sample size, the median and interquartile range over the valid Monte Carlo replications of the relative Frobenius error $\|\widehat{\Sigma}-\Sigma_0\|_F/\|\Sigma_0\|_F$. Within each structured configuration and sample size, the smaller median error and the smaller interquartile range among the two estimators are shown in bold. Table~\ref{tab:wishart-structured-smom-winner-counts} gives the corresponding counts, by sample size, of how often each estimator attains the smaller median error or the smaller interquartile range. Table~\ref{tab:wishart-structured-smom-ratios} reports the median and interquartile range of the ratio $\|\widehat{\Sigma}_{n}-\Sigma_0\|_F/\|\widetilde{\Sigma}_{n}-\Sigma_0\|_F$, corresponding to the entries labeled $\mathrm{proj}/\mathrm{naive}$. Values below one favor the projected Stein's method-of-moments estimator. This experiment is intended to check whether the gain from projecting the Stein moment equations onto the structured subspace persists across randomly generated parameter configurations, while comparing the projected estimator with the usual unstructured moment benchmark.

\bigskip
\begingroup
\footnotesize
\renewcommand{\arraystretch}{1}
\setlength{\tabcolsep}{2.5pt}


\endgroup

\end{document}